\documentclass[11pt]{amsart}
\usepackage{amsmath, amssymb, tikz, tikz-cd} 
\usepackage[colorlinks=true, linkcolor=red!80!black, urlcolor=purple, citecolor=blue!70!black]{hyperref}
\usepackage{geometry}
\usepackage{fullpage}
%\linespread{1.10}
\usepackage{mathrsfs}
\usepackage{graphicx,color}
\usepackage[all,cmtip]{xy}
\usepackage{enumitem}
\usepackage{verbatim}
\usepackage{bbm}
\usepackage{bm}
\usepackage{mathtools}
\usepackage{tabu}
\usepackage{comment}

\usepackage[mathscr]{eucal}

\setlength{\textwidth}{6.3in}
\setlength{\textheight}{8.8in}
\setlength{\topmargin}{0pt}
\setlength{\headsep}{0pt}
\setlength{\headheight}{0pt}
\setlength{\oddsidemargin}{0pt}
\setlength{\evensidemargin}{0pt}
    
%\pagestyle{empty}
%\pagenumbering{arabic}

\newtheorem{theorem}{Theorem}[section]
\newtheorem{proposition}[theorem]{Proposition}
\newtheorem{corollary}[theorem]{Corollary}

 \newtheorem{lemma}[theorem]{Lemma}

\newtheorem{prop}[theorem]{Proposition}
\newtheorem{thm}[theorem]{Theorem}
\newtheorem{lem}[theorem]{Lemma}
\newtheorem{cor}[theorem]{Corollary}

\newtheorem{thm-defn}[theorem]{Theorem-Definition}

\theoremstyle{definition}
\newtheorem{definition}[theorem]{Definition}
\newtheorem{remark}[theorem]{Remark}
\newtheorem{rem}[theorem]{Remark}

\newtheorem{defn}[theorem]{Definition}
\newtheorem{defn-prop}[theorem]{Definition-Proposition}

\numberwithin{equation}{section}

\newcommand{\bP}{\mathbb{P}}
\newcommand{\bC}{\mathbb{C}}
\newcommand{\bQ}{\mathbb{Q}}
\newcommand{\bZ}{\mathbb{Z}}
\newcommand{\bG}{\mathbb{G}}
\newcommand{\bL}{\mathbb{L}}
\newcommand{\PGL}{\mathrm{PGL}}
\newcommand{\Hilb}{\mathrm{Hilb}}
\newcommand{\calX}{\mathcal{X}}
\newcommand{\calL}{\mathcal{L}}
\newcommand{\calO}{\mathcal{O}}

\newcommand{\Fut}{\mathrm{Fut}}
\newcommand{\Aut}{\mathrm{Aut}}
\newcommand{\vol}{\mathrm{vol}}
\newcommand{\ord}{\mathrm{ord}}
\newcommand{\lct}{\mathrm{lct}}
\newcommand{\Supp}{\mathrm{Supp}}
\newcommand{\tT}{\widetilde{T}}
\newcommand{\cO}{\mathcal{O}}
\newcommand{\bA}{\mathbb{A}}
\newcommand{\sF}{\mathscr{F}}
\newcommand{\Proj}{\mathrm{Proj}}
\newcommand{\oM}{\overline{M}}
\newcommand{\GIT}{\mathrm{GIT}}
\newcommand{\K}{\mathrm{K}}
\newcommand{\cY}{\mathcal{Y}}
\newcommand{\cD}{\mathcal{D}}
\newcommand{\cV}{\mathcal{V}}

\newcommand{\CC}{\mathbb{C}}
\newcommand{\bR}{\mathbb{R}}

\newcommand{\calF}{\mathcal{F}}

\newcommand{\calE}{\mathcal{E}}
\newcommand{\Sym}{\mathrm{Sym}}

\newcommand{\Hom}{\mathrm{Hom}}

\newcommand{\Pic}{\mathrm{Pic}}

\newcommand{\Spec}{\mathrm{Spec}\;}

\renewcommand{\arraystretch}{1.2}

\newcommand{\PP}{\mathbb{P}}
\newcommand{\cS}{\mathcal{S}}
\newcommand{\cF}{\mathcal{F}}
\newcommand{\cG}{\mathcal{G}}

\newcommand{\SL}{\mathrm{SL}}
\newcommand{\calD}{\mathcal{D}}

\newcommand{\calN}{\mathcal{N}}

\newcommand{\calT}{\mathcal{T}}

\newcommand{\calZ}{\mathcal{Z}}

\newcommand{\cP}{\mathcal{P}}

\newcommand{\sExt}{\mathcal{E}xt}

\usepackage{graphicx}
\hyphenpenalty=9001
\relpenalty=10000
\binoppenalty=10000
\newcommand{\sslash}{\mathbin{/\mkern-6mu/}}

\newcommand{\Chow}{\operatorname{Chow}}

\newcommand{\cX}{\mathcal X}
\newcommand{\cT}{\mathcal T}

\newcommand{\Ext}{\mathrm{Ext}}

\newcommand{\hvol}{\widehat{\mathrm{vol}}}

\newcommand{\CM}{\mathrm{CM}}

\newcommand{\kst}{\mathrm{kst}}

\newcommand{\cM}{\mathcal M}
\newcommand{\cN}{\mathcal N}
\newcommand{\cZ}{\mathcal Z}
\newcommand{\cL}{\mathcal L}
\newcommand{\cI}{\mathcal{I}}
\newcommand{\cK}{\mathcal{K}}

\newcommand{\tsigma}{\tilde{\sigma}}

\newcommand{\fm}{\mathfrak{m}}

\newcommand{\calC}{\mathcal C}

\newcommand{\ocM}{\overline{\mathcal{M}}}
\newcommand{\ocK}{\overline{\mathcal{K}}}

\newcommand{\cE}{\mathcal{E}}
\newcommand{\Hdg}{\mathrm{Hodge}}

\newcommand{\sM}{\mathscr{M}}
\newcommand{\oK}{\overline{K}}
\newcommand{\red}{\mathrm{red}}

\newcommand{\bfP}{\mathbf{P}}
\newcommand{\bfA}{\mathbf{A}}
\newcommand{\fM}{\mathfrak{M}}
\newcommand{\osM}{\overline{\sM}}
\newcommand{\ofM}{\overline{\fM}}

\newcommand{\sX}{\mathscr{X}}
\newcommand{\sY}{\mathscr{Y}}
\newcommand{\sZ}{\mathscr{Z}}
\newcommand{\sD}{\mathscr{D}}

\newcommand{\hsF}{\widehat{\sF}}

\newcommand{\sC}{\mathscr{C}}
\newcommand{\sS}{\mathscr{S}}
\newcommand{\fp}{\mathfrak{p}}
\newcommand{\slc}{\mathrm{slc}}
\newcommand{\tcK}{\widetilde{\mathcal{K}}}

\newcommand{\tX}{\widetilde{X}}
\newcommand{\tY}{\widetilde{Y}}
\newcommand{\tS}{\widetilde{S}}
\newcommand{\tE}{\widetilde{E}}
\newcommand{\tH}{\widetilde{H}}
\newcommand{\sT}{\mathscr{T}}
\newcommand{\sH}{\mathscr{H}}
\newcommand{\PE}{\mathbb{P}\mathcal{E}}
\newcommand{\tPE}{\widetilde{\mathbb{P}\mathcal{E}}}
\newcommand{\cHom}{\mathcal{H}om}
\newcommand{\bfV}{\mathbf{V}}
\newcommand{\bfH}{\mathbf{H}}
\newcommand{\Stab}{\mathrm{Stab}}
\newcommand{\bmu}{\bm{\mu}}
\newcommand{\fS}{\mathfrak{S}}
\newcommand{\hH}{\widehat{H}}
\newcommand{\tV}{\widetilde{V}}
\newcommand{\trho}{\tilde{\rho}}
\newcommand{\tfM}{\widetilde{\mathfrak{M}}}
\newcommand{\tlambda}{\tilde{\lambda}}
\newcommand{\fY}{\mathfrak{Y}}
\newcommand{\ofY}{\overline{\mathfrak{Y}}}
\newcommand{\osY}{\overline{\mathscr{Y}}}

\newcommand{\tL}{\widetilde{L}}
\newcommand{\tW}{\widetilde{W}}
\newcommand{\sU}{\mathscr{U}}

\newcommand{\tpsi}{\tilde{\psi}}
\newcommand{\tSigma}{\widetilde{\Sigma}}
\newcommand{\lnorm}[1]{\lVert#1\rVert_2}
\newcommand{\mnorm}[1]{\lVert#1\rVert_{\rm m}}
\newcommand{\KD}[1]{{\textcolor{violet}{[Kristin: #1]}}}

\newcommand{\YL}[1]{{\textcolor{blue}{[Yuchen: #1]}}}

\title{K-stability and birational models of moduli of quartic K3 surfaces}

%\title{Wall crossing for K-moduli spaces of quartic K3 surfaces} 
\author[Ascher]{Kenneth Ascher}
\address{Department of Mathematics, University of California, Irvine, CA, 92697, USA}
\email{kascher@uci.edu}
\author[DeVleming]{Kristin DeVleming}
\address{Department of Mathematics and Statistics,
University of Massachusetts, Amherst, MA 01003-9305, USA}
\email{kdevleming@umass.edu}
\author[Liu]{Yuchen Liu}
\address{Department of Mathematics, Northwestern University, Evanston, IL 60208, USA}
\email{yuchenl@northwestern.edu}
\date{\today}

\begin{document}

\maketitle

\begin{abstract}
We show that the K-moduli spaces of log Fano pairs $(\bP^3, cS)$ where $S$ is a quartic surface interpolate between the GIT moduli space of quartic surfaces and the Baily-Borel compactification of moduli of quartic K3 surfaces as $c$ varies in the interval $(0,1)$. We completely describe the wall crossings of these K-moduli spaces. As the main application, we verify  Laza-O'Grady's prediction on the Hassett-Keel-Looijenga program for quartic K3 surfaces. We also obtain the  K-moduli compactification of quartic double solids, and classify all Gorenstein canonical Fano degenerations of $\bP^3$.
\end{abstract}
\setcounter{tocdepth}{1}
\tableofcontents

\section{Introduction}

An important question in algebraic geometry is to construct geometrically meaningful compact moduli spaces for polarized K3 surfaces. The global Torelli theorem indicates that the coarse moduli space $\fM_{2d}$ of primitively polarized K3 surfaces with du Val singularities of degree $2d$ is isomorphic, under the period map, to  the arithmetic quotient $\sF_{2d}=\sD_{2d}/\Gamma_{2d}$ of a Type IV Hermitian symmetric domain $\sD_{2d}$ as the period domain. The  space $\sF_{2d}$ has a natural Baily-Borel compactification $\sF_{2d}^*$, but it is well-known that  $\sF_{2d}^*$ does not carry a nicely behaved universal family. Thus it is a natural problem to compare $\sF_{2d}^*$ with other geometric compactifications, e.g. those coming from geometric invariant theory (GIT), via the period map.

In particular, it is natural to ask if there exists a modular way to resolve the (birational) period map. When the degree $2d=2$, there is a birational period map between the GIT quotient of sextic plane curves and $\sF_{2}^*$, since a generic such K3 is the double cover of $\bP^2$ ramified along a sextic. By work of Looijenga and Shah, this map can be resolved by considering either a partial Kirwan desingularization of the GIT quotient, or via a small partial resolution of $\sF_2^*$ \cite{Sha80, Loo86}. A realization of Laza-O'Grady (based on work of Looijenga), is that an alternate systematic approach to this problem is via interpolating the Proj of $R(\sF_2, \lambda + \beta \Delta)$, where $\lambda$ is the Hodge line bundle on $\sF_2$, the divisor $\Delta$ is some geometrically meaningful Heegner divisor, and $\beta$ varies between $0$ and $1$  (see e.g. \cite{LO18b}).

When the degree $2d=4$ (for simplicity, denoted by $\fM=\fM_{4}$ and $\sF=\sF_4$), a distinguished geometric compactification is given by the GIT moduli space $\ofM^{\GIT}$ of quartic surfaces in $\bP^3$. There is a birational period map $\fp: \ofM^{\GIT}\dashrightarrow \sF^*$ with much more complicated exceptional loci as compared to the degree two case. %An explicit modular resolution of this map has remained open, and so one of the goals of this paper is to resolve this.
%As mentioned above, 
In a series of papers \cite{LO19, LO18b, LO18a}, Laza and O'Grady proposed a systematic way to resolve such period maps (when $\sF$ is an arithmetic quotient of a Type IV Hermitian symmetric domain) via a sequence of explicit birational transformations governed by the Heegner divisors in $\sF^*$, and predict that they satisfy a natural interpolation. Motivated by the Hassett-Keel program -- running the log minimal model program on $\overline{M}_g$ to interpolate between different birational models of the moduli space of curves (see e.g. \cite{hassett2013log}),
they named this program the \emph{Hassett-Keel-Looijenga program}. In \cite{LO18a}, Laza and O'Grady verified their proposal for the the moduli of hyperelliptic quartic K3 surfaces, which is an $18$-dimensional divisor in $\sF^*$, but their prediction for the $19$-dimensional space $\sF$ has remained open. One of the main purposes of this paper is to completely verify their prediction for $\sF$ using the recently constructed moduli spaces of log Fano pairs from the theory of K-stability. We note that the analogous question in the case of EPW sextics remains open, and it would be interesting to try to use K-moduli to study their compactifications.

For a rational number $c\in (0,1)$, the pair $(\bP^3, cS)$ is a log Fano pair, where $S\subset\bP^3$ is a smooth quartic surface. Thus, K-stability provides a natural framework to construct geometrically meaningful compactifications of moduli of quartic K3 surfaces. In recent years, the algebro-geometric theory of constructing projective K-moduli spaces of log Fano pairs has been completed as a combination of the important works \cite{Jia17, LWX18, CP18, BX18, ABHLX19, BLX19, Xu19, XZ19, XZ20, BHLLX20, LXZ21}. Meanwhile, when we vary the coefficient $c$, the K-moduli spaces of $\bQ$-Gorenstein smoothable log Fano pairs display  wall-crossing phenomena as established in \cite{ADL19} (see also \cite{Zho21b}). 

In this paper, we show that the K-moduli compactifications of log Fano pairs $(\bP^3, cS)$ where $S$ is a smooth quartic surface interpolate naturally between the GIT moduli space $\ofM^{\GIT}$ and the Baily-Borel compactification $\sF^*$ as $c$ varies in the interval $(0,1)$. As a result, we resolve the period map $\fp:\ofM^{\GIT}\dashrightarrow \sF^*$ where all intermediate birational models have a modular meaning as they parametrize certain K-polystable log Fano pairs. Furthermore, using the positivity of the log CM line bundle \cite{CP18, Pos19, XZ19}, we confirm the prediction by Laza and O'Grady on the Hassett-Keel-Looijenga program for moduli space of quartic K3 surfaces \cite{LO18b, LO19}. 

We first fix some notation. Let $\sM^{\circ}$ and $\fM^{\circ}$ be the Deligne-Mumford stack and coarse moduli space of quartic surfaces $S\subset \bP^3$ with du Val singularities, respectively. Let $\sF$ be the locally symmetric variety parametrizing periods of all polarized K3 surfaces of degree $4$ with du Val singularities. The global Torelli theorem implies that the period map $\fp: \fM^{\circ}\hookrightarrow \sF$ is an open immersion of quasi-projective varieties. 
Let $\sF^*$ be the Baily-Borel compactification of $\sF$. Let $\lambda$ be the Hodge line bundle over $\sF$. By \cite{LO19}, there are two Heegner divisors $H_h$ and $H_u$ of $\sF$, which  parametrize hyperelliptic and unigonal quartic K3 surfaces respectively, such that $\fp(\fM^{\circ})=\sF\setminus (H_h\cup H_u)$. 
Let $\osM^{\GIT}$ and $\ofM^{\GIT}$ be the  GIT moduli stack and space of quartic surfaces in $\bP^3$, respectively. 
\begin{thm}\label{mthm:Kmod}
For $c\in (0,1)\cap \bQ$, let $\osM^{\K}_c$ (resp. $\ofM^{\K}_c$) be the K-moduli stack (resp. K-moduli space) parametrizing K-semistable (resp. K-polystable) log Fano pairs $(X,cD)$ admitting a $\bQ$-Gorenstein smoothing to $(\bP^3, cS)$ where $S$ is a quartic surface.
\begin{enumerate}
    \item For any $c\in (0,\frac{1}{3})\cap \bQ$, there are  isomorphisms $\osM^{\K}_c\cong \osM^{\GIT}$ and $\ofM^{\K}_c\cong\ofM^{\GIT}$.
    \item For any $c\in (0,1)\cap \bQ$, the section ring $R(\sF, c\lambda+(1-c)\Delta^{\K})$ is finitely generated where $\Delta^{\K}:= \frac{1}{4}H_h+\frac{9}{8}H_u$. Moreover, there is an isomorphism  $\ofM_c^{\K}\cong \Proj ~ R(\sF, c\lambda+(1-c)\Delta^{\K})$ where the log CM line bundle on  $\ofM_c^{\K}$ is proportional to $\cO(1)$ on the $\Proj$ up to a positive constant. 
    \item For $0<\epsilon\ll 1$, the K-moduli space $\ofM_{1-\epsilon}^{\K}$ is isomorphic to Looijenga's $\bQ$-Cartierization $\widehat{\sF}$ of $\sF^*$ associated to $H_h$ and $H_u$. Moreover, the Hodge line bundle on $\ofM_{1-\epsilon}^{\K}$ is semiample and its ample model is isomorphic to $\sF^*$.
    \item There are $9$ K-moduli walls for $c\in (0,1)$. Among them, $2$ walls are divisorial contractions: contracting a strict transform of $H_h$ to the double quadric surface $[2Q]$ when $c=\frac{1}{3}$, and contracting a strict transform of $H_u$ to the tangent developable surface $[T]$ of a twisted cubic curve  when $c=\frac{9}{13}$, respectively.  The remaining $7$ walls are flips. 
\end{enumerate}

For a detailed description of the K-moduli wall crossings, see Theorem \ref{thm:k-moduli-walls}.
\end{thm}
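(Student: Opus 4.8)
The plan is to prove the four parts largely separately. Parts (1)--(3) will follow from the general theory of K-moduli of log Fano pairs combined with the geometry of the period map and Looijenga's theory of arithmetic compactifications, while part (4) --- the complete classification of walls --- carries the bulk of the work. For part (1), I would invoke the small-coefficient comparison of \cite{ADL19}. Since $\bP^3$ is K-polystable and rigid, the K-moduli space of Fano threefolds in its deformation class is a reduced point, so for $0 < c \ll 1$ every K-semistable pair $(X, cD)$ that $\bQ$-Gorenstein smooths to $(\bP^3, cS)$ has $X \cong \bP^3$. On the locus $X = \bP^3$ the group $\PGL_4$ acts on the space of quartic surfaces and the log CM line bundle specializes to the standard $\SL_4$-linearization of classical GIT, so K-(poly/semi)stability of $(\bP^3, cD)$ coincides with GIT (poly/semi)stability of the quartic $D$; this gives $\osM^{\K}_c \cong \osM^{\GIT}$ and $\ofM^{\K}_c \cong \ofM^{\GIT}$. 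That these isomorphisms persist for all $c \in (0, \tfrac13)$ says exactly that the first wall occurs at $c = \tfrac13$, which is part of the content of (4).

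For part (2), the central object is the log CM $\bQ$-line bundle $\Lambda_c$ on $\ofM^{\K}_c$, which is ample by \cite{CP18, Pos19, XZ19} and depends polynomially on $c$. By the global Torelli theorem the locus of $\ofM^{\K}_c$ parametrizing pairs whose surface is a quartic K3 is identified with an open subset of $\sF$ with complement contained in $H_h \cup H_u$; by Looijenga's description of the Picard group of $\sF^*$, the restriction of $\Lambda_c$ to this locus is a $\bQ$-combination of $\lambda$, $H_h$ and $H_u$. Two boundary identifications pin the combination down: as $c \to 0$, $\Lambda_0$ is the GIT polarization, while as $c \to 1$ the pair becomes log Calabi--Yau and $\Lambda_1$ is proportional to the Hodge line bundle $\lambda$ (the CM line bundle of a family of K3 surfaces being proportional to the Hodge bundle). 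The coefficients $\tfrac14$ and $\tfrac98$ in $\Delta^{\K} = \tfrac14 H_h + \tfrac98 H_u$ are then forced by --- and can be cross-checked with a direct Futaki-invariant computation against --- the two divisorial walls at $c = \tfrac13$ and $c = \tfrac{9}{13}$ at which $H_h$, resp.\ $H_u$, is extracted. Finite generation of $R(\sF, c\lambda + (1-c)\Delta^{\K})$ and the identification $\ofM^{\K}_c \cong \Proj R(\sF, c\lambda + (1-c)\Delta^{\K})$, with the log CM bundle proportional to $\cO(1)$, then follow because $\ofM^{\K}_c$ is projective with $\Lambda_c$ ample and agrees with $\sF$ outside a closed subset of codimension at least two.

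For part (3), I would feed the case $c = 1-\epsilon$ of part (2) into Looijenga's theory of the $\bQ$-Cartierization: for $0 < \epsilon \ll 1$ and the fixed positive coefficients $\tfrac14$, $\tfrac98$, the section ring $R(\sF, (1-\epsilon)\lambda + \epsilon\Delta^{\K})$ is independent of $\epsilon$ and its $\Proj$ is exactly the $\bQ$-Cartierization $\widehat{\sF}$ of $\sF^*$ associated to $H_h$ and $H_u$; together with part (2) this gives $\ofM^{\K}_{1-\epsilon} \cong \widehat{\sF}$. The Hodge line bundle on $\widehat{\sF}$ is the pullback of the ample generator along the small contraction $\widehat{\sF} \to \sF^*$, hence semiample, and its ample model is $\sF^* = \Proj R(\sF, \lambda)$.

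Part (4) is where the real work lies. I would run the wall-crossing machinery of \cite{ADL19}: walls occur at the finitely many $c \in (0,1)$ at which a new K-polystable pair appears, detected by the vanishing, as a function of $c$, of a generalized Futaki invariant of some special degeneration. The first task is to control all possible central fibers, which requires classifying the Gorenstein canonical Fano threefold degenerations of $\bP^3$ (an independent result of the paper) together with the quartic sections that can degenerate while keeping the pair K-semistable --- on the GIT side this relies on the classical analysis of semistable quartic surfaces, in particular the minimal orbits $[2Q]$ and $[T]$, and on the K3 side on Laza--O'Grady's stratification of $H_h$ and $H_u$. For each candidate one computes its CM/Futaki invariant as a function of $c$, locates the unique zero in $(0,1)$, and decides K-polystability on each side using $\delta$-invariant and valuative criteria together with the explicit automorphism groups. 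One then checks that exactly two of the nine walls are divisorial: at $c = \tfrac13$, a weighted blow-up along $[2Q]$ extracting the strict transform of $H_h$ (whose generic point parametrizes hyperelliptic quartic K3 surfaces, the double covers of $\bP^1 \times \bP^1$), and at $c = \tfrac{9}{13}$, a weighted blow-up along the tangent developable $[T]$ of a twisted cubic extracting the strict transform of $H_u$ (the unigonal quartic K3 surfaces), in each case by exhibiting the weight data and verifying the exceptional locus is a divisor; for the remaining seven, the exceptional loci on both sides have codimension at least two, forcing flips. The genuinely hard point throughout is that the ambient space is allowed to degenerate, so one must simultaneously enumerate and test \emph{all} K-semistable pairs at every $c$ and then match the resulting stratification with the Heegner-divisor geometry of $\sF^*$ predicted by Laza--O'Grady; the detailed outcome is recorded in Theorem~\ref{thm:k-moduli-walls}.
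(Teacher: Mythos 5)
Your proposal for parts (1)--(3) tracks the paper closely. Part (1) is indeed the small-coefficient comparison of \cite{GMGS,ADL19}, extended to all $c<\tfrac13$ by locating the first wall. Part (2) is the paper's Theorem~\ref{thm:CM-proportional}: the identity $(1-c)^{-3}\Lambda_{1-\epsilon,c}=(1-c)\Lambda_{1-\epsilon,0}+4^4c\Lambda_{1-\epsilon,\Hdg}$ interpolates between the absolute CM bundle (supported on $H_h\cup H_u$, since the underlying Fano family is an isotrivial $\bP^3$-fibration elsewhere) and the Hodge bundle, and the coefficients $\tfrac14,\tfrac98$ are determined by restricting to $H_h$ and $H_u$ at the divisorial walls together with the Laza--O'Grady formula $\fp^*\Lambda^{\GIT}=\lambda+\tfrac12 H_h+\tfrac12 H_u$; your sketch has exactly these ingredients. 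Part (3) likewise matches.

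For part (4), the skeleton you describe is correct but the essential technical engine that makes the wall classification tractable is absent. The paper does not compute Futaki invariants of candidate degenerations case by case in a vacuum; the key reduction is the \emph{cone construction} (Section~\ref{sec:cone}, Theorem~\ref{thm:induce-3-fold}): K-(poly/semi)stability of a threefold pair $\sC(V,cC)=(X,\tfrac{4c+1}{3}S)$, where $X$ is the anticanonical cone over $V\in\{\bP^1\times\bP^1,\bP(1,1,2)\}$ and $S$ is the induced double cover, is \emph{equivalent} to that of the surface pair $(V,cC)$. This, together with Theorem~\ref{thm:H_h-embed}, gives a closed immersion $\ocK_{\frac{3c-1}{4}}\hookrightarrow \ofM_c^{\K}$ of the K-moduli of $(4,4)$-curves from \cite{ADL20} onto the birational transform of $H_h$, and the hyperelliptic walls are then imported wholesale from that earlier computation (itself matched against the VGIT of \cite{LO18a}, which also supplies the explicit destabilizing one-parameter subgroups in Theorem~\ref{thm:Kpsreplace-W}). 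The unigonal side is handled by a separate equivariant argument: $\kst(\bP^3,T)=\tfrac{9}{13}$ is computed using Zhuang's equivariant K-semistability criterion applied to the $\PGL_2$-action preserving the twisted cubic, with the $(2,3)$-weighted blow-up of $C_0$ providing the destabilizing valuation. None of this appears in your outline, and without it ``compute the Futaki invariant for each candidate'' is not actionable.

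There is also a logical-flow issue. You present the classification of Gorenstein canonical Fano degenerations of $\bP^3$ as a prerequisite for controlling central fibers. In the paper it is the opposite: Theorem~\ref{mthm:gordeg} is a \emph{corollary} of Proposition~\ref{prop:no-other-3-fold}, which is proved by induction on the K-moduli walls, using local deformation theory of the singularities of $X_h$, $\bP(1,1,2,4)$, and $X_u$ (unobstructedness via Altmann's toric deformation theory and vanishing of local-to-global obstructions) to bound which threefolds can appear at each step. Establishing a priori that only four Fano degenerations occur would be a substantially harder problem than the one the paper actually solves, so this part of your plan as stated would not go through.
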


That is, by varying the coefficient $c$, the K-moduli spaces $\ofM^K_c$ provide a natural interpolation between the GIT quotient for quartic surfaces and the Baily-Borel compactification, and explicitly resolve the period map. We note that a special case of Theorem \ref{mthm:Kmod}(1) was proved earlier in \cite[Theorem 1.2]{GMGS} and \cite[Theorem 1.4]{ADL19} (see also \cite{Zho21a}). We also note that the two walls which are divisorial contractions are actually weighted blowups of Kirwan type (see Remarks \ref{rmk:Kirwan1stpaper} and \ref{rmk:weightedblowup}).

Since these K-moduli spaces $\ofM_c^{\K}$ provide birational models of $\sF$, we are able to confirm Laza-O'Grady's prediction on the Hassett-Keel-Looijenga program for $\sF$ \cite{LO19, LO18b} by modifying $\ofM_c^{\K}$  and checking ampleness of Laza-O'Grady's line bundle. Indeed, we prove a more general finite generation result and  describe a wall-chamber structure for the full-dimensional subcone of $N^1_{\bR}(\sF)$ generated by $\lambda$, $H_h$, and $H_u$.

\begin{thm}\label{mthm:fg}
For any $a,b\in \bQ_{>0}$, the section ring $R(\sF, \lambda+\frac{1}{2}(aH_h+bH_u))$ is finitely generated, which yields a projective birational model $\sF(a,b):=\Proj ~ R(\sF, \lambda+\frac{1}{2}(aH_h+bH_u))$ of $\sF$. These $\sF(a,b)$'s have a wall-chamber structure where the walls are $a=a_i$ or $b=1$ with
\[
(a_1, a_2, \cdots, a_8)= \left(\frac{1}{9}, \frac{1}{7}, \frac{1}{6}, \frac{1}{5}, \frac{1}{4}, \frac{1}{3}, \frac{1}{2}, 1\right).
\]
Moreover, we have the following description of $\sF(a,b)$. Here we assume $0<\epsilon\ll 1$.
\begin{enumerate}
    \item If $a\in (0, \frac{1}{9})$ and $b\in (0,1)$, then $\sF(a,b)\cong \hsF$.
    \item If $a,b\in [1,+\infty)$, then $\sF(a,b)\cong \ofM^{\GIT}$.
    \item The birational map $\sF(1-\epsilon,b)\to\sF(1,b)$ is a divisorial contraction of the strict transform of $H_h$ to a point, and $\sF(1,b)\cong \sF(a,b)$ for any $a>1$.
    \item The birational map $\sF(a,1-\epsilon)\to\sF(a,1)$ is a divisorial contraction of the strict transform of $H_u$ to a point, and $\sF(a,1)\cong \sF(a,b)$ for any $b>1$.
    \item If $1\leq i\leq 7$, then birational maps $\sF(a_i-\epsilon,b)\rightarrow\sF(a_i,b)\leftarrow \sF(a_i+\epsilon,b)$ form a flip whose flipping locus (resp. flipped locus) is the strict transform of $Z^j$ (resp. of $W_{j-1})$ where $j=\begin{cases} 9-i & \textrm{if }i\geq 4;\\ 10-i & \textrm{if } i\leq 3. \end{cases}$ Here $Z^j\subset\sF$ is a tower of Shimura subvarieties of codimension $j$ (see \eqref{eq:Z^j-construct}), and $W_{i}\subset\ofM^{\GIT}$ is a tower of $i$-dimensional subvarieties (see \eqref{eq:stratification}).
\end{enumerate}

\end{thm}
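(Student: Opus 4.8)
The plan is to deduce this statement from Theorem \ref{mthm:Kmod} by a projective-cone / variation-of-GIT type argument on the three-dimensional cone of divisors spanned by $\lambda$, $H_h$, $H_u$ inside $N^1_{\bR}(\sF)$. The starting observation is that Theorem \ref{mthm:Kmod}(2) already identifies, for every rational $c\in(0,1)$, the ring $R(\sF, c\lambda + (1-c)\Delta^{\K})$ with the section ring of the CM line bundle on $\ofM^{\K}_c$, hence it is finitely generated and its $\Proj$ is the K-moduli space. Since $\Delta^{\K} = \tfrac14 H_h + \tfrac98 H_u$, rescaling shows that the ray through $c\lambda+(1-c)\Delta^{\K}$ is, up to positive scalar, the ray $\lambda + \tfrac12(a H_h + bH_u)$ with $\tfrac a2 = \tfrac{1-c}{4c}$ and $\tfrac b2 = \tfrac{9(1-c)}{8c}$, i.e. along the one-parameter subfamily where $b/a = 9/2$ is fixed. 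So Theorem \ref{mthm:Kmod} directly computes $\sF(a, \tfrac92 a)$ for all $a>0$, and the content of Theorem \ref{mthm:fg} is to let $a$ and $b$ vary \emph{independently}. First I would set up, following the Hassett-Keel-Looijenga formalism of \cite{LO19, LO18b} and Looijenga's work on arithmetic quotients, the basic structural facts: (i) $\lambda$ is big and semiample on $\sF$ with ample model $\sF^*$; (ii) $H_h$ and $H_u$ are the two relevant Heegner (Noether-Lefschetz) divisors; (iii) for $a,b\gg 0$ the ring $R(\sF,\lambda+\tfrac12(aH_h+bH_u))$ recovers the GIT ring, because the boundary divisors dominate and one lands on $\ofM^{\GIT}$; this last point is essentially the $c\to 0$ endpoint of Theorem \ref{mthm:Kmod}(1) together with a monotonicity argument.

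The core of the argument is a \emph{two-variable wall-crossing}. I would prove finite generation of $R(\sF,\lambda+\tfrac12(aH_h+bH_u))$ for all $a,b\in\bQ_{>0}$ by the standard device: the section ring of a $\bQ$-divisor on a variety with a birational contraction to a space on which the divisor is ample (here, ultimately $\ofM^{\K}_c$ or its Looijenga $\bQ$-Cartierizations $\hsF$) is finitely generated. Concretely, the K-moduli spaces $\ofM^{\K}_c$ and the intermediate Looijenga models $\hsF$ are projective, the CM / Hodge / Heegner classes on them are $\bQ$-Cartier, and every class $\lambda+\tfrac12(aH_h+bH_u)$ pulls back from one of these models as a semiample (indeed ample) class on an appropriate small modification — this is exactly the mechanism already used to prove Theorem \ref{mthm:Kmod}(2). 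Then $\sF(a,b)$ is by definition the corresponding $\Proj$, and the wall-chamber structure is read off by tracking when the ample model jumps. For the walls $b=1$: the Heegner divisor $H_u$ parametrizes unigonal K3s, whose corresponding K-moduli wall at $c=\tfrac{9}{13}$ (Theorem \ref{mthm:Kmod}(4)) is the weighted blow-up extracting the strict transform of $H_u$; translating the coefficient $c=\tfrac{9}{13}$ into the $(a,b)$-coordinates along that ray gives $b=1$, and the general-$a$ statement follows because the $H_u$-extraction is independent of how much $H_h$ one adds once $H_h$ is already ``fully contracted''. Symmetrically, the wall $a=1$ corresponds to the $c=\tfrac13$ weighted blow-up at the double quadric extracting $H_h$. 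For the seven flipping walls $a=a_i$, $i=1,\dots,7$: these are precisely the images of the seven K-moduli flipping walls of Theorem \ref{mthm:Kmod}(4) under the change of variables, and the flipping/flipped loci $Z^j$ and $W_{j-1}$ are the Shimura-subvariety towers and the GIT-stratification towers whose construction is recalled in \eqref{eq:Z^j-construct} and \eqref{eq:stratification}. The indexing shift $j = 9-i$ for $i\ge 4$ versus $j=10-i$ for $i\le 3$ will come from the bookkeeping of which walls survive after the $H_h$-contraction at $a=1$ swallows one stratum.

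To nail parts (1)–(5) I would argue as follows. Part (2): for $a,b\ge 1$ all boundary contributions are saturated and one checks directly, using $\lambda|_{\ofM^{\GIT}}$-computations and the GIT ring, that $R(\sF,\lambda+\tfrac12(aH_h+bH_u))$ stabilizes to $R(\ofM^{\GIT},\mathcal{O}(1))$ — equivalently, no new birational modification occurs beyond $a=b=1$. Part (1): for $a<\tfrac19$, $b<1$ one is below the first K-moduli wall (the smallest $a_i$ is $\tfrac19$), so $\sF(a,b)$ agrees with the $c\to 1$ K-moduli space, which Theorem \ref{mthm:Kmod}(3) identifies with $\hsF$; I must check the endpoint $b<1$ is compatible, i.e. that crossing $b=1$ from above to below is exactly the $H_u$-extraction and nothing more. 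Parts (3) and (4): these are the two divisorial contractions; I would verify they are divisorial (not small) contractions of the strict transform of $H_h$ (resp. $H_u$) to a point by identifying the contracted divisor with the exceptional divisor of the corresponding K-moduli weighted blow-up in Theorem \ref{mthm:Kmod}(4), and by a direct $\bQ$-Cartier / discrepancy check that the image is a point (the single K-polystable pair $[2Q]$, resp. $[T]$). The stability of $\sF(a,b)$ for $a>1$ (resp. $b>1$) follows from part (2)'s saturation. Part (5): each flip $\sF(a_i-\epsilon,b)\to\sF(a_i,b)\leftarrow\sF(a_i+\epsilon,b)$ is obtained by base-changing the corresponding one-parameter K-moduli flip of Theorem \ref{mthm:Kmod}(4) along the two-parameter family; the flipping locus is the locus of K-polystable pairs that become unstable across the wall, which by the explicit analysis behind Theorem \ref{thm:k-moduli-walls} is the Shimura tower $Z^j$, and the flipped locus is the new stratum $W_{j-1}$ in the GIT stratification; that this is genuinely a flip (small on both sides, with opposite relative ampleness of the varying class) is again inherited from the K-moduli picture.

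\medskip
\noindent\textbf{Main obstacle.} The genuinely hard step is \emph{decoupling the two parameters}: Theorem \ref{mthm:Kmod} only gives the slice $b/a = 9/2$ (plus the two endpoints), so to fill in the whole quadrant I must show that the wall-crossing in the $a$-direction and in the $b$-direction are independent — i.e. that adding more $H_u$ does not move the $a=a_i$ walls and does not alter the flipping loci, and symmetrically. This requires understanding the K-moduli wall-crossings of \cite{ADL19} in a genuinely two-parameter setting, or else a separate VGIT-style argument showing that the relevant CM/Heegner classes span a cone on which the ample models are governed by a single ``$H_h$-coordinate'' and a single ``$H_u$-coordinate'' that interact only through the obvious order in which the divisors get contracted. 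Establishing this clean product-of-chambers structure — and in particular verifying that the $H_h$-contraction at $a=1$ is responsible for the index shift $9-i$ vs.\ $10-i$ — is where the real work lies; the rest is bookkeeping built on Theorem \ref{mthm:Kmod}.
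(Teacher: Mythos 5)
Your proposal is essentially the paper's strategy: define $\sF(a,b)$ as a Kirwan-type modification of a K-moduli space $\ofM^{\K}_c$ with $c=\frac{1}{1+2a}$, use positivity of the log CM line bundle to prove ampleness of the pushed-forward class on $\sF(a,b)$, and read off finite generation from the fact that $\sF\dashrightarrow\sF(a,b)$ is a birational contraction. You also correctly isolate the real obstacle --- decoupling the $a$- and $b$-directions, since Theorem~\ref{mthm:Kmod} only gives the slice $b/a=9/2$. The mechanism the paper uses for this decoupling, which you gesture at but do not pin down, is the \emph{disjointness} $H_h\cap H_u=\emptyset$: on every K-moduli space in play the hyperelliptic and unigonal divisors are disjoint, so restricting $\lambda+\frac{a}{2}H_h+\frac{b}{2}H_u$ to $H_h$ kills the $H_u$-term and vice versa (Lemma~\ref{lem:CM-perturb} and the case analysis in Theorem~\ref{thm:finitegeneration}). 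This is what lets one treat the $b=1$ crossing (extraction/contraction of $H_u$) as a pure Kirwan blow-up/down commuting with the $a$-wall-crossings, and is how the paper defines $\sF(a,b)$ in four cases according to the signs of $a-\tfrac29$ and $b-1$ (Definition~\ref{def:F(a,b)}).

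One concrete error: you attribute the index shift $j=9-i$ vs.\ $j=10-i$ to ``the $H_h$-contraction at $a=1$ swallowing one stratum.'' That is not the cause. The $H_h$-contraction occurs at $a=a_8=1$, which is part~(3), outside the flip range $1\le i\le 7$. The shift at $i=3\leftrightarrow 4$ is caused by two separate things: (i) the $H_u$-extraction wall, which in the $c$-coordinate is $c_5=\tfrac{9}{13}$ (equivalently $a=\tfrac29$, lying between $a_4=\tfrac15$ and $a_5=\tfrac14$), is absorbed into the $b=1$ wall rather than contributing an $a$-wall --- so the nine K-moduli walls become eight $a$-walls; and (ii) the towers themselves skip a stratum: \eqref{eq:stratification} jumps from $W_6$ to $W_4$ and \eqref{eq:Z^j-construct} from $Z^7$ to $Z^5$, which is already encoded in Theorem~\ref{thm:k-moduli-walls}(3) as the special case $i=6$ with exceptional loci $W_4$ and $Z^5$ instead of $W_5$ and $Z^6$. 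Your bookkeeping sends the shift to the wrong wall; the rest of your argument is sound, but you would have discovered the mistake when checking the flip at $a_4=\tfrac15$.
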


\begin{cor}\label{cor:LO}
Laza-O'Grady's prediction for the $19$-dimensional locally symmetric variety $\sF$ \cite[Prediction 5.1.1]{LO19} holds.
\end{cor}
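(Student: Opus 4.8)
The strategy is to show that Corollary \ref{cor:LO} is essentially a reformulation of Theorem \ref{mthm:fg}, once the normalization conventions of Laza-O'Grady are matched with ours. First I would recall from \cite[Section 5]{LO19} the precise content of Prediction 5.1.1: it asserts that for a suitable range of slope parameters, the section rings of $\lambda$ twisted by the two boundary Heegner divisors $H_h$ (hyperelliptic) and $H_u$ (unigonal) on $\sF$ are finitely generated, and that the resulting $\Proj$'s interpolate between $\ofM^{\GIT}$ at one end and $\hsF$ (hence $\sF^*$ after taking the ample model of $\lambda$) at the other, with an explicitly predicted list of critical slopes where the birational type changes. I would state Laza-O'Grady's parametrization of these models, typically written in terms of a single parameter $\beta$ (or $s = \frac{1}{\beta}$) weighting a combination of Heegner divisors against $\lambda$, and then produce the dictionary sending that parameter to our pair $(a,b)$; the key point is that Laza-O'Grady's distinguished divisor is a fixed rational combination of $H_h$ and $H_u$, so their one-parameter family embeds into our two-parameter family $\sF(a,b)$ along a specific ray, and one reads off the predicted walls as the intersection of that ray with our wall set $\{a = a_i\} \cup \{b = 1\}$.

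**Key steps.** (i) Translate Prediction 5.1.1 into a statement purely about $R(\sF, c\lambda + (1-c)\Delta^{\K})$ (or the equivalent $\lambda + \frac{1}{2}(aH_h + bH_u)$ normalization), checking that Laza-O'Grady's coefficients $\frac14$ on $H_h$ and $\frac98$ on $H_u$ — which appear in $\Delta^{\K}$ in Theorem \ref{mthm:Kmod}(2) — are exactly the ones forced by their Hodge-theoretic/GIT matching of boundary behavior. (ii) Invoke Theorem \ref{mthm:fg} for finite generation of all the relevant section rings, which is the substantive input and is already established. (iii) Match the endpoints: identify $\sF(a,b) \cong \ofM^{\GIT}$ for $a,b \geq 1$ (Theorem \ref{mthm:fg}(2)) with the GIT end of the prediction, and $\sF(a,b) \cong \hsF$ for $a < \frac19$, $b < 1$ (Theorem \ref{mthm:fg}(1)) together with the fact that the ample model of $\lambda$ on $\ofM_{1-\epsilon}^{\K} \cong \hsF$ is $\sF^*$ (Theorem \ref{mthm:Kmod}(3)) with the Baily-Borel end. (iv) Match the intermediate walls: verify that the critical slopes $a_1, \dots, a_8 = \frac19, \frac17, \frac16, \frac15, \frac14, \frac13, \frac12, 1$ together with $b = 1$, restricted to Laza-O'Grady's ray, reproduce their predicted list of critical values, and that our flip/divisorial-contraction descriptions in Theorem \ref{mthm:fg}(3)--(5) match their predicted geometry of the exceptional loci (the Shimura towers $Z^j$ and the GIT strata $W_i$). (v) Conclude that every assertion in Prediction 5.1.1 is a specialization of Theorem \ref{mthm:fg}, hence holds.

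**The main obstacle.** The serious difficulty is bookkeeping, not new mathematics: reconciling the two sets of conventions. Laza-O'Grady phrase their prediction in the language of the Hassett-Keel-Looijenga program with their own normalization of the Hodge bundle, their own scaling of Heegner divisors (which differ from the reduced boundary divisors by explicit positive rationals), and a single interpolation parameter, whereas Theorem \ref{mthm:fg} is stated with a two-parameter $(a,b)$ family and the K-theoretic coefficients $\frac14, \frac98$. I expect the bulk of the proof of the corollary to consist of carefully writing the change-of-variables isomorphism between section rings, confirming that Laza-O'Grady's predicted critical slopes correspond under this change of variables exactly to $\{a_i\} \cup \{b=1\}$ intersected with their ray, and checking that the geometric descriptions of the walls (divisorial contractions of the strict transforms of $H_h$ and $H_u$, and the seven flips along the towers $Z^j$ and $W_i$) coincide with what Laza-O'Grady predict. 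Once the dictionary is in place, the corollary follows formally from Theorem \ref{mthm:fg}; there is no additional geometric input required beyond possibly citing \cite{LO18a} for the compatibility along the hyperelliptic divisor $H_h$ as a sanity check, since that $18$-dimensional case was already verified there.
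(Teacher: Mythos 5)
Your overall route is the same as the paper's: deduce Corollary~\ref{cor:LO} by specializing the two-parameter Theorem~\ref{mthm:fg} to Laza--O'Grady's one-parameter ray. In fact the paper's entire proof is the single sentence ``This is a direct consequence of Theorem~\ref{mthm:fg} by letting $a=b$,'' so your plan is considerably more elaborate than what is actually required.

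There is, however, a conceptual slip in your step~(i) that you should fix, because it could send you to the wrong ray. You write that Laza--O'Grady's coefficients are ``$\frac14$ on $H_h$ and $\frac98$ on $H_u$,'' attributing the coefficients of $\Delta^{\K}$ to Laza--O'Grady. That is backwards: the divisor $\Delta^{\K} = \frac14 H_h + \frac98 H_u$ is the one that falls out of the K-moduli/CM-line-bundle computation (Theorem~\ref{thm:CM-proportional}), whereas Laza--O'Grady's HKL divisor is the symmetric $\Delta = \frac12(H_h + H_u)$, as recalled in Section~\ref{sec:laza-ogrady}. Since $\lambda + \beta\Delta = \lambda + \frac{\beta}{2}H_h + \frac{\beta}{2}H_u$, the correct specialization of $R(\sF, \lambda + \frac12(aH_h + bH_u))$ is the diagonal $a=b=\beta$, not the ray $b/a=\frac92$ through $\Delta^{\K}$. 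On the diagonal, the walls $\{a=a_i\}\cup\{b=1\}$ from Theorem~\ref{mthm:fg} restrict exactly to $\beta\in\{\frac19,\frac17,\frac16,\frac15,\frac14,\frac13,\frac12,1\}$ (with $b=1$ coinciding with $a_8=1$), reproducing Laza--O'Grady's predicted critical values, and the geometric descriptions of the walls match their predicted flips of the towers $Z^j \leftrightarrow W_{j-1}$ and the terminal divisorial contraction. Had you instead restricted along the $\Delta^{\K}$ ray you would cross the $b=1$ wall at a genuinely different value of $a$ than $1$ and the critical slopes would not match the prediction. With the ray corrected, the rest of your plan (steps (ii)--(v)) is fine and, indeed, formally trivial given Theorem~\ref{mthm:fg}.
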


 We note that partial results toward Laza-O'Grady's prediction were obtained in \cite{LO19, LO18b}. In \cite{LO18a} the $18$-dimensional case of their prediction was confirmed (see also \cite{ADL20} for a different approach). In \cite{LO18a} the authors used an intricate and subtle variation of GIT argument, motivated by their previous arithmetic and hodge theoretic computations in \cite{LO19}.

As a consequence of Theorem \ref{mthm:Kmod}, we give an explicit description of the K-moduli space of quartic double solids, i.e. del Pezzo threefolds of degree $2$. The smooth quartic double solids are previously known to be K-stable \cite{Der16}. Note that this K-moduli space displays similar behavior to the K-moduli space of del Pezzo surfaces of degree $1$ \cite{OSS16} as both are two-step birational modifications (a blow-up followed by a small contraction) of GIT moduli spaces, while K-moduli spaces of del Pezzo threefolds/fourfolds of degree $3$ or $4$ are identical to GIT moduli spaces \cite{SS17, LX19, Liu20}.

\begin{thm}\label{mthm:doublesolid}
Let $\ofY$ be the K-moduli space of quartic double solids. Then the seminormalization of $\ofY$ is isomorphic to $\ofM_{\frac{1}{2}}^{\K}$. Moreover, it fits into the following diagram
\[
\ofM^{\GIT}\xleftarrow{\rho} \widehat{\fM}^{\GIT}\xrightarrow{\psi} \ofM_{\frac{1}{2}}^{\K}\xrightarrow{\iota} \ofY
\]
where $\rho$ is a divisorial contraction of a birational transform of $H_h$ to the point parametrizing the double quadric surface $[2Q]$, $\psi$ is a small contraction of a rational curve (the strict transform of $W_1$) to a point $p$, and $\iota$ is the seminormalization obtained by taking fiberwise double covers, where $\iota(p)$ represents the toric $\bQ$-Fano threefold $(x_2^4=x_3x_4)\subset\bP(1,1,2,4,4)_{[x_0,\cdots,x_4]}$.
\end{thm}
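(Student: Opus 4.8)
The plan is to realize $\ofY$ through the classical double-cover construction applied fiberwise to the universal family over $\ofM_{1/2}^{\K}$, and then to read off the two birational modifications from the wall-crossing analysis already in place. Recall that a quartic double solid is a double cover $\pi\colon Y\to\bP^3$ branched along a quartic surface $S$, so $-K_Y=\pi^*(-K_{\bP^3}-\tfrac12 S)=\pi^*(2H)$ is ample and $Y$ is a del Pezzo threefold of degree $2$ with fundamental divisor $L:=-\tfrac12 K_Y=\pi^*H$ and $h^0(Y,L)=4$; conversely the pair $(\bP^3,\tfrac12 S)$ is recovered from $Y$ alone, as the morphism defined by $|L|$ exhibits $Y$ as a degree-$2$ cover of $\bP^3=\Proj\bigoplus_{m\ge0}H^0(Y,mL)$ with branch divisor $S$. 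I would use this together with the cyclic-cover criterion: \emph{$(X,\tfrac12 D)$ is K-(semi/poly)stable if and only if its double cover $Y\to X$ branched along $D$ is}. The semistable direction follows by matching $\mu_2$-equivariant test configurations of $Y$ with test configurations of $(X,\tfrac12 D)$ and comparing CM invariants up to a fixed positive scalar, using that K-semistability may be tested $\mu_2$-equivariantly; the polystable direction is then formal.

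To construct $\iota$: over the K-moduli stack $\osM_{1/2}^{\K}$ carrying the universal log Fano pair $(\calX,\tfrac12\calD)$, the relative double cover $\calY\to\calX$ branched along $\calD$ is well-defined (the divisor $\calD$ is relatively $2$-divisible, with the square root pinned down by the generic fiber $(\bP^3,\calO(2H))$, and the construction descends after passing to a suitable cover of the base), and yields a $\bQ$-Gorenstein family of K-semistable del Pezzo threefolds of degree $2$ degenerating smooth quartic double solids, which are K-stable by \cite{Der16}; thus $\ofY$ exists as a proper scheme and the family induces $\iota\colon\ofM_{1/2}^{\K}\to\ofY$. Now $\iota$ is surjective, being closed with image containing the dense locus of smooth quartic double solids (each the double cover of the K-stable smooth pair $(\bP^3,\tfrac12 S)$); injective, since a K-polystable $Y\in\ofY$ determines the pair uniquely through its fundamental system $|{-\tfrac12 K_Y}|$; and finite, being proper with finite fibers (equivalently, $\iota$ pulls the CM polarization on $\ofY$ back to a positive multiple of the log CM polarization on $\ofM_{1/2}^{\K}$). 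Being finite and bijective with trivial residue field extensions in characteristic zero, $\iota$ is subintegral, and since $\ofM_{1/2}^{\K}$ is seminormal it exhibits $\ofM_{1/2}^{\K}$ as the seminormalization of $\ofY$.

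For the diagram I would feed in the explicit wall structure (Theorem~\ref{thm:k-moduli-walls}, equivalently Theorem~\ref{mthm:fg}) via the dictionary $a=\tfrac{1-c}{2c}$, $b=\tfrac{9(1-c)}{4c}$ identifying $\ofM_c^{\K}$ with $\sF(a,b)$. Under it the only K-moduli walls with $c\le\tfrac12$ are $c=\tfrac13$ (where $a=1=a_8$) and $c=\tfrac12$ (where $a=\tfrac12=a_7$), with none in $(\tfrac13,\tfrac12)$, so setting $\widehat{\fM}^{\GIT}:=\ofM_{1/3+\epsilon}^{\K}$ gives $\widehat{\fM}^{\GIT}\cong\ofM_{1/2-\epsilon}^{\K}$. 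By Theorem~\ref{mthm:Kmod}(1),(4) the $c=\tfrac13$ wall is the weighted blow-up $\rho\colon\widehat{\fM}^{\GIT}\to\ofM_{1/3-\epsilon}^{\K}=\ofM^{\GIT}$ at $[2Q]$ extracting a birational transform of $H_h$, of Kirwan type because the wall-crossing at $[2Q]$ is governed by a local variation-of-GIT problem for the reductive stabilizer of $[2Q]$. By Theorem~\ref{mthm:fg}(5) with $i=7$, $j=2$, the $c=\tfrac12$ wall is a flip in which $\psi\colon\widehat{\fM}^{\GIT}=\ofM_{1/2-\epsilon}^{\K}\to\ofM_{1/2}^{\K}$ is the small contraction of the flipped locus, the strict transform of the rational curve $W_1$, to a point $p$. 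Finally, the wall-crossing computation identifies the K-polystable pair at $p$ as $(X_0,\tfrac12 D_0)$ with $X_0=\bP(1,1,2,4)$ (a Gorenstein canonical degeneration of $\bP^3$) and $D_0=\{u^2=4x_2^4\}\in|{-K_{X_0}}|$, and an explicit computation of the double cover of $X_0$ branched along $D_0$ identifies $\iota(p)$ with the toric $\bQ$-Fano threefold $(x_2^4=x_3x_4)\subset\bP(1,1,2,4,4)$.

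The point I expect to be most delicate is the injectivity of $\iota$ over the singular locus: for \emph{every} K-polystable $Y$ in $\ofY$ --- not merely the smooth ones --- one must verify that $|{-\tfrac12 K_Y}|$ is base-point free and realizes $Y$ as a degree-$2$ cover of a Gorenstein canonical degeneration of $\bP^3$, so that the associated pair is unambiguous and lies in $\ofM_{1/2}^{\K}$; on a sufficiently singular $Y$ the system could a priori acquire base points. Related to this is understanding why $\iota$ fails to be an isomorphism precisely near $[Y_0]$ --- equivalently, why the relative double cover of the universal family does not descend to an honest family over $\ofY$ there --- although the statement requires only the seminormalization. The other inputs (the cyclic-cover criterion, the CM line bundle proportionality, the wall bookkeeping, and the explicit identification of $(X_0,\tfrac12 D_0)$, imported from Theorem~\ref{thm:k-moduli-walls}) are essentially formal given the results already quoted.
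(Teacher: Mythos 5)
Your construction of $\iota$ via fiberwise double covers over a $\bmu_2$-gerbe on $\osM_{\frac12}^{\K}$, the appeal to the cyclic-cover criterion for K-stability, the surjectivity argument by properness and density of the smooth locus, and the wall-crossing bookkeeping (including the identification of $p=[(\bP(1,1,2,4),(x_3^2=x_2^4))]$ and $\iota(p)=(x_2^4=x_3x_4)\subset\bP(1,1,2,4,4)$ after a linear change of the two weight-$4$ coordinates) all coincide with the paper's proof via Proposition \ref{prop:qds} and Theorem \ref{mthm:doublesolid}.

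The one genuine gap is exactly the injectivity step you flagged. Your claim that a K-polystable $Y$ ``determines the pair uniquely through its fundamental system $|{-\tfrac12 K_Y}|$'' does not quite close: when the base $X$ is $X_h$ or $\bP(1,1,2,4)$, the pullback $\tL$ of the fundamental Weil divisor $L$ to $Y$ is $\bQ$-Cartier of index $2$ at the two preimages of the cone vertex, so $|\tL|$ alone does not cut out the covering morphism to $\bP(1^4,2)$; one needs $|2\tL|$ as well, and even then one must know which section of $2\tL$ to adjoin. The paper's Proposition \ref{prop:qds} sidesteps the need to reconstruct the pair abstractly from $Y$: given two pairs $(X,D),(X',D')$ with $Y\cong Y'$, it first shows the covers $\pi,\pi'$ are quasi-\'etale at the cone vertices and that $\tL-\tL'$ is a torsion Weil divisor class; since $Y$ is rationally connected and hence simply connected, $\tL=\tL'$. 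Then the eigensheaf decompositions $\pi_*\cO_Y(\tL)\cong\cO_X(L)\oplus\cO_X(-L)$ and, in the cone case, $\pi_*\cO_Y(2\tL)\cong\cO_X(2L)\oplus\cO_X$ produce a common basis of sections of $\tL$ and a distinguished extra section of $2\tL$, yielding a morphism $Y\to\bP(1^4,2)$ that is simultaneously $\pi$ and $\pi'$ after taking the image, hence $(X,D)\cong(X',D')$. This construction never needs basepoint-freeness of $|{-\tfrac12 K_Y}|$ on an abstract $Y$, which is exactly the difficulty you anticipated. Once this is supplied, your subintegrality and seminormalization conclusion goes through verbatim, since $\ofM_{\frac12}^{\K}$ is normal by Theorem \ref{thm:k-moduli-walls}.
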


Another interesting consequence is a classification of all Gorenstein canonical Fano degenerations of $\bP^3$. Here $X_h$ is the projective anti-canonical cone over $\bP^1\times\bP^1$,  and $X_u$ is a Gorenstein $\bQ$-Fano threefold constructed in Section \ref{sec:construction}. Their notation is chosen so that $X_h$ (resp. $X_u$) contains a general hyperelliptic (resp. unigonal) quartic K3 surface as its anti-canonical divisor.

\begin{thm}\label{mthm:gordeg}
    Let $X$ be a Gorenstein canonical Fano variety that admits a $\bQ$-Gorenstein smoothing to $\bP^3$. Then $X$ is isomorphic to $\bP^3$, $X_h$, $\bP(1,1,2,4)$, or $X_u$.
\end{thm}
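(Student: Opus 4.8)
The plan is to derive this classification from the detailed description of the K-moduli wall crossings in Theorem \ref{mthm:Kmod}, combined with a direct analysis of Gorenstein canonical Fano threefolds of degree $64$. Any Gorenstein canonical Fano variety $X$ admitting a $\bQ$-Gorenstein smoothing to $\bP^3$ has $-K_X$ ample with $(-K_X)^3 = 64$ and index dividing $4$. The key point is that such an $X$ underlies a K-semistable log Fano pair $(X, cD)$ for suitable $c$: by Theorem \ref{mthm:Kmod}, for each $c \in (0,1)\cap\bQ$ the stack $\osM_c^{\K}$ parametrizes precisely the $\bQ$-Gorenstein smoothable K-semistable pairs $(X, cD)$ with $(\bP^3, cS)$ as smoothing, and $X$ itself must appear as the underlying variety of some K-polystable pair after degeneration. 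So first I would show that every Gorenstein canonical $\bQ$-Gorenstein smoothable Fano $X$ appears as the variety underlying some point of $\ofM^{\K}_c$ for all small $c$: taking $c \to 0^+$, K-stability of $(X, cD)$ forces (a limit of) the underlying variety to degenerate compatibly, and by Theorem \ref{mthm:Kmod}(1) the $c\to 0^+$ chamber is $\ofM^{\GIT}$, whose boundary is well understood.

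Next I would enumerate the candidates. The four listed varieties are $\bP^3$ itself (the generic/smooth point), $X_h$ = the anticanonical cone over $\bP^1\times\bP^1$, $\bP(1,1,2,4)$, and $X_u$; these are exactly the Gorenstein canonical varieties that arise as underlying varieties at the K-moduli walls $c = \frac{1}{3}$ (the point $[2Q]$, whose replacement/partial resolution involves $X_h$ and $\bP(1,1,2,4)$) and $c = \frac{9}{13}$ (the point $[T]$, the tangent developable surface, whose replacement involves $X_u$), per Theorem \ref{mthm:Kmod}(4) and the construction in Section \ref{sec:construction}. The strategy is: by the wall-crossing structure, the set of underlying varieties that appear over the whole interval $c \in (0,1)$ is generated by those appearing at $\ofM^{\GIT}$ together with those newly introduced at each of the $9$ walls; since $7$ of the walls are flips (which do not change the generic underlying variety of the flipped locus in a way producing new Gorenstein canonical degenerations of $\bP^3$ — the flips modify K3 boundary behavior, and one must check the underlying $X$ stays within the list or becomes worse than canonical), only the two divisorial walls contribute genuinely new Gorenstein canonical Fano threefolds, namely $X_h$, $\bP(1,1,2,4)$, $X_u$.

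The concrete verification then breaks into two parts. First, confirm that $X_h$, $\bP(1,1,2,4)$, and $X_u$ are each Gorenstein canonical and each admits a $\bQ$-Gorenstein smoothing to $\bP^3$: for $X_h$ and $\bP(1,1,2,4)$ this is classical (these are standard degenerations of $\bP^3$, e.g. $\bP(1,1,2,4)$ arises as a weighted-projective degeneration and $X_h$ as a cone degeneration), and for $X_u$ one uses the explicit construction of Section \ref{sec:construction} together with the existence of the general unigonal K3 as an anticanonical member. Second — and this is the part I expect to be the main obstacle — rule out every other possibility. This requires showing that no other Gorenstein canonical Fano threefold of anticanonical degree $64$ is $\bQ$-Gorenstein smoothable to $\bP^3$; concretely, one must analyze the possible singularities and the anticanonical ring, using the index (which is $1$, $2$, or $4$), the classification of Gorenstein canonical Fano threefolds with small Picard rank (via, e.g., the graded-ring methods of Prokhorov and others, or the Hilbert series), and cross-reference with which appear as flat limits inside the universal family over $\ofM^{\GIT}$. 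The delicate point is that a priori the K-moduli walls only control pairs $(X, cD)$ with a chosen divisor $D$, so one must argue that the underlying variety $X$ being Gorenstein canonical and smoothable already forces $(X, cD)$ to be K-semistable for generic $D \in |-K_X|$ and small $c$ — this uses lower-semicontinuity of stability thresholds and the fact that $(\bP^3, cS)$ is K-stable for small $c$ — thereby placing $X$ inside the K-moduli picture and reducing the classification to the wall-crossing bookkeeping already carried out.
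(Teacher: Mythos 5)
Your strategy runs in the wrong direction on the coefficient $c$, and this is a genuine gap. You propose to place $X$ inside the K-moduli picture by taking $c\to 0^+$ and observing that the $c\to 0^+$ chamber is $\ofM^{\GIT}$. But for small $c$ the only underlying variety in $\ofM_c^{\K}\cong\ofM^{\GIT}$ is $\bP^3$ itself; a singular degeneration such as $X_h$, $\bP(1,1,2,4)$, or $X_u$ is K-\emph{unstable} as a log Fano pair with small boundary coefficient (they carry large automorphism groups / cone structures), so for such an $X$ there is no choice of $D$ making $(X,cD)$ K-semistable with $c$ small. Consequently, lower semicontinuity of stability thresholds cannot be used to push $X$ into the $c$-small chamber; in fact the relevant direction is $c\to 1^-$. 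The paper's proof goes exactly the other way: it first produces, via the Ambro--Kawamata effective non-vanishing theorem, an anticanonical Cartier divisor $S\in|-K_X|$ with $(X,S)$ \emph{plt}, then applies Theorem \ref{thm:uKs-almost-logCY} (``plt log CY implies uniformly K-stable for coefficient $1-\epsilon$'') to conclude $(X,(1-\epsilon)S)$ is K-stable, shows $S$ deforms along the smoothing via Lemma \ref{lem:L-construct}, and so places $[(X,S)]\in\ofM_{1-\epsilon}^{\K}$; Proposition \ref{prop:no-other-3-fold} then gives the list.

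Your proposal also lacks the key input that makes this work unconditionally: without effective non-vanishing producing a plt anticanonical member, there is no a priori choice of $D$ that you can certify is K-(semi)stable for \emph{any} $c$, so ``$X$ appears in some $\ofM_c^{\K}$'' is not established. Your fallback suggestion of classifying Gorenstein canonical Fano threefolds of degree $64$ directly (via graded-ring/Prokhorov-style methods) would be a genuinely different and substantially harder route, and it is not what the paper does. The one part of your proposal that aligns with the paper is the final reduction to the wall-crossing bookkeeping (Proposition \ref{prop:no-other-3-fold}), but the reduction as you state it is not reachable from the $c\to 0^+$ side.
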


\subsection*{Sketch of proofs} We sketch the proofs of Theorems \ref{mthm:Kmod} and \ref{mthm:fg}. First of all, by \cite{GMGS, ADL19} we know that $\ofM_\epsilon^{\K}\cong \ofM^{\GIT}$.  If $S$ is a quartic surface in $\bP^3$ with semi-log canonical (slc) singularities (also called insignificant limit singularities), then $(\bP^3, S)$ is a K-semistable log Calabi-Yau pair, and $\bP^3$ is K-polystable. Hence by interpolation of K-stability, the log Fano pair $(\bP^3, cS)$ is K-semistable for any $0<c<1$.  Therefore, the birational map $\ofM_c^{\K}\dashrightarrow \ofM^{\GIT}$ is isomorphic over the open subset $\fM^{\slc}$ parametrizing quartic surfaces with slc singularities. Thus in order to describe wall-crossings of $\ofM_c^{\K}$, we only need to understand the K-polystable replacements of $\ofM^{\GIT}\setminus \fM^{\slc}$ parametrizing quartic surfaces with significant limit singularities. From the GIT of quartic surfaces \cite{Sha81, LO18b}, we know that $\ofM^{\GIT}\setminus \fM^{\slc}=W_8 \sqcup \{[T]\}$ where $T$ is the tangent developable surface of a twisted cubic curve, and $W_8$ is the largest subvariety of $\ofM^{\GIT}$ in the tower $W_i$ (see \eqref{eq:stratification}). Indeed, the K-polystable replacements of $[T]$ (resp. $W_i$) precisely correspond to unigonal (resp. hyperelliptic) quartic K3 surfaces.

In Section \ref{sec:unigonal}, we study the K-stability of $(\bP^3, cT)$ and its K-polystable replacements. Using equivariant K-stability from \cite{Zhu20}, we show that the K-semistable threshold of $(\bP^3, T)$, i.e. the largest $c$ where $(\bP^3, cT)$ is K-semistable, is equal to $\frac{9}{13}$. Then we construct the K-polystable replacement $(X_u, \frac{9}{13} T_0)$ of $(\bP^3, \frac{9}{13}T)$ by explicit birational geometry. Here $X_u$ is constructed as a particular Gorenstein $\bQ$-Fano threefold that contains all unigonal quartic K3 surfaces as anti-canonical divisors (see Section \ref{sec:construction}). Then using Paul-Tian criterion type arguments and the deformation theory of Gorenstein toric threefold singularities \cite{Altmann}, we show that the K-moduli wall crossing at $c=\frac{9}{13}$ near $[T]$ is a divisorial contraction whose exceptional divisor, birational to $H_u$, is the GIT moduli space of $(X_u, S)$ where $S$ is a Weierstrass elliptic surface.

In Section \ref{sec:hyperelliptic}, we study the K-polystable replacements of the tower $W_i$ in $\ofM^{\GIT}$. This is the trickiest part of the proof. Our motivation comes from \cite{ADL20} where we show that the K-moduli compactification $\oK_c$ of $(\bP^1\times\bP^1, cC)$ where $C\in |\cO(4,4)|$ is identical to the VGIT moduli space of slope $t=\frac{3c}{2c+2}$. By taking fiberwise double covers, we obtain a family of K-moduli spaces birational to $H_h$. However, these K-moduli spaces parametrize surface pairs rather than threefold pairs. Nevertheless, we notice that a hyperelliptic quartic K3 surface $S$ as a double cover of $\bP^1\times \bP^1$ (resp. of $\bP(1,1,2)$) naturally embeds into the cone $X_h$ (resp. $\bP(1,1,2,4)$) as an anti-canonical divisor. Then using a cone construction, a covering trick, and interpolation (see Section \ref{sec:cone} for more details), we show that $\oK_{\frac{3c-1}{4}}$ admits a closed embedding into $\ofM_c^{\K}$ for $c>\frac{1}{3}$ whose image $H_{h,c}$ is a birational transform of $H_h$ (see Theorem \ref{thm:H_h-embed}). Then we construct K-polystable replacements of $W_i$ by first embedding all of $\bP^3$, $X_h$, $\bP(1,1,2,4)$ into $\bP(1^4,2)$ as weighted hypersurfaces of degree two, and then finding a particular $1$-PS (coming from VGIT in \cite{LO18a}) that degenerates $(\bP^3, cS)$ to a K-polystable pair in $H_{h,c}$ (see Theorem \ref{thm:Kpsreplace-W}). Then we use deformation theory to classify exceptional loci after the walls. In particular, all K-moduli spaces $\ofM_c^{\K}$ are isomorphic outside of the loci $H_{h,c}$ and $H_{u,c}$. We give a complete description of all wall-crossings of $\ofM_c^{\K}$ in Theorem \ref{thm:k-moduli-walls}. 

Finally, in Section \ref{sec:proofs} we prove the main theorems. We observe that $\sF\dashrightarrow \ofM_c^{\K}$ is a birational contraction by Theorem \ref{thm:k-moduli-walls}. The upshot to show $\ofM_c^{\K}\cong \Proj ~R(\sF, c\lambda+(1-c)\Delta^{\K})$ is to use ampleness of log CM line bundles \cite{XZ19}, and to compute the variation of  log CM line bundles which interpolate between the Hodge line bundle and the absolute CM line bundle (see \eqref{eq:CM-Hodge}). Then we perform necessary gluing operations and birational modifications on $\ofM_c^{\K}$ to obtain $\sF(a,b)$ (see Definition \ref{def:F(a,b)} and Proposition \ref{prop:F(a,b)proper}), and show that the pushforward of $\lambda+\frac{a}{2}H_h +\frac{b}{2}H_u$ is ample. %using ampleness of log CM line bundles. 

\subsection*{Prior and related works}
Compactifying the moduli space $\sF_2$ of degree 2 K3 surfaces is a well studied problem. Recall that a general K3 surface of degree two can be realized as a double cover of $\bP^2$ branched along a sextic curve. As such, there is a natural birational period map between the GIT quotient of plane sextics and the Baily-Borel compactification. Shah \cite{Sha80} constructed a partial Kirwan desingularization of the GIT quotient which provides a compactification of $\sF_2$ with a set-theoretic map to $\sF_2^*$. Work of Looijenga \cite{Loo86, Loo03} shows that Shah's compatification is a $\mathbb{Q}$-factorialization of $\sF_2^*$ and additionally resolves the birational period map.  In fact, this case serves as a major motivation for the Hassett-Keel-Looijenga (HKL) program.
The case of degree 2 was revisited, in terms of Koll\'ar-Shepherd-Barron (KSB) stable pairs, by Laza \cite{laza2012ksba}, and more recently studied from the viewpoint of toroidal compactifications in work of Alexeev-Engel-Thompson \cite{AET} (see also the more recent \cite{AE21}). 

As mentioned above, the Hasset-Keel-Looijenga program was proposed by Laza-O'Grady \cite{LO19} for Type IV locally symmetric varieties associated to the lattice $U^2\oplus D_{N-2}$. It has been verified in the case of $N=18$ for hyperelliptic quartic K3 surfaces by Laza-O'Grady using variation of GIT \cite{LO18a}, and partial results for $N=19$, i.e. moduli of quartic K3 surfaces were obtained in \cite{LO19, LO18b}. 

The wall-crossing phenomenon for K-moduli spaces of log Fano pairs with varying coefficients was systematically investigated in \cite{ADL19}. One  novelty of this strategy is to naturally connect well-studied moduli spaces, such as GIT, moduli of curves, and K3 surfaces, through birational maps between a sequence of K-moduli spaces. In our previous works \cite{ADL19, ADL20}, we carried out this strategy for the Hassett-Keel-Looijenga program for $\sF_2$ and the hyperelliptic Heegner divisor $H_h$ of $\sF_4$. In this paper, we use this novel approach of wall-crossing for K-moduli to solve the problem of Laza-O'Grady (HKL program for $\sF_4$). One of the key benefits of this approach is that it gives a direct solution to a problem which was posed from an entirely different point of view, namely considering the Proj of a ring of automorphic forms and interpolating based on some arithmetic predictions.

Finally, we mention that moduli of pairs $(\bP^3, cH)$ have been studied from the point of view of KSB stable pairs by DeVleming \cite{dV} where $H$ is a surface in $\bP^3$ of degree $d\geq 5$.

%\textcolor{blue}{YL: I think it is a good idea to compare our results to other works. Maybe this can be a subsection or a remark. For instance, Shah, Looijenga, Laza-O'Grady, Kristin's thesis, Alexeev-Engel-Thompson, Alexeev-Engel, etc.}

\subsection*{Acknowledgements}
We would like to thank Dori Bejleri, Justin Lacini, Zhiyuan Li, Andrea Petracci, David Stapleton, Xiaowei Wang, and Chenyang Xu for helpful discussions, and Yuji Odaka for useful comments.  We thank the referee for their helpful comments and suggestions. The authors were
supported in part by the American Insitute of Mathematics as part of the AIM SQuaREs
program. Research of KA was supported in part by the NSF grant DMS-2140781 (formerly DMS-2001408). Research of YL was supported in part by the NSF grant DMS-2148266 (formerly DMS-2001317). 

\section{Preliminaries on K-stability and K-moduli}

Throughout this paper, we work over the field of complex numbers $\bC$. We refer the background of singularities of pairs, such as Kawamata log terminal (klt), purely log terminal (plt), log canonical (lc), and semi-log canonical (slc), to the standard references \cite{KM98, Kol13}. All schemes are assumed to be of finite type over $\bC$.

\subsection{K-stability}

\subsubsection{Fujita-Li's valuative criteria}
\begin{defn}
Let $X$ be a normal variety. Let $D$ be an effective $\bQ$-divisor on $X$. We say $(X,D)$ is a \emph{log pair} if $K_X+D$ is $\bQ$-Cartier. A log pair $(X,D)$ is called \emph{log Fano} if $X$ is projective and $-K_X-D$ is ample. A log pair $(X,D)$ is called \emph{log Calabi-Yau} if $X$ is projective and $K_X+D\sim_{\bQ} 0$. If $(X,0)$ is a klt log Fano pair, then we call $X$ a \emph{$\bQ$-Fano variety}.
\end{defn}

We refer the definitions of test configurations and  K-(poly/semi)stability of log Fano pairs to  \cite[Section 2.1]{ADL19}. Here we use Fujita-Li's valuative criteria as alternative definitions.

\begin{defn}
Let $(X,D)$ be a log pair. We call $E$ a \emph{prime divisor over} $X$ if there is a proper birational morphism $\mu: Y\to X$ from a normal variety $Y$ such that $E$ is a prime divisor on $Y$. We define the \emph{log discrepancy} of $E$ with respect to $(X,D)$ as
\[
A_{(X,D)}(E):= 1+ \textrm{coeff}_E(K_Y-\mu^*(K_X+D)).
\]
If, in addition, $(X,D)$ is a log Fano pair, then we define the \emph{pseudo-effective threshold}, the  \emph{expected vanishing order} (also known as the \emph{$S$-functional}), and the \emph{$\beta$-invariant} of $E$ with respect to $(X,D)$ as
\begin{align*}
T_{(X,D)}(E) & := \sup \{t\in \bR_{\geq 0}\mid \mu^*(-K_X-D)-tE\textrm{ is big}\},\\ 
S_{(X,D)}(E)&:=\frac{1}{\vol_X(-K_X-D)}\int_0^{T_{(X,D)}(E)} \vol_X(-K_X-D-tE)dt,\\
\beta_{(X,D)}(E)&:=A_{(X,D)}(E)-S_{(X,D)}(E).
\end{align*}
Here $\vol_X(-K_X-D-tE):=\vol_Y(\mu^*(-K_X-D)-tE)$. The \emph{$\alpha$-invariant} and the \emph{stability threshold} (also known as the \emph{$\delta$-invariant}) of a klt log Fano pair $(X, D)$ are defined as
\[
\alpha(X,D):= \inf_{E} \frac{A_{(X,D)}(E)}{T_{(X,D)}(E)}\quad \textrm{and}\quad
\delta(X,D):= \inf_{E} \frac{A_{(X,D)}(E)}{S_{(X,D)}(E)},
\]
where the infima run over all prime divisors $E$ over $X$. 
\end{defn}

Next, we recall Fujita-Li's valuative criteria for K-(semi)stability and uniform K-stability.

\begin{thm}[\cite{Fuj19, Li17, BX18}, see also \cite{FO16, BJ17}]\label{thm:valuative}
    A klt log Fano pair $(X,D)$ is
    \begin{enumerate}
        \item K-semistable if and only if $\beta_{(X,D)}(E)\geq 0$ for any prime divisor $E$ over $X$, or equivalently, $\delta(X,D)\geq 1$;
        \item K-stable if and only if $\beta_{(X,D)}(E)> 0$ for any prime divisor $E$ over $X$;
        \item uniformly K-stable if and only if $\delta(X,D)>1$.
    \end{enumerate}
\end{thm}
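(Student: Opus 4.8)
The plan is to reduce the three stability conditions, which are a priori statements about \emph{all} test configurations, to statements about divisorial valuations on $X$, following the strategy of Fujita and Li (cf. \cite{Fuj19, Li17, BX18}). The starting point is the reduction to \emph{special} test configurations. Given any normal, ample test configuration $(\calX,\calL)$ of the klt log Fano pair $(X,D)$, one runs an equivariant MMP with scaling (in the log setting, as in \cite{Li17, BX18}) to produce a special test configuration $(\calX^s,\calL^s)$ with $\mathrm{DF}(\calX^s,\calL^s)\le \mathrm{DF}(\calX,\calL)$, with equality only in degenerate cases, while the non-Archimedean functional $J^{\mathrm{NA}}$ stays controlled along each step. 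This reduces K-semistability, K-stability and uniform K-stability to testing on special test configurations; and since the central fiber $\calX^s_0$ of a special test configuration is a klt $\bQ$-Fano variety, it is normal and connected, hence irreducible.

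Next I would identify the Donaldson--Futaki invariant of a special test configuration with a $\beta$-invariant. A special test configuration with integral central fiber is the same data as a prime divisor $E$ over $X$ whose graded ring $\bigoplus_{m}\{f:\ord_E(f)\ge m\}$ (with respect to $-K_X-D$) is finitely generated --- a \emph{dreamy} divisor --- via the (extended) Rees algebra construction, and the induced $\bG_m$-action on $\calX_0^s$ has Futaki invariant
\[
\mathrm{DF}(\calX_E,\calL_E)\;=\;\frac{A_{(X,D)}(E)-S_{(X,D)}(E)}{c}\;=\;\frac{\beta_{(X,D)}(E)}{c},
\]
where $c=c(n,\vol_X(-K_X-D))>0$. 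This is Fujita's computation: one expands the Donaldson--Futaki invariant using equivariant Riemann--Roch on the total space, matches the filtration $\ord_E$ on the spaces of sections $H^0(X,-m(K_X+D))$ to the weight polynomial so that $\int_0^{T_{(X,D)}(E)}\vol_X(-K_X-D-tE)\,dt$ appears as the leading term, and reads off $A_{(X,D)}(E)$ from the discrepancy along the exceptional divisor. I would simply quote this identity rather than redo the expansion.

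Combining the two steps gives: $(X,D)$ is K-semistable (resp. K-stable) if and only if $\beta_{(X,D)}(E)\ge 0$ (resp. $>0$) for every dreamy prime divisor $E$ over $X$ --- for the strict case one checks that a non-trivial special test configuration produces a genuinely non-trivial valuation, so strict positivity transfers. To upgrade ``dreamy $E$'' to ``every prime divisor $E$ over $X$'', and to obtain the $\delta$-reformulation, I would invoke the approximation results of Fujita--Odaka and Blum--Jonsson \cite{FO16, BJ17}: $\delta(X,D)=\inf_m \delta_m(X,D)$ computed via log canonical thresholds of $m$-basis-type divisors, the infimum of $A_{(X,D)}(v)/S_{(X,D)}(v)$ over all valuations agrees with the infimum over dreamy divisors by quasi-monomial approximation together with lower semicontinuity, and hence $\beta_{(X,D)}\ge 0$ on dreamy divisors is equivalent to $\delta(X,D)\ge 1$. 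For the uniform statement I would compare $\mathrm{DF}$ with $J^{\mathrm{NA}}$ on special test configurations via Okounkov-body estimates, getting $\mathrm{DF}\gtrsim (1-\delta(X,D)^{-1})\,J^{\mathrm{NA}}$ and a matching reverse bound, so that (using the controlled reduction to special test configurations from Step~1) uniform K-stability holds exactly when $\delta(X,D)>1$.

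The main obstacle is Step~1: the reduction of an arbitrary ample test configuration to a special one via the MMP with scaling, keeping the Donaldson--Futaki invariant from increasing (and $J^{\mathrm{NA}}$ under control) at each contraction and flip. This is where the genuine birational geometry of degenerations of $\bQ$-Fano pairs enters, and it is by far the deepest input. The secondary difficulty is Step~3, passing from finitely generated (dreamy) valuations to arbitrary real valuations when computing $\delta$; this is handled by the basis-type-divisor/Okounkov-body machinery rather than by constructing test configurations directly.
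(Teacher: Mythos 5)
This theorem is quoted in the paper without proof; it is one of the foundational inputs credited to \cite{Fuj19, Li17, BX18, FO16, BJ17}, so there is no in-paper argument to compare against. Your sketch is a fair summary of how the result is established in that literature: the Li--Xu--style equivariant MMP reduction of arbitrary (normal, ample) test configurations to special ones, the Fujita identity expressing the Donaldson--Futaki invariant of a special test configuration as a positive multiple of $\beta_{(X,D)}(E)$ for the associated dreamy divisor $E$, and the basis-type-divisor/Okounkov-body machinery of Fujita--Odaka and Blum--Jonsson for the $\delta$-reformulation and the uniform statement.

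The one place where the logic as written has a soft spot is the passage from \emph{dreamy} divisors to \emph{all} prime divisors over $X$. Steps 1 and 2 together yield the equivalence ``$(X,D)$ is K-semistable $\Longleftrightarrow$ $\beta_{(X,D)}(E)\ge 0$ for every dreamy $E$'', and the easy implication ``$\beta\ge 0$ for all $E$ $\Rightarrow$ $\beta\ge 0$ for all dreamy $E$'' then gives one direction of the theorem. But the forward implication ``K-semistable $\Rightarrow$ $\beta_{(X,D)}(E)\ge 0$ for \emph{every} $E$'' does not come for free from the MMP reduction: you cannot deduce it by ``upgrading'' because you would need the reverse inclusion, namely that non-negativity on dreamy divisors forces non-negativity on arbitrary $E$, and dreamy divisors are a priori far from dense among all divisorial valuations. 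In Fujita's proof this direction is handled by a separate construction: given an arbitrary $E$, one truncates the filtration induced by $\ord_E$ on the section rings of $-K_X-D$ to produce a sequence of (non-special) test configurations whose normalized Donaldson--Futaki invariants are controlled by $\beta_{(X,D)}(E)$, so K-semistability pushes through to $\beta_{(X,D)}(E)\ge 0$. Alternatively, the Blum--Jonsson route proves ``K-semistable $\Longleftrightarrow\delta(X,D)\ge 1$'' directly via $\delta_m$ and basis-type divisors, from which $\beta\ge 0$ for all $E$ follows by definition of $\delta$ --- but in that case the dreamy-divisor step is not an intermediary at all. Your write-up blends these two routes in a way that makes the forward direction look like a formal consequence of an approximation statement about valuations; it should instead be presented as a separate argument, either the filtration-truncation construction or the full $\delta_m$-approximation theorem.

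A minor additional caution for part (2): after the MMP reduction one must verify that a non-trivial ample test configuration does not degenerate to the trivial special test configuration with zero Futaki invariant, i.e. that the resulting dreamy divisor $E$ is non-trivial whenever the original test configuration is; this is precisely the ``degenerate cases'' caveat you flag, and it needs the finer bookkeeping from \cite{Li17, BX18} to be made airtight. With those two points made precise, your outline matches the standard proof.
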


Note that a K-semistable log Fano pair is always klt by \cite{Oda13b}. By a recent result of Liu-Xu-Zhuang \cite{LXZ21}, K-stability is equivalent to uniform K-stability  for any klt log Fano pair.

If $X$ is a $\bQ$-Fano variety, $D$ is an effective $\bQ$-Cartier $\bQ$-divisor on $X$, and $c\in\bQ_{>0}$, then we say $(X,D)$ is \emph{$c$-K-(poly/semi)stable} if $(X,cD)$ is a K-(poly/semi)stable log Fano pair.

\subsubsection{Special degenerations and plt blow-ups}

Recall that a test configuration $(\cX,\cD;\cL)$ of a klt log Fano pair $(X,D)$ is called \emph{special} if $(\cX,\cX_0+\cD)$ is plt and $\cL\sim_{\bQ}-l(K_{\cX/\bA^1}+\cD)$ for some $l\in \bZ_{>0}$. In this case, we call $(\cX_0,\cD_0)$ a \emph{special degeneration} of $(X,D)$ and denote by $(X,D)\rightsquigarrow (\cX_0,\cD_0)$. By adjunction we know that $(\cX_0,\cD_0)$ is also a klt log Fano pair. 

Next, we recall a result of Li-Wang-Xu which gives a characterization for K-polystability in terms of special degenerations.

\begin{thm}[\cite{LWX18}]\label{thm:lwx-polystable}
    A K-semistable log Fano pair $(X,D)$ is K-polystable if and only if any K-semistable special degeneration of $(X,D)$ is isomorphic to itself.
\end{thm}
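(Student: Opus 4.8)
The plan is to reduce everything to \emph{special} test configurations and then to combine Fujita--Li's valuative criterion (Theorem~\ref{thm:valuative}) with the standard fact that, for a special test configuration $(\cX,\cD;\cL)$ of a klt log Fano pair $(X,D)$, the Donaldson--Futaki invariant $\mathrm{DF}(\cX,\cD;\cL)$ equals, up to a fixed positive constant, the classical Futaki invariant $\Fut_{(\cX_0,\cD_0)}(\lambda_0)$ of the $\mathbb{G}_m$-action $\lambda_0$ induced on the central fiber $(\cX_0,\cD_0)$. I will also use that a special test configuration is automatically a product test configuration once $(\cX_0,\cD_0)\cong (X,D)$ as polarized log pairs, the plt hypothesis guaranteeing normality of $\cX$.

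For the ``only if'' direction, suppose $(X,D)$ is K-polystable and let $(\cX,\cD;\cL)$ be a special test configuration with $(\cX_0,\cD_0)$ K-semistable. I would first observe that $\mathrm{DF}(\cX,\cD;\cL)=0$: since $(\cX_0,\cD_0)$ is K-semistable, feeding the product test configurations attached to $\lambda_0$ and to $\lambda_0^{-1}$ into the definition of K-semistability gives $\Fut_{(\cX_0,\cD_0)}(\lambda_0)\ge 0$ and $-\Fut_{(\cX_0,\cD_0)}(\lambda_0)\ge 0$, hence $\Fut_{(\cX_0,\cD_0)}(\lambda_0)=0$. Then K-polystability of $(X,D)$ forces $(\cX,\cD;\cL)$ to be a product test configuration, so $(\cX_0,\cD_0)\cong (X,D)$, as desired.

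For the ``if'' direction, suppose every K-semistable special degeneration of $(X,D)$ is isomorphic to $(X,D)$. Since K-polystability of a klt log Fano pair may be tested on special test configurations (\cite{LWX18}), it is enough to show that every special test configuration $(\cX,\cD;\cL)$ of $(X,D)$ with $\mathrm{DF}(\cX,\cD;\cL)=0$ is a product. The crucial step is to prove that $(\cX_0,\cD_0)$ is again K-semistable: if it were not, there would be a prime divisor $F$ over $\cX_0$ with $\beta_{(\cX_0,\cD_0)}(F)<0$, and, by transporting $F$ across the degeneration --- comparing the filtered anticanonical rings (Okounkov bodies) of $(X,D)$ and $(\cX_0,\cD_0)$ and using $\mathrm{DF}=0$ to match log discrepancies and $S$-invariants --- one would obtain a prime divisor over $X$ with negative $\beta$-invariant, contradicting K-semistability of $(X,D)$ through Theorem~\ref{thm:valuative}; this is the lifting-of-valuations argument of \cite{LWX18, BLX19}. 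Once $(\cX_0,\cD_0)$ is known to be K-semistable, it is a K-semistable special degeneration of $(X,D)$, hence isomorphic to $(X,D)$ by hypothesis, and therefore $(\cX,\cD;\cL)$ is a product test configuration; thus $(X,D)$ is K-polystable.

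The Futaki-invariant bookkeeping in the ``only if'' direction is formal, and the reduction to special test configurations is a black box. The main obstacle is the ``if'' direction's claim that K-semistability descends to the central fiber of a Donaldson--Futaki-trivial special test configuration: this requires genuinely moving destabilizing divisorial valuations across the degeneration while keeping their log discrepancies and $S$-functionals under control, and it is here that the substance of \cite{LWX18} (together with the openness techniques of \cite{BLX19}) is needed.
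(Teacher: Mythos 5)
Since the paper cites \cite{LWX18} for this statement and gives no proof of its own, I compare your outline against the known arguments. Your ``only if'' direction is complete: for a special test configuration the Donaldson--Futaki invariant equals, up to a positive constant, the Futaki invariant of the induced $\bG_m$-action $\lambda_0$ on the central fiber; K-semistability of $(\cX_0,\cD_0)$ applied to the product configurations of $\lambda_0$ and $\lambda_0^{-1}$ forces that invariant to vanish, and K-polystability of $(X,D)$ then makes the configuration a product.

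The ``if'' direction has a genuine gap. You pass from ``$(\cX_0,\cD_0)\cong(X,D)$'' to ``$(\cX,\cD;\cL)$ is a product test configuration'' as though this were immediate from normality or the plt hypothesis. It is not: abstract isomorphism of the central fiber with the general fiber does not by itself make the family a product, and the statement that a K-semistable klt log Fano pair cannot isotrivially specially degenerate to itself along a non-product configuration is a nontrivial rigidity/separatedness theorem --- itself one of the main inputs of \cite{LWX18} (compare also \cite{BX18}). Your sketch of the other key step (that $\mathrm{DF}=0$ forces the central fiber to be K-semistable, by ``transporting'' a destabilizing divisor across the degeneration while matching $A$- and $S$-invariants) also does not reflect the actual mechanism in \cite{LWX18}, which composes a destabilizing special test configuration of $(\cX_0,\cD_0)$ with the original one and passes to a limit. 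As written your argument reduces the theorem to two statements carrying essentially the same weight as the theorem itself, without proving either. A cleaner route for the ``if'' direction, which sidesteps both issues, is to invoke the existence of the K-polystable degeneration $(X^{\mathrm{ps}},D^{\mathrm{ps}})$ from \cite[Theorem 1.3]{LWX18} directly: it is in particular a K-semistable special degeneration, hence $\cong (X,D)$ by your hypothesis, so $(X,D)$ is K-polystable.
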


\begin{defn}\label{defn:specialplt}
Let $(X,D)$ be a klt log pair.
Let $E$ be a prime divisor over $X$. 
\begin{enumerate}

    \item (\cite{Fuj19a}) We say $E$ is of \emph{plt type} over $(X,D)$ if there exists a birational morphism $\mu: Y\to X$ from a normal projective variety $Y$ such that
    \begin{itemize}
        \item $E$ is a $\bQ$-Cartier prime divisor on $Y$, and $-E$ is $\mu$-ample;
        \item $\mu|_{Y\setminus E}: Y\setminus E\to X\setminus \mu(E)$ is an isomorphism;
        \item $(Y, E+\mu_*^{-1} D)$ is plt.
    \end{itemize}
    Such a morphism $\mu$ is called a \emph{plt blow-up}.
    \item \cite{Pro00, Xu14} A plt type divisor $E$ over $(X,D)$ with center $\mu(E)=x$ being a closed point is called a \emph{Koll\'ar component} over the singularity $x\in (X,D)$. 
    \item We say $E$ is a \emph{special divisor} over $(X,D)$ if there exists a special test configuration $(\cX,\cD;\cL)$ of $(X,D)$ and a positive integer $d$ such that $\ord_{\cX_0}|_{\bC(X)}=d\cdot \ord_E$.
    %Note that $E$ is not necessarily exceptional over $X$.
\end{enumerate}
 
\end{defn}

\begin{lem}[Zhuang]\label{lem:specialplt}
Any special divisor over a klt log Fano pair $(X,D)$ is of plt type.
\end{lem}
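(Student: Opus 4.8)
The plan is to unwind the definition of a special divisor and then repackage the data of a special test configuration as a plt blow-up. Suppose $E$ is a special divisor over the klt log Fano pair $(X,D)$, so there is a special test configuration $(\cX,\cD;\cL)$ of $(X,D)$ and a positive integer $d$ with $\ord_{\cX_0}|_{\bC(X)}=d\cdot\ord_E$. The first step is to recall what ``special'' gives us: $(\cX,\cX_0+\cD)$ is plt and $\cX_0$ is a normal prime divisor whose associated valuation on $\bC(X)=\bC(\cX)$ restricts to (a multiple of) $\ord_E$. Since $(\cX,\cX_0+\cD)$ is plt and $\cX_0$ is a reduced irreducible component of the boundary, by adjunction $(\cX_0,\cD_0)$ is klt log Fano; this is the standard ``plt implies the divisor is a nice birational model'' picture, and it is precisely the configuration that one wants to descend to a plt blow-up of $X$.

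The key step is to produce, from the test configuration over $\bA^1$, a birational morphism $\mu\colon Y\to X$ over $\Spec\bC$ extracting $E$. The standard device here (due to Zhuang, building on the degeneration-to-valuation correspondence of Li and Xu) is to view the test configuration via its Rees-algebra interpretation: the filtration of the section ring $R=\bigoplus_m H^0(X,-m(K_X+D))$ induced by $\ord_E$ (equivalently by $\ord_{\cX_0}$) has finitely generated associated graded ring because the test configuration is of finite type, so $E$ is an \emph{$h$-extraction} / dreamy divisor and one may form $Y=\Proj_X\bigoplus_{j\ge 0}\mu_*\cO(-jE)$ — i.e. $E$ can be extracted by a single projective birational morphism $\mu$ with $-E$ $\mu$-ample and $\mu$ an isomorphism away from $\mu(E)$. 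This uses that $(X,D)$ is log Fano so that $-K_X-D$ is ample and the relevant algebras are graded pieces of a finitely generated ring; the finite generation of $\mathrm{gr}$ is exactly the consequence of $(\cX,\cD;\cL)$ being a genuine (finite-type) test configuration.

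It then remains to check that $(Y,E+\mu_*^{-1}D)$ is plt, and this is where I expect the main difficulty to lie. The idea is inversion of adjunction together with semicontinuity/specialization along the test configuration: the central fiber $(\cX_0,\cD_0)$ is plt (indeed klt, after adjunction from the plt pair $(\cX,\cX_0+\cD)$), and $\cX_0$ is itself a plt-type/Kollár-type degeneration associated to the valuation $\ord_E$; one transports the plt-ness of the special fiber to plt-ness of the extraction $(Y,E+\mu_*^{-1}D)$ by comparing log discrepancies of divisors over $E$ with those of divisors over $\cX_0$, using that $A_{(X,D)}(\cdot)$ is computed the same way on $\cX$ restricted to the generic fiber and that the filtration data match. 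Concretely, if $(Y,E+\mu_*^{-1}D)$ were not plt there would be a divisor $F$ over $Y$ with center in $E$ and $A_{(X,D)}(F)\le 0$ relative to $E$ (i.e. $A_{(Y,E+\mu_*^{-1}D)}(F)\le 0$); one then degenerates $F$ along the test configuration to contradict the plt-ness of $(\cX,\cX_0+\cD)$ in a neighborhood of the generic point of $\cX_0$. Once plt-ness is verified, $\mu$ is by definition a plt blow-up and $E$ is of plt type, completing the argument. The delicate points are the finite generation needed to construct $\mu$ (where one must genuinely invoke that the test configuration has finite type, equivalently that the associated filtration is finitely generated) and the adjunction/specialization bookkeeping for discrepancies near $\cX_0$.
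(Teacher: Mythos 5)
Your approach is genuinely different from the paper's, and the second half has a gap. The paper cites Zhuang's theorem (\cite[Theorem 4.12]{Xu20}): a special divisor $E$ is the \emph{unique} lc place of some $\bQ$-complement $D^+$ of $(X,D)$; perturbing to a klt pair $(X,(1-\epsilon)D+\epsilon D^+)$ in which $E$ has log discrepancy below $1$, BCHM extracts $E$ by $\mu\colon Y\to X$, and plt-ness of $(Y,E+\mu_*^{-1}D^+)$ (hence of $(Y,E+\mu_*^{-1}D)$) is immediate because $E$ is the only lc place of the complement. You instead propose to build $\mu$ directly from the Rees algebra of the valuation filtration and then verify plt-ness by a degeneration argument along the test configuration.

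Your construction of the extraction is fine: special test configurations give finitely generated filtrations (``dreamy'' divisors), so one may form $Y=\Proj_X\bigoplus_{j\ge 0}a_j(E)$ with $-E$ $\mu$-ample and $\mu$ an isomorphism away from $\mu(E)$. This is a legitimate alternative to invoking BCHM.

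The gap is in the plt verification. The sentence ``if $(Y,E+\mu_*^{-1}D)$ were not plt there would be a divisor $F$ over $Y$ with center in $E$ and $A_{(Y,E+\mu_*^{-1}D)}(F)\le 0$; one then degenerates $F$ along the test configuration to contradict plt-ness of $(\cX,\cX_0+\cD)$'' does not go through as stated. The naive degeneration --- taking the $\bG_m$-invariant closure $\cF$ of $F\times\bG_m$ in a model of $\cX$ --- produces a \emph{horizontal} divisor (dominating $\bA^1$), so $\ord_{\cF}(\cX_0)=0$ and therefore
\[
A_{(\cX,\,\cX_0+\cD)}(\cF)=A_{(\cX,\,\cD)}(\cF)=A_{(X,D)}(F)>0
\]
by klt-ness of $(X,D)$. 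No contradiction with plt-ness of $(\cX,\cX_0+\cD)$ arises, because the problematic log discrepancy $A_{(Y,E+\mu_*^{-1}D)}(F)=A_{(X,D)}(F)-\ord_F(E)$ differs from $A_{(X,D)}(F)$ by the term $\ord_F(E)$, which horizontal closure does not see. What one actually needs is the orbifold cone correspondence: $\cX_0$ is (locally near the generic point of $\mu(E)$) the orbifold cone over $E$ with polarization $-E|_E$, and Koll\'ar's cone theorem identifies klt-ness of $(\cX_0,\cD_0)$ with plt-ness of $(Y,E+\mu_*^{-1}D)$ along $E$, via adjunction to the different $\mathrm{Diff}_E(\mu_*^{-1}D)$. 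That identification is a genuine theorem (see e.g.\ \cite[Proposition 5.3]{LX16}) and is the missing ingredient --- it is not supplied by ``inversion of adjunction plus semicontinuity'' as phrased, and the degeneration-of-$F$ heuristic, read literally, yields nothing. So either invoke the cone theorem explicitly at this step, or follow the paper's route through Zhuang's complement theorem, which sidesteps the cone analysis entirely.
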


\begin{proof}
By Zhuang's Theorem \cite[Theorem 4.12]{Xu20}, if $E$ is a special divisor over $(X,D)$, then there exists a $\bQ$-complement $D^+$ of $(X,D)$ such that $E$ is the only lc place of $(X,D^+)$. Thus for $0<\epsilon\ll 1$, the log pair $(X, (1-\epsilon)D+\epsilon D^+)$ is klt where $E$ has log discrepancy less than $1$. Thus by \cite{BCHM10} there exists a birational morphism $\mu: Y\to X$ from a normal projective variety $Y$ such that the first two conditions in Definition \ref{defn:specialplt}(2) hold. Moreover, since $E$ is the only lc place of $(X,D^+)$, we know that $(Y, E+\mu_*^{-1} D^+)$ is plt. Thus $(Y, E+\mu_*^{-1} D)$ is also plt. 
\end{proof}

\subsubsection{Equivariant K-stability}

The following theorem is essentially due to Zhuang \cite{Zhu20}. The equivalence between (i) and (iii) was also proved by Fujita \cite{Fuj19a} when $G$ is trivial.

\begin{thm}[Zhuang]\label{thm:kss-plt}
    Let $(X,D)$ be a klt log Fano pair. Let $G$ be an algebraic group acting on $(X,D)$. Then the following are equivalent.
    \begin{enumerate}[label=(\roman*)]
        \item $(X,D)$ is K-semistable;
        \item $\beta_{(X,D)}(E)\geq 0$ for any $G$-invariant special divisor $E$ over $(X,D)$;
        \item $\beta_{(X,D)}(E)\geq 0$ for any $G$-invariant prime divisor $E$ of plt type over $(X,D)$.
    \end{enumerate}
\end{thm}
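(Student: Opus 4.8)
The plan is to run the cycle (i) $\Rightarrow$ (iii) $\Rightarrow$ (ii) $\Rightarrow$ (i), where the first two implications are formal and all the content lies in the last one. For (i) $\Rightarrow$ (iii): a $G$-invariant prime divisor of plt type over $(X,D)$ is in particular a prime divisor over $X$, so (i) forces $\beta_{(X,D)}\ge 0$ on it via the valuative criterion (Theorem~\ref{thm:valuative}(1)). For (iii) $\Rightarrow$ (ii): by Lemma~\ref{lem:specialplt} every special divisor over $(X,D)$ is of plt type, so a $G$-invariant special divisor is a $G$-invariant plt-type divisor and (iii) applies to it. It then remains to prove (ii) $\Rightarrow$ (i), which I would do by contraposition: assuming $(X,D)$ is \emph{not} K-semistable, I will exhibit a $G$-invariant special divisor $E$ over $(X,D)$ with $\beta_{(X,D)}(E)<0$.

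\textbf{Step 1: a destabilizing special divisor, not yet $G$-invariant.} Since $(X,D)$ is not K-semistable, Theorem~\ref{thm:valuative} gives $\delta(X,D)<1$. By the existence and lower-semicontinuity results of \cite{BLX19}, the infimum defining $\delta$ is attained at a quasi-monomial valuation $v_0$ over $X$, and by the finite-generation theorem \cite{LXZ21} the graded ring $\mathrm{gr}_{v_0}R$, with $R=\bigoplus_m H^0(X,-m(K_X+D))$, is finitely generated and induces a special degeneration; in particular $v_0$ can be taken to be a rescaling of $\ord_{E_0}$ for a special divisor $E_0$ over $(X,D)$, and $A_{(X,D)}(E_0)=\delta\cdot S_{(X,D)}(E_0)<S_{(X,D)}(E_0)$ forces $\beta_{(X,D)}(E_0)<0$.

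\textbf{Step 2: equivariantization (following Zhuang \cite{Zhu20}).} I would first handle a torus $\mathbb{T}$. The $\mathbb{T}$-action gives a weight grading on $R$, and degenerating the filtration attached to $v_0$ compatibly with this grading produces a $\mathbb{T}$-invariant valuation $\tilde v_0$ having the same Hilbert function of the filtration, hence $S_{(X,D)}(\tilde v_0)=S_{(X,D)}(v_0)$, together with $A_{(X,D)}(\tilde v_0)\le A_{(X,D)}(v_0)$ by lower semicontinuity of the log discrepancy under such degenerations \cite{BLX19}. Thus $\delta_{\mathbb{T}}(X,D)$ (the infimum of $A_{(X,D)}/S_{(X,D)}$ over $\mathbb{T}$-invariant valuations) equals $\delta(X,D)<1$, and applying \cite{LXZ21} $\mathbb{T}$-equivariantly produces a $\mathbb{T}$-invariant special destabilizer. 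For general connected $G$ I would iterate this along a Borel $B=\mathbb{T}\ltimes\mathbb{U}$, treating the unipotent radical $\mathbb{U}$ by replacing $v_0$ with a $\mathbb{U}$-fixed valuation in the closure of its $\mathbb{U}$-orbit (such a fixed point exists because $\mathbb{U}$ is unipotent), again preserving $S$ and not raising $A$; composing with the torus step yields a $B$-invariant special destabilizer. To upgrade $B$-invariance to full $G$-invariance, and to absorb the finite quotient $G/G^0$, I would invoke the uniqueness of the \emph{optimal destabilizing center}: among destabilizing valuations one selects those minimizing $\delta$ and then with the largest center $Z\subset X$; this $Z$ is unique, hence $G$-invariant, and one can then choose a destabilizing special divisor with center in $Z$ in a $G$-equivariant way. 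This contradicts (ii) and completes the proof.

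The step I expect to be the main obstacle is Step 2: establishing the lower semicontinuity of the log discrepancy under the torus and unipotent degenerations of valuations, and proving enough rigidity — uniqueness of the optimal destabilizing center — to pass from Borel-invariance to genuine $G$-invariance. This is the technical core of \cite{Zhu20}. By contrast, Step 1 and the two formal implications are direct consequences of the valuative criterion (Theorem~\ref{thm:valuative}) together with \cite{BLX19, LXZ21}.
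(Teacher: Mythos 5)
Your two formal implications (i) $\Rightarrow$ (iii) and (iii) $\Rightarrow$ (ii) match the paper exactly. Where you diverge is the hard direction (ii) $\Rightarrow$ (i), and the divergence is substantial. The paper argues \emph{forward} rather than by contraposition: it takes an arbitrary non-trivial $G$-equivariant special test configuration $(\cX,\cD;\cL)$, observes that $\ord_{\cX_0}|_{\bC(X)} = d\cdot\ord_E$ for a special divisor $E$ which is automatically $G$-invariant (because the test configuration is), so hypothesis (ii) together with Fujita's formula $\Fut(\cX,\cD;\cL)=d\cdot\beta_{(X,D)}(E)$ gives $\Fut\geq 0$. The equivariant version of \cite{LX14} then upgrades this to $G$-equivariant K-semistability of $(X,D)$, and the main theorem of \cite{Zhu20} — that $G$-equivariant K-semistability implies K-semistability — finishes. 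This is four lines long precisely because \cite{Zhu20} is treated as a black box. Your contrapositive route, by contrast, attempts to directly manufacture a $G$-invariant destabilizing special divisor from a non-invariant one; that is, you are re-deriving the technical core of \cite{Zhu20} instead of invoking it, which is far more work than the paper intends to do in a preliminaries section for a theorem it explicitly attributes to Zhuang.

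There is also a genuine gap inside your Step 2 as written. The unipotent step — replacing $v_0$ by a $\mathbb{U}$-fixed valuation in the closure of its $\mathbb{U}$-orbit, "such a fixed point exists because $\mathbb{U}$ is unipotent" — is not justified: the Kostant--Rosenlicht closed-orbit theorem for unipotent actions applies to affine varieties of finite type, while the space of (quasi-monomial) valuations is not such a variety, so $\mathbb{U}$-orbit closures need not contain fixed points a priori. In Zhuang's actual argument this is handled via finitely generated filtrations and the uniqueness of the optimal destabilizing center, which is exactly the nontrivial content you are implicitly re-proving. If you keep the contrapositive framing, the minimal correct move is to skip the by-hand equivariantization entirely: from $\delta(X,D)<1$ conclude that $(X,D)$ is not $G$-equivariantly K-semistable by \cite{Zhu20}, and then use the equivariant version of \cite{LX14} to produce a $G$-equivariant special test configuration with $\Fut<0$, whose central-fiber valuation gives the desired $G$-invariant special divisor $E$ with $\beta_{(X,D)}(E)<0$. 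That is logically the mirror image of the paper's proof and avoids the gap.
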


\begin{proof}
The (i)$\Rightarrow$(iii) part is a consequence of Theorem \ref{thm:valuative}. The (iii)$\Rightarrow$(ii) part follows from Lemma \ref{lem:specialplt}. So we focus on the (ii)$\Rightarrow$(i) part. 
Assume to the contrary that $(X,D)$ is K-unstable. By \cite[Theorem 1.2]{LXZ21} and \cite[Theorem 1.2(2)]{BHLLX20}, there exists a non-trivial special test configuration $(\cX,\cD)$ of $(X,D)$ that minimizes the bi-valued invariant $\left( \frac{\Fut(\cX , \cD )}{ \mnorm {\cX,\cD} }, \frac{\Fut(\cX , \cD )}{ \lnorm{\cX,\cD}}\right)$ under the lexicographic order among all special test configurations, where $\mnorm{\cdot}$ and $\lnorm{\cdot}$ represents the minimum norm and the $L^2$-norm respectively (see \cite[Section 2.3]{BHLLX20} for definitions). Moreover, such a minimizing special test configuration $(\cX,\cD)$ is unique up to rescaling. The minimizing property implies $\Fut(\cX,\cD)<0$ as $(X,D)$ is K-unstable. Every $g\in G$ induces a pull-back $(\cX_g, \cD_g)$ of the test configuration $(\cX,\cD)$ with the same $\Fut(\cdot)$, $\mnorm{\cdot}$, and $\lnorm{\cdot}$. Since a non-trivial rescaling must change the norms, the uniqueness of $(\cX,\cD)$ implies that $(\cX_g,\cD_g)\cong (\cX,\cD)$ for every $g\in G$. Thus $(\cX,\cD)$ is $G$-equivariant. %Let $(\cX,\cD;\cL)$ be any non-trivial $G$-equivariant special test configuration of $(X,D)$. 
Let $E$ be a special divisor over $(X,D)$ such that $\ord_{\cX_0}|_{\bC(X)}=d\cdot \ord_E$ for some $d\in \bZ_{>0}$. Then $\Fut(\cX,\cD)=d\cdot\beta_{(X,D)}(E)\geq 0$ by \cite{Fuj19}, a contradiction to $\Fut(\cX,\cD)<0$. Thus $(X,D)$ is K-semistable. 
%The equivariant version of \cite{LX14} implies that $(X,D)$ is $G$-equivariantly K-semistable, hence is K-semistable by \cite{Zhu20}.
\end{proof}

\begin{prop}\label{prop:G-kps}
Let $(X,D)$ be a K-semistable, but not K-polystable, log Fano pair. Let $G$ be a reductive group acting on $(X,D)$. Then there exists a $G$-invariant special divisor $E$ over $(X,D)$ with $\beta_{(X,D)}(E)=0$ which induces a K-polystable special degeneration of $(X,D)$. 
\end{prop}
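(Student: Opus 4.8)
The plan is to combine the equivariant K-semistability criterion (Theorem~\ref{thm:kss-plt}) with the Li--Wang--Xu characterization of K-polystability (Theorem~\ref{thm:lwx-polystable}), exploiting reductivity of $G$ to make the relevant degeneration $G$-equivariant. First I would observe that since $(X,D)$ is K-semistable but not K-polystable, Theorem~\ref{thm:lwx-polystable} produces a non-trivial special degeneration $(X,D)\rightsquigarrow (\cX_0,\cD_0)$ that is still K-semistable and not isomorphic to $(X,D)$. The associated special divisor $E_0$ (with $\ord_{\cX_0}|_{\bC(X)}=d\cdot\ord_{E_0}$) satisfies $\beta_{(X,D)}(E_0)=0$ by \cite{Fuj19}, because the Donaldson--Futaki invariant of a special test configuration equals $d\cdot\beta_{(X,D)}(E_0)$ and K-semistability forces this to be $\geq 0$ while the test configuration being non-trivial with K-semistable central fiber forces it to be $\leq 0$ (a strictly positive value would contradict K-semistability of $(X,D)$, and by the equality case one gets vanishing; alternatively, a K-semistable degeneration must have Futaki invariant zero).

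The key step is to upgrade $E_0$ to a $G$-invariant special divisor. Here I would use that $G$ is reductive: the set of special divisors $E$ (equivalently, special test configurations up to the natural equivalence) realizing $\beta_{(X,D)}(E)=0$ forms a well-behaved family, and one can run a $G$-equivariant minimization/averaging argument. Concretely, following the equivariant techniques of Zhuang \cite{Zhu20} (cf.\ the proof of Theorem~\ref{thm:kss-plt}) and the Li--Wang--Xu theory, the ``$\delta$-minimizing'' or optimal-destabilization data attached to a K-semistable non-K-polystable pair is canonical, hence $G$-invariant when $G$ is reductive and acts on $(X,D)$; in particular there is a $G$-invariant valuation computing it. One then extracts from this $G$-invariant valuation a $G$-invariant special divisor $E$ with $\beta_{(X,D)}(E)=0$. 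Its associated special test configuration $(\cX,\cD;\cL)$ is automatically $G$-equivariant, so $G$ acts on the central fiber $(\cX_0,\cD_0)$.

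It remains to check that $(\cX_0,\cD_0)$ is K-polystable. It is K-semistable: indeed $\Fut(\cX,\cD;\cL)=0$, and for a special test configuration with vanishing Futaki invariant the central fiber is K-semistable by the standard semistable-degeneration principle (lower semicontinuity of $\delta$, or \cite{BX18, LWX18}). If $(\cX_0,\cD_0)$ were not K-polystable, one could iterate, producing a strictly longer chain of special degenerations; finiteness of such chains --- guaranteed by the finite generation / boundedness results underlying the construction of K-moduli (\cite{LWX18, ABHLX19, LXZ21}), or directly by the fact that the dimension of the automorphism group strictly increases along non-trivial special degenerations of K-semistable pairs --- forces the process to terminate at a K-polystable pair, which we then take as $(\cX_0,\cD_0)$. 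Taking $E$ to be the special divisor inducing this final degeneration completes the proof.

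The main obstacle I anticipate is the equivariance step: producing a genuinely $G$-invariant special divisor with $\beta=0$, rather than merely a $G$-orbit of such divisors. This is where reductivity of $G$ is essential and where one must invoke the canonicity of the optimal destabilization (the Kempf--Ness-type uniqueness in the Li--Wang--Xu / Zhuang framework) rather than a naive averaging. Everything else --- the vanishing of $\beta$, K-semistability of the central fiber, and termination --- is comparatively routine given the results recalled in the excerpt.
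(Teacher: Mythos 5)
Your overall plan is in the right circle of ideas, but the central step --- producing a genuinely $G$-invariant special divisor --- has a real gap, and it is precisely the step you flag as "the main obstacle."

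Your proposed fix is to invoke "canonicity of the optimal destabilization (the Kempf--Ness-type uniqueness in the Li--Wang--Xu / Zhuang framework)." This does not apply here. In the K-\emph{unstable} case there is a canonical $\delta$-minimizing divisorial valuation, which one can then argue is $G$-invariant. But $(X,D)$ is K-\emph{semistable}: $\delta\geq 1$, and there is no $\delta$-minimizer to speak of. The objects one is led to consider are special divisors with $\beta=0$, and these are not unique --- several inequivalent special test configurations can degenerate $(X,D)$ to the (unique) K-polystable pair $(\cX_0,\cD_0)$. What \emph{is} canonical is the isomorphism class of $(\cX_0,\cD_0)$, not the valuation or test configuration realizing it. So you cannot extract $G$-invariance of $E$ from a uniqueness principle; $G$-invariance of the degenerate pair is not the same as $G$-equivariance of the degeneration data.

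The paper closes this gap differently: it uses the fact (from \cite{Zhu20}) that for reductive $G$, K-(semi/poly)stability is equivalent to $G$-equivariant K-(semi/poly)stability, so $(X,D)$ is $G$-equivariantly K-semistable but not $G$-equivariantly K-polystable; then it applies a $G$-equivariant version of \cite{LX14} and \cite[Theorem 1.3]{LWX18} to produce a non-trivial $G$-equivariant special test configuration with $\Fut=0$ and K-polystable central fiber \emph{in one shot}, rather than first producing a non-equivariant degeneration and then trying to symmetrize it. This also removes the need for your iteration/termination argument, since the $G$-equivariant LWX theorem delivers K-polystability of the central fiber directly. The remaining part of your proposal --- extracting the special divisor $E$ with $\ord_{\cX_0}|_{\bC(X)}=d\cdot\ord_E$ and concluding $\beta_{(X,D)}(E)=0$ from $\Fut=d\cdot\beta=0$ --- is fine and matches the paper, once the test configuration is known to be $G$-equivariant (so that $E$ is $G$-invariant).

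To repair your argument, you would either need to prove the $G$-equivariant LWX statement yourself, or give an actual symmetrization mechanism; the latter is not available by "canonicity" alone, since the destabilizing data is genuinely non-unique.
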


\begin{proof}
By \cite[Theorem 1.3]{LWX18} let $(X_0,\Delta_0)$ be the unique K-polystable special degeneration of $(X,D)$. Then the second paragraph of \cite[Proof of Corollary 4.11]{Zhu20} implies that there exists a non-trivial $G$-equivariant special test configuration $(\cX,\cD)$ of $(X,D)$ such that $(\cX_0,\cD_0)\cong (X_0,D_0)$. Since $(X_0,D_0)$ is K-polystable, we have $\Fut(\cX,\cD)=0$. From \cite{Fuj19} we know that there exists a special divisor $E$ over $(X,D)$ and $d\in \bZ_{>0}$ such that $\ord_{\cX_0}|_{\bC(X)}=d\cdot\ord_E$, and $0=\Fut(\cX,\cD)=d\cdot \beta_{(X,D)}(E)$. Since $(\cX,\cD)$ is $G$-equivariant, we know that $E$ is $G$-invariant. Thus the proof is finished.
%By \cite{Zhu20} we know that $(X,D)$ is $G$-equivariantly K-semistable but not $G$-equivariantly K-polystable.
%Using the $G$-equivariant version of \cite{LX14}, we can generalize \cite[Theorem 1.3]{LWX18} to the $G$-equivariant setting, i.e. there exists a non-trivial $G$-equivariant special test configuration $(\cX,\cD;\cL)$ of $(X,D)$ such that $(\cX_0,\cD_0)$ is $G$-equivariantly K-polystable and $\Fut(\cX,\cD;\cL)=0$. Hence $(\cX_0,\cD_0)$ is K-polystable by \cite{Zhu20}. From \cite{Fuj19} we know that there exists a special divisor $E$ over $(X,D)$ and $d\in \bZ_{>0}$ such that $\ord_{\cX_0}|_{\bC(X)}=d\cdot\ord_E$, and $0=\Fut(\cX,\cD;\cL)=d\cdot \beta_{(X,D)}(E)$. Thus the proof is finished.
\end{proof}

\subsubsection{Almost log Calabi-Yau pairs}
The following definition is equivalent to the original definition by \cite{Oda13b}.

\begin{defn}
A log Calabi-Yau pair $(X,D)$ is called \emph{K-semistable} if $(X,D)$ is log canonical. 
\end{defn}

% \begin{lem}
% Let $X$ be a $\bQ$-Fano variety of dimension at least $2$. Let $D\sim_{\bQ} -K_X$ be an effective $\bQ$-Cartier Weil divisor. Assume that $(X,D)$ is plt. Let $c\in (0,1)$ be a rational number. Then $(X,cD)$ is K-polystable if and only if $(X,cD)$ is K-stable.\footnote{YL: Consider replace by the next thm.}
% \end{lem}

% \begin{proof}
% The ``if'' part is obvious, so we only need to prove the ``only if'' part. Since $D\sim_{\bQ} -K_X$ is ample and $\dim(X)\geq 2$, we know that $\Supp(D)$ is connected. Thus $D$ is a prime divisor on $X$ by the plt assumption on $(X,D)$. Assume to the contrary that $(X,cD)$ is K-polystable but not K-stable. Then by definition, there exists a non-trivial product test configuration $(\cX,c\cD)/\bA^1$ of $(X,cD)$ such that $\Fut(\cX,c\cD)=0$. Since $(X,D)$ is plt, \cite[Proposition 2.13]{ADL19} implies that $(X,tD)$ is K-polystable for any rational number $t\in (c,1)$. Hence we have $\Fut(\cX, t\cD)=0$. According to \cite{Fuj19}, the test configuration $\cX$ induces a prime divisor $E$ over $X$ such that $\beta_{(X,tD)}(E)=0$ for any $t\in (c,1)$. Since $A_{(X,tD)}(E)=A_X(E)-t\ord_E(D)$ and $S_{(X,tD)}(E)=(1-t)S_X(E)$, by letting $t\to 1$ we get $A_{(X,D)}(E)=0$.  Since $(X,D)$ is plt and $D$ is irreducible, we know that $E=D$. Hence simple computation shows that 
% \[
% A_{(X,cD)}(D)=1-c>\frac{1-c}{\dim(X)+1} =S_{(X,cD)}(D).
% \]
% This is a contradiction. Thus the proof is finished.
% \end{proof}

The following theorem can be viewed as an algebraic analogue of \cite[Corollary 1]{JMR16}. A different proof can be obtained by applying \cite[Lemma 5.3]{Zho21b}.

\begin{thm}\label{thm:uKs-almost-logCY}
Let $X$ be a $\bQ$-Fano variety of dimension $n\geq 2$. Let $D\sim_{\bQ} -K_X$ be an effective $\bQ$-Cartier Weil divisor. Assume that $(X,D)$ is plt. Then there exists $\epsilon_1\in (0,1)$ depending only on $n$, such that for any rational number $c\in (1-\epsilon_1,1)$, the log Fano pair $(X,cD)$ is uniformly K-stable. In particular, $\Aut(X,D)$ is a finite group.
\end{thm}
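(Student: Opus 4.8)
The plan is to apply the valuative criterion of Fujita--Li (Theorem~\ref{thm:valuative}) together with \cite{LXZ21}: since K-stability and uniform K-stability coincide for klt log Fano pairs, it suffices to produce $\epsilon_0=\epsilon_0(n)$ such that $\beta_{(X,cD)}(E)>0$ for every prime divisor $E$ over $X$ and every rational $c\in(1-\epsilon_0,1)$. The starting point is a scaling computation. Since $D\sim_{\bQ}-K_X$, we have $-K_X-cD\sim_{\bQ}(1-c)(-K_X)$, so for every prime divisor $E$ over $X$,
\[
T_{(X,cD)}(E)=(1-c)\,T_{(X,0)}(E),\qquad S_{(X,cD)}(E)=(1-c)\,S_{(X,0)}(E),
\]
while $A_{(X,cD)}(E)=A_{(X,0)}(E)-c\,\ord_E(D)=c\,A_{(X,D)}(E)+(1-c)\,A_{(X,0)}(E)$. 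Subtracting gives the interpolation identity
\[
\beta_{(X,cD)}(E)\;=\;c\,A_{(X,D)}(E)\;+\;(1-c)\,\beta_{(X,0)}(E)\;=\;A_{(X,D)}(E)-(1-c)\bigl(S_{(X,0)}(E)-\ord_E(D)\bigr).
\]

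Next I would isolate the boundary divisor itself. Because $n\ge 2$, the ample divisor $D\sim_{\bQ}-K_X$ is connected, and since $(X,D)$ is plt the reduced divisor $D=\lfloor D\rfloor$ is normal; hence $D$ is an irreducible prime divisor, and it is the \emph{unique} prime divisor over $X$ with $A_{(X,D)}(\cdot)=0$. For $E=D$ one has $\ord_D(D)=1$ and, since $-K_X-tD\sim_{\bQ}(1-t)(-K_X)$ is big exactly for $t<1$, $T_{(X,0)}(D)=1$ and $S_{(X,0)}(D)=\int_0^1(1-t)^n\,dt=\tfrac1{n+1}$, so $\beta_{(X,cD)}(D)=(1-c)\tfrac{n}{n+1}>0$ for all $c\in(0,1)$. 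For every other $E$ we have $A_{(X,D)}(E)>0$, and the identity above shows $\beta_{(X,cD)}(E)>0$ as soon as $(1-c)\bigl(S_{(X,0)}(E)-\ord_E(D)\bigr)<A_{(X,D)}(E)$; if $S_{(X,0)}(E)\le\ord_E(D)$ this is automatic, and otherwise it will follow from the estimate below.

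The crux, and the step I expect to be hardest, is the dimensional estimate: there is $\gamma=\gamma(n)$ with
\[
S_{(X,0)}(E)-\ord_E(D)\;\le\;\gamma(n)\,A_{(X,D)}(E)
\]
for every prime divisor $E$ over $X$ with $A_{(X,D)}(E)>0$; one then takes $\epsilon_0:=1/\gamma(n)$. To prove it I would extract $E$ by a birational morphism $\mu\colon Y\to X$ with $\mathrm{Exc}(\mu)=E$ and $-E$ $\mu$-ample (a consequence of \cite{BCHM10}); since $(X,D)$ is plt, the crepant pullback $\bigl(Y,\widetilde D+(1-A_{(X,D)}(E))E\bigr)$, satisfying $\mu^*(K_X+D)=K_Y+\widetilde D+(1-A_{(X,D)}(E))E\sim_{\bQ}0$, is again a plt log Calabi--Yau pair. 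Using $\mu^*(-K_X)\sim_{\bQ}\widetilde D+\ord_E(D)\,E$ one splits $\int_0^{T_{(X,0)}(E)}\vol_Y(\mu^*(-K_X)-tE)\,dt$ at $t=\ord_E(D)$: the contribution over $[0,\ord_E(D)]$ is at most $\ord_E(D)\cdot\vol_X(-K_X)$, while the contribution over $[\ord_E(D),T_{(X,0)}(E)]$ is, after a shift, an integral of $\vol_Y(-K_Y-\theta E)$, which one bounds using concavity of $\vol^{1/n}$ together with the $\mu$-ampleness of $-E$ and the log-canonicity of the boundary along $E$. The delicate point is extracting a constant depending only on $n$: the plt hypothesis enters both in keeping the crepant model plt and in excluding pathological $E$ with $A_{(X,D)}(E)$ close to $0$ but $S_{(X,0)}(E)-\ord_E(D)$ not comparably small (this is where a boundedness-type input on plt anticanonical pairs of fixed dimension, or \cite[Lemma~5.3]{Zho21b}, would be invoked).

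Granting the estimate, for $c\in\bigl(1-\tfrac1{\gamma(n)},1\bigr)\cap\bQ$ we obtain $\beta_{(X,cD)}(E)>0$ for all prime divisors $E$ over $X$, so $(X,cD)$ is K-stable and hence uniformly K-stable by \cite{LXZ21}. Finally, a uniformly K-stable log Fano pair has finite automorphism group: a positive-dimensional $\Aut(X,cD)^{\circ}$ would contain a $\bG_m$, giving a non-trivial product test configuration with vanishing generalized Futaki invariant, contradicting K-stability. Since $0<c<1$, an automorphism of $X$ preserves $cD$ if and only if it preserves $D$, so $\Aut(X,D)=\Aut(X,cD)$ is finite.
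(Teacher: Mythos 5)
Your reduction via the interpolation identity
\[
\beta_{(X,cD)}(E)=A_{(X,D)}(E)-(1-c)\bigl(S_{(X,0)}(E)-\ord_E(D)\bigr)
\]
is correct, and your observation that $D$ itself is an irreducible, normal, ample prime divisor (connected because $n\geq 2$) with $\beta_{(X,cD)}(D)=(1-c)\frac{n}{n+1}>0$ matches a computation the paper also makes. But the proposal has a genuine gap at the step you yourself flag as the crux: the claimed bound $S_{(X,0)}(E)-\ord_E(D)\leq\gamma(n)\,A_{(X,D)}(E)$, uniform over all prime divisors $E$ over $X$, is not actually established. Splitting the integral at $t=\ord_E(D)$ and appealing to concavity of $\vol^{1/n}$ and $\mu$-ampleness of $-E$ does not by itself produce a constant depending only on $n$; as you note, one needs a boundedness input, and deferring to ``a boundedness-type input on plt anticanonical pairs, or \cite[Lemma~5.3]{Zho21b}'' without carrying it out leaves the core of the theorem unproved. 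This is not a detail that can be safely waved away: without it one has shown nothing beyond $\beta_{(X,cD)}(D)>0$.

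The paper's actual proof sidesteps the need for any such universal valuative estimate. It invokes \cite[Corollary 3.5]{BLX19} (resting on Birkar's boundedness of complements \cite[Theorem 1.8]{Bir19}): for $m>N=N(n)$, either $(X,\frac{m-1}{m}D)$ is already uniformly K-stable, or its $\delta$-invariant is computed by lc places of $N$-complements $\Delta_m^+\geq\frac{m-1}{m}D$. Because $N\Delta_m^+$ is Weil and $m>N$, integrality forces $\Delta_m^+\geq D$, and then $\Delta_m^+\sim_{\bQ}-K_X\sim_{\bQ}D$ forces $\Delta_m^+=D$. Since $(X,D)$ is plt with $D$ irreducible (connected by ampleness and $n\geq 2$), $D$ is the unique lc place, so $\delta(X,\frac{N}{N+1}D)$ is computed by $E=D$, and the explicit computation $A=\frac{1}{N+1}$, $S=\frac{1}{(n+1)(N+1)}$ gives the contradiction. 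Interpolation then finishes. The advantage is that boundedness of complements collapses the set of divisors one must test down to the single divisor $D$, exactly the step your sketch leaves open. If you want to salvage your approach, you would need to supply the estimate on $S_{(X,0)}(E)-\ord_E(D)$ via a precise invocation of \cite[Lemma~5.3]{Zho21b} or via boundedness of complements, at which point you would essentially be reproducing the paper's argument in a less economical form.
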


\begin{proof}
Consider the pair $(X, \frac{m-1}{m}D)$ where $m\in \bZ_{>0}$. By \cite[Corollary 3.5]{BLX19} based on Birkar's boundedness of complements \cite[Theorem 1.8]{Bir19}, there exists $N\in \bZ_{>0}$ depending only on $n$, such that either $(X, \frac{m-1}{m}D)$ is uniformly K-stable, or 
\begin{equation}\label{eq:almost-logCY}
    \delta(X, \frac{m-1}{m}D)=\inf_E \frac{A_{(X, \frac{m-1}{m}D)}(E)}{S_{(X, \frac{m-1}{m}D)}(E)},
\end{equation}
where $E$ runs over lc places of $N$-complements $\Delta_m^+$ of $(X, \frac{m-1}{m}D)$ satisfying $\Delta_m^+\geq \frac{m-1}{m}D$. Since $N\Delta_m^+$ is a Weil divisor, we know that $\Delta_m^+\geq D$ as long as $m> N$. Since $\Delta_m^+\sim_{\bQ}-K_X\sim_{\bQ} D$, we have that $\Delta_m^+=D$ for $m>N$. 

We claim that $(X, \frac{N}{N+1}D)$ is uniformly K-stable. If not, from $n\geq 2$ and ampleness of $D$ we know that $D$ is connected. Since $\Delta_{N+1}^+=D$ and $(X,D)$ is plt, the prime divisor $D$ is the only lc place of $N$-complements $\Delta_{N+1}^+$. Hence \eqref{eq:almost-logCY} implies that $\delta(X,\frac{N}{N+1}D)$ is computed by $E=D$, and simple computation shows that
\[
A_{(X,\frac{N}{N+1}D)}(D)=\frac{1}{N+1} \quad\textrm{and} \quad S_{(X,\frac{N}{N+1}D)}(D)=\frac{1}{(n+1)(N+1)}.
\]
This contradicts the assumption that $\delta(X,\frac{N}{N+1}D)\leq 1$. Thus we prove the claim which implies the uniform K-stability of $(X,cD)$  for any $c\in [\frac{N}{N+1},1)$ by \cite[Proposition 2.13]{ADL19}. The finiteness of $\Aut(X,D)$ follows from \cite[Corollary 1.3]{BX18}.
\end{proof}

\subsection{CM line bundles}
The CM line bundle of a flat family of polarized projective varieties is a functorial line bundle over the base which was introduced algebraically by Tian \cite{Tia97}.
The following definition of CM line bundles is due to Paul and Tian \cite{PT06, PT09} using the Knudsen-Mumford expansion (see also \cite{FR06}). We use the concept of relative Mumford divisors from \cite{Kol18, Kol19}; see also \cite[Definition 2.7]{ADL20}.

\begin{defn}[log CM line bundle]\label{defn:logCM}
Let $f:\cX\to B$ be a proper flat morphism of connected schemes. Assume that $f$ has $S_2$ fibers of pure dimension $n$. Let $\cL$ be an $f$-ample line bundle on $\cX$. Let $\cD:= \sum_{i=1}^k c_i\cD_i$ be a relative Mumford $\bQ$-divisor on $\cX$ over $B$ where each $\cD_i$ is a relative Mumford divisor and $c_i \in [0, 1] \cap \bQ$ . We also assume that each $\cD_i$ is flat over $B$ (see Remark \ref{rmk:flatness}).

%Let $\cD_i$ ($i\in \{1,2,\cdots, k\}$) be a closed subscheme of $\cX$ such that $f|_{\cD_i}:\cD_i\to T$ is flat of pure dimension $n-1$. Let $c_i\in[0,1]$ be rational numbers.

A result of Knudsen-Mumford \cite{KM76} says that there exist line bundles $\lambda_j=\lambda_{j}(\cX,\cL)$ on $B$ such that for all $k$,
\[
\det f_!(\cL^k)=\lambda_{n+1}^{\binom{k}{n+1}}\otimes\lambda_n^{\binom{k}{n}}\otimes\cdots\otimes\lambda_0.
\]
By flatness, the Hilbert polynomial $\chi(\cX_b,\cL_b^k)=a_0 k^n+a_1 k^{n-1}+ O(k^{n-2})$ for any $b\in B$. 
Then the \emph{CM line bundle} and the \emph{Chow line bundle} of the data $(f:\cX\to B,\cL)$ are defined as
\[
\lambda_{\CM,f,\cL}:=\lambda_{n+1}^{\mu+n(n+1)}\otimes\lambda_n^{-2(n+1)},\qquad \lambda_{\Chow,f,\cL}:=\lambda_{n+1}.
\]
where $\mu:=\frac{2a_1}{a_0}$.
The \emph{log CM $\bQ$-line bundle} of the data $(f:\cX\to B, \cL,\cD)$ is defined as
\[
 \lambda_{\CM,f,\cD,\cL}:=\lambda_{\CM,f,\cL}-\frac{n(\cL_b^{n-1}\cdot\cD_b)}{(\cL_b^n)}\lambda_{\Chow,f,\cL}+(n+1)\lambda_{\Chow,f|_{\cD},\cL|_{\cD}},
\]
where $(\cL_b^{n-1}\cdot\cD_b):=\sum_{i=1}^k c_i (\cL_b^{n-1}\cdot\cD_{i,b})$ and $\lambda_{\Chow,f|_{\cD},\cL|_{\cD}}:=\bigotimes_{i=1}^k\lambda_{\Chow,f|_{\cD_i},\cL|_{\cD_i}}^{\otimes c_i}$. 
\end{defn}

\begin{remark}\label{rmk:flatness}
In Definition \ref{defn:logCM} we assumed that each $\calD_i$ are flat over $B$. This is guaranteed in our setting -- $\mathbb{Q}$-Gorenstein smoothable log Fano families over reduced base schemes -- by \cite[Proposition 2.12]{ADL20}. 
\end{remark}

% \begin{defn}\label{defn:qgorfamily} Let $f:\cX\to T$ be a proper flat morphism between normal varieties. Let $\cD$ be an effective $\bQ$-divisor on $\cX$. We say $f:(\cX,\cD)\to T$ is a \emph{$\bQ$-Gorenstein flat family of log Fano pairs} if the following conditions hold:
% \begin{itemize}
% \item $f$ has normal, connected fibers;
% \item $\Supp(\cD)$ does not contain any fiber;
% \item $-(K_{\cX/T}+\cD)$ is $\bQ$-Cartier and $f$-ample.
% \end{itemize}
% We define the \emph{CM $\bQ$-line bundle} of $f:(\cX,\cD)\to T$ to be  $\lambda_{\CM,f,\cD}:=l^{-n}\lambda_{\CM,f,\cD,\cL}$, where $\cL:=-l(K_{\cX/T}+\cD)$ is an $f$-ample Cartier divisor on $\cX$ for some $l\in\bZ_{>0}$. 
% \end{defn}

\subsection{K-moduli of $\bQ$-Fano varieties}
We first recall the moduli stack of $\bQ$-Fano varieties. 

\begin{defn}
A \emph{$\bQ$-Fano family} is a morphism $f:\cX\to B$ between schemes such that 
\begin{enumerate}
    \item $f$ is projective and flat of pure relative dimension $n$ for some positive integer $n$;
    \item the geometric fibers of $f$ are $\bQ$-Fano varieties;
    \item $-K_{\cX/B}$ is $\bQ$-Cartier and $f$-ample;
    \item $f$ satisfies Koll\'ar's condition.
\end{enumerate}
\end{defn}

We recall the following definition from \cite{BHLLX20}. 

\begin{definition}\cite[Section 4.1]{BHLLX20}
Let $n$ be a positive integer and $V$ a positive rational number. We define the moduli pseudo-functor $\cM_{n,V}^{\rm Fano}$ that sends a scheme $B$ to 
\[
\cM_{n,V}^{\rm Fano}(B):= \{\textrm{$\bQ$-Fano families } f:\cX\to B \mid \dim(X_b)=n,~(-K_{X_b})^n=V \textrm{ for all }b\in S  \},
\]

Fix an $0<\epsilon \leq 1$, and let 
${ \cM_{n,V}^{\delta \geq\epsilon } \subseteq \cM^{\rm Fano}_{n,V}}$ denote the subfunctor
defined by 
\[
 \cM^{\delta \geq \epsilon }_{n,V}(B) := \{ [\cX \to B] \in \  \cM^{\rm Fano}_{n,V}(B) \, \vert \, \delta(\cX_{\overline{b}}) \geq \epsilon \, \text{ for all }\, b\in B \}.\]
 We also define 
 \[
  \cM^{\rm Kss}_{n,V}(B) := \{ [\cX \to B] \in \  \cM^{\rm Fano}_{n,V}(B) \, \vert \, \cX_{\overline{b}}\textrm{ is K-semistable for all }\, b\in B \}.
 \]
 Then by Theorem \ref{thm:valuative} we know that $\cM_{n,V}^{\rm Kss} = \cM^{\delta \geq 1 }_{n,V}$.

 By \cite[Section 4.1]{BHLLX20}, the pseudo-functor $\cM^{\delta \geq \epsilon}_{n,V}$ is represented by an Artin stack of finite type with affine diagonal (indeed, a quotient stack $[Z/\PGL_{m+1}]$ where $Z$ is a quasi-projective scheme). The CM $\bQ$-line bundle on $\cM_{n,V}^{\delta \geq \epsilon}$ is defined as the CM $\bQ$-line bundle of its universal family.
\end{definition}

By \cite{BL18b, BLX19}, we know that for a $\bQ$-Fano family $\cX\to B$, the function $b\mapsto \min\{1, \delta(\cX_{\bar{b}})\}$ is constructible and lower semi-continuous. Thus for any $0<\epsilon <\epsilon'\leq 1$ there are canonical open immersions $\cM_{n,V}^{\delta \geq \epsilon'}\hookrightarrow \cM_{n,V}^{\delta \geq \epsilon}$.

%This pseudo-functor is represented by an Artin stack (c.f. \cite[8.3]{Kol17} for general background on moduli of strongly polarized varieties). 

%It follows from \cite{Kol17} that $\cM_{n,V}^{\rm Fano}$ is represented by an Artin stack locally of finite type, which we also denote by $\cM_{n,V}^{\rm Fano}$ as abuse of notation. The CM $\bQ$-line bundle on $\cM_{n,V}^{\rm Fano}$ is defined as the CM $\bQ$-line bundle of its universal family.

The following result, known as the K-moduli theorem, is a combination of many recent important algebraic works  \cite{Jia17, LWX18, CP18, BX18, ABHLX19, BLX19, Xu19, XZ19, XZ20, BHLLX20, LXZ21}.

\begin{thm}[K-moduli theorem]\label{thm:k-moduli}
    Let $n$ be a positive integer and $V$ a positive rational number. Then there exists an Artin stack $\cM_{n,V}^{\rm Kss}$ of finite type with affine diagonal parametrizing K-semistable $\bQ$-Fano varieties of dimension $n$ and volume $V$. Moreover, $\cM_{n,V}^{\rm Kss}$ admits a projective good moduli space $M_{n,V}^{\rm Kps}$ parametrizing K-polystable $\bQ$-Fano varieties, and the CM $\bQ$-line bundle on $\cM_{n,V}^{\rm Kss}$ descends to an ample $\bQ$-line bundle on $M_{n,V}^{\rm Kps}$.
\end{thm}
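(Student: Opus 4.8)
The plan is to assemble the statement from its known constituents, in roughly the following order. \emph{Boundedness:} First I would show that $\bQ$-Fano varieties $X$ of dimension $n$ with $(-K_X)^n=V$ and $\delta(X)\geq 1$ form a bounded family. Since $\delta(X)\geq 1$ forces a uniform positive lower bound on the $\alpha$-invariant (equivalently, on log discrepancies of Koll\'ar components), this follows from Birkar's solution of the Borisov--Alexeev--Borisov conjecture combined with Jiang's argument \cite{Jia17}, producing a finite-type parameter scheme carrying all such $X$. \emph{Openness and the stack:} Next I would invoke lower semicontinuity of the stability threshold $\delta$ in $\bQ$-Gorenstein families \cite{BLX19, Xu19}: the locus of fibers with $\delta\geq 1$ is open, and intersecting it with the finite-type locus from the previous step yields the open substack $\cM_{n,V}^{\rm Kss}\subset\cM_{n,V}^{\rm Fano}$, which is of finite type and, by Fujita--Li's valuative criterion (Theorem \ref{thm:valuative}), parametrizes exactly the K-semistable $\bQ$-Fano varieties of the prescribed numerical type.

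To produce the good moduli space $M_{n,V}^{\rm Kps}$ I would verify the two local conditions $\Theta$-reductivity and S-completeness for $\cM_{n,V}^{\rm Kss}$, following \cite{ABHLX19, BHLLX20}. Together with reductivity of the automorphism groups of K-polystable $\bQ$-Fano varieties \cite{ABHLX19} and the general existence criterion for good moduli spaces of Alper--Halpern-Leistner--Heinloth, this gives a separated good moduli space $M_{n,V}^{\rm Kps}$ as an algebraic space whose closed points correspond to K-polystable $\bQ$-Fano varieties by \cite{LWX18} (Theorem \ref{thm:lwx-polystable}).

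For properness I would check the valuative criterion. The separated half (uniqueness of limits) is uniqueness of K-polystable degenerations \cite{BX18}; the existential half requires that any $\bQ$-Fano family over a punctured smooth curve, after a finite base change, acquires a K-semistable limit. This is the crux of the whole argument: it amounts to the optimal-destabilization/higher-rank finite generation statement resolved in \cite{LXZ21} (building on \cite{Xu19, XZ20}). Granting this, $M_{n,V}^{\rm Kps}$ is a proper algebraic space. Finally, for projectivity I would descend the CM $\bQ$-line bundle to $M_{n,V}^{\rm Kps}$: nefness over proper bases follows from \cite{CP18, XZ20}, and strict positivity, hence ampleness on the proper good moduli space, from \cite{XZ19}; this also upgrades $M_{n,V}^{\rm Kps}$ from an algebraic space to a projective scheme.

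The step I expect to be the main obstacle is properness --- specifically, the existence of K-semistable limits. Historically this was the last piece of the K-moduli theorem to be established, and it required the full strength of the finite generation result of \cite{LXZ21}; establishing S-completeness and $\Theta$-reductivity (for the good moduli space) and the positivity of the CM line bundle (for projectivity) are also substantial inputs, but were available earlier. Given the breadth of machinery involved, in practice one cites these works rather than reproving them, and the "proof" here is the organization of the cited results into the stated package.
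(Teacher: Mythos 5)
Your proposal is correct and matches the paper's approach: the paper does not prove this theorem but states it as a combination of the cited works \cite{Jia17, LWX18, CP18, BX18, ABHLX19, BLX19, Xu19, XZ19, XZ20, BHLLX20, LXZ21}, and your organization of those works into boundedness, openness, existence of the good moduli space, properness, and projectivity accurately reconstructs the standard assembly. Your identification of the properness step --- reduced via the $\Theta$-stratification of \cite{BHLLX20} to the higher-rank finite generation theorem of \cite{LXZ21} --- as the last and hardest piece is historically and mathematically on point.
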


We call $\cM_{n,V}^{\rm Kss}$ and $M_{n,V}^{\rm Kps}$ a \emph{K-moduli stack} and a \emph{K-moduli space}, respectively.

In this paper, we are mainly interested in the $\bQ$-Gorenstein smoothable case. 

\begin{defn}
%Let $n$ and $V$ be positive integers. Let $\cM_{n,V}^{\rm sm}$ be the open substack of $\cM_{n,V}^{\rm Fano}$ that parametrizes smooth Fano manifolds of dimension $n$ and volume $V$. Let $\ocM_{n,V}^{\rm sm}$ be the Zariski closure of $\cM_{n,V}^{\rm sm}$ in $\cM_{n,V}^{\rm Fano}$ with reduced structure.

Let $n$ and $V$ be positive integers, and fix any $0 < \epsilon \leq 1$. Let $\cM_{n,V}^{{\rm sm}, \delta\geq \epsilon}$ be the open substack of $\cM^{\delta \geq \epsilon}_{n,V}$ parametrizing smooth Fano varieties $X$ of dimension $n$ and volume $V$ with $\delta(X) \geq \epsilon$. Let $\ocM_{n,V}^{{\rm sm}, \delta \geq \epsilon}$ be the Zariski closure of $\cM_{n,V}^{{\rm sm}, \delta \geq \epsilon}$ in  $\cM^{\delta \geq \epsilon}_{n,V}$ with reduced structure.  We call $\ocM_{n,V}^{{\rm sm}, \delta\geq \epsilon}$ a \emph{moduli stack of $\bQ$-Gorenstein smoothable $\bQ$-Fano varieties}. A $\bQ$-Fano variety $X$ is called \emph{$\bQ$-Gorenstein smoothable} if $[X]\in \ocM_{n,V}^{{\rm sm},\delta\geq \epsilon}$ for some $n,V,\epsilon$.
\end{defn}

%By the Kodaira-Nakano vanishing theorem and boundedness of Fano manifolds, it is clear that $\cM_{n,V}^{\rm sm}$ is a smooth Artin stack of finite type: with boundedness and the Kodaira-Nakano vanishing theorem, we may construct $\cM_{n,V}^{\rm sm}$ using the theory of strongly polarized varieties in \cite[Section 8.3, Corollary 8.23]{Kol17}.  We call $\ocM_{n,V}^{\rm sm}$ a \emph{moduli stack of $\bQ$-Gorenstein smoothable $\bQ$-Fano varieties}. A $\bQ$-Fano variety $X$ is called \emph{$\bQ$-Gorenstein smoothable} if $[X]\in \ocM_{n,V}^{\rm sm}$ for some $n,V$.
%\end{defn}

Let $\ocM_{n,V}^{\rm sm, Kss}:=\ocM_{n,V}^{{\rm sm}, \delta \geq 1}$ be a reduced closed substack of $\cM_{n,V}^{\rm Kss}$. %Since $\delta \geq 1$ (see Theorem \ref{thm:valuative}) for any object parametrized by $\cM_{n,V}^{\rm Kss}$, we see that our definition does not depend on the choice of $\epsilon \leq 1$ from above.
According to Theorem \ref{thm:k-moduli}, the stack $\ocM_{n,V}^{\rm sm,Kss}$ admits a projective good moduli space $\oM_{n,V}^{\rm sm,Kps}$ as a reduced closed subscheme of $M_{n,V}^{\rm Kps}$. Note that prior to the algebraic approach in Theorem \ref{thm:k-moduli}, it was shown using analytic methods that there exists a proper good moduli space $\oM_{n,V}^{\rm sm,Kps}$ of $\ocM_{n,V}^{\rm sm,Kss}$ by \cite{LWX19} (see also \cite{Oda15}).

\subsubsection{$\bQ$-Gorenstein smoothable log Fano pairs}

We will consider the following class of pairs. 

\begin{definition}\label{defn:qgorsmoothable}
 Let $c,r$ be positive rational numbers such that $c<\min\{1, r^{-1}\}$. 
 A log Fano pair $(X,cD)$ is \emph{$\bQ$-Gorenstein smoothable} if there exists a $\bQ$-Fano family $\pi:\cX\to C$ over a pointed smooth curve $(0\in C)$ and a relative Mumford divisor $\cD$ on $\cX$ over $C$ such that the following holds:
 \begin{itemize}
  \item $\cD$ is $\bQ$-Cartier, $\pi$-ample, and  $\cD\sim_{\bQ,\pi}-rK_{\cX/C}$;
  \item Both $\pi$ and $\pi|_{\cD}$ are smooth morphisms
  over $C\setminus\{0\}$;
  \item $(\cX_0,c\cD_0)\cong (X,cD)$, in particular $X$ has klt singularities.
 \end{itemize} 
 A \emph{$\bQ$-Gorenstein smoothable log Fano family} $f:(\cX,c\cD)\to B$ over a reduced scheme $B$ consists of a $\bQ$-Fano family $f:\cX\to B$ and a $\bQ$-Cartier relative Mumford divisor $\cD$ on $\cX$ over $B$, such that 
 all fibers $(\cX_b,c\cD_b)$ are $\bQ$-Gorenstein smoothable log Fano pairs, and $\cD\sim_{\bQ,f} -rK_{\cX}$.
\end{definition}

For a $\bQ$-Gorenstein smoothable log Fano family, we  define its Hodge line bundle as follows.

\begin{defn}\label{defn:hodge}
For $c,r\in \bQ_{>0}$ with $cr<1$, let $f:(\cX,c\cD)\to B$ be a $\bQ$-Gorenstein smoothable log Fano family over a reduced scheme $B$ where  $\cD\sim_{\bQ,f} -rK_{\cX/B}$. The \emph{Hodge $\bQ$-line bundle} $\lambda_{\Hdg,f,r^{-1}\cD}$ is defined as the $\bQ$-linear equivalence class of $\bQ$-Cartier $\bQ$-divisors on $T$ such that
\[
K_{\cX/B}+r^{-1}\cD\sim_{\bQ}f^*\lambda_{\Hdg,f,r^{-1}\cD}.
\]
\end{defn}

In \cite{ADL19} we define the Artin stacks $\cK\cM_{\chi_0, r, c}$ and prove that they admit proper good moduli spaces $KM_{\chi_0, r,c}$, where the projectivity of such K-moduli spaces is proven by Xu and Zhuang \cite{XZ19}. Note that the K-moduli theorem also holds for all log Fano pairs without the $\bQ$-Gorenstein smoothable assumption as a generalization of Theorem \ref{thm:k-moduli} (see e.g. \cite[Theorem 1.3]{LXZ21}), though we restrict to the $\bQ$-Gorenstein smoothable case in this article.

 \begin{thm}[{\cite[Theorem 3.1 and Remark 3.25]{ADL19} and \cite{XZ19}}]\label{thm:ADL19-moduli}
 Let $\chi_0$ be the Hilbert polynomial of an anti-canonically polarized Fano manifold. Fix $r\in\bQ_{>0}$ and a rational number $c\in (0,\min\{1,r^{-1}\})$. Consider the following moduli pseudo-functor over reduced schemes $B$:
\[
\cK\cM_{\chi_0,r,c}(B)=\left\{(\cX,\cD)/B\left| \begin{array}{l}(\cX,c\cD)/B\textrm{ is a $\bQ$-Gorenstein smoothable log Fano family,}\\ \cD\sim_{B,\bQ}-rK_{\cX/B},~\textrm{each fiber $(\cX_b,c\cD_b)$ is K-semistable,}\\ \textrm{and $\chi(\cX_b,\cO_{\cX_b}(-kK_{\cX_b}))=\chi_0(k)$ for $k$ sufficiently divisible.}\end{array}\right.\right\}.
\]
Then there exists a reduced Artin stack  $\cK\cM_{\chi_0,r,c}$ (called a \emph{K-moduli stack}) of finite type over $\bC$ representing the above moduli pseudo-functor. In particular, the $\bC$-points of $\cK\cM_{\chi_0,r,c}$ parametrize K-semistable $\bQ$-Gorenstein smoothable log Fano pairs $(X,cD)$ with Hilbert polynomial $\chi(X,\cO_X(-mK_X))=\chi_0(m)$ for sufficiently divisible $m$ and $D\sim_{\bQ}-rK_X$.

Moreover, the Artin stack $\cK\cM_{\chi_0,r,c}$ admits a good moduli space $KM_{\chi_0,r,c}$ (called a \emph{K-moduli space}) as a projective reduced scheme of finite type over $\bC$, whose closed points parametrize K-polystable log Fano pairs, and the CM $\bQ$-line bundle on $\cK\cM_{\chi_0,r,c}$  descends to an ample $\bQ$-line bundle on $KM_{\chi_0,r,c}$.
\end{thm}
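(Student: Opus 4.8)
The statement is an instance of the general K-moduli machinery, and the plan is to assemble it from the ingredients recalled above together with the log analogues worked out in \cite{ADL19}. First I would construct the ambient stack set-theoretically: starting from the moduli stack $\ocM_{n,V}^{\rm sm}$ of $\bQ$-Gorenstein smoothable $\bQ$-Fano varieties (Theorem~\ref{thm:k-moduli}), with $n$ and $V$ read off from $\chi_0$, one forms over its universal family the relative space of Mumford divisors $\cD$ with $\cD\sim_{\bQ}-rK$. Since $-rK_{\cX/B}$ has bounded Hilbert polynomial this space is of finite type, and imposing Koll\'ar's condition together with flatness of $\cD$ cuts out a finite-type Artin stack; restricting to the $\bQ$-Gorenstein smoothable locus --- the Zariski closure of the open substack where both $\cX\to B$ and $\cD\to B$ are smooth --- gives a reduced finite-type Artin stack $\cK\cM^{\rm all}$ parametrizing all $\bQ$-Gorenstein smoothable log Fano pairs $(X,cD)$ with $D\sim_{\bQ}-rK_X$ and the prescribed Hilbert polynomial.

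Next I would cut out the K-semistable locus. By lower semicontinuity of the stability threshold $\delta$ in $\bQ$-Gorenstein families \cite{BLX19, Xu19}, applied with coefficient $c$, the condition ``$(\cX_b,c\cD_b)$ is K-semistable'' is open on the base; hence $\cK\cM_{\chi_0,r,c}$ is an open substack of $\cK\cM^{\rm all}$, so it is a reduced Artin stack of finite type over $\bC$, and by construction its $\bC$-points are exactly the K-semistable members with the required numerical invariants. Boundedness of this family is covered by \cite{Jia17} in the pair version used in \cite{ADL19}.

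For the good moduli space I would apply the general existence criterion for good moduli spaces of algebraic stacks, for which the work is to verify $\Theta$-reductivity and S-completeness of $\cK\cM_{\chi_0,r,c}$. These conditions encode the existence and uniqueness of K-polystable degenerations of a given K-semistable log Fano pair, which are supplied by \cite{LWX18, BX18, ABHLX19, BHLLX20} and their extensions to pairs in \cite{ADL19}; combined with boundedness and the valuative criterion they yield a separated, proper good moduli space $KM_{\chi_0,r,c}$ whose closed points parametrize K-polystable pairs.

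Finally, for projectivity: the log CM $\bQ$-line bundle descends from $\cK\cM_{\chi_0,r,c}$ to $KM_{\chi_0,r,c}$ by descent along the good moduli morphism (it has trivial weight along closed orbits), it is nef and big by \cite{CP18, Pos19}, and it is in fact \emph{ample} by Xu--Zhuang \cite{XZ19}. I expect the ampleness of the CM line bundle to be the technical heart of the whole statement, as it rests on the positivity estimates of \cite{CP18, XZ19} and, for strict positivity along the polystable locus, on reduced uniform K-stability \cite{LXZ21}; everything else is a fairly mechanical adaptation of the absolute $\bQ$-Fano case of Theorem~\ref{thm:k-moduli}. For the bookkeeping special to the log setting --- relative Mumford divisors, Koll\'ar's condition, the Hodge $\bQ$-line bundle of Definition~\ref{defn:hodge} --- one cites \cite[Theorem~3.1 and Remark~3.25]{ADL19} directly.
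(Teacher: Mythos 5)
The paper does not prove Theorem~\ref{thm:ADL19-moduli}; it is quoted verbatim from \cite[Theorem~3.1 and Remark~3.25]{ADL19} with projectivity from \cite{XZ19}, so there is no in-text proof to compare against. That said, your sketch is a faithful reconstruction of the argument in those references, and the chain of inputs you identify (openness of K-semistability from \cite{BLX19, Xu19}, separatedness and the existence of polystable degenerations from \cite{BX18, LWX18, ABHLX19, BHLLX20}, descent and ampleness of the CM line bundle from \cite{CP18, Pos19, XZ19, LXZ21}) is exactly right.

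One presentational difference worth flagging: in \cite{ADL19} the moduli stack $\cK\cM_{\chi_0,r,c}$ is built directly as a quotient stack $[Z^{\red}/\PGL(N+1)]$, where $Z^{\red}$ is a reduced locally closed subscheme of a relative Hilbert scheme parametrizing $m$-anti-canonically embedded pairs, and $m$ is chosen uniformly via Jiang's boundedness \cite{Jia17}. You instead build the stack relative to $\ocM_{n,V}^{\rm sm}$ by adjoining a family of relative Mumford divisors. Both constructions are used in the present paper (the quotient presentation in Section~\ref{sec:proofs} to define the CM line bundle, and the forgetful morphism to $\ocM_{3,64}^{\rm sm}$ in Lemma~\ref{lem:forgetful-smooth}), and they produce the same stack, so this is a legitimate alternative route rather than a gap. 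Your route has the advantage of making the forgetful map to $\ocM_{n,V}^{\rm sm}$ tautological; the Hilbert-scheme route has the advantage of making boundedness and the quotient presentation manifest from the start. Either way, the verification that you do not spell out --- that the smoothable condition (closure of the smooth locus) interacts correctly with the open K-semistable condition, so that the resulting stack is reduced and of finite type --- is where the bulk of \cite[Section~3]{ADL19} is spent, and a complete write-up would need to supply that bookkeeping.
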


\begin{lem}\label{lem:forgetful-smooth}
Let $n$ and $V$ be positive integers. Let $r$ be a positive rational number. Let $\chi_0$ be the Hilbert polynomial of an anti-canonically polarized Fano manifold of dimension $n$ and volume $V$. 
Then there exists $\epsilon_0\in (0,1]$ depending only on $n$ and $r$ such that the forgetful map $\cK\cM_{\chi_0,r,c}\to \ocM^{{\rm sm},\delta\geq \epsilon_0}$ that assigns $[(X,D)]\mapsto [X]$ is well-defined for every $c\in (0, \min \{1, r^{-1}\})$.
Moreover, if $r$ is an integer, then this forgetful map  is a smooth morphism with connected or empty fibers.
%let $\sU \subset \ocM^{{\rm sm},\delta\geq \epsilon_0}$ be an open substack such that $\sU$ is normal. Let $\sV\subset \cK\cM_{\chi_0,r,c}$ be the open substack whose $\bC$-points are $[(X,D)]\in \cK\cM_{\chi_0,r,c}$ satisfying $[X]\in \sU$. Then the forgetful morphism $\sV\to \sU$ that assigns $[(X,D)]\mapsto [X]$ is a smooth morphism with connected fibers.
\end{lem}

\begin{proof}
We fix a positive integer $n$ and a positive rational number $r$. Since smooth Fano manifolds of dimension $n$ are bounded \cite{KMM92, Cam92}, there are finitely many choices of $V$ and $\chi_0$. By \cite[Theorem 1.2]{ADL19} we know that the collection of $n$-dimensional $\bQ$-Fano varieties $X$ such that $[(X,D)]\in \cK\cM_{\chi_0, r, c}$ for some $D$, $\chi_0$, $V$, and $c\in (0,\min\{1,r^{-1}\})$ is bounded. Thus by \cite[Theorem A]{BJ17} and \cite[Proposition 5.3]{BL18b}, there exists $\epsilon_0\in (0,1]$ depending only on $n$ and $r$ such that $\delta(X)\geq \epsilon_0$ for every $X$ in this collection. This shows that the forgetful map $\cK\cM_{\chi_0,r,c}\to \ocM^{{\rm sm},\delta\geq \epsilon_0}$ is well-defined.

Next, we assume $r\in \bZ_{>0}$. For the last statement, following the last paragraph of the proof of \cite[Theorem 2.21]{ADL20}, it suffices to show that for any $\bQ$-Gorenstein smoothable $\bQ$-Fano variety $X$ and any effective Weil divisor $D$ on $X$ satisfying $D\sim_{\bQ} -rK_X$, we have $D\sim -rK_X$. Let $\pi:(\cX,\cD)\to B$ be a $\bQ$-Gorenstein smoothing over a pointed curve $0\in B$ with $(\cX_0, \cD_0)\cong (X,D)$ and $\cD\sim_{B,\bQ} -rK_{\cX/B}$. Since $\pi$ is smooth over $B\setminus\{0\}$, the class group of $\cX_b$ is torsion free for $b\in B\setminus\{0\}$. Thus we have $\cD|_{\cX\setminus\cX_0}\sim_B -rK_{\cX/B}|_{\cX\setminus\cX_0}$ as $r$ is an integer. Since $\cX_0$ is integral and $\cX_0\sim_B 0$, we know that $\cD\sim_B -rK_{\cX/B}$. This implies that $\cD_0\sim -rK_{\cX_0}$. The proof is finished.
\end{proof}

\section{Geometry and moduli of quartic K3 surfaces}

%\textcolor{blue}{Review K3 moduli, Laza-O'Grady, and ADL20.}

\subsection{Geometry of quartic K3 surfaces} Our goal in this section is to review the Hassett-Keel-Looijenga program (Section \ref{sec:laza-ogrady}). Before doing so, we introduce some terminology from the geometry of quartic K3 surfaces as studied by Mayer in \cite{Mayer}. A \emph{K3 surface} $S$ is a connected projective surface with du Val singularities such that $\omega_S\cong \cO_S$ and $H^1(S,\cO_S)=0$. A \emph{polarized K3 surface} $(S, L_S)$ consists of a K3 surface $S$ and an ample line bundle $L_S$ which is primitive. A \emph{quartic K3 surface} is a polarized K3 surface $(S,L_S)$ of degree $4$, i.e. $(L_S^2)=4$. %Let $S$ be a K3 surface and let $h$ be a primitive ample class on $S$ with $h^2 = 4$. 
Consider the map $S\dashrightarrow |L_S|^\vee\cong \bP^3$ induced by the linear system $|L_S|$. 

\begin{definition}\label{def:mayerK3} Generically, the linear system $|L_S|$ defines an isomorphism onto a quartic surface in $\bP^3$ with du Val singularities.
\begin{enumerate}
    \item We say that $S$ is \emph{hyperelliptic} if $|L_S|$ induces a  $2:1$ map onto a quadric surface in $\bP^3$. In this case $S$ is isomorphic to a double cover of $\bP^1 \times \bP^1$ or $\bP(1,1,2)$ ramified along a $(4,4)$ curve or a degree $8$ curve, respectively. 
    \item We say that $S$ is \emph{unigonal} if $|L_S|$ defines a rational map from $S$ onto a twisted cubic curve in $\bP^3$ with general fiber a smooth elliptic curve.\end{enumerate}
    By \cite{Mayer}, we know that any quartic K3 surface belongs to one of the three classes above.
\end{definition}

\subsection{K-moduli of quartic surfaces}
We define the K-moduli stacks $\osM_c^{\K}$ and spaces $\ofM_c^{\K}$. 
\begin{defn}
Let $\chi_0$ be the Hilbert polynomial of $(\bP^3, \cO_{\bP^3}(4))$. Let $c\in (0,1)\cap \bQ$ be a rational number. We define the K-moduli stacks $\osM_c^{\K}$ and spaces $\ofM_c^{\K}$ as
\[
\osM_c^{\K}:=\cK\cM_{\chi_0, 1, c}\quad \textrm{and}\quad \ofM_c^{\K}:=KM_{\chi_0,1,c}. 
\]
By Theorem \ref{thm:ADL19-moduli} we know that $\osM_c^{\K}$ is a reduced Artin stack of finite type, and $\ofM_c^{\K}$ is a reduced projective scheme. 
\end{defn}

\begin{lem}\label{lem:L-construct}
Let $X$ be a $\bQ$-Fano variety in $\ocM^{{\rm sm}, \delta\geq \epsilon}_{3,64}$ for some $\epsilon\in (0,1]$. Then $X$ admits a $\bQ$-Gorenstein smoothing to $\bP^3$. Moreover, there exists an ample $\bQ$-Cartier Weil divisorial sheaf $L$ on $X$ such that the following conditions hold.
\begin{enumerate}
    \item $L^{[m]}$ is Cohen-Macaulay for any $m\in \bZ$;
    \item $\omega_X\cong L^{[-4]}$ and $(L^3)=1$;
    \item $h^i(X, L^{[m]})=h^i(\bP^3, \cO_{\bP^3}(m))$ for any $m\in \bZ$ and $i\geq 0$;
\end{enumerate}
\end{lem}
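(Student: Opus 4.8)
The plan is to reduce everything to the analogous statements for $\bP^3$ itself via deformation theory and semicontinuity, exploiting that $X$ sits in a $\bQ$-Gorenstein smoothing to $\bP^3$. First I would establish the $\bQ$-Gorenstein smoothability: since $X\in\ocM^{\rm sm}_{3,64}$ by definition lies in the Zariski closure of the locus of smooth Fanos of dimension $3$ and volume $64=(-K_{\bP^3})^3$, and since every smooth Fano threefold of anticanonical degree $64$ is $\bP^3$ (this is classical; e.g. the degree forces $-K_X=\cO(4H)$ with $H$ the generator, and $\bP^3$ is rigid), any connected chart of $\ocM^{\rm sm}_{3,64}$ through $X$ maps to a point corresponding to $\bP^3$; hence $X$ admits a $\bQ$-Gorenstein smoothing $\pi:\cX\to (0\in C)$ with general fiber $\bP^3$. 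Now construct $L$: on the total space $\cX$, the relative canonical $-K_{\cX/C}$ is $\bQ$-Cartier and $\pi$-ample, and on the general fiber it is $\cO_{\bP^3}(4)=\cO_{\bP^3}(1)^{\otimes 4}$. I would take $\cL$ to be a Weil divisorial sheaf on $\cX$ restricting to $\cO_{\bP^3}(1)$ on the general fiber with $4\cL\sim -K_{\cX/C}$ (shrinking $C$ and using that $\mathrm{Cl}$ of the generic fiber is $\bZ\cdot\cO(1)$ together with properness of the relative Picard/divisor-class functor to spread out the generator and take closure), then set $L:=\cL|_X$ as a divisorial sheaf. By construction $\omega_X\cong L^{[-4]}$, and $(L^3)$ is computed on the general fiber as $(\cO_{\bP^3}(1)^3)=1$, so (2) holds once we know the intersection number is deformation-invariant, which it is for a flat family with $\bQ$-Cartier relative polarization.

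For (1) and (3) the key point is Kollár's condition and flatness of the reflexive powers. Because $\pi$ is a $\bQ$-Fano family it satisfies Kollár's condition, so $\cL^{[m]}$ commutes with base change and is flat over $C$ for every $m\in\bZ$; thus $L^{[m]}=\cL^{[m]}|_X$ and, since the general fiber is $\bP^3$ with $\cL^{[m]}|_{\bP^3}=\cO_{\bP^3}(m)$ which is Cohen–Macaulay (indeed locally free), upper semicontinuity of depth / the standard fact that being Cohen–Macaulay is an open condition in a flat family forces $L^{[m]}$ to be Cohen–Macaulay on the central fiber as well — here one uses that $X$ is the only special fiber and that $\cX$ is Cohen–Macaulay (a $\bQ$-Fano family over a smooth curve with Cohen–Macaulay — even klt — fibers has Cohen–Macaulay total space). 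This gives (1). For (3), flatness of $\pi_*\cL^{[m]}$ and its base-change compatibility give $\chi(X,L^{[m]})=\chi(\bP^3,\cO_{\bP^3}(m))$; then vanishing of higher cohomology on the general fiber ($h^i(\bP^3,\cO(m))$ is the expected one for all $m$) plus upper semicontinuity of $h^i$ forces $h^i(X,L^{[m]})\le h^i(\bP^3,\cO_{\bP^3}(m))$, and equality of Euler characteristics then pins down every individual $h^i$. Concretely: for $m\ge 0$ semicontinuity gives $h^0(X,L^{[m]})\le h^0(\bP^3,\cO(m))$ while $\chi$ equality plus $h^{>0}(\bP^3,\cO(m))=0$ gives the reverse; for $m<0$ Serre duality on $X$ (valid since $X$ is Cohen–Macaulay with dualizing sheaf $\omega_X=L^{[-4]}$) converts $H^i(X,L^{[m]})$ into $H^{3-i}(X,L^{[-4-m]})$, reducing to the already-handled range — or one simply runs the same semicontinuity argument for all $i$ simultaneously, using that $\sum_i (-1)^i h^i$ is constant and each inequality goes the same way.

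The main obstacle I expect is the construction of the divisorial sheaf $L$ with the exact linear-equivalence $4L\sim -K_X$ (not merely $\bQ$-linear equivalence) and with the guaranteed compatibility $L^{[m]}|_{\text{general fiber}}=\cO_{\bP^3}(m)$: this requires controlling the relative class group of $\pi$, i.e. knowing that after shrinking $C$ the sheaf $\cO_{\bP^3}(1)$ on the generic fiber extends to a divisorial sheaf on $\cX$, and that the extension is unique enough that its reflexive powers are the "right" ones. The cleanest route is to invoke that $-K_{\cX/C}$ is Cartier in codimension two on $\cX$ (Gorenstein in codimension two, as $\cX$ has klt, hence in particular $\bQ$-Gorenstein, fibers and the total space is normal), so $\cO_\cX(-K_{\cX/C})$ admits a fourth root as a divisorial sheaf étale-locally, and then globalize by taking the closure of the generator of $\mathrm{Cl}(\bP^3)$ over the generic point and using that $\mathrm{Cl}$ of a normal variety is unaffected by removing codimension-two loci; alternatively one may cite the analysis of divisorial sheaves on $\bQ$-Fano families in \cite{Kol18,Kol19}. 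Once $L$ exists with the stated normalization, properties (1)–(3) are the semicontinuity arguments above and should be routine.
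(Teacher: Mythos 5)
Your construction of $L$ (closure of $\cO_{\bP^3}(1)$ from the general fiber, after trivializing $\cX$ over $B\setminus\{0\}$) matches the paper, and the smoothability claim via the Iskovskikh--Mori--Mukai classification is the same. The problems are in parts (1) and (3).

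For (1), the appeal to ``upper semicontinuity of depth / CM is an open condition in a flat family'' runs in the wrong direction: those results say the relative CM locus is open on $\cX$, not that the special fiber is contained in it. Knowing that the general fiber sheaf is locally free does not propagate to the special fiber. The argument the paper actually uses is: $\cX$ is klt (not merely CM) and $\cL$ is $\bQ$-Cartier, so $\cO_{\cX}(m\cL)$ is Cohen--Macaulay by \cite[Corollary~5.25]{KM98}; then, since $\cX_0$ is a Cartier divisor, $\cO_{\cX}(m\cL)\otimes\cO_{\cX_0}$ is CM, and this sheaf agrees with $L^{[m]}$ on a big open subset of $\cX_0$, hence everywhere because both are $S_2$. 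You note that $\cX$ is CM, but never invoke the klt property in the way that is actually needed. Relatedly, the claim that $\cL^{[m]}$ commutes with base change ``by Koll\'ar's condition'' is unjustified for general $m$ (Koll\'ar's condition concerns reflexive powers of $\omega$); in the paper the compatibility $L^{[m]}=\cL^{[m]}|_{\cX_0}$ is a \emph{consequence} of the CM argument, not an input.

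For (3), you have the direction of semicontinuity reversed: upper semicontinuity gives $h^i(X,L^{[m]})\geq h^i(\bP^3,\cO(m))$ at the special fiber, not $\leq$. Together with constancy of $\chi$, that inequality does not pin down each $h^i$ individually; for instance $h^0$ and $h^1$ could both jump by the same amount without changing $\chi$. What closes the gap is a vanishing theorem: the paper proceeds as in \cite[Proof of Theorem~3.1]{Liu20}, using Kawamata--Viehweg vanishing on the klt variety $X$ to kill $H^{>0}(X,L^{[m]})$ for $m>-4$, with Serre duality ($\omega_X\cong L^{[-4]}$) handling $m\leq-4$ and $\chi$-constancy then determining $h^0$ and $h^3$. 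Your Serre duality step is fine once vanishing is available, but the semicontinuity-plus-$\chi$ argument alone does not suffice.
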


\begin{proof}
From the Iskovskikh-Mori-Mukai classfication of smooth Fano threefolds \cite{IP99}, we know that $\bP^3$ is the only smooth Fano threefold with anti-canonical volume $64$. Hence $X$ admits a $\bQ$-Gorenstein smoothing $\pi:\cX\to B$ over a smooth pointed curve $0\in B$ such that $\cX_0\cong X$ and $\cX_b\cong\bP^3$ for $b\in B\setminus \{0\}$. Denote by $\cX^\circ:=\cX\setminus \cX_0$ and $B^\circ:=B\setminus \{0\}$. After a quasi-finite base change of $\pi$, we may assume that $\cX^\circ\cong \bP^3\times B^\circ$. Let $\cL^\circ$ be a Weil divisor on $\cX^\circ$ in  $|\cO_{\bP^3_{B^\circ}}(1)|$. Let $\cL$ be the Zariski closure of $\cL^\circ$ in $\cX$. Since $4\cL^\circ\sim_{B} -K_{\cX^\circ/B^\circ}$ and $\cX_0$ is a Cartier prime divisor, we know that $4\cL\sim_{B}-K_{\cX/B}$, in particular $\cL$ is $\bQ$-Cartier. Since $\cX$ is klt and $\cL$ is $\bQ$-Cartier, the sheaf $\cO_{\cX}(m\cL)$ is Cohen-Macaulay for any $m\in\bZ$ by \cite[Corollary 5.25]{KM98}. Let $L:=\cL|_{\cX_0}$, then $L$ is a $\bQ$-Cartier Weil divisor on $X$. Moreover, we have $\cO_{\cX}(m\cL)\otimes\cO_{\cX_0}$ and $\cO_{X}(mL)$ are isomorphic on a big open subset of $\cX_0$, hence they are isomorphic everywhere since $\cO_{\cX}(m\cL)\otimes\cO_{\cX_0}$ is Cohen-Macaulay. Thus part (1) is proved.

For part (2), notice that  $4\cL\sim_{B}-K_{\cX/B}$ implies $4L\sim -K_X$. We have $(L^3)=1$ since $(-K_X)^3=64$. Part (3) follows from Kawamata-Viehweg vanishing similar to \cite[Proof of Theorem 3.1]{Liu20}.
\end{proof}

From now on, we fix a number $\epsilon_0\in (0,1]$ from Lemma \ref{lem:forgetful-smooth} with $n=3$ and $r=1$.

Next we recall a result connecting K-stability and GIT stability as a special case of \cite[Theorem 1.2]{GMGS} and \cite[Theorem 1.4]{ADL19} (see also \cite{Zho21a}).

\begin{thm}[\cite{GMGS, ADL19}]\label{thm:K=GIT-smallc}
    There exists $\epsilon_2\in (0,1)$ such that for any rational number $c\in (0, \epsilon_2)$, a quartic surface $S\subset\bP^3$ is GIT (poly/semi)stable if and only if $(\bP^3, cS)$ is K-(poly/semi)stable.
\end{thm}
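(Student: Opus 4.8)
The plan is to prove this by interpolating between two extreme points: the classical GIT picture for quartic surfaces (large weight on the "surface" direction) and the K-stability of $\bP^3$ itself (weight $c \to 0$). The strategy follows the by-now standard comparison between K-moduli of log Fano pairs $(\bP^3, cD)$ for $c$ small and the GIT quotient of the corresponding linear system, originally carried out in \cite{ADL19} and \cite{GMGS}. First, I would set up the GIT side: the relevant parameter space is $\bP(H^0(\bP^3, \cO(4)))$ with its natural $\SL_4$-action, and $\ofM^{\GIT}$ is its GIT quotient with respect to the linearization $\cO(1)$ twisted by the $\SL_4$-character. A quartic surface $S$ is GIT (semi/poly)stable in the usual sense exactly when the corresponding point is.

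The core of the argument is the comparison of stability conditions, and I expect this to be the main obstacle. The key input is that for small $c$, the Futaki invariant (equivalently the $\beta$-invariant) of a test configuration of $(\bP^3, cS)$ can be expanded to first order in $c$, and the leading term at $c = 0$ is governed by K-stability of $\bP^3$, which is K-polystable and has Fano automorphism group $\PGL_4$. Because $\bP^3$ admits no nontrivial special degenerations — it is the unique smooth Fano threefold of volume $64$ by the Iskovskikh–Mori–Mukai classification, as recorded in Lemma \ref{lem:L-construct} — any test configuration of $(\bP^3, cS)$ with small $c$ that could destabilize must have central fiber still $\bP^3$. Such test configurations are exactly those coming from one-parameter subgroups of $\Aut(\bP^3) = \PGL_4$ acting on the divisor $S$, i.e. precisely the test configurations appearing in the Hilbert–Mumford criterion for GIT stability of $S \subset \bP^3$. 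Making this rigorous requires an ``openness'' argument showing that destabilizing test configurations have uniformly bounded complexity (so that one can pass to a limit in $c$), which is where one invokes boundedness of K-semistable Fano varieties together with the explicit linear-in-$c$ structure of the CM/Futaki invariant along a fixed test configuration.

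Next I would match the numerical invariants: along a one-parameter subgroup $\lambda$ of $\PGL_4$, the Donaldson–Futaki invariant of the product test configuration $(\bP^3, cS)$ equals $c$ times (a positive multiple of) the Mumford weight $\mu(S, \lambda)$ for the GIT problem, plus terms that vanish identically because $\bP^3$ is already K-polystable. Thus $\Fut \geq 0$ for all such $\lambda$ iff $S$ is GIT semistable, with equality characterizing the polystable locus; combined with the previous paragraph's reduction, this gives the semistable and polystable equivalences via Theorem \ref{thm:valuative} and Theorem \ref{thm:lwx-polystable}. Finally, since both sides form good moduli spaces of finite type and the equivalence of closed (polystable) points together with the equivalence of semistable points induces an isomorphism of stacks and hence of good moduli spaces, one concludes. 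I would cite \cite[Theorem 1.4]{ADL19} and \cite[Theorem 1.2]{GMGS} for the precise packaging, since the statement here is explicitly flagged as a special case of those results; the proof sketch above is essentially an outline of their argument specialized to $(\bP^3, \cO(4))$.
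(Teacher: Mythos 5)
The paper does not give a proof of this statement --- it is attributed to \cite{GMGS, ADL19} and cited as a special case of their results, so there is no internal argument to compare against; your sketch should therefore be judged against the strategy of those references, which you correctly name. Your outline is on track except for one genuine gap. You claim that ``$\bP^3$ admits no nontrivial special degenerations'' because it is the unique smooth Fano threefold of volume $64$. That inference is false: special test configurations allow singular (klt Fano) central fibers, and this very paper exhibits several nontrivial special degenerations of $\bP^3$, namely $X_h$, $\bP(1,1,2,4)$, and $X_u$ (see Theorem \ref{mthm:gordeg} and, e.g., Proposition \ref{prop:2Q-replace}). The correct input is that $\bP^3$, being K-\emph{polystable}, admits no nontrivial K-\emph{semistable} special degeneration, by Theorem \ref{thm:lwx-polystable}. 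To conclude that for small $c$ the central fiber of any K-polystable degeneration of $(\bP^3, cS)$ is again $\bP^3$, one must combine this with openness of K-semistability in $c$, finiteness of the K-moduli walls (Theorem \ref{thm:ADL19-wall}), and boundedness of the relevant $\bQ$-Gorenstein smoothable Fanos, so that as $c\to 0$ the limit Fano $X_0$ is forced to be K-semistable and therefore $\cong \bP^3$. You gesture at this (``bounded complexity,'' ``linear-in-$c$ structure of the CM/Futaki invariant''), but as written the step begins from a false premise. With that repaired, the remaining comparison --- that the Futaki invariant of $(\bP^3, cS)$ along a $\PGL_4$ one-parameter subgroup is $c$ times a positive multiple of the Hilbert--Mumford weight of $S$, closed off via the Paul--Tian criterion and ampleness of the CM line bundle (cf.\ Definition \ref{defn:logCM} and the use of \cite[Theorem 2.22]{ADL19} elsewhere in this paper) --- is indeed the right way to finish.
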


\begin{lem}\label{lem:ADE-K}
Let $S\subset \bP^3$ be a quartic surface.
\begin{enumerate}
    \item If $S$ has only ADE singularities, then $(\bP^3, cS)$ is K-stable for any  $c\in [0,1)\cap\bQ$.
    \item If $S$ is semi-log canonical, then $(\bP^3, cS)$ is K-semistable for any $c\in [0,1]\cap\bQ$.
\end{enumerate}
\end{lem}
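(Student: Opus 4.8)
The plan is to deduce both parts from stability of the ``log Calabi--Yau'' endpoint $(\bP^3, S)$ together with the interpolation principle for K-stability, which appears in \cite[Proposition 2.13]{ADL19} and is used repeatedly in the sketch of proofs. The starting observation is that $\bP^3$ is a smooth Fano threefold, hence K-polystable (this is classical, and follows e.g.\ from the fact that $\bP^n$ has a K\"ahler--Einstein metric; algebraically one can compute $\delta(\bP^3)=1$), and that for a quartic surface $S\subset\bP^3$ we have $S\sim -K_{\bP^3}$, so $(\bP^3, S)$ is a log Calabi--Yau pair. By the definition recalled just before Theorem \ref{thm:uKs-almost-logCY}, $(\bP^3, S)$ is K-semistable as a log Calabi--Yau pair precisely when it is log canonical, i.e.\ when $S$ has semi-log canonical singularities. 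This immediately frames part (2).

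For part (2): assume $S$ is slc. Then $(\bP^3, S)$ is a K-semistable log Calabi--Yau pair. Since $\bP^3$ itself (the $c=0$ endpoint) is K-polystable, interpolation of K-stability between the klt log Fano endpoint at $c=0$ and the K-semistable log Calabi--Yau endpoint at $c=1$ gives that $(\bP^3, cS)$ is K-semistable for all $c\in[0,1]\cap\bQ$. Concretely one applies the $\beta$-invariant criterion (Theorem \ref{thm:valuative}): for any prime divisor $E$ over $\bP^3$, the function $c\mapsto \beta_{(\bP^3,cS)}(E)$ is affine linear in $c$ (since $A_{(\bP^3,cS)}(E) = A_{\bP^3}(E) - c\,\ord_E(S)$ and, after rescaling, $S_{(\bP^3,cS)}(E)$ is also affine in $c$ because $-K_{\bP^3}-cS = (1-c)(-K_{\bP^3})$ so the volume function rescales by $(1-c)^3$); being nonnegative at the two endpoints forces nonnegativity on the whole interval. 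One should be slightly careful that the log canonical condition on $S$ is exactly what makes $\beta_{(\bP^3,S)}(E)\ge 0$ for all $E$ — this is the content of the equivalence between log canonicity and K-semistability for log CY pairs, cf.\ \cite{Oda13b}.

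For part (1): if $S$ has only ADE (= du Val = canonical) singularities, then in particular $S$ is slc, so by part (2) $(\bP^3, cS)$ is K-semistable for all $c\in[0,1)\cap\bQ$. To upgrade to K-stability on $[0,1)$, I would again use the endpoint analysis: at $c=0$, $\bP^3$ is K-polystable but not K-stable (it has infinite automorphisms), so one cannot simply invoke strict positivity at an endpoint. Instead, the cleanest route is to observe that ADE singularities are klt of discrepancy bounded away from the log canonical threshold, and to show directly that for $c$ close to $1$ the pair $(\bP^3, cS)$ is uniformly K-stable — this is exactly Theorem \ref{thm:uKs-almost-logCY} applied to $X=\bP^3$, $D=S$ (note $(\bP^3,S)$ is plt when $S$ is normal with ADE, in fact even canonical, so plt). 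That theorem yields uniform K-stability for $c\in(1-\epsilon_0,1)$. Then, since $(\bP^3,cS)$ is K-semistable for all $c\in[0,1)$ and uniformly K-stable (hence K-stable) for $c$ near $1$, I would argue that K-stability propagates down: if $(\bP^3,cS)$ were strictly K-semistable (K-semistable but not K-stable) for some $c_0\in(0,1)$, there would be a prime divisor $E$ with $\beta_{(\bP^3,c_0 S)}(E)=0$; by affine-linearity of $c\mapsto\beta_{(\bP^3,cS)}(E)$ and nonnegativity on $[0,1)$, this would force $\beta$ to be identically zero or to become negative, contradicting uniform K-stability near $c=1$ unless $\beta_{(\bP^3,cS)}(E)\equiv 0$; a divisor with $\beta\equiv 0$ would have to be a special divisor inducing a product-type degeneration, and for $\bP^3$ with a du Val quartic such a degeneration is ruled out by the finiteness of $\Aut(\bP^3,S)$ (again Theorem \ref{thm:uKs-almost-logCY}, last sentence) — more precisely, $\beta_{(\bP^3,S)}(E)=0$ would force $E$ to be an lc place of the log CY pair $(\bP^3,S)$, impossible since $S$ is klt. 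Handling $c=0$ separately (where the statement as written includes $c=0$: $(\bP^3, 0\cdot S)=\bP^3$ is K-polystable but not K-stable) suggests the intended reading is $c\in(0,1)$, or that ``K-stable'' at $c=0$ should be interpreted loosely; I would flag this and prove the honest statement for $c\in(0,1)\cap\bQ$.

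The main obstacle is part (1): getting strict positivity of $\beta$ uniformly, not just at isolated coefficients. The affine-linearity trick reduces everything to ruling out a divisor $E$ with $\beta_{(\bP^3,cS)}(E)\equiv 0$ on $[0,1)$; the key input that closes this is that $S$ is klt (indeed canonical), so $(\bP^3,S)$ has no lc places, combined with the fact that any $\beta$-minimizing degeneration at $c=1$ would be detected by $(\bP^3,S)$ being strictly lc — which ADE singularities are not. Equivalently, one invokes Theorem \ref{thm:uKs-almost-logCY} to get $\delta(\bP^3,cS)>1$ near $c=1$ and then uses $\delta(\bP^3,cS)\ge \min\{\delta(\bP^3,0), \lim_{c\to 1}\delta\} $-type concavity/interpolation of the $\delta$-invariant in $c$ (each $A/S$ ratio varies in a controlled, and for fixed $E$ monotone-or-affine-ish, way) to conclude $\delta>1$ on all of $(0,1)$, hence K-stability throughout. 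I expect the write-up to be short once the interpolation lemma from \cite{ADL19} and Theorem \ref{thm:uKs-almost-logCY} are cited.
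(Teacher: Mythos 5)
Your proof is correct but reaches part~(1) by a genuinely different route from the paper. The paper's proof of~(1) anchors K-stability at \emph{small} $c$: since a quartic with ADE singularities is GIT stable by Shah's theorem, Theorem~\ref{thm:K=GIT-smallc} gives that $(\bP^3,\epsilon S)$ is K-stable for $0<\epsilon\ll 1$; combined with plt-ness of the log~CY endpoint $(\bP^3,S)$ and interpolation (\cite[Prop.~2.13]{ADL19}), this yields K-stability on $(0,1)$. You instead anchor K-stability at $c$ \emph{near~$1$} via Theorem~\ref{thm:uKs-almost-logCY}, establish K-semistability on all of $[0,1)$ from part~(2), and then propagate K-stability into the interior by the affine-linearity of $c\mapsto\beta_{(\bP^3,cS)}(E)$ (here $-K_{\bP^3}-cS=(1-c)(-K_{\bP^3})$, so $S_{(\bP^3,cS)}(E)=(1-c)S_{\bP^3}(E)$ and the $\beta$-function is affine; a $\beta$ that vanishes at an interior point and is nonnegative on $[0,1)$ must vanish identically, contradicting $\beta>0$ near $c=1$). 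The paper's route is more elementary given that Theorem~\ref{thm:K=GIT-smallc} is already available, while yours avoids GIT input entirely at the cost of invoking Theorem~\ref{thm:uKs-almost-logCY}, which rests on Birkar's boundedness of complements. You also correctly observe that the lemma as printed claims K-stability at $c=0$, where $\bP^3$ is only K-polystable (not K-stable); the honest range for~(1) is $(0,1)$. One small stylistic note: the closing discussion about $\delta$-concavity is unnecessary and somewhat vague — the $\beta$-affine-linearity argument is already complete and does not need it — and the extra aside about ``special divisors'' and lc places, while not wrong, is redundant once $\beta_E\equiv 0$ is seen to contradict K-stability near $c=1$ directly.
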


\begin{proof}
We first prove part (1). Since $S$ has ADE singularities, it is GIT stable by \cite[Theorem 2.4]{Sha81}. Hence Theorem \ref{thm:K=GIT-smallc} implies that  $(\bP^3, \epsilon S)$ is K-stable for $0<\epsilon\ll 1$. Moreover, by adjunction we have that $(\bP^3, S)$ is plt. Hence part (1) follows from \cite[Proposition 2.13]{ADL19}.

Next we prove (2). In fact, inversion of adjunction implies that $(\bP^3, S)$ is log canonical since $S$ is slc. Then the result follows from \cite[Proposition 2.13]{ADL19} since $\bP^3$ is K-polystable. 
\end{proof}

Recall that $\sM^\circ$ and $\fM^\circ$ denote the modui stack and coarse moduli space of ADE quartic surfaces in $\bP^3$, respectively. Denote by $\sM^{\slc}$ the open substack of the GIT moduli stack $\osM^{\GIT}$ parametrizing quartic surfaces $S$ that are semi-log canonical (i.e. $(\bP^3, S)$ is log canonical).

\begin{prop}\label{prop:k-moduli-irred}
For any rational number $c\in (0,1)$, both $\osM_c^{\K}$ and $\ofM_c^{\K}$ are irreducible. Moreover, there are open immersions $\sM^\circ \hookrightarrow \sM^{\slc}\hookrightarrow\osM_c^{\K}$ whose images in $\osM_c^{\K}$ are saturated open substacks. Taking good moduli spaces yields open immersions $\fM^{\circ}\hookrightarrow\fM^{\slc}\hookrightarrow \ofM_c^{\K}$ where $\fM^{\slc}$ parametrizes GIT polystable slc quartic surfaces $S\subset \bP^3$. In particular, both $\sM^\circ$ and $\sM^{\slc}$ are saturated open substacks of $\osM^{\GIT}$.
\end{prop}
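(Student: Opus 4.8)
The plan is to deduce everything from the K-moduli theorem (Theorem \ref{thm:ADL19-moduli}) together with the interpolation results of Lemma \ref{lem:ADE-K} and the comparison with GIT in Theorem \ref{thm:K=GIT-smallc}. First I would establish irreducibility of $\osM_c^{\K}$ and $\ofM_c^{\K}$: by Lemma \ref{lem:forgetful-smooth} the forgetful morphism $\osM_c^{\K}=\cK\cM_{\chi_0,1,c}\to\ocM_{3,64}^{\rm sm}$ is smooth with connected fibers, and by Lemma \ref{lem:L-construct} (using the Iskovskikh--Mori--Mukai classification) the only smooth Fano threefold of anti-canonical volume $64$ is $\bP^3$, so $\ocM_{3,64}^{\rm sm}$ is irreducible; a smooth surjection with connected fibers from an irreducible base target forces $\osM_c^{\K}$ to be irreducible, and hence so is its good moduli space $\ofM_c^{\K}$. (One must also note the stack is nonempty: by Lemma \ref{lem:ADE-K}(1), $(\bP^3,cS)$ for a smooth quartic $S$ is K-stable, so it defines a point of $\osM_c^{\K}$.)

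Next I would construct the open immersions. Lemma \ref{lem:ADE-K}(2) says that if $S$ is slc then $(\bP^3,cS)$ is K-semistable for all $c\in[0,1]\cap\bQ$; combined with Lemma \ref{lem:ADE-K}(1) this gives a family of K-semistable log Fano pairs over $\sM^{\slc}$, hence (by the universal property / representability in Theorem \ref{thm:ADL19-moduli}) a morphism of stacks $\sM^{\slc}\to\osM_c^{\K}$, restricting to $\sM^{\circ}\to\osM_c^{\K}$. To see these are open immersions, I would argue that the pairs $(\bP^3,cS)$ with $S$ slc (resp. ADE) are exactly the K-semistable pairs in $\osM_c^{\K}$ whose underlying variety is $\bP^3$: the underlying-variety map $\osM_c^{\K}\to\ocM_{3,64}^{\rm sm}$ has $\bP^3$ as a point with automorphism group $\PGL_4$, and the fiber over $[\bP^3]$ is the quotient stack $[U/\PGL_4]$ where $U\subset|\cO_{\bP^3}(4)|$ is the locus of $S$ with $(\bP^3,cS)$ K-semistable; by Theorem \ref{thm:K=GIT-smallc} for $c\ll 1$ this locus is the GIT-semistable locus, and by Lemma \ref{lem:ADE-K} the slc (resp. ADE) locus is always contained in $U$ and is open, so we get an open substack. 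Saturatedness (i.e. that the preimage of its image under $\osM_c^{\K}\to\ofM_c^{\K}$ is itself) follows because a K-semistable degeneration of $\bP^3$ inside $\osM_c^{\K}$ that is S-equivalent to some $(\bP^3,cS)$ with $S$ slc must again have underlying variety $\bP^3$ (the K-polystable representative in its S-equivalence class is a pair on $\bP^3$, as $\bP^3$ is K-polystable and admits no nontrivial special degeneration to another K-polystable Fano by rigidity of $\bP^3$ — cf. Theorem \ref{thm:uKs-almost-logCY} giving finiteness only after perturbation, but here one uses that any special degeneration of $(\bP^3,cS)$ for small $c$ must be $(\bP^3,cS')$). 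Passing to good moduli spaces turns open immersions of saturated open substacks into open immersions $\fM^{\circ}\hookrightarrow\fM^{\slc}\hookrightarrow\ofM_c^{\K}$, and the identification of $\fM^{\slc}$ with the space of GIT-polystable slc quartics comes from Theorem \ref{thm:K=GIT-smallc}: for $c\ll 1$, $\osM_c^{\K}$ restricted over $[\bP^3]$ is the GIT stack, and the slc locus is independent of $c$, so it is the GIT-polystable slc locus for all $c$.

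Finally, the last sentence — that $\sM^{\circ}$ and $\sM^{\slc}$ are saturated open substacks of $\osM^{\GIT}$ — follows by the same reasoning applied on the GIT side: the slc locus is GIT-semistable (Shah, cited via \cite{Sha81}/\cite{Sha80}), it is open, and it is saturated for the GIT quotient map because the closure of a GIT orbit of an slc quartic inside the semistable locus consists of slc quartics (insignificant limit singularities are preserved under the relevant degenerations — this is exactly Shah's characterization), so the polystable representative in each S-equivalence class of an slc quartic is again slc. I expect the main obstacle to be the careful verification of \emph{saturatedness}: one must rule out that a K-semistable (or GIT-semistable) pair whose orbit closure meets the slc locus could itself fail to be slc, which requires invoking the precise structure of the K-polystable / GIT-polystable degenerations — here one leans on the fact that any K-semistable special degeneration of $(\bP^3,cS)$ with $c$ small stays on $\bP^3$ by Theorem \ref{thm:K=GIT-smallc} and \cite{GMGS}, and that the slc condition is closed under the resulting isotrivial-in-the-base degenerations of the divisor, i.e. Shah's insignificant-limit-singularity analysis.
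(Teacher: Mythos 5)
Your treatment of irreducibility and of the open-immersion part is essentially correct and matches the paper in spirit (forgetful map to the irreducible base $\ocM_{3,64}^{\rm sm}$, and openness of the klt/lc conditions — though the paper establishes openness directly via openness of lc rather than via the fiber-over-$[\bP^3]$ picture). The genuine gap is in the saturatedness step, for two reasons.

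First, for $\sM^\circ$: the clean argument is that an ADE quartic $S$ gives a K-\emph{stable} pair $(\bP^3,cS)$ for every $c\in(0,1)$ (Lemma \ref{lem:ADE-K}(1)), so every $\bC$-point of $\sM^\circ$ is already closed with finite stabilizer in $\osM_c^{\K}$, and saturatedness is automatic. Your appeal to ``rigidity of $\bP^3$'' is misleading: even though $\bP^3$ is K-polystable as a Fano, the log pair $(\bP^3,cS)$ can specially degenerate to a pair on a different threefold (e.g.\ $(\bP^3,\tfrac{1}{3}\cdot 2Q)\rightsquigarrow(X_h,\tfrac{1}{3}\cdot 2Q_\infty)$ in Proposition \ref{prop:2Q-replace}), so ``the underlying variety must stay $\bP^3$'' is false in general and has to be earned from K-\emph{stability} of the pair, not from K-polystability of the underlying threefold.

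Second, and more seriously, for $\sM^{\slc}$: you acknowledge that your special-degeneration argument only works for $c$ small (``any special degeneration of $(\bP^3,cS)$ for small $c$ must be $(\bP^3,cS')$''), but the proposition asserts saturatedness for \emph{all} $c\in(0,1)$. For an slc but non-ADE $S$, the pair $(\bP^3,cS)$ is K-semistable but not K-stable, so it genuinely admits a nontrivial K-polystable degeneration, and one must show that degeneration is again $(\bP^3,cS_0)$ with $S_0$ slc. The missing ingredient is the Futaki-linearity trick: take the GIT-polystable degeneration $S_0$ of $S$; by Theorem \ref{thm:K=GIT-smallc} the test configuration has $\Fut=0$ at coefficient $\epsilon$, and since $\Fut$ is linear in the coefficient, $\Fut=0$ for all $c$; then \cite[Lemma 3.1]{LWX18} gives K-semistability of $(\bP^3,cS_0)$ for all $c$, which forces $S_0$ to be slc, and interpolation then shows $(\bP^3,cS_0)$ is the unique K-polystable degeneration of $(\bP^3,cS)$ for every $c\in(0,1)$. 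Without this step, the saturatedness claim for general $c$ is unproved. Finally, the last sentence of the proposition is not an independent GIT fact via Shah but is read off immediately from $\osM_\epsilon^{\K}\cong\osM^{\GIT}$ once the K-moduli saturatedness is in hand, so your separate GIT argument, while plausible, is not the intended (or needed) route.
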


\begin{proof}
By Lemma \ref{lem:L-construct} we know that $\ocM_{3,64}^{{\rm sm},\delta\geq \epsilon_0}$ is irreducible. Thus $\osM_c^{\K}$ is irreducible since the forgetful map $\osM_c^{\K}\to \ocM_{3,64}^{{\rm sm},\delta\geq \epsilon_0}$ is smooth with connected fibers by Lemma \ref{lem:forgetful-smooth}. By Lemma \ref{lem:ADE-K} (2), we know that any $[S]\in \sM^{\slc}$ satisfies that $(\bP^3, cS)$ is K-semistable for any $c\in (0,1)$. Thus by openness of klt and lc (see \cite[Corollary 7.6]{SingularitiesOfPairs}), we know that both $\sM^\circ$ and $\sM^{\slc}$ are dense open substacks of $\osM_c^{\K}$. 

Next, we show saturatedness of  $\sM^\circ$ in $\osM_c^{\K}$. If $S$ is an ADE quartic surface, then Lemma \ref{lem:ADE-K} implies that $(\bP^3, cS)$ is K-stable for any $c\in (0,1)$. Thus all $\bC$-points in $\sM^\circ$ are closed with finite stabilizers, which implies that  $\sM^\circ$ is saturated in $\osM_c^{\K}$. 

Finally, we show saturatedness of  $\sM^\slc$ in $\osM_c^{\K}$. Let $S$ be a slc quartic surface. Since $(\bP^3,c S)$ is K-semistable for any $c\in (0,1)$ from the above discussion, by Theorem \ref{thm:K=GIT-smallc} we know that $S$ is GIT semistable. Let $S_0$ be the unique GIT polystable quartic surface in the orbit closure of $S$. Then by Theorem \ref{thm:K=GIT-smallc} we know that $(\bP^3, \epsilon S_0)$ is K-polystable for $0<\epsilon\ll 1$. Denote by $(\bP^3\times \bA^1, \cS)$ the test configuration of $(\bP^3, S)$ degenerating to $(\bP^3, S_0)$. Hence we have $\Fut(\bP^3\times\bA^1, \epsilon\cS)=0$. Since $\Fut$ is linear in coefficients, we know that $\Fut(\bP^3\times\bA^1, c\cS)=0$ for any $c\in (0,1)$. Hence $(\bP^3, cS_0)$ is K-semistable for any $c\in (0,1)$ by \cite[Lemma 3.1]{LWX18} which implies that $S_0$ is slc. Thus interpolation of K-stability \cite[Proposition 2.13]{ADL19} implies that $(\bP^3, cS_0)$ is the unique K-polystable degeneration of $(\bP^3, cS)$ for any $c\in (0,1)$. Since $S_0\in \sM^\slc$, we have that $\sM^\slc$ is saturated in $\osM_c^{\K}$. The last statement follows from  $\osM_{\epsilon}^{\K}\cong \osM^{\GIT}$ by Theorem \ref{thm:K=GIT-smallc}.
\end{proof}

\begin{defn}
The \emph{K-semistable threshold} of a quartic surface $S\subset\bP^3$ is defined as
\[
\kst(\bP^3, S):= \sup\{c\in [0,1]\mid (\bP^3, cS)\textrm{ is K-semistable}\}.
\]
If $S$ is GIT semistable, then by Theorem \ref{thm:K=GIT-smallc} and \cite[Theorem 3.15]{ADL19} we know that $\kst(\bP^3,S)\in (0,1]$ is a rational number, and the supremum is a maximum.
\end{defn}

 By Lemma \ref{lem:ADE-K}, we know that $\kst(\bP^3, S)=1$ if and only if $S$ is slc. If, in addition, $S$ is GIT polystable, then by interpolation of K-stability \cite[Proposition 2.13]{ADL19} and Theorem \ref{thm:K=GIT-smallc}, we know that for any $c\in [0, \kst(\bP^3,S))$, the log pair $(\bP^3, cS)$ is K-polystable. If a GIT polystable quartic surface $S$ is not slc, i.e. $\kst(\bP^3, S)<1$, then $(\bP^3, \kst(\bP^3, S) S)$ is K-semistable but not K-polystable by \cite[Proposition 3.18]{ADL19}.
 
% \footnote{YL: add a wall crossing result from ADL19.}

The following theorem follows directly from \cite[Theorem 1.2]{ADL19} and Proposition \ref{prop:k-moduli-irred}.

\begin{thm}\label{thm:ADL19-wall}
There exist rational numbers
$0=c_0<c_1<c_2<\cdots<c_k=1$
such that for each $0\leq i\leq k-1$, both the K-moduli stack $\osM_{c}^{\K}$ and the K-moduli space $\ofM_c^{\K}$ are independent of the choice of the rational number $c\in (c_i, c_{i+1})$. Moreover, for each $1\leq i\leq k-1$ and $0<\epsilon\ll 1$ we have open immersions 
\[
\osM_{c_i-\epsilon}^{\K} \hookrightarrow \osM_{c_i}^{\K} \hookleftarrow \osM_{c_i+\epsilon}^{\K}.
\]
which induce projective birational morphisms
\[
\ofM_{c_i-\epsilon}^{\K} \xrightarrow{\phi_i^-} \ofM_{c_i}^{\K} \xleftarrow{\phi_i^+} \ofM_{c_i+\epsilon}^{\K}.
\]
In addition, all the above morphisms have local VGIT presentations in terms of \cite[(1.2)]{AFS17}.
\end{thm}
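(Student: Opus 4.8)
Theorem \ref{thm:ADL19-wall}: for the family $c\mapsto(\osM_c^{\K},\ofM_c^{\K})$ attached to quartic surfaces, there is a finite wall decomposition $0=c_0<c_1<\cdots<c_k=1$ with the stack/space constant on each open chamber, open immersions $\osM_{c_i-\epsilon}^{\K}\hookrightarrow\osM_{c_i}^{\K}\hookleftarrow\osM_{c_i+\epsilon}^{\K}$, projective birational morphisms $\phi_i^\pm$ on good moduli spaces, and local VGIT presentations.

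\textbf{Strategy.} The plan is to obtain the wall structure together with the maps $\phi_i^{\pm}$ as a direct specialization of the general K-moduli wall-crossing theorem for $\bQ$-Gorenstein smoothable log Fano pairs, and then to upgrade these maps from proper to birational using the irreducibility in Proposition~\ref{prop:k-moduli-irred}. Concretely, I would apply \cite[Theorem 1.2]{ADL19} to the data $\chi_0$ (the Hilbert polynomial of $(\bP^3,\cO_{\bP^3}(4))$) and $r=1$, i.e.\ to the one-parameter family of K-moduli stacks $c\mapsto \cK\cM_{\chi_0,1,c}=\osM_c^{\K}$, $c\in(0,1)\cap\bQ$. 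This yields a finite wall set $0=c_0<c_1<\cdots<c_k=1$ with: $\osM_c^{\K}$ and $\ofM_c^{\K}$ constant for $c$ in each open chamber $(c_i,c_{i+1})$; for $0<\epsilon\ll 1$ open immersions of Artin stacks $\osM_{c_i-\epsilon}^{\K}\hookrightarrow\osM_{c_i}^{\K}\hookleftarrow\osM_{c_i+\epsilon}^{\K}$ (the point being that $\beta_{(X,cD)}(E)$ varies continuously in $c$, so K-semistability on one side of a wall is preserved at the wall itself); the morphisms $\phi_i^{\pm}$ on good moduli spaces, induced by the universal property of good moduli spaces applied to the composites $\osM_{c_i\pm\epsilon}^{\K}\to\osM_{c_i}^{\K}\to\ofM_{c_i}^{\K}$; and the local VGIT presentations of \cite[(1.2)]{AFS17}. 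Each $\ofM_c^{\K}$ is projective by Theorem~\ref{thm:ADL19-moduli} (via \cite{XZ19}), so the $\phi_i^{\pm}$ are projective automatically. As for the endpoints: near $c=0$, Theorem~\ref{thm:K=GIT-smallc} exhibits $(0,\epsilon_0)$ as a single chamber with $\osM_c^{\K}\cong\osM^{\GIT}$, so no walls accumulate at $0$; and for $c<1$ one has $-K_X-cD\sim_{\bQ}(1-c)(-K_X)$ since $D\sim_{\bQ}-K_X$ (here $r=1$), which stays ample, so the relevant family of log Fano pairs stays bounded and only finitely many walls occur.

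\textbf{Birationality via Proposition~\ref{prop:k-moduli-irred}.} By Proposition~\ref{prop:k-moduli-irred}, each $\osM_c^{\K}$ and $\ofM_c^{\K}$ is irreducible, and $\sM^{\circ}$ (the moduli stack of quartic surfaces with only ADE singularities, which does not depend on $c$) is a dense saturated open substack of $\osM_c^{\K}$ for every $c\in(0,1)$; passing to good moduli spaces, $\fM^{\circ}$ is a dense open subscheme of $\ofM_c^{\K}$, again independent of $c$. By Lemma~\ref{lem:ADE-K}(1), for such $S$ the pair $(\bP^3,cS)$ is K-stable for all $c\in[0,1)\cap\bQ$; hence the open immersions of stacks above restrict to the identity on $\sM^{\circ}$, and therefore each $\phi_i^{\pm}$ restricts to the identity over the dense open $\fM^{\circ}\subset\ofM_c^{\K}$. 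A proper morphism between irreducible schemes that is an isomorphism over a dense open subset is birational, so each $\phi_i^{\pm}$ is projective and birational, as asserted.

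\textbf{Main obstacle.} There is no real difficulty in the combination itself: the substance is already packaged inside \cite[Theorem 1.2]{ADL19} (which encodes properness, projectivity of the good moduli space, the CM-line-bundle GIT structure, and the local VGIT presentations) and inside Proposition~\ref{prop:k-moduli-irred} (whose proof rests on boundedness, interpolation of K-stability, and the comparison with GIT of Theorem~\ref{thm:K=GIT-smallc}). The only point requiring a small independent check is the \emph{global} finiteness of the wall set on the open interval $(0,1)$ rather than mere local finiteness in compact subintervals: one must rule out accumulation of walls at the two endpoints, which is handled at $c=0$ by Theorem~\ref{thm:K=GIT-smallc} and at $c=1$ by the ampleness of $(1-c)(-K_X)$ for $c<1$ keeping the family bounded. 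Everything else is a direct quotation of \cite[Theorem 1.2]{ADL19} together with Proposition~\ref{prop:k-moduli-irred} and Lemma~\ref{lem:ADE-K}.
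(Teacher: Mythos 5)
Your proposal is correct and follows the paper's own route: the paper's proof is precisely the observation that the theorem is a direct consequence of \cite[Theorem 1.2]{ADL19} combined with Proposition~\ref{prop:k-moduli-irred}, which is exactly how you organize your argument. Your extra remarks on birationality via the common dense open $\fM^{\circ}$ and on non-accumulation of walls at the endpoints are correct fleshing-out of details already encapsulated in those two cited results, not a deviation in method.
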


\subsection{GIT stratification of quartic surfaces}\label{sec:GIT-quartic}

By Proposition \ref{prop:k-moduli-irred}, the GIT moduli space $\ofM^{\GIT}$ has an open subset $\fM^{\slc}$ parametrizing GIT polystable quartic surfaces with slc singularities. 
From \cite{Sha81, LO18b}, we know that the complement $\ofM^{\GIT}\setminus \fM^{\slc}$ (denoted by $\fM^{IV}$ therein) has two connected components, where one of them is an isolated point $\{[T]\}$ representing the tangent developable surface $T$ (see Definition \ref{def:tangentdevelopable}), and the other component
has the following stratification
\begin{equation}\label{eq:stratification}
(\ofM^{\GIT}\setminus \fM^{\slc})\setminus \{[T]\}=W_8\supset W_7\supset W_6\supset W_4\supset W_3\supset W_2\supset W_1\supset W_0.
\end{equation}
Here $W_i$ is an $i$-dimensional closed integral subvariety of $\ofM^{\GIT}$, and $W_0=\{[2Q]\}$ is the single point representing the double quadric surface. For each $i\in \{0,1,2,3,4,6,7,8\}$, we denote by $W_i^{\circ}:= W_i\setminus W_{i-1}$ when $i\not\in \{0, 6\}$, $W_6^{\circ}:= W_6\setminus W_4$, and $W_0^{\circ}=W_0$.

The following result follows from the classification of Shah \cite[S-4.3 on Page 282]{Sha81} (see also \cite[Section 4.3]{LO18b}).

\begin{thm}[\cite{Sha81}]\label{thm:shah}
Let $[S]\in W_i^{\circ}$ for $i\in \{0,1,2,3,4,6,7,8\}$. Then in suitable projective coordinates $[x_0,x_1,x_2,x_3]$ of $\bP^3$, the equation of $S$ has the form $q^2+g=0$ where $q$ and $g$ are given as follows.
\begin{align*}
q & = x_0 x_2 + x_1^2 +a x_3^2,\\
g & = \begin{cases} x_3^3 (x_0 + \beta_1(x_1, x_2, x_3)) +
x_2 ( x_3^2 f_1(x_1,x_2) + x_2 x_3 g_1(x_1, x_2) + x_2^2 h_1(x_1,x_2)), & \textrm{if } i\geq 3;\\
x_3^3 l_1(x_1,x_3), & \textrm{if } i\leq 2.
\end{cases}
\end{align*}
Here $\beta_1$, $f_1$, $g_1$, $h_1$, and $l_1$ are homogeneous linear polynomials in corresponding variables.
More precisely, we have the following classification.
\begin{enumerate}
    \item $[S]\in W_8^\circ$ if and only if  $x_2\nmid h_1$;
    \item $[S]\in W_7^\circ$ if and only if $x_2 \mid h_1$ and $x_2\nmid g_1$;
    \item $[S]\in W_6^\circ$ if and only if $x_2\mid h_1\neq 0$ and $x_2\mid g_1$;
    \item $[S]\in W_4^\circ$ if and only if $h_1= 0$, and either $x_2\mid g_1\neq 0$ or $x_2\nmid f_1$;
    \item $[S]\in W_3^\circ$ if and only if $h_1=g_1=0$ and $x_2\mid f_1\neq 0$;
    \item $[S]\in W_2^\circ$ if and only if $x_3\nmid l_1$;
    \item $[S]\in W_1^\circ$ if and only if $x_3\mid l_1\neq 0$;
    \item $[S]\in W_0^\circ$ if and only if $g= 0$ and $a\neq 0$.
\end{enumerate}
\end{thm}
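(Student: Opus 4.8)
The plan is to deduce the statement from Shah's classification of semistable quartic surfaces \cite{Sha81} — specifically the table ``S-4.3'' on page~282 — reorganized by dimension as in \cite[Section 4.3]{LO18b}, so that the real content is to identify Shah's normal forms with the strata $W_i^\circ$. I would begin from the $\SL_4$-action on the $34$-dimensional linear system $|\cO_{\bP^3}(4)|$ of quartic surfaces. Shah's analysis of the semistable locus shows that a GIT-semistable quartic $S$ falls into exactly one of three classes: $S$ has only insignificant limit singularities, so that $(\bP^3,S)$ is log canonical and $[S]\in\fM^{\slc}$; or $S$ is projectively equivalent to the tangent developable $T$ of a twisted cubic, whose $\SL_2$-orbit (via the third symmetric power $\SL_2\to\PGL_4$) is the isolated polystable point $\{[T]\}$; or $S$ lies in the remaining component of $\ofM^{\GIT}\setminus\fM^{\slc}$, i.e. the one stratified by the $W_i$ in \eqref{eq:stratification}. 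The whole statement concerns this last component.

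The first step is to extract the normal form $F=q^2+g$. Here I would follow Shah: such an $S$ fails to be log canonical, and tracing the resulting worse-than-canonical singular locus produces a distinguished quadric $Q=\{q=0\}$ to which $S$ is bitangent along a curve, so that $F=q^2+g$ for a quadratic form $q$ of rank $\ge 3$ and a quartic form $g$ (this is also where the relation to hyperelliptic quartic K3 surfaces, which double-cover a quadric, enters). Using $\GL_4$ and a rescaling I would put $q$ into the family $x_0x_2+x_1^2+ax_3^2$, which realizes both the rank-$4$ locus ($a\neq0$) and the rank-$3$ locus ($a=0$) at once. The residual freedom is then the stabilizer $\Gamma:=\Stab_{\PGL_4}(Q)$, and the constraint that $[S]$ lie in the GIT boundary of the double quadric translates into a bound on the weights of the monomials of $g$ relative to the one-parameter subgroup $\mathrm{diag}(3,0,-3,-1)$ (when $a=0$). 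Combining this weight bound with a normalization by $\Gamma$ clears every monomial of $g$ except those displayed in the theorem and reduces the surviving coefficients to the homogeneous linear factors $\beta_1,f_1,g_1,h_1,l_1$; the dichotomy ``$i\ge 3$ versus $i\le 2$'' records whether $g$ still meets the plane $\{x_2=0\}$, or has collapsed onto the line $\{x_0=x_2=0\}$ where $S$ acquires a more degenerate double curve.

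The second step matches divisibility patterns to strata. For each of the eight cases I would compute the singularity type of $S=\{q^2+g=0\}$ along its double curve — these are precisely the simple-elliptic, cusp and degenerate-cusp singularities appearing in Shah's table — and the dimension of the $\Gamma$-orbit space of the corresponding family of $g$'s, checking that it equals the asserted value of $i$; and I would check that imposing the next divisibility condition (passing from $x_2\nmid h_1$ to $x_2\mid h_1$, then to $h_1=0$, and so on) specializes a general member of $W_i^\circ$ into the closure of $W_{i-1}^\circ$, which reproduces the nesting \eqref{eq:stratification}. Since each $W_i$ is irreducible of dimension $i$ and the eight cases are exhaustive and mutually exclusive, this forces the divisibility conditions to cut out exactly the $W_i^\circ$, with $W_0^\circ=\{[2Q]\}$ sitting at $a\neq0$, $g=0$.

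The main obstacle is really Shah's underlying Hilbert--Mumford computation — the exhaustive one-parameter-subgroup analysis of quartic forms in four variables together with the identification of the minimal orbit representatives — which I would take as input from \cite{Sha81} rather than redo. Granting that, the remaining difficulty is sharpness: one must verify that $\Gamma$ does not identify any two of the displayed normal forms beyond what the divisibility strata predict, that the transition $a\to0$ is compatible with the stratum boundaries (in particular that $g\neq0$ is forced as soon as $i\ge1$, so that $W_0^\circ$ is genuinely isolated at the bottom of the chain), and that the dimensions come out exactly $0,1,2,3,4,6,7,8$ — the absence of a $5$-dimensional stratum, i.e. the jump from $W_4$ directly to $W_6$, being a genuine feature of the residual-group orbit structure rather than an omission. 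This bookkeeping is exactly the translation of Shah's page~282 table into the present coordinates carried out in \cite[Section 4.3]{LO18b}, which one may also simply cite.
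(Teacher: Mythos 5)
Your proposal takes essentially the same route as the paper: the paper itself gives no proof of Theorem~\ref{thm:shah} but states that it ``follows from the classification of Shah \cite[S-4.3 on Page 282]{Sha81} (see also \cite[Section 4.3]{LO18b}),'' and your sketch is a plausible expository unpacking of exactly that citation chain, deferring Shah's Hilbert--Mumford analysis and the Laza--O'Grady coordinate translation to the same two references. Since you explicitly acknowledge that the substantive input is taken from \cite{Sha81} and that the final bookkeeping is \cite[Section 4.3]{LO18b}, there is no genuine divergence from the paper's treatment.
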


\subsection{Laza-O'Grady and the Hassett-Keel-Looijenga Program}\label{sec:laza-ogrady}
The moduli space of quartic K3 surfaces can be constructed as a Type IV locally symmetric variety $\sF$, and comes with a natural Baily-Borel compactification $\sF^*$. The global Torelli theorem for K3 surfaces implies that the period map $\fp: \ofM^{\GIT} \dashrightarrow \sF^*$ is birational.  Building off of previous work of Shah \cite{Sha80} and Looijenga \cite{Loo031,Loo03}, in a series of papers \cite{LO19, LO18b, LO18a} Laza and O'Grady propose a conjectural method to resolve the period map $\fp$ whenever $\sF$ is a Type IV locally symmetric variety associated to a lattice of the form $U^2 \oplus D_{N-2}$. When $N = 18$ this is the case of hyperelliptic quartic K3 surfaces, and when $N = 19$ this is the case of quartic K3 surfaces. 

Recall that Baily-Borel showed that (for any $N$) one has $\sF^* \cong \Proj~ R(\sF, \lambda)$, where $\lambda$ denotes the hodge line bundle. Based on observations of Looijenga, Laza-O'Grady predict that in many cases  there is an isomorphism $\ofM^{\GIT} \cong \Proj~ R(\sF, \lambda + \Delta)$, for some geometrically meaningful boundary divisor $\Delta$ depending on $\sF$. 
Moreover, they predict that more generally the rings $R(\sF, \lambda + \beta \Delta)$ are finite generated and so the  schemes $\sF(\beta) = \Proj ~ R(\sF, \lambda + \beta \Delta)$  interpolate between $\ofM^{\GIT}$ and $\sF^*$. Their work also predicts the location of the walls, i.e. the values $\beta$ where the moduli spaces change. 

\subsubsection{Hyperelliptic quartic K3 surfaces}

When $N=18$, i.e. the \emph{hyperelliptic case}, Laza and O'Grady confirm their conjecture in \cite{LO19}. By Definition \ref{def:mayerK3}, this case occurs when the K3 surface is a double cover of $\bP^1 \times \bP^1$ ramified along a $(4,4)$ curve. If we let $\ofM_{(4,4)}^{\GIT}$ denote the GIT quotient of $(4,4)$ curves on $\bP^1 \times \bP^1$, then Laza and O'Grady show that the period map $\fp: \ofM_{(4,4)}^{\GIT} \dashrightarrow \sF^*(18)$ can be resolved via a series of explicit wall crossings arising from variation of GIT. Let $H_{h,18} \subset \sF(18)$ denote the divisor which parametrizes periods of hyperelliptic K3s which are the double cover of a quadric cone.

If $\mathrm{Reg}(\fp) \subset \ofM_{(4,4)}^{\GIT}$ denotes the regular locus of $\fp$, then $\fp(\mathrm{Reg}(\fp)) \cap \sF(18) \cong  \sF(18) \setminus H_{h, 18}$. Laza and O'Grady prove that $R(\beta) := R(\sF(18), \lambda + \beta \cdot \frac{H_{h, 18}}{2})$ is a finitely generated $\mathbb{C}$-algebra, and that $\sF_{18}(\beta) := \Proj ~ R(\beta)$ is a projective variety which interpolates between $\sF_{18}(0) \cong \sF(18)^*$, the Baily-Borel compactification, and $\sF_{18}(1) \cong \ofM_{(4,4)}^{\GIT}$, the GIT quotient. Moreover, the period map can be explicitly described as a composition of elementary birational maps. The first step $\sF_{18}(\epsilon) \to \sF_{18}(0)$ can be realized as the $\mathbb{Q}$-factorialization of $\sF(18)^*$, which fails to be $\mathbb{Q}$-factorial along $H_{h,18}$. The remainder of the birational transformations are flips, finally followed by a divisorial contraction $\sF_{18}(1-\epsilon) \to \ofM_{(4,4)}^{\GIT}$.   

In \cite{ADL20}, we show that the wall crossings resolving the period map $\fp$ can be interpreted as wall-crossings in a suitable K-moduli space of log Fano pairs. Let $\ocK_{c}$ denote the connected component of the moduli stack parametrizing K-semistable log Fano pairs admitting $\bQ$-Gorenstein smoothings to $(\bP^1 \times \bP^1, cC)$ where $C$ is a $(4,4)$ curve. Denote the good moduli space of $\ocK_c$ by $\oK_c$. Then, varying the weight $c$, the K-moduli spaces $\oK_c$ interpolate between $\ofM_{(4,4)}^{\GIT}$ and $\sF(18)^*$. Moreover, the explicit intermediate spaces constructed in \cite{LO19} using variation of GIT are all isomorphic to K-moduli spaces, and the walls coincide.

\subsubsection{General conjecture}

In fact, Laza and O'Grady conjecture that a similar behavior that is shown in \cite{LO19} is true for many classes of varieties whose moduli space can be constructed as a Type IV locally symmetric variety. They expect that, as in the hyperelliptic case, if $\fp: \ofM^{\GIT} \dashrightarrow \sF^*$ represents the relevant birational period map from a GIT compactification to Baily-Borel, then $\fp(\mathrm{Reg}(\fp) )\cap \sF \cong  \sF \setminus \Delta$, for some geometrically meaningful divisor $\Delta$. Moreover, $R(\beta) := R(\sF, \lambda + \beta \cdot \Delta)$ should be finitely generated and so $\sF(\beta) := \Proj~ R(\beta)$ should interpolate between $\ofM^{\GIT}$ and $\sF^*$. Moreover, they conjecture that interpolating from $\sF^*$ to $\ofM^{\GIT}$ should consist of birational transformations related to $\Delta$. More precisely, let $\mathscr{H} := \pi^{-1} \Supp \Delta$, where $\pi: \sD \to \sF$. Then $\mathscr{H}$ is a union of hyperplane sections of $\sD$, and has a stratification by closed subsets, where the stratification is given by the number of independent sheets of $\mathscr{H}$ containing the general point. Then, the stratification of $\mathscr{H}$ induces a stratification of $\Supp \Delta$.

With this in mind, the prediction for resolving $\fp$ is as follows: $\mathbb{Q}$-factorialize $\Delta$, followed by a series of explicit flips of strata inside $\Delta$, followed by a divisorial contraction of the strict transform of $\Delta$ to obtain $\ofM^{\GIT}$.  

\subsubsection{Quartic K3 surfaces}
Now let $\ofM^{\GIT}$ denote the GIT moduli space of quartic surfaces. We recall some notation from \cite{LO19}. For quartic K3 surfaces, we have that the K3 lattice $\Lambda \cong U^2 \oplus D_{17}$. Consider 
\[ \sD = \{ \left[ \sigma \right] \in \bP(\Lambda \otimes \bC) \mid \sigma^2 = 0 , (\sigma + \overline{\sigma})^2 > 0\}^+,\] where the superscript $+$ indicates that we have taken one of the two connected components. Let $\mathrm{O}^+(\Lambda)$ denote the subgroup of isometries $\mathrm{O}(\Lambda)$ of $\Lambda$ which fixes $\sD$. Then $\sF \cong \sD / \mathrm{O}^+(\Lambda)$ is the period space for quartic K3 surfaces (see \cite[Secion 1.2]{LO19}). 

\begin{definition}Thet \emph{hyperelliptic divisor} $H_h \subset \sF$ is the image of $v^\perp \cap \sD$ for $v \in \Lambda$ such that $q(v) = -4$ and $\mathrm{div}(v)=2$.  The \emph{unigonal divisor} $H_u \subset \sF$ is the image of $v^\perp \cap \sD$ for $v \in \Lambda$ such that $q(v) = -4$ and $\mathrm{div}(v)=4$.\end{definition}

\begin{remark}
Note that $\fp(S,L_S) \in H_h$ (resp. $H_u$) if and only if $(S,L_S)$ is hyperelliptic (resp. unigonal) in the sense of Definition \ref{def:mayerK3}.
\end{remark}

For quartic K3 surfaces, Laza-O'Grady predict that the regular locus of $\fp$ is the complement of $H_u$ and $H_h$. If $\Delta = (H_u + H_h)/2$, they predict that the predicted critical $\beta$ values are $\beta \in \{ 1, \frac{1}{2}, \frac{1}{3}. \frac{1}{4}, \frac{1}{5}, \frac{1}{6}, \frac{1}{7}, \frac{1}{9}, 0\}$ (see \cite[Prediction 5.1.1]{LO19}). Recall, in \eqref{eq:stratification}, we discussed a stratification of $\ofM^{\GIT}$. They predict that, under $\fp$, this stratification is related to a stratification of $\sF^*$. This is made more precise as follows. 
Let $\Delta^{(k)} \subset \Supp \Delta$ be the $k$-th stratum of the stratification defined above, and consider 
\begin{equation}\label{eq:Z^j-construct}
    Z^9 \subset Z^8 \subset Z^7 \subset Z^5 \subset Z^4 \subset Z^3 \subset Z^2 \subset Z^1 = H_u \cup H_h \subset \sF,
\end{equation}
 where
$Z^k = \Delta^{(k)}$ for $k \leq 5$, and then
\begin{itemize}
    \item $Z^7 = \mathrm{Im} \sF(\mathrm{II}_{2, 10} \oplus A_2) \hookrightarrow \sF$,
    \item $Z^8 = \mathrm{Im} \sF(\mathrm{II}_{2, 10} \oplus A_1) \hookrightarrow \sF$, and
    \item $Z^9 = \mathrm{Im} \sF(\mathrm{II}_{2, 10}) \hookrightarrow \sF$.
\end{itemize}

Then, they predict that each birational map occurring at one of the critical values above corresponds to a flip with center $Z^k$, and that each $Z^k$ is replaced by $W_{k-1} \subset \ofM^{\GIT}$.

\section{Tangent developable surface and unigonal K3 surfaces}\label{sec:unigonal}

In this section, we show that if $T$ denotes the tangent developable surface (see Definition \ref{def:tangentdevelopable}), then the pair $(\bP^3, cT)$ is K-polystable if and only if $c<\frac{9}{13}$. Moreover, the $c$-K-polystable replacements of $(\bP^3, T)$ for $c>\frac{9}{13}$ are log pairs $(X_u, S)$ where $X_u$ is a Gorenstein canonical Fano threefold (see Proposition \ref{prop:X_u-construct} for the construction), and $S$ is a GIT polystable unigonal K3 surface.

\subsection{Destabilizing divisor}
\begin{definition}\cite[Pages 30 - 31]{EL19}\label{def:tangentdevelopable}
The \emph{tangent developable} to the twisted cubic curve $C_0$ in $\bP^3$ is the surface $T \subset \bP^3$ defined to be the union of all the embedded projective tangent lines to $C_0$.  The surface $T$ is a quartic surface that has cuspidal singularities along $C_0$, and whose normalization is $\bP^1 \times \bP^1$ such that the diagonal $\Delta_{\bP^1} \subset \bP^1 \times \bP^1$ is the preimage of $C_0$. 
\end{definition}

Consider the group $G:=\PGL(2,\bC)$ acting linearly on $\bP^3$ where $C_0$ is $G$-invariant, since clearly $\Aut(\bP^3, C_0)\cong \Aut(C_0)\cong G$. A simple analysis shows that there are precisely three $G$-orbits in $\bP^3$, which are $C_0$, $T\setminus C_0$, and $\bP^3\setminus T$.

\begin{prop}\label{prop:normal-bundle}
Let $\iota:\bP^1 \xrightarrow{\cong} C_0$ be a parametrization. Then  $\iota^*\cN_{C_0/\bP^3}\cong \cO_{\bP^1}(5)\oplus\cO_{\bP^1}(5)$. Moreover, there exists a $G$-equivariant sub-line bundle $\cN_1$ of $\cN_{C_0/\bP^3}$ such that $\iota^*\cN_1\cong\cO_{\bP^1}(4)$. We denote by $\cN_2:= \cN_{C_0/\bP^3}/\cN_1$ the quotient line bundle. Then locally analytically along $C_0$ we may split $\cN_{C_0/\bP^3}$ as $\cN_1\oplus\cN_2$ such that the surface $T$ has analytic equation $(y^2=x^3)$ where $\cN_1=\langle\partial/\partial x \rangle$ and $\cN_2=\langle\partial/\partial y\rangle$.
\end{prop}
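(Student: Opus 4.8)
The plan is to compute the normal bundle explicitly using the standard parametrization of the twisted cubic, and then to analyze the jet structure of the tangent developable surface along $C_0$.

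\textbf{Step 1: The normal bundle of the twisted cubic.} Take the parametrization $\iota: \bP^1 \to C_0 \subset \bP^3$ given by $[s:t] \mapsto [s^3 : s^2 t : s t^2 : t^3]$. One has the Euler sequence restricted to $C_0$ and the tautological exact sequence $0 \to \cO_{\bP^1} \to \iota^*T_{\bP^3}(-?) \to \dots$; more efficiently, use $0 \to T_{\bP^1} \to \iota^* T_{\bP^3} \to \iota^*\cN_{C_0/\bP^3} \to 0$. Since $\iota^*\cO_{\bP^3}(1) = \cO_{\bP^1}(3)$, the Euler sequence gives $\iota^* T_{\bP^3} \cong \cO_{\bP^1}(4)^{\oplus 3}/\cO_{\bP^1}$ (twisting the Euler sequence $0 \to \cO \to \cO(1)^{\oplus 4} \to T_{\bP^3} \to 0$ by $\cO_{\bP^1}(3)$... more precisely one gets a rank-$3$ bundle of degree $12$). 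Since $\deg T_{\bP^1} = 2$, we get $\deg \iota^* \cN_{C_0/\bP^3} = 10$, and as $\cN_{C_0/\bP^3}$ is a rank-$2$ bundle on $\bP^1$ that is globally generated (being a quotient of $\iota^* T_{\bP^3}$, which is globally generated), the only possibility making it $\PGL(2)$-equivariant and a direct summand of a balanced bundle is $\cO_{\bP^1}(5) \oplus \cO_{\bP^1}(5)$. (One should double-check via $H^0$ counts or by invoking that all projective automorphisms of $C_0$ extend, forcing $\cN_{C_0/\bP^3}$ to be a homogeneous bundle on $\bP^1$ under the $\SL(2)$-action, hence a sum of line bundles of equal degree since $10$ is even and $h^1 = 0$ forces each summand to have degree $\geq -1$, while global generation forces each $\geq 0$, and equivariance rules out $\cO(a)\oplus\cO(10-a)$ with $a \neq 5$ as a homogeneous bundle — actually both $\cO(5)\oplus\cO(5)$ and other splittings can be homogeneous, so the cleanest argument is a direct local computation of transition functions.)

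\textbf{Step 2: The equivariant sub-line bundle $\cN_1$.} The surface $T$ is swept out by tangent lines to $C_0$; the tangent direction at each point of $C_0$ determines a distinguished line in the fiber of $T_{\bP^3}$, hence (after quotienting by $T_{C_0}$) a distinguished sub-line bundle $\cN_1 \subset \cN_{C_0/\bP^3}$: the "osculating" direction. Concretely, the osculating $2$-plane to $C_0$ at each point cuts out a sub-bundle of $\cN_{C_0/\bP^3}$; this is the classical osculating flag, which is $\SL(2)$-equivariant. Computing its degree from the parametrization: the second fundamental form / second-order jet data shows $\iota^*\cN_1 \cong \cO_{\bP^1}(4)$ and hence $\iota^*\cN_2 \cong \cO_{\bP^1}(6)$. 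I would verify the degree $4$ by writing the osculating $2$-plane explicitly as the span of $\iota$, $\iota'$, $\iota''$ in the affine chart and extracting the sub-line-bundle generator, whose zeros (as a section of $\cN_{C_0/\bP^3} \otimes (\text{twist})$) count to give degree $4$.

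\textbf{Step 3: Local normal form of $T$.} Work in a local analytic coordinate $x$ on $C_0$ and normal coordinates $(u,v)$ transverse to $C_0$ adapted to the splitting $\cN_1 \oplus \cN_2$ (possible analytically even though the splitting need not be algebraic). A point of $T$ near $C_0$ is $\gamma(x) + \lambda \gamma'(x)$ for small $\lambda$; parametrize by $(x,\lambda)$. The image has, in the transverse $(u,v)$-coordinates, $u \sim \lambda$ (the $\cN_1$-component, since $\gamma'$ points in the osculating direction to first order) and $v \sim \lambda^2$ times a unit plus higher order (the $\cN_2$-component appears at second order via $\gamma''$). Eliminating $\lambda$ gives $v = (\text{unit}) \cdot u^2 + \dots$; but one must be more careful — the known cuspidal edge structure of the tangent developable (classically $T$ has a cuspidal edge along $C_0$) gives the standard normal form $v^2 = u^3$ after a coordinate change, i.e. a Whitney cusp / "cuspidal edge" singularity. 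Setting $x = u$ and $y = v$ yields the equation $(y^2 = x^3)$ with $\cN_1 = \langle \partial/\partial x\rangle$, $\cN_2 = \langle \partial/\partial y \rangle$, consistent with the degree computation ($\cN_1$ is the "lower" jet direction, matching $\cO(4)$; $\cN_2$ the "higher", matching $\cO(6)$).

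\textbf{Main obstacle.} The hardest part will be \emph{Step 3}: rigorously producing the analytic normal form $(y^2 = x^3)$ with the correct identification of which normal direction is $\cN_1$ versus $\cN_2$, rather than just citing that a tangent developable has a cuspidal edge. One must track the jet expansion $\gamma(x+\epsilon) = \gamma(x) + \epsilon\gamma'(x) + \tfrac{\epsilon^2}{2}\gamma''(x) + \cdots$ carefully and see that, after reparametrizing the ruling and choosing adapted transverse coordinates, the $\cN_1$-coordinate scales linearly and the $\cN_2$-coordinate quadratically in the cuspidal parameter, so that the defining equation is $y^2 = x^3$ with $x$ in the $\cN_1$-slot. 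This also requires confirming that the local analytic splitting $\cN_{C_0/\bP^3} \cong \cN_1 \oplus \cN_2$ can be chosen so that $T$ has \emph{exactly} this equation (no lower-order perturbation), which follows from the $G$-homogeneity: $G$ acts transitively on $C_0$ and the stabilizer of a point acts on the transverse slice, forcing the equation into a normal form with no moduli.
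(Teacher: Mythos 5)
Your Steps 1 and 2 are a viable alternative route to the paper's. Step 1 (the normal bundle of the twisted cubic is $\cO_{\bP^1}(5)^{\oplus 2}$) is standard; the paper simply cites Eisenbud--Van de Ven. Step 2 is a genuinely different construction of $\cN_1$: you take the osculating-plane flag $T_{C_0}\subset\cF_2\subset T_{\bP^3}|_{C_0}$ and set $\cN_1:=\cF_2/T_{C_0}$, whereas the paper builds the quotient $\cN_2\cong\cO_{C_0}(2)$ from the $G$-invariant complement of the image of the restriction map $H^0(\bP^3,\cI_{C_0}(2))\to H^0(C_0,(\cI_{C_0}/\cI_{C_0}^2)(2))$ and then defines $\cN_1:=\ker(\cN_{C_0/\bP^3}\to\cN_2)$. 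Both produce a $G$-equivariant sub-line bundle pulling back to $\cO_{\bP^1}(4)$, and they must coincide; your identification is legitimate and arguably more geometric.

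The gap is in Step 3, and it is substantive. You assert that $\gamma(x)+\lambda\gamma'(x)$ has, in normal coordinates adapted to $\cN_1\oplus\cN_2$, $\cN_1$-coordinate $u\sim\lambda$ and $\cN_2$-coordinate $v\sim\lambda^2$, "since $\gamma'$ points in the osculating direction to first order." But $\gamma'(x)$ is \emph{tangent} to $C_0$; its image in the normal bundle is zero, so the ruling through $\gamma(x_0)$ produces no first-order normal displacement at all. Carrying out the expansion (set $\tau=x-x_0$ and use $\lambda=s-\tau$ where $s$ is the coordinate along $C_0$; restrict to the transverse slice $s=0$) gives $u=-\tfrac{\tau^2}{2}+O(\tau^3)$ and $v=-\tfrac{\tau^3}{3}+O(\tau^4)$, i.e.\ the $\cN_1$-coordinate scales as $\tau^2$ and the $\cN_2$-coordinate as $\tau^3$, yielding $v^2=(\mathrm{unit})\cdot u^3$. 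Your scalings $u\sim\lambda$, $v\sim\lambda^2$ would give the smooth equation $v=u^2$ instead of a cusp, and the final sentence "the $\cN_1$-coordinate scales linearly and the $\cN_2$-coordinate quadratically \dots so that the defining equation is $y^2=x^3$" is internally inconsistent (linear/quadratic gives $y=x^2$, not $y^2=x^3$; the cusp needs quadratic/cubic). So the step as planned would either fail or, if pushed through correctly, requires exactly the opposite scaling to what you wrote.

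You may also want to know that the paper avoids the jet computation entirely. It takes as known that a tangent developable has a cuspidal-edge normal form $(y^2=x^3)$ along $C_0$, observes that $\langle\partial/\partial x\rangle$ then defines a $G$-invariant sub-line bundle $\cN_1'\subset\cN_{C_0/\bP^3}$ (this needs only $G$-invariance of $T$), and argues: by transitivity of $G$ on $C_0$, either $\cN_1'=\cN_1$ or $\cN_1'\oplus\cN_1\cong\cN_{C_0/\bP^3}$, and the latter is impossible since $\cO_{\bP^1}(4)$ cannot be a direct summand of $\cO_{\bP^1}(5)^{\oplus 2}$. This determines which normal slot carries $x$ with no jet bookkeeping, and could replace your Step 3 outright, or be used to check the corrected version of it.
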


\begin{proof}
By \cite{EVdV81},  $\iota^*\cN_{C_0/\bP^3}\cong \cO_{\bP^1}(5)\oplus \cO_{\bP^1}(5)$.  Denote by $\cO_{C_0}(m):=\cO_{\bP^3}(m)\otimes \cO_{C_0}$, then we have $\iota^*\cO_{C_0}(m)\cong \cO_{\bP^1}(3m)$. Denote by $\cI_{C_0}$ the ideal sheaf of $C_0$ in $\bP^3$. Since $C_0$ is projectively normal, we have a short exact sequence
\[
0\to H^0(\bP^3, \cI_{C_0}(2))\to H^0(\bP^3, \cO_{\bP^3}(2))\to H^0(C_0, \cO_{C_0}(2))\to 0.
\]
Hence a dimension computation shows that $h^0(\bP^3, \cI_{C_0}(2))=3$. Since $G=\PGL(2,\bC)$ has fundamental group $\bZ/2\bZ$, the line bundle $\cO_{\bP^3}(2)$ has a natural $G$-linearization. Thus $G$ acts on $H^0(\bP^3, \cI_{C_0}(2))$. From the classification of $G$-orbits, we know that there does not exist any $G$-invariant quadric surface in $\bP^3$. Thus $H^0(\bP^3, \cI_{C_0}(2))$ is a $3$-dimensional irreducible representation of $G$. Next we consider the restriction map
\[
r: H^0(\bP^3, \cI_{C_0}(2))\to H^0(C_0, (\cI_{C_0}/\cI_{C_0}^2)(2)).
\]
It is clear that $r$ is non-zero and $G$-equivariant. Since $\cI_{C_0}/\cI_{C_0}^2$ is the conormal bundle $\cN_{C_0/\bP^3}^{\vee}$, we have that $\iota^*(\cI_{C_0}/\cI_{C_0}^2)(2)\cong \cO_{\bP^1}(1)^{\oplus 2}$ which implies that $h^0(C_0, (\cI_{C_0}/\cI_{C_0}^2)(2)) = 4$. Since $G$ is reductive, the cokernel of $r$ provides a non-zero $G$-invariant section $s \in  H^0(C_0, (\cI_{C_0}/\cI_{C_0}^2)(2))$. Since $(\cI_{C_0}/\cI_{C_0}^2)(2)\cong \cHom(\cN_{C_0/\bP^1},\cO_{C_0}(2))$, the section $s$ induces a  non-zero $G$-equivariant morphism $\cN_{C_0/\bP^3}\to \cO_{C_0}(2)$ which has to be surjective since $G$ acts transitively on $C_0$. Thus we define $\cN_2:=\cO_{C_0}(2)$ and $\cN_1$ as the kernel of the surjection $\cN_{C_0/\bP^3}\twoheadrightarrow\cN_2$. Computations on degrees show that $\iota^*\cN_1\cong \cO_{\bP^1}(4)$ and $\iota^*\cN_2\cong \cO_{\bP^1}(6)$.

It is clear that $T$ has equation $(y^2=x^3)$ in some analytic coordinate $(x,y,z)$ of $\bP^3$. Moreover, the tangent vector $\partial/\partial x$ spans a $G$-invariant sub-line bundle $\cN_1'$ of $\cN_{C_0/\bP^3}$. Since the $G$-action on $C_0\cong\bP^1$ is transitive, we know that either $\cN_1'=\cN_1$ or $\cN_1'\oplus \cN_1\cong \cN_{C_0/\bP^3}$. The latter case is not possible since $\cO_{\bP^1}(4)\cong \cN_1\hookrightarrow\cN_{C_0/\bP^3}\cong \cO_{\bP^1}(5)^{\oplus 2}$ does not split. Thus we have $\langle\partial/\partial x\rangle=\cN_1'=\cN_1$. 
\end{proof}

\begin{thm}\label{thm:tangent-kst}
Let $T$ be the tangent developable surface of twisted cubic curve $C_0$ in $\bP^3$. Let $c\in [0,1)$ be a rational number.
Then $(\bP^3, cT)$ is K-semistable (resp. K-polystable) if and only if $c\leq \frac{9}{13}$ (resp. $c<  \frac{9}{13}$).
\end{thm}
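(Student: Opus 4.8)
The plan is to use equivariant K-stability (Theorem \ref{thm:kss-plt}) with respect to the group $G=\PGL(2,\bC)$ acting on $(\bP^3,cT)$, which reduces the problem to checking $\beta_{(\bP^3,cT)}(E)\geq 0$ for $G$-invariant divisors $E$ of plt type. Since $G$ acts on $\bP^3$ with only three orbits $C_0$, $T\setminus C_0$, and $\bP^3\setminus T$, any $G$-invariant prime divisor of plt type over $\bP^3$ must have center either all of $\bP^3$ (in which case $E$ is $G$-invariant in $|\cO_{\bP^3}(m)|$, but there is no $G$-invariant quadric/quartic hypersurface, as shown in the proof of Proposition \ref{prop:normal-bundle}; one must check $T$ itself is the relevant case) or center equal to $C_0$ (a $G$-invariant smooth curve). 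So I expect the only genuinely destabilizing candidates to be: (a) the divisor $E=T$ itself on $\bP^3$, and (b) divisors obtained by weighted blow-ups along $C_0$ using the splitting $\cN_{C_0/\bP^3}=\cN_1\oplus\cN_2$ from Proposition \ref{prop:normal-bundle}, where $T$ has local equation $(y^2=x^3)$ with $\cN_1=\langle\partial/\partial x\rangle$, $\cN_2=\langle\partial/\partial y\rangle$.

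First I would compute $\beta_{(\bP^3,cT)}(T)$. Here $A_{(\bP^3,cT)}(T)=1-c$, and $-K_{\bP^3}-cT\sim_{\bQ}(4-4c)H$, so $T_{(\bP^3,cT)}(T)=\frac{4-4c}{4}=1-c$ and $\vol=(4-4c)^3$; then $S_{(\bP^3,cT)}(T)=\frac{1}{(4-4c)^3}\int_0^{1-c}(4-4c-4t)^3\,dt=\frac{1-c}{4}$, giving $\beta(T)=(1-c)(1-\tfrac14)=\tfrac34(1-c)>0$, so $T$ never destabilizes. Next, the crucial computation: the weighted blow-up $E$ along $C_0$ with weights $(2,3)$ on the analytic coordinates $(x,y)$ normal to $C_0$ (the weights that make $(y^2=x^3)$ quasi-homogeneous, so that $T$ passes through the generic point of $E$). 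I would compute $A_{(\bP^3,cT)}(E)$: the log discrepancy of the $(2,3)$-weighted blow-up of a smooth curve in a smooth threefold is $2+3=5$ for the ambient space, and $T$ vanishes along $E$ to order $6$ (since $y^2-x^3$ has weight $6$), so $A_{(\bP^3,cT)}(E)=5-6c$. For the $S$-invariant I would compute $T_{(\bP^3,cT)}(E)$ and the volume function $\vol(-K_{\bP^3}-cT-tE)$ via an explicit model — pulling back $\cO(1)$, using that $\iota^*\cN_i$ has degrees $4$ and $6$ — and integrate; I anticipate the answer will be arranged so that $\beta_{(\bP^3,cT)}(E)=0$ exactly at $c=\frac{9}{13}$, with $\beta(E)>0$ for $c<\frac{9}{13}$ and $\beta(E)<0$ for $c>\frac{9}{13}$. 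This single divisor $E$ should be the unique destabilizer, which also shows K-polystability fails precisely at $c=\frac{9}{13}$ (there $E$ induces a nontrivial special degeneration to $(X_u,\frac{9}{13}T_0)$, so $(\bP^3,cT)$ is strictly K-semistable but not K-polystable).

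The remaining work is to show no \emph{other} $G$-invariant plt-type divisor has negative $\beta$. For divisors with center $C_0$, I would argue that any $G$-invariant valuation centered on $C_0$ is a weighted blow-up (or a quasi-monomial combination) in the $\cN_1,\cN_2$ directions with some weights $(a,b)$, compute $\beta$ as a function of $(a,b,c)$, and minimize; the point is that the minimizer over all weights, for fixed $c\leq\frac9{13}$, stays $\geq 0$, with equality only at the distinguished weight $(2,3)$ and $c=\frac9{13}$. For divisors with center all of $\bP^3$: a $G$-invariant divisor over $\bP^3$ of plt type either is a hypersurface section (ruled out except $T$, which we handled — actually there is no $G$-invariant quadric or cubic, and the quartic $T$ we did), or has center $C_0$ or $T\setminus C_0$; a divisor centered at a point of $T\setminus C_0$ cannot be $G$-invariant since $G$ acts transitively there with positive-dimensional orbit, so such a center is impossible for a $G$-invariant divisor unless the center is the whole orbit $T\setminus C_0$, whose closure is $T$. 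So the case analysis closes.

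The main obstacle I expect is the explicit volume/$S$-invariant computation for the $(2,3)$-weighted blow-up $E$ along $C_0$: one needs a good birational model of this weighted blow-up — likely realized by first embedding via $|\cO(2)|$ or using the scroll structure coming from $\iota^*\cN_{C_0/\bP^3}\cong\cO(5)^{\oplus2}$ — and then compute intersection numbers on it to get $\vol(\mu^*(-K-cT)-tE)$ as an explicit piecewise-polynomial in $t$, including correctly locating the pseudo-effective threshold $T_{(\bP^3,cT)}(E)$ where the Zariski decomposition changes. Getting the constant $\frac{9}{13}$ out cleanly, and verifying that $E$ genuinely is of plt type (equivalently special) so that Theorem \ref{thm:kss-plt} applies and that it is the strict minimizer, is where the real content lies.
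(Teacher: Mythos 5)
Your overall strategy matches the paper's: equivariant K-stability with $G=\PGL(2,\bC)$, analysis of $G$-orbits to isolate the possible centers, computing $\beta$ for $T$ and for the $(2,3)$-weighted blow-up $E_0$ of $C_0$ in the coordinates split by $\cN_1\oplus\cN_2$, and getting $A_{(\bP^3,cT)}(E_0)=5-6c$ (matching the paper's figures; the paper obtains $S=\tfrac{11}{16}(4-4c)$ via an explicit volume integral and a Zariski decomposition past the nef threshold). The K-polystability statement you handle by producing the degeneration to $(X_u,\tfrac{9}{13}T_0)$; the paper instead invokes GIT polystability of $T$ at small $c$ plus interpolation, and cites the general fact that instability just past $c_0$ forces non-polystability at $c_0$. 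Both routes work.

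The genuine divergence is in how you rule out other $G$-invariant plt-type divisors centered on $C_0$. You propose to sweep over quasi-monomial weights $(a,b)$ in the $\cN_1,\cN_2$ directions, compute $\beta$ as a function of $(a,b,c)$, and minimize. That is workable in principle, but it has two costs: you would need to justify that every $G$-invariant plt-type divisor centered at $C_0$ is indeed a weighted blow-up (or quasi-monomial) in those particular coordinates — this is not automatic — and you would need to evaluate volume integrals for a two-parameter family of models. The paper sidesteps both: it localizes $(\bP^3,\tfrac{9}{13}T)$ at the generic point of $C_0$ to get the surface pair $(\bA^2,\tfrac{9}{13}(y^2=x^3))$, then appeals to Lemma~\ref{lem:exceptional-sing}, which computes the different $\Gamma$ on the exceptional curve $E\cong\bP^1$ of the $(2,3)$-weighted blow-up and shows $\alpha(E,\Gamma)=\tfrac{24}{11}>1$, i.e. the pair is exceptional. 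By Prokhorov's criterion for exceptional log Fano pairs, this forces $E$ to be the unique Kollár component over $(\bA^2,\tfrac{9}{13}(y^2=x^3))$, and hence the $(2,3)$-blow-up along $C_0$ is the only $G$-invariant plt-type divisor centered there. This is the key structural input your proposal is missing; with it, the "minimize over weights" step collapses to a single explicit beta computation.

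One more small point: your remark that the center "all of $\bP^3$" case must be a $G$-invariant hypersurface is fine, but be careful that the center of a divisorial valuation is always a proper closed irreducible $G$-invariant subset, so the dichotomy is really center $=T$ (giving $\ord_T$) versus center $=C_0$. The open orbit $T\setminus C_0$ is not a valid center because it is not closed; its closure $T$ takes you back to the first case. You essentially say this, but the cleaner phrasing avoids any appearance of a gap.
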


\begin{proof}
We first show the statement for K-semistability. For the ``only if'' part, we use Fujita-Li's valuative criteria Theorem \ref{thm:valuative}. 
%Here we give a valuative criterion computation of the divisor $E_0$ over $(\bP^3, cT)$. 
Denote by $\mu: Y_0\to \bP^3$ the $(2,3)$-weighted blow up of $\bP^3$ along the twisted cubic curve $C_0$ in the local coordinates $(x,y)$ defined by $\cN_1$ and $\cN_2$ from Proposition \ref{prop:normal-bundle}. Let $E_0$ be the exceptional divisor of $\mu$. 
Then we know that $E_0\cong \Proj_{\bP^1} \Sym (\cE_2\oplus \cE_3)$ where $\cE_2:=\cO_{\bP^1}(-4)\cong \iota^*\cN_1^\vee$, $\cE_3:=\cO_{\bP^1}(-6)\cong \iota^*\cN_2^\vee$, $\Sym(\cE_2\oplus\cE_3)$ is a $\bZ_{\geq 0}$-graded $\cO_{\bP^1}$-algebra, and each $\cE_i$ has degree $i$.
Denote by $\tT$ the strict transform of $T$ in $Y_0$. From the local computation that the $(2,3)$ blow-up  normalizes the cusp, we see that $\tT$ is the normalization of $T$, so by \cite[Page 31]{EL19}, $\tT\cong\bP^1\times\bP^1$. Denote the two pencil of rulings on $\tT$ by $R$ and $C$ respectively, such that $\mu_*R$ is a tangent line of $C_0$ and $\mu_*C$ is a conic curve. Denote by  $H:=\mu^*\cO(1)$. Then we have the following relations:
\begin{equation}\label{eq:intersection-tT}
(H\cdot R)=1, \quad (H\cdot C)=2, \quad (E_0\cdot R)=1, \quad (E_0\cdot C)=1.
\end{equation}
Moreover, we have $\tT=4H-6E_0$ and $-K_{Y_0}=4(H-E_0)$.
%Hence we see that $(-K_{Y_0}\cdot L)=0$, $(-K_{Y_0}\cdot C)=4$.

\textbf{Claim.} $H-E_0$ (and hence $-K_{Y_0}$) is nef and big on $Y_0$.
%$L$ generates an extremal ray of the Mori cone of $Y_0$.

\textbf{Proof of claim.} We first show that $H-E_0$ is nef. Indeed, since $4H-4E_0\sim \tT+2E_0$, it suffices to show that $(H-E_0)|_{\tT}$ and $(H-E_0)|_{E_0}$ are both nef. Since $\tT\cong\bP^1\times \bP^1$ with $R$ and $C$ as two rulings, \eqref{eq:intersection-tT} implies that $((H-E_0)\cdot R)=0$, $((H-E_0)\cdot C)=1$. Hence $(H-E_0)|_{\tT}$ is nef. Denote by $F$ the fiber of the $\bP^1$-bundle $E_0\to C_0$. Let $\Delta_{\tT}:=\tT\cap E_0$. It is clear that $E_0\cong \bP^1\times \bP^1$ with $F$ and $\Delta_{\tT}$ as the two rulings. Since $\Delta_{\tT}\sim R+C$ in $\tT$, \eqref{eq:intersection-tT} implies that $((H-E_0)\cdot \Delta_{\tT})=1$. Moreover, we know that $(H\cdot F)=(\cO_{\bP^3}(1)\cdot \mu_*F)=0$ and $-E_0|_{E_0}\sim\cO_{E_0}(1)$, hence $(-E_0\cdot F)=1$. Thus $((H-E_0)\cdot F)=1$. This implies that $(H-E_0)|_{E_0}$ is also nef.

Next we show that $H-E_0$ is big. This is done by computing the self intersection number of $H-tE_0$ for $t\in [0,1]$ and observing that the computation in the previous paragraph also shows that $H-tE_0$ is nef for $t \in [0,1]$. 
%Assuming the claim, we see that $H-E_0$ is a nef divisor which has zero intersection number with $L$. Later on we will see that it is also big. Hence by base-point-free theorem we know that $H-E_0$ is semiample. 
%Next we compute the self intersection number of $H-tE_0$. 
It is clear that 
\[
H|_{\tT}\sim 2R+C, \quad E_0|_{\tT}\sim R+C,\quad 
\tT|_{\tT}\sim 2R-2C.
\]
Hence 
\[
(H^3)=1, \quad (H^2\cdot \tT)=4, \quad (H\cdot \tT^2)=-2,\quad (\tT^3)=-8.
\]
Using the fact that $E_0=\frac{1}{6}(4H-\tT)$, we see 
\[
(H-tE_0)^3= ((1-\frac{2}{3}t)H +\frac{t}{6}\tT)^3= 1-\frac{3}{2}t^2+\frac{2}{3}t^3.
\]
When $0\leq t\leq 1$ this is also the volume of $H-tE_0$ since it is nef. Thus $\vol_{Y_0}(H-E_0)=\frac{1}{6}>0$ which implies that $H-E_0$ is big. The claim is proved.

Next we compute the $S$-invariant of $E_0$. Since $-K_{Y_0}$ is nef and big, and $Y_0$ has only quotient singularities which are klt, we know that $-K_{Y_0}$ (and hence $H-E_0$) is semi-ample. The ample model of $H-E_0$ gives a divisorial contraction $g: Y_0\to Y_0'$ which contracts $\tT$ to a smooth rational curve by contracting $R$ to a point.  By computation (using the intersection theory above), we have \[ g^*g_*(H - tE_0) = H- t E_0 + \left( \frac{1-t}{2} \right) \tT = H- tE_0 + \left( \frac{1-t}{2} \right) (4H - 6E_0) = (3-2t)(H-E_0). \]

Therefore, if $1< t < \frac{3}{2}$, the divisor $H-tE_0$ is big and by Zariski decomposition we know that
\[
\vol_{Y_0}(H-tE_0)=\vol_{Y_0}(g^*g_*(H-tE_0))=\vol((3-2t)(H-E_0))=\frac{(3-2t)^3}{6}.
\]
Therefore,
\[
\int_{0}^{\infty}\vol(H-tE_0)dt=\int_0^1 (1-\frac{3}{2}t^2+\frac{2}{3}t^3)dt+\int_1^{3/2}\frac{(3-2t)^3}{6}dt=\frac{11}{16}.
\]
Thus we know that 
\[
A_{(\bP^3, cT)}(E_0)=5-6c, \quad S_{(\bP^3,cT)}(E_0)=\frac{11}{16}(4-4c).
\]
So if $(\bP^3, cT)$ is K-semistable, then Theorem \ref{thm:valuative} implies that $5-6c\geq \frac{11}{16}(4-4c)$ which is equivalent to $c\leq \frac{9}{13}$. Thus we have shown the ``only if'' part for K-semistability. 

Next, we show the ``if'' part for K-semistability. By interpolation of K-stability \cite[Proposition 2.13]{ADL19}, it suffices to show that $(\bP^3, \frac{9}{13}T)$ is K-semistable. Consider the $G=\PGL(2,\bC)$-action on $\bP^3$ which preserves the twisted cubic curve $C_0$. By Theorem \ref{thm:kss-plt}, it suffices to show that $\beta_{(\bP^3, \frac{9}{13}T)}(F)\geq 0$ for any $G$-invariant prime divisor $F$ of plt type over $(\bP^3, \frac{9}{13}T)$. Since there are only three $G$-orbits in $\bP^3$: $C_0$, $T\setminus C_0$, and $\bP^3\setminus T$, we know that either $F=T$ or $F$ is centered at $C_0$. The first case is easy. For the second case, notice that localizing $(\bP^3, \frac{9}{13}T)$ at the generic point of $C_0$ produces a singularity analytically isomorphic to $(\bA^2, \frac{9}{13}(y^2=x^3))$. Hence the $(2,3)$-weighted blow up in $(x,y)$ produces the only $G$-invariant  divisor $E_0$ of plt type centered at $C_0$ by Lemma \ref{lem:exceptional-sing}. Therefore, the above computations show that $\beta_{(\bP^3, \frac{9}{13}T)}(E_0)= 0$. Thus $(\bP^3, \frac{9}{13}T)$ is K-semistable. This finishes the proof for K-semistability. 

Finally, we show the statement for K-polystability. Since $T$ is GIT polystable by \cite{Sha81, LO18b}, Theorem \ref{thm:K=GIT-smallc} implies that $(\bP^3, \epsilon T)$ is GIT polystable for $0<\epsilon\ll 1$. Thus \cite[Proposition 2.13]{ADL19} implies that $(\bP^3, cT)$ is K-polystable  when $0\leq c<\frac{9}{13}$. Moreover, since  $(\bP^3, cT)$ is K-unstable for any $c>\frac{9}{13}$ from the ``only if'' part for K-semistability, \cite[Proposition 3.18]{ADL19} implies that $(\bP^3, \frac{9}{13}T)$ is not K-polystable. Thus the proof is finished.
\end{proof}

\begin{lem}\label{lem:exceptional-sing}
The klt singularity $0\in (\bA^2, \frac{9}{13}(y^2=x^3))$ admits a unique plt blow-up given by the $(2,3)$-weighted blow-up in $(x,y)$.
\end{lem}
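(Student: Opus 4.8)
Write $c=\tfrac{9}{13}$ and $\cC=(y^2=x^3)$. The plan has two halves: (i) verify the $(2,3)$-weighted blow-up is a plt blow-up, and (ii) show every plt blow-up coincides with it.

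\emph{Step (i).} I would unwind the $(2,3)$-weighted blow-up $\mu_0\colon Y_0\to\bA^2$ in charts. In the chart $x=u^2$, $y=u^3v$ (which is $\bA^2_{u,v}$ modulo the $\tfrac12(1,1)$-action) one computes $y^2-x^3=u^6(v^2-1)$, so $E_0=(u=0)$ and the strict transform $\widetilde{\cC}=(v^2=1)$ is smooth, meeting $E_0$ transversally at a single smooth point of $Y_0$; the two cyclic quotient singularities of $Y_0$, of types $\tfrac12(1,1)$ and $\tfrac13(1,1)$, lie on $E_0$ but off $\widetilde{\cC}$. Hence $(Y_0,E_0+c\widetilde{\cC})$ is log smooth near $\widetilde{\cC}\cap E_0$ and equals the plt pair $(Y_0,E_0)$ near the quotient points (each of which is, étale-locally in codimension one, a cyclic quotient of the log-smooth pair $(\bA^2,\{u=0\})$), so it is plt; and $-E_0$ is $\mu_0$-ample. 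Thus $\mu_0$ is a plt blow-up.

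\emph{Step (ii).} Let $\mu\colon Y\to\bA^2$ be a plt blow-up of $(\bA^2,c\cC)$ with exceptional divisor $E$. Since $\mu$ is also a plt blow-up of the smooth germ $(\bA^2,0)$, the classification of divisorial contractions over a smooth surface germ (these are weighted blow-ups in suitable formal coordinates) provides formal coordinates $(u,w)$ in which $E=E_{p,q}$ is a weighted blow-up, $\gcd(p,q)=1$, with $Y$ acquiring cyclic quotient singularities of indices $p,q$ at the two torus-fixed points of $E\cong\bP(p,q)$. Since $\cC$ is unibranch, its strict transform meets $E$ at a single point $P$, whose local structure is read off from the $(p,q)$-initial form of the local equation $f$ of $\cC$; moreover $\widetilde{\cC}\cdot E=\tfrac{1}{pq}\ord_{(p,q)}(f)$. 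The argument then splits:
\begin{itemize}
  \item If $\widetilde{\cC}$ is \emph{not} transversal to $E$ at $P$ (i.e.\ $\widetilde{\cC}\cdot E$ exceeds the transversal value $1$ when $P$ is a smooth point, resp.\ $\tfrac1n$ when $P$ is a $\tfrac1n$-singular point), then passing to the cyclic cover at $P$ and blowing up once more yields a divisor $F$ over $Y$ with $A_{(Y,\,E+c\widetilde{\cC})}(F)\leq 1-2c<0$ (even $\leq 1-3c$ in the ``cuspidal'' subcase), contradicting the log canonicity of $(Y,E+c\widetilde{\cC})$; this is where $c=\tfrac9{13}>\tfrac12$ enters.
  \item If $\widetilde{\cC}$ is transversal to $E$ at $P$, then --- using that the cusp $\cC$ has value semigroup $\langle 2,3\rangle$, via a Puiseux parametrization --- one shows that $P$ must be a smooth point of $Y$ with $\widetilde{\cC}\cdot E=1$, and that the only possibility is $(p,q)=(2,3)$; then $\ord_E(x)=2$, $\ord_E(y)=3$ and $A_{\bA^2}(E)=5$, so $\ord_E$ is the monomial valuation $\ord_{(2,3)}^{(x,y)}$. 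The latter is exactly the valuation defining $E_0$: it is the third exceptional divisor of the minimal embedded resolution of $(\bA^2,\cC)$, obtained by blowing up $0$, then the point of tangency of $E_1$ with the strict transform of $\cC$, then the resulting triple point, the three exceptional divisors carrying monomial orders $(1,1),(1,2),(2,3)$ and log discrepancies $2,3,5$.
\end{itemize}
In either case $E=E_0$.

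The main obstacle is Step (ii), and within it the coordinate-dependence of the weighted-blow-up classification: the cheap necessary condition $A_{(\bA^2,c\cC)}(E_{p,q})=p+q-c\,pq>0$ is satisfied not only by $(p,q)=(2,3)$ but also by $(p,q)=(2,5)$ (since $\tfrac12+\tfrac15>\tfrac9{13}$), so no $\beta$- or log-discrepancy estimate alone will finish the job, and one genuinely needs the transversality dichotomy combined with the arithmetic of the semigroup $\langle 2,3\rangle$. It is consistent with, and can be used to streamline, this analysis that $(\bA^2,c\cC)$ carries the weighted-scaling $\bG_m$-action $t\cdot(x,y)=(t^2x,t^3y)$, whose unique plt blow-up is necessarily $\bG_m$-equivariant.
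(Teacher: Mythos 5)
Your proposal takes a genuinely different route from the paper's. The paper does \emph{not} attempt to classify competing plt blow-ups at all. Instead, it verifies that for the $(2,3)$-weighted blow-up $\pi: Y\to\bA^2$ with exceptional divisor $E$, the different $\Gamma$ on $E$ (defined by $K_E+\Gamma = (K_Y+E+\pi_*^{-1}D)|_E$) is $\frac12[0]+\frac{9}{13}[1]+\frac23[\infty]$ on $E\cong\bP^1$, and then computes the $\alpha$-invariant of $(E,\Gamma)$: since $\alpha(E,\Gamma)=\frac{1-9/13}{2-\deg\Gamma}=\frac{24}{11}>1$ (the paper has a small typesetting slip, writing a product for what must be the quotient, but the value $24/11$ is the correct one), the log Fano pair $(E,\Gamma)$ is \emph{exceptional} in the sense of Shokurov--Prokhorov. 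Prokhorov's theory \cite[Section 4]{Pro00} then says that a klt singularity admitting a Koll\'ar component whose exceptional divisor is an exceptional log Fano pair admits no other Koll\'ar component. Uniqueness is therefore automatic, with no enumeration of divisorial contractions needed.

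Your Step (i) essentially reproduces the paper's adjunction computation in chart form, which is fine. Step (ii) is where the routes diverge: you try to classify all plt blow-ups directly, exhibiting each as a weighted blow-up in formal coordinates and then combining a transversality dichotomy with the Puiseux/semigroup arithmetic of the cusp to force $(p,q)=(2,3)$. This can plausibly be made to work, but as you honestly note, the coordinate-dependence of the weighted-blow-up classification defeats the cheap discrepancy bound (your $(2,5)$ example), and the two bullets of Step (ii) are only sketched --- the non-transversal case needs the cyclic-cover argument spelled out at quotient points, and the transversal case's claim that ``one shows $(p,q)=(2,3)$'' is precisely the part that requires care and is not written down. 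The $\bG_m$-equivariance remark at the end is also circular as stated (you cannot invoke ``the unique plt blow-up is $\bG_m$-equivariant'' while proving uniqueness; you would need to first show that all plt blow-ups are equivariant, which is itself nontrivial). In short, your framework is viable but has a genuine gap where the real work lies, whereas the paper's $\alpha$-invariant argument sidesteps the classification entirely. The lesson worth keeping: to prove uniqueness of a Koll\'ar component over a klt germ, it is often enough to check that the one you \emph{have} produces an exceptional log Fano pair; the enumeration of competitors becomes unnecessary.
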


\begin{proof}
Let $E\subset Y\xrightarrow{\pi} \bA^2$ be the $(2,3)$-weighted blow-up in $(x,y)$. Let $\Gamma$ be the different divisor on $E$, i.e.  $K_E+\Gamma=(K_Y+E+\pi_*^{-1}D)|_E$ where $D=\frac{9}{13}(y^2=x^3)$. Then it is not hard to see that $(E,\Gamma)\cong (\bP^1, \frac{1}{2}[0]+\frac{9}{13}[1]+\frac{2}{3}[\infty])$. Hence  $\alpha(E,\Gamma)=(1-\max\{\frac{1}{2}, \frac{9}{13}, \frac{2}{3}\})(2-\deg(\Gamma)) =\frac{24}{11}>1$, which means that $(E,\Gamma)$ is an exceptional log Fano pair, i.e. there is at most exceptional divisor over it with log discrepancy one (see \cite[Definition 4.1]{Pro00}). Thus  \cite[Section 4]{Pro00} implies that $E$ is the unique Koll\'ar component over $0\in (\bA^2, D)$. 
\end{proof}

\subsection{Construction of $X_u$}\label{sec:construction}

Let $\cE:=\cO_{\bP^1}\oplus \cO_{\bP^1}(-4)\oplus \cO_{\bP^1}(-6)$ be a rank $3$ vector bundle over $\bP^1$. Denote by $\cE_i$ the $i$-th direct summand line bundle of $\cE$ for $i=1,2,3$. Denote by $\PE:=\Proj_{\bP^1} \Sym \,\cE$ where $\Sym\,\cE$ is a $\bZ_{\geq 0}$-graded $\cO_{\bP^1}$-algebra such that each line bundle $\cE_i$ has degree $i$. Thus $p:\PE\to \bP^1$ is a $\bP(1,2,3)$-bundle. It is clear that 
\begin{align*}
p_*\cO_{\PE}(1)&\cong\cO_{\bP^1},\\ p_*(\cO_{\PE}(2)\otimes p^*\cO_{\bP^1}(4)) & \cong\cO_{\bP^1}(4)\oplus \cO_{\bP^1},\\ p_*(\cO_{\PE}(3)\otimes p^*\cO_{\bP^1}(6)) & \cong \cO_{\bP^1}(6)\oplus \cO_{\bP^1}(2)\oplus \cO_{\bP^1}.
\end{align*}
Let $x\in H^0(\PE,\cO_{\PE}(1))$, $y\in H^0(\PE,\cO_{\PE}(2)\otimes p^*\cO_{\bP^1}(4))$, and $z\in H^0(\PE,\cO_{\PE}(3)\otimes p^*\cO_{\bP^1}(6))$ be non-zero sections in the last direct summand of the right-hand-side in each isomorphism above. 

By the Euler sequence for weighted projective bundles (see \cite[Ex. III.8.4]{Har77} and \cite{Dol82}), we have 
\begin{equation}\label{eq:-K_PE}
\cO(-K_{\PE})\sim \pi^*(\cE_1^\vee\otimes\cE_2^\vee\otimes\cE_3^\vee) \otimes \pi^*\cO_{\bP^1}(-K_{\bP^1})\otimes \cO_{\PE}(1+2+3)\cong p^*\cO_{\bP^1}(12) \otimes \cO_{\PE}(6).    
\end{equation}
It is clear that
\[
p_*\cO_{\PE}(6)\cong \cE_3^{\otimes 2}\oplus (\cE_1\otimes\cE_2\otimes\cE_3)\oplus (\cE_1^{\otimes 3}\otimes \cE_3) \oplus \cE_2^{\otimes 3}\oplus (\cE_1^{\otimes 2}\otimes\cE_2^{\otimes 2})\oplus  (\cE_1^{\otimes 3}\otimes\cE_2)\oplus \cE_1^{\otimes 6}.
\]
Thus any section $s\in H^0(\PE, \cO_{\PE}(-K_{\PE}))$ can be uniquely expressed as
\begin{equation}\label{eq:PE-anti-can}
s= az^2 +f_2xyz+f_6 x^3 z + by^3 + f_4 x^2 y^2 + f_8 x^4 y + f_{12} x^6,
\end{equation}
where $f_j \in H^0(\bP^1, \cO_{\bP^1}(j))$. 

Let $H_{\cE}\in |\cO_{\PE}(1)|$ be the Weil divisor defined by $(x=0)$. 
Let $T_{\cE}$ be the anti-canonical divisor on $\PE$ defined by $(z^2-y^3=0)$. Then $T_{\cE}$ and $H_{\cE}$ intersect transversally along a smooth curve $C_{\cE}=V(x, z^2-y^3)$ which is a section of $\PE\to \bP^1$. Let $h:\tPE\to \PE$ be the $(2,1)$-weighted blow up along the divisors $(T_{\cE}, H_{\cE})$. Let $\tE_{\cE}$ be the exceptional divisor of $h$. Denote by $\tH_{\cE}$ and $\tT_{\cE}$ the strict transforms of $H_{\cE}$ and $T_{\cE}$ in $\tPE$ respectively. It is clear that $\tH_{\cE}\cong H_{\cE}\cong \bP^1\times\bP^1$.

%Let $Y:=\Proj_{\bP^1} \Sym\,\pi_*\cO_S(3\Sigma)$ where we assign degrees $1,2,3$ for $\cO_{\bP^1}$, $\cO_{\bP^1}(-4)$, and $\cO_{\bP^1}(-6)$. Then $Y\to \bP^1$ is a $\bP(1,2,3)$-bundle containing $S$ as an anti-canonical divisor. Let $E$ be the only effective divisor in $|\cO_Y(1)|$. Then it is clear that $\Sigma=E|_S$. Let $\tY$ be the $(2,1)$-weighted blow up along the divisors $(S, E)$. Note that $\tY$ does not depend on the choice of $S$. Denote by $\tS$ and $\tE$ the strict transforms of $S$ and $E$ in $\tY$. It is clear that $\tS\cong S$. \footnote{YL: change notation later.}

\begin{prop}\label{prop:X_u-construct}
With the above notation, $\tPE$ is a Gorenstein canonical weak Fano threefold, i.e. $-K_{\tPE}$ is nef and big. We call its anti-canonical model $X_u$. Then $X_u$ is a Gorenstein canonical Fano threefold. Moreover, the birational morphism $\psi:\tPE\to X_u$ contracts $\tH_{\cE}$ to an isolated singularity $o\in X_u$, and is isomorphic elsewhere.
\end{prop}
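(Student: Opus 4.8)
The plan is to make the weighted blow‑up $h$ completely explicit in local toric coordinates near $C_{\cE}$, and then reduce every assertion to intersection theory on $\PE$ and on two Hirzebruch surfaces.

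\emph{Step 1 (local model and singularities).} Since $C_{\cE}=T_{\cE}\cap H_{\cE}$ is a section of $p$ lying in the smooth locus of $\PE$ — it avoids the singular sections $\{x=z=0\}$ (type $A_1$) and $\{x=y=0\}$ (type $A_2$) because $y$ and $z$ are nowhere zero on it — I choose coordinates $(u,v,t)$ along $C_{\cE}$ with $u$ a local equation of $T_{\cE}$, $v=x$ a local equation of $H_{\cE}$, and $t$ a coordinate on $C_{\cE}$; the $(2,1)$-weighted blow‑up is then the star subdivision of the smooth cone at the ray $(2,1,0)$. It has two charts, a smooth one (containing $\tT_{\cE}$) and one isomorphic to $\{ab=c^{2}\}\times\bA^{1}$ (containing $\tH_{\cE}$). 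Hence, besides the $A_1$- and $A_2$-curves inherited from $\PE$, the threefold $\tPE$ acquires exactly one new curve $\gamma=\tE_{\cE}\cap\tH_{\cE}$ of $A_1$-points; $2\tE_{\cE}$ is Cartier, $\tE_{\cE}$ has discrepancy $2+1-1=2$, and $K_{\tPE}=h^{*}K_{\PE}+2\tE_{\cE}$ is Cartier, so $\tPE$ is Gorenstein and (being a product of a Du Val surface singularity with a curve at every point) canonical. One also reads off $h^{*}T_{\cE}=\tT_{\cE}+2\tE_{\cE}$ and $h^{*}H_{\cE}=\tH_{\cE}+\tE_{\cE}$, and that $h$ restricts to isomorphisms $\tT_{\cE}\cong T_{\cE}$ and $\tH_{\cE}\cong H_{\cE}$.

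\emph{Step 2 ($-K_{\tPE}=\tT_{\cE}$ is nef and big).} Because $T_{\cE}\in|-K_{\PE}|$ misses $\mathrm{Sing}\,\PE$ it is Cartier with $-K_{\PE}\sim T_{\cE}$, so $-K_{\tPE}=h^{*}(-K_{\PE})-2\tE_{\cE}=h^{*}T_{\cE}-2\tE_{\cE}=\tT_{\cE}$; thus $-K_{\tPE}$ is effective and $(-K_{\tPE})\cdot C\ge 0$ for every curve $C\not\subset\tT_{\cE}$. For $C\subset\tT_{\cE}$ I use $\tT_{\cE}\cong T_{\cE}$ and the simultaneous normalization $\nu\colon\bF_{2}=\bP_{\bP^{1}}(\cO\oplus\cO(-2))\to T_{\cE}$ of the family of cuspidal cubics $z^{2}=y^{3}$ over $\bP^{1}$: a short computation in $\Pic(\bF_{2})$, using $\nu^{*}\cO_{\PE}(1)|_{T_{\cE}}=\cO_{\bF_{2}}(1)$ and $C_{\cE}=H_{\cE}|_{T_{\cE}}$, gives $\nu^{*}\bigl(\tT_{\cE}|_{\tT_{\cE}}\bigr)=\nu^{*}\bigl((-K_{\PE})|_{T_{\cE}}-2C_{\cE}\bigr)=4\sigma+12f$ with $\sigma$ the negative section and $f$ a fiber. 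This class is ample on $\bF_{2}$, so $\tT_{\cE}|_{\tT_{\cE}}$ is nef on $T_{\cE}$ and $-K_{\tPE}$ is nef; moreover $(-K_{\tPE})^{3}=(\tT_{\cE}|_{\tT_{\cE}})^{2}=(4\sigma+12f)^{2}=64>0$, so $-K_{\tPE}$ is big (as it must be, $X_{u}$ being a $\bQ$-Gorenstein degeneration of $\bP^{3}$; one can cross‑check $64=(-K_{\PE})^{3}-8\,(\tE_{\cE}|_{\tE_{\cE}})^{2}=72-8$, where $\tE_{\cE}\cong\bF_{4}$ from the local model).

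\emph{Step 3 (the anticanonical model).} By the base‑point‑free theorem ($\tPE$ is Gorenstein canonical and $-K_{\tPE}$ nef and big), $|-mK_{\tPE}|$ is free for $m\gg0$ and defines a birational morphism $\psi\colon\tPE\to X_{u}:=\Proj R(\tPE,-K_{\tPE})$ with $-K_{\tPE}=\psi^{*}(-K_{X_{u}})$ and $-K_{X_{u}}$ ample; hence $X_{u}$ is Fano, $\psi$ is crepant, and $X_{u}$ is canonical. Now $(-K_{\tPE})|_{\tH_{\cE}}=\tT_{\cE}|_{\tH_{\cE}}=0$ because $\tT_{\cE}\cap\tH_{\cE}=\emptyset$ (Step 1), and since $\tH_{\cE}\cong\bP^{1}\times\bP^{1}$ this forces $\omega_{\tPE}|_{\tH_{\cE}}\cong\cO_{\tH_{\cE}}$. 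Therefore $\psi$ contracts $\tH_{\cE}$ to a single point $o$; by Grauert--Riemenschneider $\omega_{X_{u}}=\psi_{*}\omega_{\tPE}$, whose fibre at $o$ is $H^{0}(\tH_{\cE},\cO_{\tH_{\cE}})=\bC$, so $\omega_{X_{u}}$ is invertible at $o$ and $X_{u}$ is Gorenstein. Finally $\psi$ is an isomorphism away from $\tH_{\cE}$: via $\tPE\setminus\tE_{\cE}\cong\PE\setminus C_{\cE}$ and the fact that the $(-K_{\PE})$-trivial locus of $\PE$ is exactly $H_{\cE}$ (immediate, since $\cO_{\PE}(6)\otimes p^{*}\cO_{\bP^{1}}(12)$ is positive on every curve not contained in $H_{\cE}$), every $(-K_{\tPE})$-trivial curve lies in $\tH_{\cE}$; combined with $(-K_{\tPE})^{2}\cdot\tE_{\cE}=4\neq0$, no divisor other than $\tH_{\cE}$ is contracted. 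This yields all the assertions.

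\emph{Expected main obstacle.} The technical heart is Steps 1--2: producing the correct toric charts for the $(2,1)$-weighted blow‑up, identifying $\tE_{\cE},\tT_{\cE},\tH_{\cE}$ as (weighted) $\bP^{1}$-bundles, and getting the weighted/fractional intersection numbers right — in particular keeping track of the non‑Cartier behaviour along the $A_1$-curve $\gamma$, so that although $\tH_{\cE}|_{\tH_{\cE}}$ is a genuine $\bQ$-divisor, $K_{\tPE}$ stays Cartier and $-K_{\tPE}|_{\tH_{\cE}}$ is exactly trivial; this is precisely what makes the contraction of $\tH_{\cE}$ produce a Gorenstein, rather than a merely $\bQ$-Gorenstein, singular point.
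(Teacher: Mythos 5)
Your argument reaches the same conclusions by a genuinely different route, and the intersection-theoretic computations that carry Steps~1--2 are correct. The paper instead establishes nefness and bigness of $-K_{\tPE}$ by choosing a general Weierstrass K3 surface $S_{\cE}\in|-K_{\PE}|$ and showing $\tS_{\cE}:=h^*S_{\cE}-2\tE_{\cE}\sim -K_{\tPE}$ is a smooth K3 disjoint from $\tH_{\cE}$ on which $-K_{\tPE}$ restricts to $4h^*(3F+C_{\cE})$, which is ample; this makes the intersection theory painless because the restricting surface is smooth. You instead restrict to the singular member $\tT_{\cE}\cong T_{\cE}$ and pass to the simultaneous normalization $\nu:\bF_2\to T_{\cE}$. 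I checked that $\nu^*\cO_{\PE}(1)|_{T_{\cE}}$ is indeed the negative section $\sigma$ (so $\nu^*C_{\cE}=\sigma$), giving $\nu^*(\tT_{\cE}|_{\tT_{\cE}})=4\sigma+12f$, which is ample on $\bF_2$, and $(4\sigma+12f)^2=64$; so nefness, bigness, and the anticanonical volume all come out right. Your toric local model for the $(2,1)$-weighted blow-up also reproduces exactly the paper's statement that $\tPE$ has three disjoint curves of quotient singularities in $\tH_{\cE}$ (two $A_1$, one $A_2$), and your Gorenstein-at-$o$ argument via $R^{>0}\psi_*\omega_{\tPE}=0$ is a valid alternative to the paper's Picard-rank identification. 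So the computations buy you a more elementary, coordinate-explicit proof; the paper's choice of a general anticanonical K3 buys smoothness of the restricting surface, avoiding normalization.

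The one place the argument is thin is the claim that \emph{every} $(-K_{\tPE})$-trivial curve lies in $\tH_{\cE}$. Your justification via $\tPE\setminus\tE_{\cE}\cong\PE\setminus C_{\cE}$ only covers curves disjoint from $\tE_{\cE}$. Curves inside $\tE_{\cE}$ are in fact fine (on $\bF_4$ the only curve disjoint from the positive section $\gamma'=\tE_{\cE}\cap\tT_{\cE}$ is the negative section $\gamma=\tE_{\cE}\cap\tH_{\cE}\subset\tH_{\cE}$; you implicitly have the data for this). But a curve $C$ meeting $\tE_{\cE}$ without being contained in $\tE_{\cE}\cup\tT_{\cE}\cup\tH_{\cE}$ is not addressed: for such $C$ one only gets $-K_{\PE}\cdot h(C)=2\tE_{\cE}\cdot C>0$, which does not directly contradict $-K_{\tPE}\cdot C=0$. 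Likewise, your closing sentence ``no divisor other than $\tH_{\cE}$ is contracted'' is weaker than ``$\psi$ is an isomorphism away from $\tH_{\cE}$'' (small contractions are not a priori excluded by a statement about divisors); it is the full trivial-curve claim that is needed. This is fixable — e.g.\ by the paper's Picard-rank count $\rho(\tPE)=3$ together with the two non-proportional rulings of $\tH_{\cE}$ forcing $\rho(X_u)=1$ and $\dim\ker\psi_*=2$, or by a fiberwise analysis of $\tPE\to\bP^1$ — but as written it is a gap.
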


\begin{proof}
From the geometry of the weighted blow-up $h$ we know that $\tPE$ has quotient singularities along three disjoint smooth rational curves in $\tH_{\cE}$, where two curves are of type $\frac{1}{2}(1,1,0)$, and the rest is of type $\frac{1}{3}(1,2,0)$. Thus $\tPE$ is Gorenstein canonical. 

Let $S_{\cE}$ be an anti-canonical divisor on $\PE$ defined by $(z^2=y^3+f_8x^4y+f_{12} x^6)$ where $f_8$ and $f_{12}$ are general degree $8$ and $12$ binary forms respectively. By \cite[Chapter 11 \S 2]{huybrechts} %\footnote{is this just since the binary forms are general, the discriminant does not vanish?},
we have that $S_{\cE}\to \bP^1$ is a smooth elliptic K3 surface with a section $C_{\cE}$, i.e. a smooth unigonal K3 surface, as the discriminant does not vanish for general binary forms $f_8$ and $f_{12}$. Denote by $F$ a general elliptic fiber on $S_{\cE}$. Then since $(C_{\cE}^2) = -2$ we know that $mF+C_{\cE}$ is ample on $S_{\cE}$ whenever $m\geq 3$. It is easy to see that $S_{\cE}$ and $T_{\cE}$ are tangent along $C_{\cE}$ which implies that $\ord_{\tE_{\cE}}(S_{\cE})=2$. Denote by $\tS_{\cE}$ the strict transform of $S_{\cE}$ in $\tPE$. Thus  $\tS_{\cE}=h^*S_{\cE}-2\tE_{\cE}\sim h^*(-K_{\PE})-2\tE_{\cE}\sim -K_{\tPE}$ because $A_{\PE}(\tE_{\cE})=3$. By \eqref{eq:-K_PE}, we know that $-K_{\PE}|_{S_{\cE}}\sim 12F + 6 C_{\cE}$. Thus  
\[
-K_{\tPE}|_{\tS_{\cE}}\sim h^*(12F+6C_{\cE})-2\tE_{\cE}|_{\tS_{\cE}}\sim 4h^*(3F+C_{\cE}).
\]
Since $h:\tS_{\cE}\xrightarrow{\cong} S_{\cE}$, we know that $-K_{\tPE}|_{\tS_{\cE}}$ is ample. Because $-K_{\tPE} \sim \tS_{\cE}$, it has non-negative intersection with any curve not contained in $\tS_{\cE}$, so this implies that $-K_{\tPE}$ is nef.  Furthermore, this implies $(-K_{\tPE})^3 = (-K_{\tPE})^2\cdot \tS_{\cE} = (-K_{\tPE}|_{\tS_{\cE}})^2 > 0$, so by \cite[Theorem 2.2.16]{Positivity1}, $-K_{\tPE}$ is big. By the Kawamata-Shokurov basepoint free theorem \cite[Theorem 3.3]{KM98}, $-K_{\tPE}$ is semiample with ample model $X_u$.

Moreover, $\tS_{\cE}$ is disjoint from $\tH_{\cE}$ which implies that $-K_{\tPE}|_{\tH_{\cE}}\sim 0$. Thus $\tH_{\cE}$ is contracted under $\psi$ to an single point. Since $h\circ p$ realizes $\tH_{\cE}$ as a $\bP^1$-bundle over $\bP^1$, we know that the curve classes $\cO(1,0)$ and $\cO(0,1)$ on $\tH_{\cE}$ are not proportional in $N_1(\tPE)_{\bR}$. Since $\tPE$ has Picard rank $3$, if we only contract $\tH_{\cE}$ from $\tPE$ then the resulting variety has Picard rank $1$ which has to be $X_u$. This implies that $\psi$ is isomorphic away from $\tH_{\cE}$. 

Denote by $o = \psi(\tH_{\cE})$ the unique singular point of $X_u$.
Since $\tS_{\cE}\sim -K_{\tPE}$ is disjoint from $\tH_{\cE}=\mathrm{Exc}(\psi)$, we know that $\psi_* \tS_{\cE}\sim -K_{X_u}$ does not pass through $o$. Thus $-K_{X_u}$ is Cartier. Moreover, we have $\psi^*(-K_{X_u}) = \psi^*\psi_*(-K_{\tPE}) = -K_{\tPE}$ as $-K_{\tPE}|_{\tH_{\cE}}\sim 0$. Thus $-K_{X_u}$ is ample on $X_u$, and $\psi$ is crepant birational. Thus $X_u$ is canonical as $\tPE$ is canonical. Therefore, $X_u$ is a  Gorenstein canonical Fano threefold.
\end{proof}

\begin{prop}\label{prop:tangent-Kps-replace}
With the above notation,  $E_0$ induces a K-polystable degeneration $(X_u, \frac{9}{13}T_0)$ of $(\bP^3, \frac{9}{13}T)$.%\footnote{YL: any idea for a better notation of this mysterious Fano $3$-fold? Maybe call it $X_u$.}
\end{prop}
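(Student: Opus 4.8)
The plan is to show, via equivariant K-stability, that $E_0$ is the only divisor that can destabilize $(\bP^3,\frac{9}{13}T)$, and then to identify the resulting K-polystable degeneration with $(X_u,\frac{9}{13}T_0)$ by making the associated special test configuration explicit. By Theorem~\ref{thm:tangent-kst}, $(\bP^3,\frac{9}{13}T)$ is K-semistable but not K-polystable, and $G=\PGL(2,\bC)$ is reductive and acts on it, so Proposition~\ref{prop:G-kps} provides a $G$-invariant special divisor $E$ over $(\bP^3,\frac{9}{13}T)$ with $\beta_{(\bP^3,\frac{9}{13}T)}(E)=0$ whose induced special degeneration $(\cX_0,\frac{9}{13}\cD_0)$ is K-polystable. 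I would then argue $E=E_0$. By Lemma~\ref{lem:specialplt} the divisor $E$ is of plt type, and being $G$-invariant its center on $\bP^3$ is an irreducible $G$-invariant closed subset, hence — there being exactly three $G$-orbits — equal to $C_0$, $T$, or $\bP^3$. The center is not $\bP^3$ for dimension reasons, and it is not $T$ since $\beta_{(\bP^3,\frac{9}{13}T)}(T)=\frac{3}{13}\neq 0$ (as $A_{(\bP^3,\frac{9}{13}T)}(T)=\frac{4}{13}$ and $S_{(\bP^3,\frac{9}{13}T)}(T)=\frac{1}{13}$, so $\ord_T$ is in any case not a destabilizer; alternatively $\ord_T$ is not of plt type over $(\bP^3,\frac{9}{13}T)$). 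Thus the center of $E$ is $C_0$, and localizing at its generic point — where $(\bP^3,\frac{9}{13}T)$ is analytically $(\bA^2,\frac{9}{13}(y^2=x^3))$ by Proposition~\ref{prop:normal-bundle} — Lemma~\ref{lem:exceptional-sing} forces $E$ to be the $(2,3)$-weighted blow-up, i.e. $E=E_0$. Hence $E_0$ is a special divisor inducing the K-polystable special degeneration $(\cX_0,\frac{9}{13}\cD_0)$ of $(\bP^3,\frac{9}{13}T)$.

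It remains to identify $(\cX_0,\frac{9}{13}\cD_0)$ with $(X_u,\frac{9}{13}T_0)$. Here the plan is to realize the special test configuration attached to $E_0$ as a deformation to the weighted normal cone of $C_0\subset\bP^3$, followed by the contraction $\psi$ of Proposition~\ref{prop:X_u-construct}. Explicitly, $\cX=\Proj_{\bA^1}$ of the Rees algebra of the $\ord_{E_0}$-filtration on $R=\bigoplus_k H^0(\bP^3,\cO_{\bP^3}(4k))$ has central fiber $\cX_0=\Proj\,\mathrm{gr}_{\ord_{E_0}}R$; since the $(2,3)$-weighted conormal cone of $C_0$ in the coordinates of Proposition~\ref{prop:normal-bundle} is $\mathrm{Spec}_{\bP^1}\Sym\big(\cN_1^\vee\oplus\cN_2^\vee\big)\cong \mathrm{Spec}_{\bP^1}\Sym\big(\cO_{\bP^1}(-4)\oplus\cO_{\bP^1}(-6)\big)$, this family compactifies along the $\bP^3$-direction to the $\bP(1,2,3)$-bundle $\PE=\Proj_{\bP^1}\Sym\big(\cO_{\bP^1}\oplus\cO_{\bP^1}(-4)\oplus\cO_{\bP^1}(-6)\big)$ of Section~\ref{sec:construction}, the limit of $T$ becoming $T_{\cE}=(z^2=y^3)$. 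Passing to the $(2,1)$-weighted blow-up $h\colon\tPE\to\PE$ along $(T_{\cE},H_{\cE})$ and then to the anticanonical contraction $\psi\colon\tPE\to X_u$, one obtains a test configuration of $(\bP^3,\frac{9}{13}T)$ with central fiber $(X_u,\frac{9}{13}T_0)$, where $T_0:=\psi_*\tT_{\cE}$ (a degenerate unigonal quartic K3). One then checks that $\ord_{\cX_0}|_{\bC(\bP^3)}$ is proportional to $\ord_{E_0}$ and that the configuration is special — $-K_{X_u}$ is Cartier and $X_u$ is canonical by Proposition~\ref{prop:X_u-construct}, so $(\cX,\cX_0+\cD)$ is plt — and concludes, by uniqueness of the special test configuration induced by a given special divisor, that $(\cX_0,\frac{9}{13}\cD_0)\cong(X_u,\frac{9}{13}T_0)$, which is therefore K-polystable.

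The main obstacle is this last identification: matching the graded ring $\mathrm{gr}_{\ord_{E_0}}R$ — equivalently, running the explicit birational geometry over $\bA^1$ together with the ensuing $(2,1)$-weighted blow-up and $\psi$-contraction — with the anticanonical ring of $X_u$, and verifying that the total space is $\bQ$-Gorenstein with plt central fiber. A variant that sidesteps part of this would be to produce directly a $\bG_m$-equivariant $\bQ$-Gorenstein degeneration of $(\bP^3,\frac{9}{13}T)$ to $(X_u,\frac{9}{13}T_0)$ — for instance as a one-parameter family inside a weighted projective space embedding both $\bP^3$ and $X_u$ — and then to check $\ord$-proportionality and plt-ness and invoke the same uniqueness statement.
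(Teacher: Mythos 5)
Your proof has the same two ingredients as the paper's, presented in the opposite order. The paper first constructs, by explicit birational geometry, a special test configuration attached to $E_0$ with central fiber $(X_u,\frac{9}{13}T_0)$, and then applies Proposition~\ref{prop:G-kps} and Lemma~\ref{lem:specialplt} to conclude that the K-polystable degeneration must be induced by $E_0$; you run the equivariance argument first and the identification last. Your step (i) is exactly the paper's: the three-orbit analysis, the computation $\beta_{(\bP^3,\frac{9}{13}T)}(T)=\frac{3}{13}$ (correct), and Lemma~\ref{lem:exceptional-sing} at the generic point of $C_0$ pin down $F=E_0$.

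Your step (ii) — the identification of the central fiber — is where there is a genuine gap, which you rightly flag as the main obstacle, but your sketch of how to close it is not quite correct. The $\Proj_{\bA^1}$ of the Rees algebra of $\ord_{E_0}$ yields the anti-canonical model directly, so its central fiber is already $X_u$ rather than $\PE$; indeed $-K_{\PE}$ is nef but not ample, which is precisely why $\tPE$ and $\psi\colon\tPE\to X_u$ are introduced. Also, one cannot "pass to'' a weighted blow-up and contraction on the central fiber of a fixed test configuration: modifying only the central fiber is not a legitimate operation on a family. What is needed, and what the paper carries out, is a chain of birational operations on the total space over $\bA^1$: the $(2,3,1)$-weighted blow-up $\mu\colon\cX\to\bP^3\times\bA^1$ along $C_0\times\{0\}$, whose central fiber is the \emph{two-component} variety $Y_0\cup\PE$; a flip $f$ of $\tT\subset\cX$ obtained by contracting the rulings $R$, which generate a $(K_{\cX}+\cT)$-negative extremal ray, giving central fiber $Y_0'\cup\tPE$; and finally the divisorial contraction $\psi$ of the Picard-rank-one component $Y_0'$, yielding $\cZ$ with central fiber $X_u$ and boundary $T_0$. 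Plt-ness of the total space follows from the Gorenstein canonical singularities of $\tPE$ and $X_u$ from Proposition~\ref{prop:X_u-construct}. Your alternative suggestion — embedding $\bP^3$ and $X_u$ into a common weighted projective space and writing a $\bG_m$-degeneration there — is the method the paper uses in the hyperelliptic case (Theorem~\ref{thm:Kpsreplace-W}), but it is not available here in any obvious way, since $X_u$ is not a weighted hypersurface.
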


\begin{proof}
We perform the following birational transformations,
\[
\begin{tikzcd}
 & \cX\arrow{ld}{\mu}\arrow{rd}{g}\arrow[rr,dashed,"f"]& & \cX^+\arrow{ld}{h}\arrow{rd}{\psi}\\
 \bP^3\times\bA^1& & \cY & & \cZ
 \end{tikzcd}
\]
where in the central fiber we have
\[
 \begin{tikzcd}
 & Y_0\cup \PE\arrow{ld}{\mu}\arrow{rd}{g}\arrow[rr,dashed,"f"]& & Y_0'\cup \tPE \arrow{ld}{h}\arrow{rd}{\psi}\\
 \bP^3 & & Y_0'\cup \PE & & X_u
 \end{tikzcd}
\]

Recall that $C_0 \subset T$ denotes the twisted cubic curve.  Using notation from Theorem \ref{thm:tangent-kst} and Proposition \ref{prop:X_u-construct}, the maps are as follows: 
    \begin{enumerate}
        \item $\mu: \calX \to \bP^3 \times \bA^1$ is the $(2,3,1)$-weighted blow up along $C_0$ in the central fiber, where $Y_0$ is the strict transform of $\bP^3 \times \{0\}$ and $\bP\calE$ is the exceptional divisor.
        \item $f: \calX \dashrightarrow \calX^+$ is the flip of $\widetilde{T}$ induced by the contraction $g$ of the rulings $R$ of $\widetilde{T}$ in the central fiber.  The strict transform of $Y_0$ is $Y'_0$ and the strict transform of $\bP\calE$ is $\widetilde{\bP\calE}$.
        \item $\psi: \calX^+ \to \calZ$ is the divisorial contraction of $Y'_0$. 
    \end{enumerate}
    
We elaborate on these maps below. 

The first morphism $\mu: \calX \to \bP^3 \times \bA^1_t$ is the $(2,3,1)$-weighted blow up of $\bP^3\times\bA^1$ along $C_0$, in local coordinates $(x,y,t)$, where $x,y$ are as in Theorem \ref{thm:tangent-kst}, whose exceptional divisor $\PE$ is a $\bP(1,2,3)$ bundle over $C_0$.  In the central fiber, this is the  $(2,3)$-weighted blow up of $\bP^3$ along $C_0 \subset T$.  

Let $R$ and $C$ denote the two pencils of rulings ($\mu_*R$ is a tangent line of $C_0$, and $\mu_*C$ is a conic curve).  With this notation, $g$ is the flipping contraction obtained by contracting the rulings $R$ of $\widetilde{T}$, the strict transform of $T$ in the central fiber, and the flip $f$ flips $\widetilde{T} \subset \calX$.  Indeed, the rulings $R$ are contractible, and the computations in Theorem \ref{thm:tangent-kst} imply that they generate a $K_{\calX} + \calT$-negative extremal ray in $\calX$, where $\calT$ is the family of surfaces.  Using the local structure of $\calX$ as a fibration over $C_0$, we can obtain an explicit description of the flip.  Along $C_0$, using the local coordinates $(x,y)$ in the central fiber, $T$ has equation $y^2 = x^3$ and $\mu$ normalizes the cusp.  Consider $\calX$ as a fibration of surfaces in a neighborhood of $p\in \calC_0$.  In each fiber $\calX_p = {Y_0}_p\cup \bP(1,2,3)$, the normal bundle of $R$ is $\calN_{R/\calX_p} = \calO_R(-2) \oplus \calO_R(-1)$ by computation, so the contraction $g$ creates a $\frac{1}{2}(1,1)$ singularity at $q\in {Y'_0}_p$ and the map $h$ is the $(2,1)$ weighted blow up of $q$ in $\bP(1,2,3)$.  The structure of the central fiber of $\calX^+$ is therefore exactly as described in Proposition \ref{prop:X_u-construct}, so in the central fiber, the strict transform of $\cT$ is $\tT_\cE$ in $\tPE$.  Recall from Proposition \ref{prop:X_u-construct} that $T_\cE$ is the anti-canonical divisor on $\PE$ defined by $(z^2 - y^3 = 0)$, and by construction, the strict transform of $\cT$ under the first morphism $\mu$ in $\PE$ is an anti-canonical divisor with this equation. 

The last morphism $\psi$ is the divisorial contraction of $Y'_0$, and $X_u$  is the image of $\tPE$ under $\psi$.  Indeed, an easy computation shows that curves in $Y'_0$ are $K_{\calX^+}$-negative, and Proposition \ref{prop:X_u-construct} shows that $Y'_0 \cap \tPE$ is contractible in $\tPE$, but $Y_0'$ has Picard rank one, hence is contractible in $\calX^+$ to $\calZ$.  Let $T_0$ be the central fiber of the strict transform of $\calT$ in $\calZ$, which is the image of $\tT_\cE \subset \tPE$ under $\psi$.

Finally, we show that $(X_u, \frac{9}{13}T_0)$ is K-polystable.  By Proposition \ref{thm:tangent-kst}, the log Fano pair $(\bP^3, \frac{9}{13}T)$ is K-semistable but not K-polystable. Let $G:=\PGL(2,\bC)$ as in the proof of Theorem \ref{thm:tangent-kst}. Then Proposition \ref{prop:G-kps} implies that there exists a $G$-invariant special divisor $F$ over $(\bP^3, \frac{9}{13}T)$ which induces a K-polystable degeneration and $\beta_{(\bP^3, \frac{9}{13}T)}(F)=0$. Since $F$ is $G$-invariant of plt type by Lemma \ref{lem:specialplt}, from the proof of Theorem \ref{thm:tangent-kst} there are only two such divisors: $T$ or $E_0$, and the $\beta$-invariant of $E_0$ (resp. of $T$) is zero (resp. non-zero). Hence $F=E_0$ and the proof is finished.
\end{proof}

In particular, a consequence of Proposition \ref{prop:tangent-Kps-replace} is that $X_u$ admits a $\bQ$-Gorenstein smoothing to $\bP^3$. Let $L$ be the $\bQ$-Cartier Weil divisor on $X_u$ obtained as the limit of $\cO_{\bP^3}(1)$ according to Lemma \ref{lem:L-construct}. The next result determines the Cartier index of $L$.

\begin{lem}\label{lem:L-index}
With the above notation, the Cartier index of $L$ on $X_u$ is $4$.
\end{lem}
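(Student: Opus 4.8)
The plan is to reduce to a local computation at the unique singular point $o:=\psi(\tH_{\cE})$ of $X_u$ and to carry it out via the crepant partial resolution $\psi\colon\tPE\to X_u$ of Proposition~\ref{prop:X_u-construct}.

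First, the reduction. By Lemma~\ref{lem:L-construct}(2) we have $-K_{X_u}\sim 4L$, and $X_u$ is Gorenstein, so $4L$ is Cartier and the Cartier index of $L$ divides $4$. Since $\psi$ is an isomorphism over $X_u\setminus\{o\}$ while all singularities of $\tPE$ lie on the contracted divisor $\tH_{\cE}$, the threefold $X_u$ is smooth away from $o$; hence $L$ is Cartier there, and the Cartier index of $L$ equals the order of $[L]$ in the local class group $\Cl(\mathcal O_{X_u,o})$. So it suffices to show that $2L$ is \emph{not} Cartier at $o$.

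Next I would compute $\Cl(\mathcal O_{X_u,o})$. Because $\psi$ is birational with irreducible exceptional divisor $\tH_{\cE}$, for a small neighbourhood $\cU$ of $\tH_{\cE}$ in $\tPE$ one has $\Cl(\mathcal O_{X_u,o})\cong\Cl(\cU)/\bZ[\tH_{\cE}]$. By Proposition~\ref{prop:X_u-construct}, $\cU$ retracts onto $\tH_{\cE}\cong\bP^1\times\bP^1$ and its only singularities are the cyclic quotient singularities $\tfrac12(1,1,0),\tfrac12(1,1,0),\tfrac13(1,2,0)$ along three disjoint rational curves in $\tH_{\cE}$, each a section of the ruling $\tH_{\cE}\to\bP^1$ and hence numerically a fibre $f$ of the other ruling. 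Thus $\cU$ is, near $\tH_{\cE}$, the total space of an orbifold line bundle over the root stack $\mathcal S$ of $\bP^1\times\bP^1$ along these three curves; computing $\Pic(\mathcal S)=\Cl(\cU)$ from the relations $2[\mathcal R_i]=[f]$ (resp.\ $3[\mathcal R_3]=[f]$) between the tautological roots $\mathcal R_i$ and $[f]$ gives $\Cl(\cU)\cong\bZ^2\oplus\bZ/2$. The class $[\tH_{\cE}]\in\Cl(\cU)$ is that of $N_{\tH_{\cE}/\tPE}$, and by adjunction together with $-K_{\tPE}|_{\tH_{\cE}}\sim 0$ (Proposition~\ref{prop:X_u-construct}) it equals $K_{\tH_{\cE}}+\mathrm{Diff}_{\tH_{\cE}}(0)=K_{\tH_{\cE}}+\tfrac12\Sigma+\tfrac12\Sigma'+\tfrac23\Sigma''$; in particular $\tH_{\cE}$ is not Cartier along the two $A_1$-curves. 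Feeding this back, $\Cl(\mathcal O_{X_u,o})=\Cl(\cU)/\bZ[\tH_{\cE}]\cong\bZ\oplus\bZ/4$, the $\bZ/4$ produced by the $\bZ/2$-torsion of $\Cl(\cU)$ together with the relation $[\tH_{\cE}]=0$.

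Finally I would place $[L]$ in this group and show it generates the $\bZ/4$-factor. The general member of $|{-}K_{X_u}|=|4L|$ avoids $o$ — its strict transform is the unigonal K3 surface $\tS_{\cE}$, disjoint from $\tH_{\cE}$ — so $\psi^*(4L)=-K_{\tPE}$; comparing with the strict transform $\widetilde L$ of a general member of $|L|$ gives $4[\widetilde L]=k[\tH_{\cE}]$ in $\Cl(\cU)$ for an explicit $k\in\bZ$. Expanding $\widetilde L$ in $\Cl(\tPE)$ via $4\widetilde L\sim -K_{\tPE}=h^*(-K_{\PE})-2\tE_{\cE}$ and restricting to $\cU$ (using that a fibre $p^{*}(\mathrm{pt})$ restricts to a ruling on $\tH_{\cE}$, and keeping track of $\tE_{\cE}$, which is non-Cartier along the $A_1$-curve $\tE_{\cE}\cap\tH_{\cE}$), one finds that the image of $[\widetilde L]$ in $\Cl(\mathcal O_{X_u,o})$ has order exactly $4$; equivalently $\psi^{*}(2L)$ is not Cartier near $\tH_{\cE}$, so $2L$ is not Cartier at $o$, and the index is $4$. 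The main obstacle I expect is precisely this last bookkeeping: tracking the $\bQ$-factorial but non-Cartier classes of $\tPE$ along its three singular curves, expressing both $[\tH_{\cE}]$ and $[\widetilde L]$ in terms of them, and verifying that contracting $\tH_{\cE}$ upgrades the order of $[L]$ from $2$ to $4$; everything else is formal.
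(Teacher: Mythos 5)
Your approach is genuinely different from the paper's and is left incomplete at the decisive step, which you yourself flag as ``the main obstacle.''

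The paper's proof is a short intersection-number argument: on $\tPE$, the strict transform $\tT_{\cE}$ meets the $h$-exceptional divisor $\tE_{\cE}$ transversally along a section of $\tPE\to\bP^1$, so a fiber $R_{\cE}$ of $\tE_{\cE}\to\bP^1$ satisfies $(\tT_{\cE}\cdot R_{\cE})=1$. Since $\tT_{\cE}\sim -K_{\tPE}=\psi^*(4L)$, the projection formula gives $(L\cdot\psi_*R_{\cE})=\frac{1}{4}$. As $\psi_*R_{\cE}$ is an honest curve on $X_u$ (only $\tH_{\cE}$ is contracted, and $R_{\cE}\not\subset\tH_{\cE}$), any Cartier multiple $mL$ would force $m/4\in\bZ$, so $4\mid m$; together with $4L\sim -K_{X_u}$ Cartier this pins the index at $4$, with no class-group analysis needed.

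Your route, computing $\Cl(\cO_{X_u,o})$ from a tubular neighborhood of $\tH_{\cE}$ via a root-stack Picard computation and then quotienting by $[\tH_{\cE}]$, is coherent in outline, and the Smith normal form of the relation matrix you set up does give $\Cl(\cO_{X_u,o})\cong\bZ\oplus\bZ/4$. Since $4L$ is Cartier, $[L]$ is $4$-torsion, so everything reduces to showing $[L]$ generates the $\bZ/4$ rather than having order $1$ or $2$. That is exactly what you do not do, and it is not a formality: a general member of $|L|$ must pass through $o$ (otherwise $L$ would already be Cartier there), so its strict transform is not disjoint from $\tH_{\cE}$, and locating its class requires tracking the non-Cartier behavior of $\tE_{\cE}$ and of $\tH_{\cE}$ itself along the three singular curves of $\tPE$. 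Until that bookkeeping is carried out your argument only shows the index is $2$ or $4$. If you want to salvage your framework, the paper's test curve $\psi_*R_{\cE}$ already pairs with $L$ to denominator $4$ and immediately forces $[L]$ to have order $4$, closing the gap.
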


\begin{proof}
By Lemma \ref{lem:L-construct} and Proposition \ref{prop:X_u-construct}, we know that $L$ is a $\bQ$-Cartier Weil divisor on $X_u$ such that $4L\sim -K_{X_u}$ is Cartier. Thus the Cartier index of $L$ divides $4$. Recall that $h:\tPE\to \PE$ is a $(2,1)$-weighted blow up along $(T_{\cE}, H_{\cE})$ where the center $C_{\cE}$ is a section of $\PE\to \bP^1$. Hence on $\tPE$ the divisor $\tT_{\cE}:=h_*^{-1} T_{\cE}$ and the $h$-exceptional divisor $\tE_{\cE}$ have  transverse intersection along a section of $\tPE\to \bP^1$. Denote by $R_{\cE}$ a fiber of $E_{\cE}\to \bP^1$. Then the above argument implies that $(\tT_{\cE}\cdot R_{\cE})=1$. By the proof of Proposition \ref{prop:X_u-construct}, we know that $\tT_{\cE}\sim -K_{\tPE}=\psi^*(-K_{X_u})\sim 4\psi^*L$. Thus we have $(L\cdot \psi_* R_{\cE}) = \frac{1}{4}(\tT_{\cE}\cdot R_{\cE})=\frac{1}{4}$ which implies that $L$ has Cartier index $4$.
\end{proof}

\subsection{GIT for Weierstrass models of unigonal K3 surfaces}

By construction of $X_u$, the surface $T_0$ has the structure of a Weierstrass elliptic fibration over the twisted cubic curve.  Repeating these birational transforms for any family $(\bP^3 \times \bA^1, \calT)$ whose central fiber $\calT_0$ is the tangent developable surface, the new central fiber also has this structure.  These elliptic surfaces naturally live inside (birational models of) $\bP\calE$, a $\bP(1,2,3)$ bundle over $C_0$.  Next, we relate this construction to GIT for Weierstrass elliptic surfaces. 

 We recall some notation from Section \ref{sec:construction}, namely the construction of Weierstrass elliptic surfaces inside a $\bP(1,2,3)$-bundle. As above, let $\calE = \calO_{\bP^1} \oplus \calO_{\bP^1}(-4) \oplus \calO_{\bP^1}(-6)$ and let $\calE_i$ denote the $i$th direct summand line bundle where $\calE_i$ has degree $i$. Then we obtain the $\bP(1,2,3)$ bundle $p: \bP\calE \to \bP^1$, where $\bP\calE := \Proj_{\bP^1} \Sym \calE$. Consider the affine space $\bfA:=H^0(\bP^1, \cO_{\bP^1}(8))\oplus H^0(\bP^1,\cO_{\bP^1}(12))$ of dimension $22$ parametrizing pairs $(A,B)$ of degree $8$ and $12$ binary forms respectively. In this way, for each pair $(A,B)\in \bfA$ one can associate a \emph{Weierstrass elliptic surface} $S_{(A,B)}\in |-K_{\PE}|$  whose equation is given by 
\[ S_{(A,B)}=(z^2 = y^3 + Ax^4y + Bx^6),\]
where $x, y, z$ have degrees one, two, and three respectively. Note that $S_{(0,0)}=T_{\cE}$ by definition.

%Let $\pi: S \to \bP^1$ be an elliptic K3 surface with section $\Sigma$. It is standard (see \cite[Section 2.2]{huybrechts}) that 
%\[ \pi_* \calO_S(\Sigma) \cong \calO_{\bP^1}, \quad \pi_* \calO_S(2\Sigma) \cong \calO_{\bP^1}(-4) \oplus \calO_{\bP^1}, \textrm{ and } \quad \pi_* \calO_S(3\Sigma) \cong \calO_{\bP^1}(-6) \oplus \calO_{\bP^1}(-4) \oplus \calO_{\bP^1}.  \]

%If $F = \pi_* \calO_S(3\Sigma)$, then the image of the morphism $S \to \bP(F^*)$ is the Weierstrass model $\overline{S}$ of $S$ (i.e. the unique surface obtained by contracting all fibral components not meeting the section). 
%Alternatively, the surface can be viewed as the closed subscheme of $\bP(F^*)$ defined by 
%\[ y^2z = x^3 + Axz^2 + Bz^3,    \] where $A \in H^0(\bP^1, \calO_{\bP^1}(8))$ and $B \in H^0(\bP^1, \calO_{\bP^1}(12))$.
The discriminant divisor $4A^3 + 27B^2$ vanishes at the singular fibers, and for every point $p \in \bP^1$, one has $\ord_p(A) \leq 3$ or $\ord_p(B) \leq 5$. The latter condition is equivalent to the surface having only ADE singularities. Furthermore, one can show that a pair $(A,B)$ satisfying these conditions
yields a unigonal K3 surface with ADE singularities, and vice versa. 

Next, we introduce the GIT set-up for Weierstrass elliptic surfaces.  Clearly the affine space $\bfA$ admits an $(\SL(2,\bC)\times\bG_m)$-action given by 
\[
(g,t)\cdot (A,B):= (t^4 A\circ g, t^6 B\circ g).
\]
Let $\cP:=[(\bfA\setminus\{0\})/\bG_m]$ be the weighted projective stack where $\bG_m$ acts as above. Let $\bfP$ be the weighted projective space as the coarse moduli space of $\cP$. Then we have $\bfP\cong \bP(2^9, 3^{13})$. The $\SL(2,\bC)$-action on $\bfA$ descends to an $\SL(2,\bC)$-action on $(\bfP, \cO_{\bfP}(1))$. We say that a non-zero pair $(A,B)\in \bfA\setminus\{0\}$ is GIT (poly/semi)stable if $[A,B]\in \bfP$ is GIT (poly/semi)stable with respect to the $\SL(2,\bC)$-action on $(\bfP,\cO_{\bfP}(1))$. Denote by $\bfA^{\rm ss}\subset \bfA\setminus\{0\}$ the GIT semistable open locus.

In \cite{Mir81}, Miranda constructs a compact moduli space parametrizing Weierstrass fibrations over $\bP^1$ using GIT following the above discussion. 
In particular, 
Miranda gives (see \cite[Proposition 5.1]{Mir81}) a criterion for when a pair $(A,B)$ is GIT (semi)stable based on the valuations.

\begin{rem}
Before stating the following theorem, we point out that Miranda's setup (and the discussion that follows in \cite{OO18} and \cite{ABK3}) considers Weierstrass elliptic surfaces embedded in a $\bP^2$ bundle, instead of a $\bP(1,2,3)$ bundle, which is more convenient for our work. The same results hold in our setup, as the underlying surfaces are the same. 
\end{rem}

\begin{thm}\label{thm:unigonal-GIT-slc}
If $(A,B)\in \bfA\setminus \{0\}$ is GIT semistable, then the surface $S_{(A,B)}$ is slc.
\end{thm}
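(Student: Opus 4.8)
The plan is to translate GIT semistability of $(A,B)$ into a numerical bound on the vanishing orders of $A$ and $B$ at every point of $\bP^1$, and then to run a local analytic check that this bound forces the Weierstrass surface $S_{(A,B)}$ to have at worst semi-log canonical (indeed, slc being equivalent to "insignificant limit" here) singularities fiber by fiber. First I would invoke Miranda's Hilbert--Mumford computation \cite[Proposition 5.1]{Mir81}: a pair $(A,B)$ fails to be GIT semistable with respect to the $\SL(2,\bC)$-action on $\bfP\cong\bP(2^9,3^{13})$ precisely when there is a point $p\in\bP^1$ with $\ord_p(A)\geq 4$ and $\ord_p(B)\geq 6$ (equivalently, after a coordinate change one destabilizes with the diagonal one-parameter subgroup fixing $p$ and $\infty$). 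Hence GIT semistability of $(A,B)$ gives, for every $p\in\bP^1$, either $\ord_p(A)\leq 3$ or $\ord_p(B)\leq 5$.

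Next I would localize. Away from the section $C_{\cE}=(x=0)$ and away from the (finitely many) singular fibers, $S_{(A,B)}\to\bP^1$ is a smooth elliptic fibration, so $S_{(A,B)}$ is smooth there; and along a general point of $C_{\cE}$ the surface is smooth as well, since $z^2=y^3$ is a cuspidal curve but the total surface $z^2=y^3+Ax^4y+Bx^6$ is smooth at a general point of $C_{\cE}$ when $(A,B)\neq(0,0)$ (and when $(A,B)=(0,0)$ the surface is $T_{\cE}$, which has slc, in fact normal crossings-type, singularities along $C_{\cE}$ by Definition \ref{def:tangentdevelopable}). So the only thing to check is the local analytic type of $S_{(A,B)}$ at a point lying over $p\in\bP^1$ on the special fiber. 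There, in an affine chart $x=1$ with local parameter $t$ at $p$, the surface has equation $z^2=y^3+A(t)y+B(t)$, a Weierstrass singularity whose type is governed by Tate's algorithm / Kodaira's classification: the surface singularity is ADE (hence canonical, hence slc) as long as one does not have the minimal Weierstrass data simultaneously divisible to too high an order, and the precise threshold is exactly $\ord_p(A)\geq 4$, $\ord_p(B)\geq 6$ — the type $\widetilde{I_0^*}$ non-minimal case. Since GIT semistability excludes this, every fiber over the special locus contributes only an ADE surface singularity to $S_{(A,B)}$, so $S_{(A,B)}$ has ADE singularities and in particular is slc. (One should also check the point at infinity of $\bP^1$ and the behavior of $y,z$ as sections twisted by $\cO_{\bP^1}(4),\cO_{\bP^1}(6)$, but this is handled by the same local computation after passing to the other trivializing chart of $\bP\cE$.)

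The main obstacle I expect is the local singularity analysis at the worst fibers: one must verify carefully that the inequality "$\ord_p(A)\leq 3$ or $\ord_p(B)\leq 5$" is exactly the condition that $z^2=y^3+A(t)y+B(t)$ defines an ADE (equivalently, canonical, equivalently slc) surface singularity, handling the borderline cases — e.g.\ $\ord_p(A)=1$ with $\ord_p(B)$ arbitrary large (type $I_n$), $\ord_p(A)\geq 2,\ \ord_p(B)=2$ or $3$ (types $IV$, $I_0^*$), $\ord_p(A)=2,\ \ord_p(B)=3$ ($I_0^*$), $\ord_p(A)=3,\ \ord_p(B)\geq 5$ ($IV^*$, $III^*$, $II^*$), and especially distinguishing the minimal $I_0^*$ (which is a $D_4$ surface singularity, slc) from the non-minimal case. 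This is where I would either cite the relevant part of Miranda's book or \cite{OO18}/\cite{ABK3}, or else carry out Tate's algorithm explicitly; modulo that the argument is a short combination of Miranda's HM-criterion with a table of Kodaira fibers. I would also remark, as the paper does, that working in the $\bP(1,2,3)$-bundle rather than a $\bP^2$-bundle changes nothing since the surfaces are the same.
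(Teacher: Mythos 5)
Your proposal has the right overall shape—translate GIT semistability into a pointwise bound on $\ord_p(A),\ord_p(B)$ via Miranda's Hilbert--Mumford computation, then check the resulting Weierstrass surface singularities locally—and it agrees in spirit with what the cited references do (the paper itself simply points to \cite[Proposition 7.4]{OO18} and \cite[Section 10]{ABK3} and proves nothing in-house). However, there is a concrete error that makes your argument miss exactly the hard part of the theorem.

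The criterion you quote, namely that $(A,B)$ is GIT \emph{semistable} iff for all $p$ one has $\ord_p(A)\le 3$ or $\ord_p(B)\le 5$, is actually the criterion for GIT \emph{stability}. For $\deg A=8,\deg B=12$ Miranda's Proposition~5.1 gives: stable iff $\ord_p(A)<4$ or $\ord_p(B)<6$ for all $p$, and semistable iff $\ord_p(A)\le 4$ or $\ord_p(B)\le 6$ for all $p$. The paper itself records, in the paragraph just before this theorem, that the condition "$\ord_p(A)\le 3$ or $\ord_p(B)\le 5$ for all $p$'' is \emph{equivalent} to $S_{(A,B)}$ having only ADE singularities; if that were the semistability condition, the theorem would trivially be "semistable $\Rightarrow$ ADE'', and it would be pointless to state the weaker conclusion "slc''. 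The strictly semistable locus consists precisely of pairs with some $p$ satisfying $\ord_p(A)\ge 4$ and $\ord_p(B)\ge 6$ while still obeying $\ord_p(A)\le 4$ or $\ord_p(B)\le 6$; at such a $p$ the surface acquires a non-klt singularity (a simple-elliptic or cusp singularity, e.g.\ $z^2=y^3+t^4y+t^6$ is a simple-elliptic $\widetilde E_8$ point), which is lc but not ADE. Your argument never reaches these points, so the case that actually needs work—showing those singularities are still log canonical—is absent. To repair the proof you must analyze the strictly semistable fibers and verify that the non-minimal Weierstrass singularities that occur are simple-elliptic or cusp type, hence lc, which is exactly the content of \cite[Proposition 7.4]{OO18}.

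One small additional slip: you assert in passing that $T_{\cE}$ has "slc, in fact normal crossings-type'' singularities along $C_{\cE}$. The tangent developable has \emph{cuspidal} singularities along the twisted cubic and is \emph{not} slc—this is precisely why $[T]$ lies in $\ofM^{\GIT}\setminus\fM^{\slc}$ and why the paper spends all of Section~\ref{sec:unigonal} constructing a K-polystable replacement for it. This does not affect your main argument (since $(0,0)$ is excluded from $\bfA\setminus\{0\}$), but it is a genuinely false statement.
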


\begin{proof}
See \cite[Proposition 7.4]{OO18} for an explicit description of the GIT compactification of Weierstrass elliptic surfaces (see also \cite[Section 10]{ABK3}).
\end{proof}

\begin{lem}\label{lem:X_u-anti-can}
Let $S\in |-K_{X_u}|$ be an anti-canonical divisor. 
If $S$ does not pass through the singular point $o\in X_u$, then there exists an automorphism $\varphi\in \Aut(X_u)$ such that $(X_u,\varphi^* S)$ is the birational transform of $(\PE, S_{(A,B)})$ for some $(A,B)\in \bfA$. Moreover, $S$ is isomorphic to the Weierstrass elliptic surface $S_{(A,B)}$.
\end{lem}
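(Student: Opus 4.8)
The plan is to reduce the statement to an explicit computation on $\PE$, exploiting that $X_u$ is the anti-canonical model of $\tPE$, which carries the birational morphisms $\psi\colon\tPE\to X_u$ and $h\colon\tPE\to\PE$. First I would pin down $H^0(X_u,-K_{X_u})$. Since $\psi$ is crepant and $\psi_*\cO_{\tPE}=\cO_{X_u}$, the projection formula gives $H^0(X_u,-K_{X_u})=H^0(\tPE,-K_{\tPE})$; and since $A_{\PE}(\tE_{\cE})=3$ we have $-K_{\tPE}=h^*(-K_{\PE})-2\tE_{\cE}$, so the projection formula again identifies $H^0(\tPE,-K_{\tPE})$ with the space of $s\in H^0(\PE,-K_{\PE})$ in \eqref{eq:PE-anti-can} with $\ord_{\tE_{\cE}}(s)\ge 2$. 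Computing $\ord_{\tE_{\cE}}$ monomial by monomial — recalling that $\ord_{\tE_{\cE}}$ is the monomial valuation along $C_{\cE}$ determined by $\ord_{\tE_{\cE}}(x)=1$, $\ord_{\tE_{\cE}}(z^2-y^3)=2$, while $y$, $z$, and the pullbacks $f_j$ are units near $C_{\cE}$ — one finds that this subspace consists exactly of the sections
\[
s=a(z^2-y^3)+f_6 x^3 z+f_4 x^2 y^2+f_8 x^4 y+f_{12}x^6,\qquad a\in\bC,\ f_j\in H^0(\bP^1,\cO_{\bP^1}(j)).
\]

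Next I would record that $S=V(s)$ avoids $o$ exactly when $a\neq 0$. Indeed, if $a=0$ then $s$ is divisible by $x^2$, and the corresponding member of $|-K_{\tPE}|$ either contains $\tH_{\cE}$ or meets it, so its image $S$ on $X_u$ passes through $o=\psi(\tH_{\cE})$; whereas if $a\neq 0$ then $V(s)$ meets $H_{\cE}=V(x)$ only along the blow-up centre $C_{\cE}$, and a local computation on the $(2,1)$-weighted blow-up $h$ shows that the strict transform $\tS$ of $V(s)$ is disjoint from $\tH_{\cE}$, so $S=\psi_*\tS$ avoids $o$. Under the hypothesis of the lemma we may therefore rescale $s$ and assume $a=1$.

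The heart of the argument is to produce enough automorphisms of $X_u$. For $a_4\in H^0(\bP^1,\cO_{\bP^1}(4))$ and $a_6\in H^0(\bP^1,\cO_{\bP^1}(6))$, consider $\varphi=\varphi_{a_4,a_6}\colon\PE\to\PE$ given by $x\mapsto x$, $y\mapsto y+a_4 x^2$, $z\mapsto z+a_6 x^3$; this is an automorphism of $\PE$ over $\bP^1$ fixing $H_{\cE}$ and $C_{\cE}$. Although $\varphi$ does \emph{not} preserve $T_{\cE}$ — indeed $\varphi^*(z^2-y^3)=(z^2-y^3)+2a_6 x^3 z-3a_4 x^2 y^2-3a_4^2 x^4 y+(a_6^2-a_4^3)x^6$ — it \emph{does} preserve the monomial valuation $\ord_{\tE_{\cE}}$ (the displayed expression still has $\ord_{\tE_{\cE}}=2$), so $\varphi$ lifts to $\tPE$ and hence descends to an automorphism of the anti-canonical model $X_u$. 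Expanding, the coefficients of $x^3 z$ and $x^2 y^2$ in $\varphi^*s$ are $f_6+2a_6$ and $f_4-3a_4$; taking $a_4=\tfrac13 f_4$ and $a_6=-\tfrac12 f_6$ makes both vanish, so $\varphi^*s=(z^2-y^3)+f_8' x^4 y+f_{12}'x^6$ for suitable binary forms $f_8',f_{12}'$, i.e.\ $\varphi^*s=s_{(A,B)}$ with $(A,B)=(-f_8',-f_{12}')$. Thus $(X_u,\varphi^*S)$ is the birational transform of $(\PE,S_{(A,B)})$.

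Finally, for the last assertion: the strict transform $\tS\subset\tPE$ of $\varphi^*S$ is disjoint from $\tH_{\cE}$ by the second step, so $\psi$ restricts to an isomorphism $\tS\xrightarrow{\ \sim\ }\varphi^*S$; and $\tS$ is equally the strict transform of $S_{(A,B)}\subset\PE$, which contains $C_{\cE}$ and satisfies $\ord_{\tE_{\cE}}(S_{(A,B)})=2$, so $h$ restricts to an isomorphism $\tS\xrightarrow{\ \sim\ }S_{(A,B)}$ exactly as in the proof of Proposition \ref{prop:X_u-construct}. Hence $S\cong\varphi^*S\cong S_{(A,B)}$, the Weierstrass elliptic surface. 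The main obstacle I anticipate is precisely the third step: recognizing that the automorphisms of $X_u$ one needs arise from automorphisms of $\PE$ that move $T_{\cE}$ but fix the exceptional divisor $\tE_{\cE}$ of the weighted blow-up, and verifying both the descent and the valuation computations carefully; by contrast the remaining steps are bookkeeping with the projection formula and monomial orders.
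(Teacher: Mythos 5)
Your proof is correct and follows essentially the same approach as the paper: identify the sections of $-K_{X_u}$ with those $s\in H^0(\PE,-K_{\PE})$ satisfying $\ord_{\tE_{\cE}}(s)\ge 2$, observe that $o\notin S$ forces $a\neq 0$, and apply the shear $\varphi_{\cE}\colon(x,y,z)\mapsto(x,y+\tfrac{f_4}{3}x^2,z-\tfrac{f_6}{2}x^3)$ to reach Weierstrass form. You supply one useful detail the paper elides — namely why $\varphi_{\cE}$, which does not preserve $T_{\cE}$, nonetheless lifts to $\tPE$ and descends to $\Aut(X_u)$, via the observation that it preserves the valuation $\ord_{\tE_{\cE}}$.
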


\begin{proof}
Recall from Proposition \ref{prop:X_u-construct} that there is a  diagram $\PE \xleftarrow{h} \tPE \xrightarrow{\psi} X_u$ where $h$ is a weighted blow-up and $\psi$ is taking the anti-canonical model. Let $S_{\cE}:=h_* \psi^* S$. Then we know that $\ord_{\tE_{\cE}}(S_{\cE})=2$ which implies that $S_{\cE}$ is tangent to $T_{\cE}$ along the curve $C_{\cE}$. Thus by \eqref{eq:PE-anti-can} we know that $S_{\cE}$ has the equation $(a(z^2-y^3)+f_6x^3z+f_4x^2y^2+f_8x^4y+f_{12}x^6=0)$. Since $o\not\in S$, we know that $S_{\cE}$ does not contain $H_{\cE}=(x=0)$ which implies $a\neq 0$. Hence we may assume that $a=1$.  Let $\varphi_{\cE}\in \Aut(\PE)$ be $\varphi_{\cE}(x,y,z):=(x, y+\frac{f_4}{3}x^2, z-\frac{f_6}{2}x^3)$. Then clearly $\varphi_{\cE}^*S_{\cE}$ is defined by the equation $(z^2=y^3+Ax^4y+Bx^6)$ which is a Weierstrass elliptic surface, i.e. $\varphi_{\cE}^*S_{\cE}=S_{(A,B)}$. Hence $\varphi_{\cE}$ induces the desired $\varphi\in\Aut(X_u)$. Since $S_{\cE}$ is tangent to  $T_{\cE}$ along $C_{\cE}$, we know that $h_*^{-1} S_{\cE}$ is isomorphic to $S_{\cE}$ and disjoint with $\tH_{\cE}$, which implies $S=\psi_* (h_*^{-1} S_{\cE})\cong h_*^{-1} S_{\cE}\cong S_{\cE}$.
\end{proof}

By Lemma \ref{lem:X_u-anti-can}, every unigonal K3 surface $S_{(A,B)}$ with ADE singularities is isomorphic  to an anti-canonical divisor $S\in |-K_{X_u}|$ not passing through $o$. Since the ample $\bQ$-Cartier divisor $L$ on $X_u$ is Cartier away from $o$, and $(L^2\cdot S)=\frac{1}{16}(-K_{X_u})^3=4$, we know that every polarized unigonal K3 surface of degree $4$ is isomorphic to some $(S,L|_S)$.

\begin{thm}\label{thm:GIT=K-unigonal}
Let $S,S'\in |-K_{X_u}|$ be two divisors where $o\not\in S$ and $o\in S'$. Let $(A,B)\in \bfA$ be a pair such that $\varphi^* S$ is the birational transform of $S_{(A,B)}$ for some $\varphi\in \Aut(X_u)$. 
\begin{enumerate}
    \item $(X_u, \frac{9}{13}S)$ is always K-semistable, and it is  K-polystable if and only if $\varphi^*S=T_0$;
    \item if $c\in (\frac{9}{13},1)\cap \bQ$, then $(X_u, cS)$ is K-(poly/semi)stable if and only if $(A,B)\in \bfA\setminus \{0\}$ is GIT (poly/semi)stable;
    \item if $c\in  (0, \frac{9}{13})\cap \bQ$, then $(X_u, cS)$ is K-unstable;
    \item $(X_u, cS')$ is K-unstable for any rational $c\in (0,1)$.
\end{enumerate}
\end{thm}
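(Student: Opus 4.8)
The four parts are best organized by whether the anti-canonical divisor meets the singular point $o$. For $S\in|-K_{X_u}|$ with $o\notin S$, Lemma \ref{lem:X_u-anti-can} lets us replace $S$ by $\varphi^*S=S_{(A,B)}$ for some $(A,B)\in\bfA$ (with $(A,B)=(0,0)$ corresponding to $T_0$). I would first record two preliminary facts. First, a direct computation with $\Aut(\PE)$ --- automorphisms of the graded $\cO_{\bP^1}$-algebra $\Sym\,\cE$ together with $\PGL(2,\bC)$ acting on the base $C_0\cong\bP^1$ --- shows that $\Aut(X_u)$ has unipotent radical generated by the fibrewise shears $y\mapsto y+f_4x^2$ and $z\mapsto z+f_6x^3+f_2xy$, and that the stabilizer $\Aut(X_u,T_0)$ is the \emph{reductive} group $\PGL(2,\bC)\times\bG_m$, the $\bG_m$ acting by $x\mapsto\lambda x$. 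Second, let $\sigma$ be the $\bG_m$-action on the central fibre $X_u$ induced by the test configuration $\calZ$ of $(\bP^3,\tfrac9{13}T)$ from Proposition \ref{prop:tangent-Kps-replace}; then $\sigma\in\Aut(X_u,T_0)$, and since the Donaldson--Futaki invariant of a test configuration is a weight on its central fibre, the product test configuration of $(X_u,cT_0)$ induced by $\sigma$ satisfies $\Fut_{(X_u,cT_0)}(\sigma)=\Fut(\calZ,c)=\beta_{(\bP^3,cT)}(E_0)=\tfrac{9-13c}{4}$, using the computations in the proof of Theorem \ref{thm:tangent-kst}.

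\emph{Parts (1) and (3).} For part (3): $\Fut_{(X_u,cT_0)}(\sigma^{-1})=\tfrac{13c-9}{4}<0$ for $c<\tfrac9{13}$, so $(X_u,cT_0)$ is K-unstable there; for a general $S_{(A,B)}$ one computes the limit of $S_{(A,B)}$ under $\sigma^{\pm1}$ (a $\sigma$-fixed anti-canonical divisor) and reduces the Futaki invariant of the corresponding product test configuration of $(X_u,cS_{(A,B)})$, via its central fibre, to an explicit computation showing it is negative for $c<\tfrac9{13}$; hence $(X_u,cS_{(A,B)})$ is K-unstable for $c<\tfrac9{13}$. For part (1): the $\bG_m\subset\Aut(X_u)$ scaling $x$ sends $S_{(A,B)}$ to $S_{(\lambda^4A,\lambda^6B)}\to T_0$ as $\lambda\to0$, so $(X_u,cS_{(A,B)})$ admits a test configuration with central fibre $(X_u,cT_0)$; at $c=\tfrac9{13}$ the latter is K-polystable (Proposition \ref{prop:tangent-Kps-replace}), hence $\delta(X_u,\tfrac9{13}T_0)\ge1$, and lower semicontinuity of $\delta$ in $\bQ$-Gorenstein families \cite{BLX19} gives $\delta(X_u,\tfrac9{13}S_{(A,B)})\ge1$, i.e. K-semistability. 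If moreover $(X_u,\tfrac9{13}S_{(A,B)})$ were K-polystable with $(A,B)\ne(0,0)$, the (non-trivial) degeneration above to the K-semistable pair $(X_u,\tfrac9{13}T_0)\not\cong(X_u,\tfrac9{13}S_{(A,B)})$ would contradict Theorem \ref{thm:lwx-polystable}; conversely $(X_u,\tfrac9{13}T_0)$ is K-polystable by Proposition \ref{prop:tangent-Kps-replace}. This proves (1).

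\emph{Part (2).} By Theorem \ref{thm:ADL19-wall}, near $[T]=[(X_u,\tfrac9{13}T_0)]\in\ofM_{9/13}^{\K}$ the wall-crossing at $c=\tfrac9{13}$ has a local VGIT presentation \cite{AFS17} for the reductive group $\Aut(X_u,T_0)=\PGL(2,\bC)\times\bG_m$ acting on the miniversal deformation space of $(X_u,\tfrac9{13}T_0)$. The plan is to analyze this space: using Altmann's deformation theory of the Gorenstein toric singularity $o\in X_u$ \cite{Altmann} to control the $X_u$-directions, and identifying the ``divisor'' directions --- deformations of $T_0$ inside $|-K_{X_u}|$ transverse to the $\Aut(X_u)$-orbit --- with the $22$-dimensional space $\bfA$ of Weierstrass data, on which $\PGL(2,\bC)\times\bG_m$ acts as in the GIT set-up preceding Theorem \ref{thm:unigonal-GIT-slc}. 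Matching the $c$-linearization at $c=\tfrac9{13}+\epsilon$ with Miranda's polarization $\cO_{\bfP}(1)$ then identifies the unigonal locus of $\ofM_c^{\K}$ with $\bfP\sslash\SL(2,\bC)$ for $0<c-\tfrac9{13}\ll1$. To propagate this over all of $(\tfrac9{13},1)$: if $(A,B)$ is GIT-semistable then $S_{(A,B)}$ is slc (Theorem \ref{thm:unigonal-GIT-slc}), so $(X_u,S_{(A,B)})$ is lc, and part (1) together with interpolation of K-stability \cite[Proposition 2.13]{ADL19} give K-semistability of $(X_u,cS_{(A,B)})$ for all $c\in[\tfrac9{13},1)$; if $(A,B)$ is GIT-unstable, a Miranda destabilizing $1$-PS $\rho\in\SL(2,\bC)\subset\Aut(X_u)$ gives $\Fut_{(X_u,cS_{(A,B)})}(\rho)=\kappa(c-\tfrac9{13})$ with $\kappa<0$ --- the value at $c=\tfrac9{13}$ vanishing by part (1) applied to $\rho$ and $\rho^{-1}$, the sign of $\kappa$ being that of the Miranda weight of $\rho$ --- so $(X_u,cS_{(A,B)})$ is K-unstable for all $c>\tfrac9{13}$; the polystable statement follows by matching orbit closures. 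The main obstacle is precisely this dictionary: establishing the reductivity of $\Aut(X_u,T_0)$, controlling the (locally toric) deformation theory of $X_u$, matching the $c$-dependent linearization with $\cO_{\bfP}(1)$, and verifying that no further K-moduli wall meets the unigonal locus inside $(\tfrac9{13},1)$.

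\emph{Part (4).} Assume $o\in S'$. I would destabilize $(X_u,cS')$ by the divisor $\tH_{\cE}\subset\tPE$ over $X_u$ (the strict transform of $H_{\cE}$, contracted to $o$ by $\psi$). Since $\psi$ is crepant, $A_{(X_u,cS')}(\tH_{\cE})=1-cm$ with $m:=\ord_{\tH_{\cE}}(\psi^*S')\ge1$ because $o\in S'$. From $\psi^*(-K_{X_u})=-K_{\tPE}$ and $S'\sim-K_{X_u}$, the rescaling $u=t/(1-c)$ gives $S_{(X_u,cS')}(\tH_{\cE})=(1-c)\,S_{(X_u,0)}(\tH_{\cE})$, so
\[
\beta_{(X_u,cS')}(\tH_{\cE})=1-cm-(1-c)\,S_{(X_u,0)}(\tH_{\cE}).
\]
A direct computation of $T_{(X_u,0)}(\tH_{\cE})$ and of the volume function $t\mapsto\vol(-K_{\tPE}-t\tH_{\cE})$ on the explicit model $\tPE$ shows $S_{(X_u,0)}(\tH_{\cE})>1$; hence $\beta_{(X_u,cS')}(\tH_{\cE})<0$ for every $c\in(0,1)$, so $(X_u,cS')$ is K-unstable.
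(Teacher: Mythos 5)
Parts (1) and (4) of your proposal are sound. Part (1) is essentially the paper's argument (degenerate to $T_0$ via the $\bG_m$ scaling $x$, use openness/semicontinuity, and uniqueness of K-polystable degenerations). Part (4) is a genuinely different but valid route: you destabilize with $\beta_{(X_u,cS')}(\tH_{\cE})$, whereas the paper uses the purely local normalized-volume bound $\hvol_{(X_u,cS'),o}(\tH_{\cE})\le \tfrac43(1-c)^3<27(1-c)^3$ together with \cite{LL16}; your deferred inequality $S_{(X_u,0)}(\tH_{\cE})>1$ is in fact true and needs no Zariski decomposition, since $\vol(\psi^*(-K_{X_u})-t\tH_{\cE})\ge 64-\tfrac43t^3$ already forces $S_{(X_u,0)}(\tH_{\cE})>2$; still, the paper's local argument avoids any global volume estimate.

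The genuine gaps are in parts (2) and (3), i.e.\ the core of the theorem. For (2), the paper proves ``K-semistable $\Rightarrow$ GIT semistable'' by the Paul--Tian criterion \cite[Theorem 2.22]{ADL19}, whose key hypothesis — that the log CM line bundle of the universal Weierstrass family descends to an \emph{ample} $\bQ$-line bundle on $\bfP$ — is verified via uniform K-stability of general fibers (Theorem \ref{thm:uKs-almost-logCY} plus interpolation) and \cite{CP18,Pos19,XZ19}, using that $\bfP$ has Picard rank one. Your local-VGIT/miniversal-deformation plan never establishes the dictionary it needs (identification of the $c$-linearization with Miranda's $\cO_{\bfP}(1)$, reductivity of $\Aut(X_u,T_0)$, absence of further walls meeting the unigonal locus in $(\tfrac{9}{13},1)$ — the last being a consequence of (2), so invoking it is circular), and you flag this yourself as ``the main obstacle.'' Moreover your shortcut for ``GIT-unstable $\Rightarrow$ K-unstable'' is incorrect as stated: when $S_{(A,B)}$ is not $\rho$-invariant, the test configurations induced by $\rho$ and $\rho^{-1}$ have \emph{different} central fibers, so their Futaki invariants do not sum to zero, and K-semistability at $c=\tfrac{9}{13}$ only gives that both are $\ge 0$; the claimed vanishing at $c=\tfrac{9}{13}$ and the linear-in-$c$ formula with slope proportional to the Miranda weight are exactly the CM-weight computation you skipped. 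The polystability statement also cannot be dismissed as ``matching orbit closures'': the paper needs the K-polystable degeneration, deformation control forcing the degeneration to stay on $X_u$, Lemma \ref{lem:X_u-anti-can} in families to return to Weierstrass form, the ``only if'' direction, and separatedness of the GIT quotient. For (3), your proposed destabilizers do not work as described: the $1$-PS degenerating $S_{(A,B)}$ to $T_0$ has Futaki invariant a positive multiple of $\tfrac{9-13c}{4}$, which is \emph{positive} for $c<\tfrac{9}{13}$, and the opposite direction degenerates the boundary to a non-reduced limit divisor whose Futaki invariant you never compute, so the asserted negativity is unverified. The paper instead deduces (3) from (2): choose $(A,B)$ general (allowed by openness of K-semistability), so it is GIT stable, hence $(X_u,(\tfrac{9}{13}+\epsilon)S)$ is K-stable by (2), and interpolation forces K-stability at $c=\tfrac{9}{13}$, contradicting (1). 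Without a completed part (2), neither your part (2) nor your part (3) closes.
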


\begin{proof}
For simplicity, we always assume that $\varphi$ is the identity as guaranteed by Lemma \ref{lem:X_u-anti-can}, i.e. $S=\psi_*(h_*^{-1}S_{(A,B)})$ for some $(A,B)\in \bfA$.

(1) Since $\lim_{t\to 0}(t^4A, t^6 B)=(0,0)$, the $\bG_m$-action on $\bfA$ induces a special degeneration of $(X_u,\frac{9}{13}S)$  to $(X_u, \frac{9}{13}T_0)$ which is K-polystable by Proposition \ref{prop:tangent-Kps-replace}. Hence it follows from openness of K-semistability \cite{BLX19, Xu19} and the fact that $(X_u,S)\not\cong (X_u, T_0)$ whenever $(A,B)\neq (0,0)$.

(2) To start with, notice that $(\bP^3, cT)$ admits a special degeneration to $(X_u,cT_0)$ for $c\leq \frac{9}{13}$. Since $(\bP^3, cT)$ is K-polystable for $c<\frac{9}{13}$ by Theorem \ref{thm:tangent-kst}, we know that $(X_u,cT_0)$ is K-unstable for $c<\frac{9}{13}$. Since $(X_u, \frac{9}{13}T_0)$ is K-polystable by Proposition \ref{prop:tangent-Kps-replace},  we know that $(X_u, cT_0)$ is K-unstable for any $c\neq \frac{9}{13}$ by \cite[Proposition 3.18]{ADL19}. Thus we may assume $(A,B)\neq 0$.

We first show the ``only if'' part, that is, K implying GIT. This follows from the Paul-Tian criterion \cite[Theorem 2.22]{ADL19}.
There is a universal family of Weierstrass elliptic surfaces $(\PE\times\bfA, \cS_{\cE,\bfA})\to \bfA$ where the fiber over $(A,B)\in \bfA$ is $(\PE, (z^2=y^3+Ax^4 y+ Bx^6))$. By performing the weighted blow-up $h$ and anti-canonical morphism $\psi$ in families, we obtain a universal family $(X_u\times\bfA, \cS_{\bfA})\to \bfA$. Clearly the $\bG_m$-action on $\bfA$ lifts naturally to $\PE\times\bfA$ (where $t\cdot ([x,y,z], (A,B)):= ([tx,y,z], (t^4A, t^6B))$) and $X_u\times\bfA$ such that both $\cS_{\cE,\bfA}$ and $\cS_{\bfA}$ are $\bG_m$-invariant. Hence taking the stacky quotient of $\bG_m$ over $\bfA\setminus \{0\}$, we obtain a $\bQ$-Gorenstein log Fano family $\pi_{\cP}:(\cX_{\cP}, \cS_{\cP})\to \cP$. We show that the assumptions of \cite[Theorem 2.22]{ADL19} are satisfied for $\pi_{\cP}:(\cX_{\cP},c\cS_{\cP})\to \cP$ when $c\in (\frac{9}{13},1)$. Assumptions (a) and (b) are straightforward. For assumption (c), it suffices to show that the CM line bundle $\lambda_{\CM, \pi_{\cP}, c\cS_{\cP}}$ descends to an ample $\bQ$-line bundle $\Lambda_{\bfP,c}$ on $\bfP$ for $c\in (\frac{9}{13},1)$. For a general choice of $(A,B)$, the surface $S$ is klt which implies that $(X_u, S)$ is plt. Hence Theorem \ref{thm:uKs-almost-logCY} implies that $(X,(1-\epsilon)S)$ uniformly K-stable for $0<\epsilon\ll 1$. Thus interpolation \cite[Proposition 2.13]{ADL19} implies that $(X_u,cS)$ is uniformly K-stable for any $c\in (\frac{9}{13},1)$. Thus \cite{CP18, Pos19, XZ19} implies that $\Lambda_{\bfP,c}$ is big, hence it is ample since $\bfP$ has Picard rank $1$. Thus the proof of the ``only if'' part is finished by \cite[Theorem 2.22]{ADL19}.

For the ``if'' part, notice that by Theorem \ref{thm:unigonal-GIT-slc} any GIT semistable $S$ is slc. Thus $(X_u, cS)$ is K-semistable for any $c\in (\frac{9}{13},1)$ by part (1) and interpolation for K-stability \cite[Proposition 2.13]{ADL19}. For a general choice of $(A,B)$, the above discussion shows that $(X_u,cS)$ is K-stable for any $c\in (\frac{9}{13},1)$. If $S$ is GIT polystable, then it suffices to show that $(X_u, (\frac{9}{13}+\epsilon)S)$ is K-polystable, since then $(X_u, cS)$ is K-polystable by interpolation \cite[Proposition 2.13]{ADL19}. Let $(X_0, (\frac{9}{13}+\epsilon)S_0)$ be the K-polystable degeneration of $(X_u, (\frac{9}{13}+\epsilon)S)$. Then we know that $(X_0, \frac{9}{13}S_0)$ is K-semistable. Since $(X_u, \frac{9}{13}T_0)$ is the K-polystable degeneration of $(X_u, \frac{9}{13}S)$, by \cite{LWX18} we have a sequence of special degenerations $(X_u, \frac{9}{13}S)\rightsquigarrow (X, \frac{9}{13}S_0)\rightsquigarrow (X_u, \frac{9}{13}T_0)$. This implies that $X\cong X_u$ and $S_0$ does not pass through the singular point on $X$. We may assume that $X=X_u$ for simplicity. By Lemma \ref{lem:X_u-anti-can}, we can find a $1$-parameter family of automorphisms $(\varphi_t)\in \Aut(X_u)$ over a pointed curve $(B,0)$ such that $\varphi_t^* S~(t\neq 0)$ and $\varphi_0^*S_0$ are all in Weierstrass form, and $\lim_{t\to 0}\varphi_t^* S=\varphi_0^*S_0$. Since $(X_u, (\frac{9}{13}+\epsilon)\varphi_0^* S_0)$ is K-polystable, we know that $\varphi_0^*S_0\neq T_0$, i.e. $\varphi_0^*S_0$ corresponds to a non-zero pair $(A_0,B_0)\in \bfA$. By the ``only if'' part, $(A_0,B_0)$ is GIT polystable. Hence the separatedness of GIT quotient implies that $[A,B]$ and $[A_0,B_0]$ belong to the same $\SL(2,\bC)$-orbit in $\bfP$. In particular, $(X_u,  (\frac{9}{13}+\epsilon)S)\cong (X_u,  (\frac{9}{13}+\epsilon)S_0)$ are K-polystable. The proof is finished.

(3) Assume to the contrary that $(X_u, cS)$ is K-semistable for some $c< \frac{9}{13}$. By openness of K-semistability \cite{BLX19, Xu19}, we may choose $S$ such that $(A,B)$ is a general pair of binary forms. Thus $(A,B)$ is GIT stable which implies that $(X, (\frac{9}{13}+\epsilon)S)$ is K-stable by part (2). By interpolation  \cite[Proposition 2.13]{ADL19} we know that $(X, \frac{9}{13}S)$ is K-stable, contradicting part (1). Thus part (3) is proved.

(4) By Proposition \ref{prop:X_u-construct},  the divisor $S'$ is Cartier as $X_u$ is Gorenstein. Besides, the birational morphism $\psi:\tPE\to X_u$ contracts the prime divisor $\tH_{\cE}$ to $o\in X_u$, where $\tH_{\cE}\cong\bP^1\times\bP^1$ and $K_{\tPE}|_{\tH_{\cE}}\sim 0$. Hence by adjunction we have $\tH_{\cE}|_{\tH_{\cE}}=K_{\tH_{\cE}}+\Gamma$ where $\Gamma$ is the different divisor. Denote the two families of fibers of $\bP^1\times\bP^1$ by $F_a$ and $F_b'$ for $a,b\in\bP^1$. Then by construction we have that $\Gamma=\frac{1}{2}F_0+\frac{1}{2} F_1+\frac{2}{3}F_\infty$ under a suitable choice of coordinates. Thus $A_{X_u}(\tH_{\cE})=1$ and $\vol_{X_u, o}(\tH_{\cE})=(-K_{\tH_{\cE}}-\Gamma)^2=\frac{4}{3}$.
Here we refer to the survey article \cite{LLX18} for the background of normalized volumes. Since $S$ is Cartier and passes through $o=\psi(\tH_{\cE})$, we have $\ord_{\tH_{\cE}}(S)\geq 1$. Hence we have
\[
\hvol_{(X_u,cS),o}(\tH_{\cE})=( A_{X_u}(\tH_{\cE})-c\ord_{\tH_{\cE}}(S))^3 \vol_{X_u,o}(\tH_{\cE})\leq \frac{4}{3}(1-c)^3.
\]
On the other hand, we have $\frac{27}{64}(-K_{X_u}-cS)^3= 27(1-c)^3> \hvol_{(X_u,cS),o}(\tH_{\cE})$,
which implies that $(X_u,cS)$ is K-unstable by \cite{LL16}.
\end{proof}

\begin{comment}
\subsection{Smoothness of the stack}
\YL{Temporary name, using K3 to show smoothness of the stack containing $\bP^3$ and $X_u$.}
\KD{I think the original section is still okay; I added some details to the local-to-global obstruction calculation so that it is actually correct and can add more if we decide to keep it as is.  If we want to shorten the paper, though, I am happy to remove the deformation theory of $X_u$ section!}

In this subsection, we aim to prove the following result.

\begin{thm}
    There exists a smooth open substack $\sU$ of $\ocM_{3,64}^{\rm sm}$ parametrizing $\bP^3$ and $X_u$. 
\end{thm}

\begin{proof}
We first show that $\sU$ is an open substack.
It is clear that the locus parametrizing $\bP^3$ and $X_u$ is constructible. Thus it suffices to show that every $1$-parameter family  $\cX\to B$ with special fiber being $\bP^3$ or $X_u$ satisfies that a general fiber is also $\bP^3$ or $X_u$.
\end{proof}
\YL{Halted for now, try to work out deformation theory instead.}
\end{comment}

\subsection{Deformation theory of $X_u$}

In this subsection, we show that the allowed deformations of $X_u$ are unobstructed, and any such small deformation of $X_u$ is isomorphic to itself or $\bP^3$. 

In the moduli stack $\ocM_{3,64}^{{\rm sm},\delta\geq \epsilon_0}$, we only consider deformations induced by the index one cover of the $\bQ$-Cartier Weil divisor $L$ that is the limit of $\calO_{\PP^3}(1)$, see Lemma \ref{lem:L-construct}.  In the case of $X_u$, the divisor $L$ is 4-Cartier by Lemma \ref{lem:L-index}.

\begin{lemma}\label{lem:X_u-indexonecover}
The unique singular point $o\in X_u$ admits a special degeneration to an orbifold cone singularity $x_0\in X_{u,0}$. Under this special degeneration, the divisor $L$ degenerates to a $\bQ$-Cartier Weil divisor $L_0$ on $X_{u,0}$, whose Cartier index is $4$. Moreover, the index one cover $Y\to X_{u,0}$ of $L_0$ is a toric Gorenstein canonical singularity of the form $(xy - zw=0) \subset \bA^4/\bmu_3$, where the action of $\bmu_3$ is given by $\zeta_3 \cdot (x,y,z,w) = (\zeta_3 x, \zeta_3^2 y, \zeta_3^2 z, \zeta_3 w )$ with $\zeta_3=e^{\frac{2\pi i}{3}}$.%\footnote{add proof/computation}
\end{lemma}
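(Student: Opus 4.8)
The plan is to make the construction of $X_u$ toric in a one-parameter limit by degenerating the non-toric center $C_\cE=V(x,z^2-y^3)$ of the weighted blow-up $h$ to a torus-invariant curve. Recall that $\PE\to\bP^1$ is a $\bP(1,2,3)$-bundle, hence a toric threefold, and that $C_\cE$ is a section not preserved by the fibrewise torus. I would use the one-parameter subgroup $\gamma\colon\bG_m\to\Aut(\PE)$ acting with weight $1$ on the summand $\cE_2=\cO_{\bP^1}(-4)$ of $\cE$ and trivially on $\cE_1$ and $\cE_3$ (fibrewise $[x:y:z]\mapsto[x:sy:z]$). For $s\ne 0$ the curve $\gamma(s)\cdot C_\cE=V(x,\,s^{3}z^{2}-y^{3})$ is again a non-invariant section, but its flat limit as $s\to 0$ is $V(x,y^{3})$, whose support is the torus-invariant curve $\sigma_3=V(x,y)$ — the locus of $\tfrac13(1,2)$ quotient singularities of $\PE$. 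Forming the $\bG_m$-equivariant family over $\bA^1$ whose fiber over $s$ is obtained from $\PE$ by the $(2,1)$-weighted blow-up along $\gamma(s)\cdot C_\cE$ followed by the anticanonical contraction (and checking that its general fiber is $X_u$), the central fiber $X_{u,0}$ is built from $\PE$ by a weighted blow-up along, and a contraction over, purely toric data; hence $X_{u,0}$ is a projective toric threefold and the limit $x_0$ of $o$ is a toric cone singularity, in particular an orbifold cone singularity. This also explains why $X_u$ itself is not toric while $o$ acquires a toric model only in the limit.

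The remaining content is a toric computation. From the fan of $\PE$ (the standard fan of a $\bP(1,2,3)$-bundle over $\bP^1$), a weighted star subdivision of the two-dimensional cone corresponding to $\sigma_3$ followed by deletion of the ray of the contracted divisor (the strict transform of $H_\cE$) produces an explicit three-dimensional cone $\sigma\subset N_\bQ$ describing $(x_0\in X_{u,0})$. The divisor $L$ of Lemma~\ref{lem:L-construct}, which has Cartier index $4$ on $X_u$ by Lemma~\ref{lem:L-index}, spreads out over the family to a torus-invariant Weil divisor whose restriction $L_0$ to $X_{u,0}$ corresponds to a support function on $\sigma$; checking on the lattice points of $\sigma$ that this support function has denominator exactly $4$ shows $L_0$ has Cartier index $4$ at $x_0$ (note $4L_0\sim -K_{X_{u,0}}$, the latter being Cartier). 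Passing to the index-one cover of $L_0$ amounts torically to replacing $N$ by the sublattice $N'\subset N$ on which the support function of $L_0$ becomes integral; computing $N'$ and the induced cone, one recognises $Y$ as $(xy-zw=0)\subset\bA^4$ modulo the $\bmu_3$-action $\zeta_3\cdot(x,y,z,w)=(\zeta_3x,\zeta_3^2y,\zeta_3^2z,\zeta_3w)$, the $\bmu_3$ being exactly the residual $\tfrac13(1,2)$-orbifold structure of $\PE$ along $\sigma_3$ that survives $h$.

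Finally, the singularity type of $Y$: it is Gorenstein because the index-one cover $\pi\colon Y\to X_{u,0}$ is quasi-\'etale, whence $-K_Y\sim\pi^{*}(-K_{X_{u,0}})\sim 4L_0|_Y$ is Cartier (equivalently, the weights $(1,2,2,1)$ sum to $0$ modulo $3$ and $\bmu_3$ acts without quasi-reflections on $\{xy=zw\}$); and it is canonical by the toric roof criterion, i.e.\ one checks that the only lattice point of $N'$ lying in $\sigma$ strictly below the Gorenstein hyperplane is the origin — equivalently, that the refinement $N'\supset N$ introduces no lattice point of height in $(0,1)$ inside the cone over the quadrilateral defining $\{xy=zw\}$ (which is terminal).

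The step I expect to be the main obstacle is justifying that the flat limit of this family of weighted blow-ups, together with the relative anticanonical model, is exactly the toric weighted blow-up of $\PE$ along $\sigma_3$ with its contraction: the scheme-theoretic limiting center $V(x,y^{3})$ is non-reduced, so the weights of the exceptional divisor must be tracked carefully through the degeneration, and one must separately verify that $X_{u,0}$ is normal and irreducible with $x_0$ as its only singularity, so that the cone-singularity description applies.
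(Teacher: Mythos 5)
Your proposal takes a genuinely different route: instead of degenerating the singularity via the Koll\'ar component $\tH_\cE$ (which is what the paper does), you degenerate the ambient construction by moving the blow-up center $C_\cE$ toward the torus-invariant curve $\sigma_3=V(x,y)$ using a one-parameter subgroup of $\Aut(\PE)$, hoping to land on a toric central fiber. This is a creative idea, but it has two real problems beyond the flat-limit issue you already flag.

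First, the degeneration you propose is \emph{not} the one the paper uses, and there is no reason a priori that it produces the same germ $(x_0\in X_{u,0})$. In the paper, the Koll\'ar component $\tH_\cE\cong\bP^1\times\bP^1$ carries the different $\Gamma=\frac12 F_0+\frac12 F_1+\frac23 F_\infty$, with the three orbifold fibers at distinct points $[1:0]$, $[1:1]$, $[0:1]$ of one ruling (from $\sigma_2$, from the new $\frac12(1,1,0)$ curve of the weighted blow-up, and from $\sigma_3$, respectively). Under your $\gamma(s)$ the blow-up center $C_\cE^{(s)}$ sits at $[s^3:1]$ and \emph{collides with} $\sigma_3$ at $[0:1]$ as $s\to 0$. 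So the central fiber of your family has the $\frac12$- and $\frac23$-orbifold loci in $\tH_\cE$ merging, i.e.\ the Koll\'ar component acquires a worse orbifold structure, and the associated orbifold cone (hence the local fundamental group, hence the index-one cover of $L_0$) is in general different from the one claimed in the lemma. In particular, the resulting $X_{u,0}^{\mathrm{toric}}$ is a toric variety, whereas the paper's $X_{u,0}=C_a(\bP^1\times\bP^1,-K-\Gamma)$ is not toric (an orbifold $\bP^1$ with three orbifold points is not a toric pair). So either your degeneration is a strict further degeneration of the paper's — in which case you must separately prove that it nonetheless has the same index-one cover — or you must prove that despite the collision the formal germ at $x_0$ is unchanged. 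Neither is done, nor flagged.

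Second, even granting the scheme-theoretic set-up, the flat-limit step is more delicate than a ``careful tracking of weights.'' The relevant object is the relative Rees algebra of the $\bG_m$-equivariant pair of divisors $(\gamma(s)^{-1}T_\cE, H_\cE)$ over $\bA^1_s$ (not the blow-up of the limit pair of divisors), and flatness of this Rees algebra over $\bA^1_s$ is not automatic when the scheme-theoretic limit of the center is non-reduced. Without that, the naive identification of the central fiber with a monomial/toric weighted blow-up along $(V(y^3),V(x))$ is not justified. By contrast, the paper's argument sidesteps all of this: it reads off $(\tH_\cE,\Gamma)$ from Theorem~\ref{thm:GIT=K-unigonal}(4), invokes the Koll\'ar-component degeneration to the orbifold cone $C_a(\tH_\cE,-K_{\tH_\cE}-\Gamma)$ (so the central fiber is manifestly normal, irreducible, and an orbifold cone by construction), deduces the Cartier index of $L_0$ is $4$ from Lemma~\ref{lem:L-index} and semicontinuity, and then writes down explicitly the degree-$12$ local universal cover $C_a(\bP^1\times\bP^1,\cO(1,1))\to C_a(\bP^1\times\bP^1,\cO(2,2))\to X_{u,0}$ and picks out the intermediate $\bmu_3$-quotient of $(xy=zw)$ with the stated weights. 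That path requires no toric limit argument at all and identifies the group action concretely, which is what the statement demands.
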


\begin{proof}
By construction from Section \ref{sec:construction} and the proof of Theorem \ref{thm:GIT=K-unigonal}(4), we know that $(\tH_{\cE},\Gamma)\cong(\bP^1\times\bP^1,\frac{1}{2}F_0+\frac{1}{2} F_1+\frac{2}{3}F_\infty)$ and $\tH_{\cE}|_{\tH_{\cE}}=K_{\tH_{\cE}}+\Gamma$ where $\Gamma$ is the different divisor. Thus $\tH_{\cE}$ induces a special degeneration of $(o\in X_u)$ to the orbifold cone $X_{u,0}:=C_{a}(\bP^1\times\bP^1, -K_{\bP^1\times\bP^1}-\Gamma)$ (as defined in \cite[Section 2.4]{LX16} and \cite[Section 2.4]{LZ19}).  The notion of special degeneration in the local setting comes from Koll\'ar components over the singular point (c.f. \cite[Definition 4.24]{Xu20}). Since $X_{u,0}$ is an anti-canonical cone over a klt log Fano pair with standard coefficients, it is Gorenstein canonical (\cite[Lemma 3.1, Proposition 3.14]{Kol13}).

Let $L_0$ be the degeneration of $L$ to $X_{u,0}$. Then clearly $4L_0\sim -K_{X_{u,0}}$ is Cartier, and $2L_0$ is not Cartier because $2L$ is not Cartier. Thus we can take an index $1$ cover $Y\to X_{u,0}$ of $L_0$ which has degree $4$. In fact, there is a local universal cover of $o\in X_{u,0}$ as follows:
\[
C_a(\bP^1\times\bP^1, \cO(1,1))\to C_a(\bP^1\times\bP^1,\cO(2,2))\to C_{a}(\bP^1\times\bP^1, -K_{\bP^1\times\bP^1}-\Gamma).
\]
Here the first map is raising the polarization to the second tensor power, and the second map comes from the quotient map $\bP^1\to (\bP^1,\frac{1}{2}[0]+\frac{1}{2}[1]+\frac{2}{3}[\infty])$ of degree $6$, induced by the $\fS_3$-action on $\bP^1$ generated by $[u_0,u_1]\mapsto [u_0,\zeta_3 u_1]$ and $[u_0,u_1]\mapsto [u_1,u_0]$. Thus we have a group of order $12$ (isomorphic to a binary dihedral group) acting on the singularity $C_a(\bP^1\times\bP^1,\cO(1,1))$ whose quasi-\'etale quotient is precisely $x_0\in X_{u,0}$.

Since $L_0$ has Cartier index $4$,  the index $1$ cover of $L_0$ on $X_{u,0}$ is a $\bmu_3$-quotient of the singularity $C_a(\bP^1\times\bP^1,\cO(1,1))$. Let $([u_0,u_1],[v_0,v_1])$ be the projective coordinate on $\bP^1\times\bP^1$, then we may identify $C_a(\bP^1\times\bP^1,\cO(1,1))$ with $\tY:=(xy-zw=0)\subset \bA^3$, where
\[
(x,y,z,w)=(u_0 v_0, u_1 v_1, u_1 v_0, u_0 v_1).
\]
The group $\bmu_3$ acts on $\bP^1$ as generated by $[u_0,u_1]\mapsto [u_0, \zeta_3 u_1]$. After lifting to the anti-canonical cone $C_a(\bP^1\times\bP^1, \cO(2,2))$, the $\bmu_3$-action becomes $(u_0^2, u_0u_1, u_1^2)\mapsto (\zeta_3^2 u_0^2, u_0u_1 , \zeta_3 u_1^2)$ where $v_0^2$, $v_0v_1$, and $v_1^2$ are $\bmu_3$-invariant. Thus the only lifting of $\bmu_3$-action on $\tY$ is given by $(x,y,z,w)\mapsto (\zeta_3 x, \zeta_3^2 y, \zeta_3^2 z, \zeta_3 w)$.
\end{proof}

Therefore, we must understand the deformation theory of these singularities.  Because they are toric Gorenstein threefold singularities, Altmann's method described in \cite{Altmann} applies to compute the miniversal deformation space.  

\begin{lemma}
The singularity  $0\in Y = (xy = zw) \subset \bA^4/\bmu_3$ with above action of $\bmu_3$ is isomorphic to the affine toric Gorenstein threefold singularity $V$ defined by the cone in $\mathbb{R}^3$ with two dimensional polytope $Q$ as the hyperplane section $t = 1$ pictured below.

\begin{center}
    \begin{tikzpicture}[scale=.7]
    \draw[thin, gray] (-.9,-.9) grid (2.9,3.9);
    \draw (-.9,0) -- (2.9,0);
    \draw (0,-.9) -- (0,3.9);
    \draw[thick] (0,0) -- (1,0); 
    \draw[thick] (1,0) -- (2,3); 
    \draw[thick] (2,3) -- (1,3);
    \draw[thick] (1,3) -- (0,0);
    \filldraw[black] (0,0) circle (2pt) node[below left] {$(0,0)$};
    \filldraw[black] (1,0) circle (2pt) node[below right] {$(1,0)$};
    \filldraw[black] (1,3) circle (2pt) node[above left] {$(1,3)$};
    \filldraw[black] (2,3) circle (2pt) node[above right] {$(2,3)$};
    \end{tikzpicture}
\end{center}

\end{lemma}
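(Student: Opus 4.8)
The plan is to recognize both $Y$ and $V$ as affine toric threefolds and to match their associated height-one lattice polygons. First I would observe that the ambient hypersurface $Y_1:=(xy=zw)\subset\bA^4$ is the affine cone over the smooth quadric surface $(xy=zw)\subset\bP^3$, i.e. over $\bP^1\times\bP^1$ polarized by $\cO(1,1)$; hence $Y_1=U_\sigma$ is the affine toric threefold with torus $\mathbb{T}=\{(x,y,z,w)\in\bG_m^4\mid xy=zw\}\cong\bG_m^3$. Taking $x,z,w$ as basis characters one has $M\cong\bZ^3$ with $[x]=(1,0,0)$, $[z]=(0,1,0)$, $[w]=(0,0,1)$ and $[y]=[z]+[w]-[x]=(-1,1,1)$, so $\sigma^\vee=\mathrm{Cone}((1,0,0),(0,1,0),(0,0,1),(-1,1,1))$ and, dualizing, $\sigma=\mathrm{Cone}((0,0,1),(0,1,0),(1,0,1),(1,1,0))$. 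The ray generators of $\sigma$ all lie at height $1$ with respect to the primitive functional $m_0:=(0,1,1)\in M$, and $\sigma\cap\{\langle m_0,-\rangle=1\}$ is the standard unit square — as it must be, $Y_1$ being the cone over $\bP^1\times\bP^1$.

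Next I would account for the quotient. Since the $\bmu_3$-action is diagonal it is translation by the $3$-torsion element $t_0\in\mathbb{T}$ with $\chi^{[x]}(t_0)=\zeta_3$, $\chi^{[z]}(t_0)=\zeta_3^2$, $\chi^{[w]}(t_0)=\zeta_3$; reading off the weights $(1,2,2,1)$ of the action on $(x,y,z,w)$, equivalently $(1,2,1)$ in the coordinates $x,z,w$, this $t_0$ is the class $\xi=(1,2,1)\in N/3N$, of order exactly $3$ because $\tfrac13(1,2,1)\notin N$. Therefore $Y=Y_1/\bmu_3$ is the toric variety of the \emph{same} cone $\sigma$ but of the refined cocharacter lattice $N':=N+\bZ\cdot\tfrac13(1,2,1)$, an index-$3$ overlattice of $N$. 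Since $\langle m_0,\tfrac13(1,2,1)\rangle=1\in\bZ$, the functional $m_0$ remains primitive in $\Hom(N',\bZ)$, so $Y$ is Gorenstein with defining polygon $Q:=\sigma\cap\{\langle m_0,-\rangle=1\}$, now taken with the affine lattice induced by $N'$ on that hyperplane (a torsor over $N'_0:=N'\cap\{\langle m_0,-\rangle=0\}$).

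Then I would carry out the (routine) computation of $Q$. The four ray generators $(0,0,1),(0,1,0),(1,0,1),(1,1,0)$ of $\sigma$ stay primitive in $N'$, hence are exactly the vertices of $Q$; the slice lattice $N'_0=\bZ\eta\oplus\bZ(0,1,-1)$ with $\eta:=\tfrac13(1,2,-2)$, an index-$3$ refinement of $N_0=\bZ(1,0,0)\oplus\bZ(0,1,-1)$; and expressing the four vertices in the affine coordinates based at $(0,0,1)$ in the basis $\{\eta,(0,1,-1)\}$ gives the lattice quadrilateral with vertices $(0,0),(0,1),(3,-2),(3,-1)$. Finally, the matrix $\begin{pmatrix}1&1\\1&0\end{pmatrix}\in\GL_2(\bZ)$ carries this vertex set onto $\{(0,0),(1,0),(1,3),(2,3)\}$, i.e. onto the polygon in the statement, so $Q$ is unimodularly equivalent to the pictured polygon. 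Since a Gorenstein affine toric threefold singularity is determined up to isomorphism by its height-one lattice polygon, up to affine unimodular equivalence, and $V$ is by construction the toric variety of the cone over the pictured polygon, this yields $Y\cong V$.

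I do not expect a conceptual obstacle: the statement is pure toric bookkeeping. The one place that genuinely requires care is the passage through the quotient in the second and third steps — one must correctly identify the overlattice $N'$ from the $\bmu_3$-weights, and then track how $N'$ restricts to the height-one slice, refining the unit square's lattice by index $3$. That refinement is precisely what converts the unit square attached to $Y_1$ into the area-$3$ polygon attached to $Y$.
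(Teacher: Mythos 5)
Your proof is correct and follows essentially the same route as the paper's: both identify the toric data of $Y$ and read off the height-one lattice polygon up to unimodular equivalence. The only difference is presentational and dual: the paper computes the sublattice $M\subset M'$ of $\bmu_3$-invariant monomials (generated by $xz,\,w/x,\,x^3$), dualizes, and then picks a basis $\vec v_1,\vec v_2,\vec v_3$ of $N=\Hom(M,\bZ)$ chosen so the polygon appears directly in coordinates; you instead work on the $N$-side throughout, enlarging $N$ to $N'=N+\bZ\cdot\tfrac13(1,2,1)$ via the torsion translation and then applying an explicit element of $\GL_2(\bZ)$ to match the pictured quadrilateral. Both computations check out.
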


\begin{proof}%\footnote{YL: I took away the longer proof, hope it is ok.}
Let $M'=\bZ^3$ be the standard lattice.
Let $\omega\subset M'_{\bR}=\bR^3$ be the cone generated by $(0,0,1)$, $(1,1,1)$, $(1,0,1)$, and $(0,1,1)$ corresponding to the variables $x$, $y$, $z$, and $w$. Then the toric variety $\tY:=\bC[x,y,z,w]/(xy-zw)$ is isomorphic to $\Spec \bC[\omega\cap M']$. Let $N':=\Hom(M',\bZ)=\bZ^3$ and $\sigma=\omega^\vee\subset N'_{\bR}=\bR^3$, then computations shows that $\sigma$ is generated by $(1,0,0)$, $(0,1,0)$, $(0,-1,1)$, and $(-1,0,1)$. The $\bmu_3$-action on $\tY$ induces an index $3$ sublattice $M\subset M'$ corresponding to $\bmu_3$-invariant monomials. It is clear that $xz$, $\frac{w}{x}$, and $x^3$ are $\bmu_3$-invariant. Thus $M$ is generated by $(1,0,2)$, $(0,1,0)$, $(0,0,3)$ which correspond to the above three $\bmu_3$-invariant monomials. Let $N=\Hom(M,\bZ)$ be the dual lattice. Then computing dual basis shows that $N$ is generated by $(1,0,0)$, $(0,1,0)$, and $(-\frac{2}{3}, 0, \frac{1}{3})$. We pick a new basis $\vec{v}_1=(1,-1,0)$, $\vec{v}_2=(-\frac{2}{3}, 0, \frac{1}{3})$, and $\vec{v}_3=(0,1,0)$ of $N$. Then under the basis $\vec{v}_i$, the cone $\sigma$ is generated by 
\[
\vec{v}_1+\vec{v}_3=(1,0,0),~ \vec{v_3}=(0,1,0),~ 2\vec{v}_1+3\vec{v}_2+\vec{v}_3=(0,-1,1),~\textrm{and } \vec{v}_1+3\vec{v}_2+\vec{v}_3=(-1,0,1).
\]
This shows that $Y=\tY/\bmu_3$ corresponds to the polytope $Q$ from the picture as 
\[Q=\{(a,b)\in\bR^2\mid a\vec{v}_1+b\vec{v}_2+\vec{v}_3\in \sigma\}.
\] 
The proof is finished.
\end{proof}

\begin{proposition}
The base of the miniversal deformation space for the singularity $0\in Y$ is a smooth curve.  In particular, the singularity $0\in Y$ has unobstructed deformations.
\end{proposition}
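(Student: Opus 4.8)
\emph{Plan of proof.} By the preceding lemma, $0\in Y$ is the isolated Gorenstein toric threefold singularity $Y_\sigma=\Spec\bC[\sigma^\vee\cap M]$ attached to the cone $\sigma$ over $Q\times\{1\}$, where $Q$ is the lattice parallelogram with vertices $(0,0)$, $(1,0)$, $(2,3)$, $(1,3)$; isolatedness is clear from the description $Y=\tilde Y/\bmu_3$ with $\tilde Y=(xy=zw)$ the node, since the given $\bmu_3$-action is free on $\tilde Y\setminus\{0\}$. The plan is to apply Altmann's combinatorial computation of the versal deformation of an isolated toric Gorenstein singularity \cite{Altmann}, whose essential input here is the Minkowski decomposition of $Q$.

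First I would record the combinatorics of $Q$. Its four edge vectors $(1,0)$, $(1,3)$, $(-1,0)$, $(-1,-3)$ are all primitive, so the only lattice points on $\partial Q$ are its four vertices, and every proper face of $\sigma$ is a smooth cone. Moreover $Q$ is the Minkowski sum of two primitive lattice segments,
\[
Q=\Delta'+\Delta'',\qquad \Delta':=\mathrm{conv}\{(0,0),(1,0)\},\qquad \Delta'':=\mathrm{conv}\{(0,0),(1,3)\},
\]
and I would check that up to lattice translation this is the \emph{only} expression of $Q$ as a Minkowski sum of positive-dimensional lattice polytopes: any summand has all its edges parallel to $(1,0)$ or $(1,3)$, and a two-dimensional such summand is a lattice parallelogram with edges in those directions contained in a translate of $Q$, which by the lattice geometry of $Q$ (lattice width $1$ in each edge direction) must be a lattice translate of $Q$ itself. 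Both $\Delta'$ and $\Delta''$ are Minkowski-indecomposable.

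Then I would feed this into Altmann's theorem. The space $T^1_Y$ is $M$-graded, each graded piece $T^1_Y(-R)$ being governed by a ``cross-cut'' lattice polygon $Q(R)$ of $Q$ and its Minkowski decompositions; since every edge of $Q$ is primitive there is no cross-cut that is a lattice segment of length $\ge 2$, so the only nonzero graded piece sits in the distinguished degree $R^{*}$ with $Q(R^{*})=Q$. In that degree Altmann identifies the corresponding factor of the versal base with the parameter space of the versal Minkowski deformation of $Q$, and since $Q$ splits into exactly two indecomposable summands this space is the affine line $\bA^1$ — the one-parameter family separating $\Delta'$ from $\Delta''$, in perfect analogy with the deformation $xy-zw=t$ of the node — carrying no obstruction relations from $T^2_Y$. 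Hence the miniversal base of $0\in Y$ is $\cong\bA^1$, a smooth curve, so in particular $\dim T^1_Y=1$ and the deformations of $0\in Y$ are unobstructed. (As a consistency check, this family globalizes to the $\bQ$-Gorenstein smoothing of $X_{u,0}$, hence of $X_u$, to $\bP^3$ provided by Lemma \ref{lem:L-construct}, which already exhibits the base as non-trivial and reduced.)

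The point demanding the most care is the bookkeeping inside Altmann's multigraded formulas: (i) showing that no graded piece $T^1_Y(-R)$ with $R\ne R^{*}$ survives — this is precisely where primitivity of the edges of $Q$ is used — and (ii) confirming that the two-summand decomposition $Q=\Delta'+\Delta''$ produces an honestly \emph{smooth} (reduced, one-dimensional) base, with none of the obstruction equations that appear once $Q$ has three or more indecomposable Minkowski summands.
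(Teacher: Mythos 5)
Your argument is correct and reaches the same conclusion, but takes a noticeably different route through Altmann's machinery. The paper's proof is a direct, self-contained computation: it plugs the edge vectors of $Q$ into Altmann's generating polynomials $g_k(t)=\sum_i t_i^k d^i$, observes that the resulting ideal $\mathcal J$ reduces to the two linear equations $t_1=t_3$, $t_2=t_4$ (the $k\ge 2$ equations become vacuous because opposite edges are exactly antiparallel), and reads off $\widetilde M\cong\bA^1$. Your proof instead invokes the Minkowski-decomposition \emph{interpretation} of Altmann's versal base: $Q$ is a parallelogram decomposing uniquely as $\Delta'+\Delta''$ with both summands primitive lattice segments, hence the versal base is the one-parameter Minkowski family $\bA^1$; and you invoke the cross-cut description of the $M$-grading on $T^1$ to rule out contributions outside the distinguished degree $-R^*$. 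This is a perfectly valid conceptual shortcut, and it highlights the analogy with the node $xy=zw$, but it leans on two things the paper verifies by hand rather than quotes: (i) that primitivity of the edges kills all off-degree pieces of $T^1$ (this is from Altmann's earlier work on $T^1$ via cross-cuts, a separate input from the versality paper cited), and (ii) that the parallelogram structure — not merely ``two indecomposable summands'' in the abstract — forces the degree-$\ge 2$ obstruction equations $g_k=0$ to be implied by $g_1=0$. Both assertions are true for the $Q$ at hand, so your argument is sound; but as written, (ii) is asserted by analogy rather than derived, whereas the paper's explicit reduction of $\mathcal J$ makes it immediate. If you want the argument to stand alone, you should either cite the precise statement of Altmann (or Petracci's \cite[Prop.~4.4(ii)]{Pet21}, which the paper mentions as a general reference) relating parallelogram polytopes to smooth one-dimensional versal bases, or simply observe that for a parallelogram the linear relations $t_1=t_3$, $t_2=t_4$ already trivialize every $g_k$ with $k\ge 2$.
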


\begin{proof}
A more general result is proved in \cite[Proposition 4.4 (ii)]{Pet21}. We provide a proof here for readers' convenience.
The miniversal base space of a toric Gorenstein threefold singularity is determined by the corresponding two-dimensional polytope.  Indeed, in \cite[Theorem 5.1]{Altmann}, Altmann constructs a flat deformation over a base space $\widetilde{M}$ from the polytope, and proves it is the miniversal deformation space in \cite[Corollary 7.2]{Altmann}.  For more information on the construction of $\widetilde{M}$, see \cite[Definition 2.2, Theorem 2.4]{Altmann}.

To obtain equations for the space $\widetilde{M}$, label the edges of the polytope $Q$ in a counterclockwise fashion starting at the origin: $d^1 = (1,0)$, $d^2 = (1,3)$, $d^3 = (-1,0)$, $d^4 = (-1,-3)$.  Define the vector valued polynomial $g_k(t) = \sum_{i = 1}^4 t_i^kd^{i}$.  The inner products $\langle g_k(t) , (1,0) \rangle$ and $\langle g_k(t), (0,1) \rangle$ define two polynomials $g_{k,x}(t)$ and $g_{k,y}(t)$.  We define the ideal  \[ \mathcal{J} = \{ g_{k,x}(t), g_{k,y}(t) \mid k \ge 1 \} \subset \bC[t_1,t_2,t_3,t_4].\]   Let $M = \Spec \bC[t_1,t_2,t_3,t_4]/\mathcal{J} \subset \bA^4$.  By \cite[Theorem 7.4]{Altmann}, $M$ is defined by equations in $\bC[t_i - t_j]$, and the miniversal base space $\widetilde{M}$ is defined by $\mathcal{J} \cap \bC[t_i - t_j]$, or the pre-image of $M$ under the canonical projection $\bC^4 \to \bC^4 / \bC \cdot (1,1,1,1)$. 

Plugging in the values of $d^i$, we see that $\mathcal{J}$ is defined by the equations $t_1^k +t_2^k - t_3^k - t_4^k$ and $3t_2^k - 3t_4^k$, for $k \ge 1$ which reduces to $t_1 - t_3$ and $t_2 - t_4$.  Hence, $M = \Spec \bC[t_1,t_2]$, and $\widetilde M = \Spec \bC[t_1-t_2] \cong \bA^1$.  Therefore, the miniversal base space of the singularity $Y$ is a smooth curve, so $Y$ has unobstructed deformations. 
\end{proof}

The miniversal base space of the singular point of $X_{u,0}$ where $L_0$ deforms in a $\bQ$-Gorenstein family is given by the $\bZ/(4)$-invariant part of the miniversal base space of $Y$, which is a proper subspace of $\widetilde{M}$.  However, by construction  $x_0\in X_{u,0}$ deforms to $o\in X_u$ which admits a smoothing to $\bP^3$, so the miniversal base space must be at least one-dimensional.  Therefore, it must be all of $\widetilde{M}$ and we obtain the following corollary. 

\begin{corollary}\label{cor:local-deform}
The singularities $x_0\in X_{u,0}$ and $o\in X_u$ are formally isomorphic. As a result, the singular point $o\in X_u$ has unobstructed deformations where $L$ deforms in a $\bQ$-Gorenstein family.  
\end{corollary}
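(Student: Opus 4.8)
The plan is to read Corollary~\ref{cor:local-deform} off the deformation-theoretic computations just carried out. First I would record the translation principle underlying everything: since $X_u$ and $X_{u,0}$ are Gorenstein, every deformation of the germ $o\in X_u$ (resp.\ $x_0\in X_{u,0}$) is automatically $\bQ$-Gorenstein, and such a deformation is one in which the $\bQ$-Cartier Weil divisor $L$ (resp.\ $L_0$) deforms precisely when the degree-$4$ index one cover of $L$ (resp.\ of $L_0$) deforms $\bZ/(4)$-equivariantly, $\bZ/(4)$ being the Galois group of the cover. Since $\bZ/(4)$ is linearly reductive, the miniversal deformation of the index one cover $Y$ of $L_0$ can be chosen $\bZ/(4)$-equivariantly and the $\bZ/(4)$-equivariant deformation functor is pro-represented by the fixed subscheme $\widetilde M^{\bZ/(4)}$ of the miniversal base $\widetilde M\cong\bA^1$ of $Y$ computed above; thus $\widetilde M^{\bZ/(4)}$ is the miniversal base for $\bQ$-Gorenstein deformations of $(x_0\in X_{u,0},L_0)$ in which $L_0$ deforms.

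Next I would observe that $\widetilde M^{\bZ/(4)}$ is a $\bZ/(4)$-stable closed subscheme of $\widetilde M\cong\bA^1$ through the origin, hence is either the reduced point $\{0\}$ or all of $\widetilde M$. It is not $\{0\}$: the special degeneration of $o$ to $x_0$ carries $L$ to $L_0$, so it is a nontrivial one-parameter deformation of $(x_0,L_0)$ of the required type, and composing it with the $\bQ$-Gorenstein smoothing of $X_u$ to $\bP^3$ from Lemma~\ref{lem:L-construct} even deforms $x_0$ to a smooth point. Therefore $\widetilde M^{\bZ/(4)}=\widetilde M\cong\bA^1$, which is smooth, so $(x_0\in X_{u,0},L_0)$ has unobstructed deformations. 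Applying upper semicontinuity of $\dim T^1$ along the family degenerating $o$ to $x_0$, the analogous $T^1$ of $(o\in X_u,L)$ has dimension at most $1$; as it is at least $1$ because $o$ smooths to $\bP^3$, it is exactly one-dimensional, and since $o$ is a (smoothable) isolated singularity its miniversal base is then a smooth curve — this already yields the unobstructedness assertion in the corollary.

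For the formal isomorphism $\widehat{\calO}_{X_u,o}\cong\widehat{\calO}_{X_{u,0},x_0}$ I would exploit that the special degeneration $o\rightsquigarrow x_0$ is $\bG_m$-equivariant: it is an equivariant flat family over $\bA^1$ with central fibre $(x_0,L_0)$, hence classified by a $\bG_m$-equivariant morphism $\bA^1\to\widetilde M^{\bZ/(4)}\cong\bA^1$ sending $0$ to $0$. Comparing the $\bG_m$-weight of the test-configuration parameter with the weight of the induced $\bG_m$-action on the one-dimensional smooth miniversal base should force this classifying morphism to be identically zero, making the family formally trivial and hence $o$ formally isomorphic to $x_0$; the unobstructedness of $o$ then also follows directly from that of $x_0$. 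I expect this last weight comparison to be the main obstacle: one must pin down the $\bG_m$-grading on $T^1$ of the cone point $x_0$ — equivalently, the grading on $\widetilde M$ together with the $\bmu_3$- and $\bZ/(4)$-actions on it induced by the explicit order-$12$ cover $C_a(\bP^1\times\bP^1,\calO(1,1))\to X_{u,0}$ — and check it is incompatible with a nonzero equivariant map out of the test-configuration line. Everything else is bookkeeping with the facts already in hand, namely $\widetilde M\cong\bA^1$ and the $\bQ$-Gorenstein smoothability of $X_u$, and hence of $X_{u,0}$.
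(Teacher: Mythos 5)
Your proposal is essentially correct and follows what must be the paper's intended argument, which is presented very tersely before the corollary; you supply details that the paper leaves implicit, and your deduction of unobstructedness of $o$ directly from upper semicontinuity of $\dim T^1_{QG}$ is a clean way to separate that half of the statement from the formal isomorphism.

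The one place you flag as ``the main obstacle'' --- pinning down the $\bG_m$-weight on $T^1$ --- is in fact not an obstacle, because it is already contained in the Altmann machinery that the paper sets up in the preceding proposition. For a toric Gorenstein threefold singularity, Altmann's theorem says $T^1$ is concentrated in the single $M$-degree $-R^*$, where $R^*\in M$ is the Gorenstein degree; since the conical $\bG_m$-action is given by a cocharacter $\rho$ in the interior of $\sigma\subset N$ and $R^*\in\sigma^\vee$, the pairing $\langle R^*,\rho\rangle$ is strictly positive, so $T^1$ carries a strictly negative conical weight and the coordinate $t$ on $\widetilde{M}$ dual to it carries a strictly positive weight. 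On the other hand, in the degeneration-to-the-normal-cone test configuration, declaring the conical grading on the central fiber $x_0$ to be positive means the Rees parameter $s$ (the piece in filtration level $-1$) has weight $-1$. Both the $\bmu_3$- and $\bZ/(4)$-actions lie in the toric torus and hence commute with this $\bG_m$, so the conical $\bG_m$ descends to $\widetilde M^{\bZ/(4)}$ with the same positive weight on the coordinate. By Pinkham the classifying germ $\phi:(\bA^1_s,0)\to(\widetilde M^{\bZ/(4)},0)$ may be chosen $\bG_m$-equivariant, and then each monomial $a_k s^k$ of $\phi$ would have to satisfy $-k=\langle R^*,\rho\rangle>0$, which is impossible for $k\geq 1$; hence $\phi\equiv 0$, the special degeneration is formally trivial, and $o$ and $x_0$ are formally isomorphic. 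In short, the sign discrepancy between the test-parameter weight and the miniversal-coordinate weight is exactly the content of Altmann's concentration-in-degree-$(-R^*)$ result, so the argument you sketch closes without additional work.
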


To finish the study of the deformation theory of $X_u$, we must show that there are no local-to-global obstructions in extending the local deformation to a global deformation of $X_u$, which is done by the following lemma.  

\begin{lemma}\label{lem:local-global-deform}
There are no local-to-global obstructions for deformations of $X_u$.
\end{lemma}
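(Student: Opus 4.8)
The plan is to reduce the statement to a cohomology vanishing on $X_u$ and then to compute that vanishing on the birational model $\tPE$ of Proposition~\ref{prop:X_u-construct}. Since $X_u$ has a unique singular point $o$, and the deformations we allow are those in which the $\bQ$-Cartier Weil divisor $L$ (equivalently, the canonical covering stack structure of $X_u$ at $o$) deforms, the deformation theory is governed by a coherent sheaf $\cT$ on $X_u$ that agrees with the tangent sheaf $\cT_{X_u}=(\Omega^1_{X_u})^{\vee}$ away from $o$ and, formally at $o$, with the $\bmu_3$-invariant part of the tangent sheaf of the index-one cover $Y=(xy=zw)\subset\bA^4/\bmu_3$. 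The sheaves $\mathcal{T}^{\,i}$ with $i\ge 1$ governing obstructions are supported at $o$, so by the local-to-global spectral sequence the obstruction to extending a local deformation of $(o\in X_u)$ to a global deformation of $X_u$ lies in $H^2(X_u,\cT)$. Combined with Corollary~\ref{cor:local-deform}, it therefore suffices to prove $H^2(X_u,\cT)=0$.

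To compute this I would use the morphisms $\PE\xleftarrow{h}\tPE\xrightarrow{\psi}X_u$ of Proposition~\ref{prop:X_u-construct}. Since $\psi$ contracts the prime divisor $\tH_{\cE}\cong\bP^1\times\bP^1$ to $o$ and is an isomorphism elsewhere, a vector field on $X_u$ pulls back to a vector field on $\tPE$ preserving the ideal of $\tH_{\cE}$, and conversely; hence $\psi_*\bigl(\cT_{\tPE}(-\log\tH_{\cE})\bigr)=\cT$, where $\cT_{\tPE}(-\log\tH_{\cE})$ denotes the subsheaf of the (reflexive) tangent sheaf of derivations tangent to $\tH_{\cE}$, and a local computation at $o$ gives $R^1\psi_*\cT_{\tPE}(-\log\tH_{\cE})=0$. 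By the Leray spectral sequence for $\psi$ it then suffices to prove
\[
H^2\bigl(\tPE,\ \cT_{\tPE}(-\log\tH_{\cE})\bigr)=0 .
\]

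For the last vanishing I would exploit the fibration $p\circ h:\tPE\to\bP^1$, whose fibers $F$ are all isomorphic to a fixed rational surface (a weighted blow-up of $\bP(1,2,3)$ at a point, with at worst cyclic quotient singularities), and for which $F\cap\tH_{\cE}$ is a fixed $(-2)$-curve $D\subset F$. Because the base is $\bP^1$, the Leray spectral sequence for $p\circ h$ leaves only the term $H^0\bigl(\bP^1,\ R^2(p\circ h)_*\,\cT_{\tPE}(-\log\tH_{\cE})\bigr)$; using the relative log tangent sequence together with $R^2(p\circ h)_*\bigl((p\circ h)^*\cT_{\bP^1}\bigr)=\cT_{\bP^1}\otimes R^2(p\circ h)_*\cO=0$, this term is computed fiberwise by $H^2\bigl(F,\cT_F(-\log D)\bigr)$, the obstruction space of the pair $(F,D)$, which one checks vanishes by Serre duality on the rational surface $F$ (equivalently, $H^0\bigl(F,\Omega^1_F(\log D)\otimes\omega_F\bigr)=0$). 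The same fibration-over-$\bP^1$ argument runs verbatim on the canonical covering stacks, which is all that is needed for the $\bQ$-Gorenstein statement.

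The main obstacle is the bookkeeping in the middle step: identifying $\cT$ with $\psi_*$ of the log tangent sheaf on $\tPE$, verifying the vanishing of $R^1\psi_*$, and correctly tracking the $\bmu_3$/canonical-covering-stack structure at $o$ — together with checking the fiberwise vanishing $H^2\bigl(F,\cT_F(-\log D)\bigr)=0$ for the explicit rational surface $F$ and $(-2)$-curve $D$. Once these are settled, the Leray arguments are routine.
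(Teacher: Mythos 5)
Your proposal takes a very different — and much longer — route from the paper, and it has at least one genuine gap.

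The paper's proof is two lines: by a cited result of Petracci (deformations of a projective klt variety with $H^2(T_X)=0$ have no local-to-global obstruction), the lemma reduces to $H^2(T_{X_u})=0$. Since $X_u$ is Gorenstein canonical, hence klt, Serre duality turns this into $H^1\bigl(\Omega^{[1]}_{X_u}\otimes\omega_{X_u}\bigr)=0$, which is exactly the singular Kodaira--Akizuki--Nakano vanishing of Greb--Kebekus--Peternell (with $p=q=1<3=\dim X_u$ and $L=-K_{X_u}$ ample). In particular no resolution or fibration structure is needed at all.

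Your outline instead pushes the vanishing through the model $\tPE$ and the fibration $p\circ h:\tPE\to\bP^1$. The critical gap is in the Leray spectral sequence for $p\circ h$: you assert that for $\cF=\cT_{\tPE}(-\log\tH_{\cE})$ only the $E_2^{0,2}$ term survives, but the $E_2^{1,1}=H^1\bigl(\bP^1, R^1(p\circ h)_*\cF\bigr)$ term is not addressed. Since the fibers $F$ are rational surfaces carrying a curve $D$, the sheaf $R^1(p\circ h)_*\cF$ is generically the (typically non-zero) first-order deformation space of the pair $(F,D)$, and $H^1$ on $\bP^1$ of a non-trivial vector bundle need not vanish; you would have to compute the degrees of the rank-one pieces of $R^1(p\circ h)_*\cF$, which is not done. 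Two further steps are asserted without justification: the identification $\psi_*\cT_{\tPE}(-\log\tH_{\cE})\cong\cT$ together with $R^1\psi_*\cT_{\tPE}(-\log\tH_{\cE})=0$ (for a divisorial contraction to a klt point this requires an argument, e.g.\ via Grauert--Riemenschneider type statements on the log tangent sheaf, and is not immediate), and the fiberwise vanishing $H^2\bigl(F,\cT_F(-\log D)\bigr)=0$ (the fibers are singular, so the Serre-duality step must be carried out for the appropriate reflexive sheaves, and the positivity of $-K_F-D$ on the explicit weighted blow-up of $\bP(1,2,3)$ needs to be checked). None of these is obviously false, but each is a non-trivial computation, and combined with the missing $E_2^{1,1}$ term the proof as written is incomplete. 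Given that $X_u$ is Gorenstein canonical (hence klt) and Fano, the GKP vanishing route is both shorter and avoids all of this bookkeeping; I'd recommend switching to it.
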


%\KD{If we decide to keep this section unmodified, I've added a sketch of a complete proof.  I can add more details if we decide to keep it, but didn't want to fill it in if we are removing this section.}

\begin{proof}
By \cite[Proposition 2.3]{Petracci}, it suffices to show that $H^2(X_u, T_{X_u}) = 0$.  

For simplicity, write $X = X_u$.  By construction of $X$, there is a small $\bQ$-factorialization $\pi: Z \to X$ such that $Z$ has $D_5$ singularities along an irreducible curve, contracted to the singular point $o \in X$.  The map $\psi: \tPE \to X$ factors through $\pi$ and the fibration structure of $\tPE \to \bP^1$ descends to a fibration $f:Z \to \bP^1$.  Furthermore, $f: Z \to \bP^1$ is an isotrivial fibration of $\bQ$-Fano surfaces $S$ each with an isolated $D_5$ singularity. In a neighborhood of any point $p$ in the singular locus of $Z$, the threefold looks like $S \times T$, where $T$ is a smooth curve.

From the five-term exact sequence from the Leray spectral sequence \[H^p(X,R^q \pi_*T_Z) \Rightarrow H^{p+q}(Z, T_Z)\] and the fact that $\pi_*T_Z = T_X$, there is a sequence 
\[ 0 \to H^1(X, T_X) \to H^1(Z,T_Z) \to H^0(X,R^1\pi_*T_Z) \to H^2(X, T_X) \to H^2(Z,T_Z). \]
In fact, this can be extended to 
\[ 0 \to H^1(X, T_X) \to H^1(Z,T_Z) \to H^0(X,R^1\pi_*T_Z) \to H^2(X, T_X) \to H^2(Z,T_Z) \to H^1(X,R^1\pi_*T_Z) \]
as in general the second-to-last term is $\ker H^2(Z,T_Z) \to H^0(X,R^2\pi_*T_Z)$, but the latter is $0$ as the fibers of $\pi$ have dimension at most 1.  
In the following Lemma, we will show that $R^1\pi_*T_Z = 0$, so we find that $H^1(X, T_X) = H^1(Z,T_Z)$ and $H^2(X,T_X) = H^2(Z,T_Z)$.  

Finally, we show that $H^2(Z,T_Z) = 0$. First note that $T_Z$ and its dual $\Omega^{[1]}_Z$ are Cohen-Macaulay: their restriction to any (Cartier) fiber $S$ of $f: Z \to \bP^1$ is $S_2$ and $\dim S = 2$, so the restriction to the Cartier fiber is Cohen Macaulay.  (To see that the restriction is $S_2$, one may use the sequence $0 \to \calO_S(-S) \to \Omega^{[1]}_Z|_S \to \Omega^{[1]}_S \to 0$; shown to be exact using the description of $Z$ as a fibration over $\bP^1$.)  By Serre Duality, then $H^2(Z,T_Z) = H^1(Z,\Omega^{[1]}_Z(K_Z))^\vee$.  

The proof that $H^1(Z,\Omega^{[1]}_Z(K_Z)) = 0$ then follows by the same logic used in \cite[Proposition 4]{Namikawa}.  The input \cite[Proposition 2]{Namikawa} holds (and the proof holds verbatim), replacing $Z$ by $X$ and $Y$ by $X_u$ in the author's notation, i.e. if $D$ is an anticanonical section of $X_u$ and $D'$ its strict transform on $Z$, we have $\Pic(Z) \to \Pic(D')$ is injective. 
 Therefore, from the standard exact sequence 
\[ 0 \to \mathbb{Z} \to \calO \to \calO^\times \to 0 \]
and vanishing of $H^i(Z, \calO_Z)$ for $i > 0$, we have an injection $H^2(Z, \mathbb{Z}) \to H^2(D', \mathbb{Z})$.  Because $Z$ has only quotient singularities, it is a $V$-manifold as in \cite[Theorem pg. 4]{Steenbrink}, and the Hodge structure is pure and $H^{1,1}(Z)$ is identified with $H^{1}(Z, \Omega_Z^{[1]})$.  By Hodge theory, the injection on cohomology then induces an injection on the parts of the Hodge decomposition $H^{p,q}(Z) \to H^{p,q}(D')$ with $p+q = 2$.   Therefore, we have an injection $H^1(Z, \Omega_Z^{[1]}) \to H^1(D', \Omega_{D'}^1)$.  

Noting that the exact sequences in the proof of \cite[Proposition 4]{Namikawa} are exact replacing $\Omega_Z^1$ with $\Omega_Z^{[1]}$ because $D'$ does not intersect $Z$, the argument shows that $H^2(Z,T_Z) = 0$.
%The same argument as that above for $T^1_Z$ shows that $R^1\pi_*(\Omega^{[1]}_Z(K_Z)) = 0$\footnote{\YL{not quite}} so the five-term exact sequence from the Leray spectral sequence gives $H^1(X,\Omega^{[1]}(K_X)) = H^1(Z,\Omega^{[1]}(K_Z))$.  Because $K_{X_u}$ is an anti-ample line bundle, the former is $0$ by the singular Kodaira-Akizuki-Nakano vanishing theorem in \cite[Proposition 4.3.2]{GKP}.  Therefore, $H^1(Z,\Omega^{[1]}_Z(K_Z)) = 0$, and hence $H^2(Z,T_Z) = 0$, and we conclude $H^2(X,T_X) = 0$.
\end{proof}

\begin{lem}\label{lem:R^1-vanishing}
In the notation as in the proof of Lemma \ref{lem:local-global-deform}, we have $R^1\pi_* T_Z = 0$.
\end{lem}

\begin{proof}
Recall that $\psi: \tPE\to X_u$ is taking the anti-canonical model. From the construction of $Z$, we know that $\psi$ factors as $\tPE\xrightarrow{\theta} Z\xrightarrow{\pi} X_u$ where both $\theta$ and $\pi$ are crepant birational. Since $\tPE$ and $Z$ are both Gorenstein canonical with quotient singularities, applying  \cite[Lemma 1.11]{Ste77} (see also \cite[Theorem 1.4]{GKKP11}) to a common log resolution of them yields that $\theta_* \Omega_{\tPE}^{[2]} \cong \Omega_Z^{[2]}$. Thus we have
\[
\theta_* T_{\tPE} = \theta_*(\Omega_{\tPE}^{[2]}\otimes \omega_{\tPE}^\vee) = \theta_*(\Omega_{\tPE}^{[2]}\otimes \theta^*\omega_{Z}^\vee) \cong (\theta_* \Omega_{\tPE}^{[2]} )\otimes \omega_{Z}^\vee \cong \Omega_Z^{[2]}\otimes \omega_{Z}^\vee = T_Z.
\]
From the first two terms of Leray spectral sequence, we have that $R^1\pi_*T_Z$ injects into $R^1\psi_* T_{\tPE}$. Thus it suffices to show $R^1 \psi_* T_{\tPE} = 0$.

Recall that $X_{u,0}=C_a(\bP^1\times\bP^1, -K_{\bP^1\times\bP^1} - \Gamma)$ is the orbifold cone where $\Gamma = \frac{1}{2}F_0 + \frac{1}{2}F_1 +\frac{2}{3}F_\infty$. Let $W$ be the total space of the orbifold line bundle $K_{\bP^1\times\bP^1} + \Gamma$, i.e.
\[
W := \Spec_{\bP^1\times\bP^1}\bigoplus_{m=0}^\infty \cO_{\bP^1\times\bP^1}(\lfloor m(-K_{\bP^1\times\bP^1} - \Gamma)\rfloor).
\]
Let $\Sigma$ be the zero section of $W$. Denote by $\Delta_\Sigma$ the different of $(W, \Sigma)$ on $\Sigma$. 
Thus we know that $\psi_0: W\to X_{u,0}$ provides the Koll\'ar component $(\Sigma, \Delta_\Sigma)\cong (\bP^1\times\bP^1, \Gamma)$.
From the proof of Lemma \ref{lem:X_u-indexonecover} and Corollary \ref{cor:local-deform} we know that the formal isomorphism between $X_u$ and $X_{u,0}$ lifts to a formal isomorphism between  $\tPE$ and $W$ along $\tH_{\cE}$ and $\Sigma$. Thus it suffices to prove $R^1\psi_{0,*} T_W =0$ which is equivalent to $H^1(W, T_W)=0$ as $X_{u,0}$ is affine. 

Recall from the proof of  Lemma \ref{lem:X_u-indexonecover} that there is a finite Galois morphism $\tau_{\tX}:\tX \to X_{u,0}$ where $\tX:= C_a(\bP^1\times\bP^1, \cO(2,2))$ and the Galois group is $\fS_3$. Let $\tW$ be the total space of the line bundle $\cO_{\bP^1\times\bP^1}(-2,-2)$ with zero section $\tSigma$. Then $\tpsi: \tW\to \tX$ is the blow-up of the cone vertex providing the Koll\'ar component $\tSigma \cong \bP^1\times\bP^1$. Then the quotient map $\tau_{\tX}$ lifts to a quotient map $\tau: \tW\to W$ by the action of $\fS_3$ which gives the following commutative diagram:
\[
\begin{tikzcd}
\tW \arrow[r,"\tau"]\arrow[d,"\tpsi"] & W\arrow[d,"\psi_0"]\\
\tX\arrow[r,"\tau_{\tX}"] & X_{u,0}
\end{tikzcd}
\]
From the proof of Lemma \ref{lem:X_u-indexonecover}, 
we know that $\tau$ is quasi-\'etale, and  $\tau|_{\tSigma}:\tSigma \to \Sigma$ is  the quotient map of the effective $\fS_3$-action on $\tSigma\cong \bP^1\times\bP^1$. Thus $\tau$ is also quasi-\'etale. By \cite[Theorem 3]{Kni73}, we know that $\Omega_{W}^{[1]}\cong (\tau_* \Omega_{\tW}^{[1]})^{\fS_3}$. Since $\tau$ is quasi-\'etale and $\Omega_{\tW}^{[1]}$ is reflexive, we know that $\tau_*\Omega_{\tW}^{[1]}$ is also reflexive. This implies that $((\tau_* \Omega_{\tW}^{[1]})^{\fS_3})^\vee \cong ((\tau_* \Omega_{\tW}^{[1]})^\vee)^{\fS_3}$. Since $\psi: \tPE \to X_{u,0}$ is crepant between Gorenstein normal varieties, so is $\psi_0: W\to X_{u,0}$. In particular, $W$ is normal and Gorenstein. By construction, $\tW$ is smooth hence also normal and Gorenstein.
Applying Lemma \ref{lem:Gduality} to the morphism $\tau$ and sheaf $\Omega_{\tW}^{[1]}$, we have that
\[
T_W = (\Omega_{W}^{[1]})^\vee \cong ((\tau_* \Omega_{\tW}^{[1]})^{\fS_3})^\vee \cong ((\tau_* \Omega_{\tW}^{[1]})^\vee)^{\fS_3}\cong (\tau_* ((\Omega_{\tW}^{[1]})^\vee))^{\fS_3} = (\tau_* T_{\tW})^{\fS_3}.
\]
Since  $T_W = (\tau_* T_{\tW})^{\fS_3}$ is a direct summand of $\tau_* T_{\tW}$, we have that $H^1(W, T_W)$ is a direct summand of $H^1(W, \tau_* T_{\tW})\cong H^1(\tW, T_{\tW})$. Thus it suffices to show $H^1(\tW, T_{\tW})=0$.

Denote by $f_{\tW}:\tW\to \bP^1\times\bP^1$ the $\bA^1$-bundle structure. Then we have a short exact sequence 
\[
0\to T_{\tW/\bP^1\times\bP^1} \to T_{\tW} \to f_{\tW}^* T_{\bP^1\times\bP^1}\to 0.
\]
By the long exact sequence of cohomology, it suffices to show the vanishing of both $H^1(\tW, T_{\tW/\bP^1\times\bP^1})$ and $H^1(\tW, f_{\tW}^* T_{\bP^1\times\bP^1})$.

Denote by $L_{\tW}:= \cO_{\bP^1\times\bP^1}(2,2)$. Since $\tW$ is the total space of the line bundle $L_{\tW}^\vee$, we know that $f_{\tW,*}\cO_{\tW}\cong \oplus_{m=0}^\infty L_{\tW}^{\otimes m}$, and $T_{\tW/\bP^1\times\bP^1}=f_{\tW}^* L_{\tW}^\vee$. Thus we have
\[
H^1(\tW, T_{\tW/\bP^1\times\bP^1}) = H^1(\tW, f_{\tW}^* L_{\tW}^\vee) \cong H^1(\bP^1\times\bP^1, f_{\tW,*}f_{\tW}^* L_{\tW}^\vee)\cong  H^1(\bP^1\times\bP^1, \bigoplus_{m=-1}^\infty L_{\tW}^{\otimes m}).
\]
By Kodaira vanishing  we know that $H^1(\bP^1\times\bP^1, L_{\tW}^{\otimes m}) = H^1(\bP^1\times\bP^1, \cO(2m, 2m)) = 0$ for every $m\in \bZ$. Thus we get the vanishing of $H^1(\tW, T_{\tW/\bP^1\times\bP^1})$. On the other hand, we have
\[
H^1(\tW, f_{\tW}^* T_{\bP^1\times\bP^1}) \cong H^1(\bP^1\times\bP^1, f_{\tW,*}f_{\tW}^* T_{\bP^1\times\bP^1}) \cong H^1(\bP^1\times\bP^1, \bigoplus_{m=0}^\infty L_{\tW}^{\otimes m}\otimes T_{\bP^1\times\bP^1}).
\]
Since $T_{\bP^1\times\bP^1}\cong T_{\bP^1}\boxtimes T_{\bP^1}\cong \cO(2,0)\oplus \cO(0,2)$, by Kodaira vanishing we have 
\[
H^1(\bP^1\times\bP^1, L_{\tW}^{\otimes m}\otimes T_{\bP^1\times\bP^1}) \cong H^1(\bP^1\times\bP^1, \cO(2m+2,2m)\oplus\cO(2m,2m+2)) = 0
\]for every $m\geq 0$. Thus the vanishing of $H^1(\tW, f_{\tW}^* T_{\bP^1\times\bP^1})$ is proved. The proof is finished.
\end{proof}

\begin{lem}\label{lem:Gduality}
Let $f: X\to Y$ be a quasi-\'etale finite morphism between normal Gorenstein varieties. Let $\cG$ be a  coherent sheaf on $X$. Then we have
$(f_* \cG)^\vee = f_* (\cG^\vee) $. 
\end{lem}

\begin{proof}
Since $f$ is quasi-\'etale and both $X$ and $Y$ are Gorenstein, we know that $\omega_{X} = f^* \omega_Y = f^{!}\omega_Y$. By Grothendieck duality for finite morphisms (see e.g. \cite[\href{https://stacks.math.columbia.edu/tag/0AU3}{Tag 0AU3}]{stacks-project}), we have
\begin{equation}\label{eq:Gduality}
f_*\cHom_{\cO_{X}}(\cG\otimes \omega_X, \omega_X) = \cHom_{\cO_Y}(f_*(\cG\otimes \omega_X),\omega_Y).
\end{equation}
Since $\omega_X$ is invertible, the left-hand side of \eqref{eq:Gduality} is
\[
f_*\cHom_{\cO_{X}}(\cG\otimes \omega_X, \omega_X) = f_*\cHom_{\cO_{X}}(\cG, \cO_X) = f_*(\cG^\vee).
\]
For the right-hand side of \eqref{eq:Gduality}, we get
\begin{align*}
    \cHom_{\cO_Y}(f_*(\cG\otimes \omega_X),\omega_Y) & = \cHom_{\cO_Y}(f_*(\cG\otimes f^*\omega_Y),\omega_Y)\\
    & = \cHom_{\cO_Y}((f_*\cG)\otimes \omega_Y,\omega_Y) \\
    & = \cHom_{\cO_Y}(f_*\cG,\cO_Y) \\& = (f_* \cG)^\vee.
\end{align*}
Here we use projection formula and the fact that $\omega_Y$ is invertible.
The proof is finished by combining the above equalities.
\end{proof}

\begin{corollary}\label{cor:X_u-global-deform}
The $\bQ$-Fano threefold $X_u$ has unobstructed deformations where $L$ deforms in a $\bQ$-Gorenstein family, and the miniversal base space is a smooth curve. Moreover, any small deformation of $X_u$ in $\ocM^{{\rm sm},\delta\geq \epsilon_0}_{3,64}$ is isomorphic to $\bP^3$ or $X_u$.
\end{corollary}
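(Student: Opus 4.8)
The plan is to assemble the local computation (Corollary~\ref{cor:local-deform}) and the vanishing of local-to-global obstructions (Lemma~\ref{lem:local-global-deform}) into a description of the miniversal deformation space of $X_u$ where $L$ deforms $\bQ$-Gorensteinly, and then to read off the geometric conclusion from the already-constructed $\bQ$-Gorenstein smoothing of $X_u$ to $\bP^3$ together with the uniqueness of smooth Fano threefolds of anti-canonical degree $64$.

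First I would set up the tangent--obstruction theory for the $\bQ$-Gorenstein deformations of $X_u$ in which $L$ deforms (i.e.\ index-one-cover deformations). By Lemma~\ref{lem:local-global-deform} and \cite[Proposition 2.3]{Petracci}, the forgetful morphism from this deformation functor to the product of the corresponding local deformation functors at the singular points of $X_u$ is smooth; since $o$ is the only singular point and, by Corollary~\ref{cor:local-deform}, its local $\bQ$-Gorenstein deformation functor is pro-represented by a smooth curve, the miniversal base of $X_u$ is smooth. To see that it is a curve (and not of higher dimension) one checks that there are no locally trivial $\bQ$-Gorenstein deformations, i.e.\ that the relevant $H^1$ of the tangent sheaf vanishes; this is where one again invokes the singular Kodaira--Akizuki--Nakano vanishing \cite[Proposition 4.3.2]{GKP} on $X_u$, using that $-K_{X_u}$ is ample. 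This establishes the first assertion: $X_u$ has unobstructed such deformations and the miniversal base is a smooth curve.

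For the second assertion, recall from the construction (Proposition~\ref{prop:tangent-Kps-replace}, together with Lemma~\ref{lem:L-construct}) that $X_u$ admits a $\bQ$-Gorenstein smoothing $\pi\colon \cX\to B$ over a smooth pointed curve $(0\in B)$ with $\cX_0\cong X_u$ and $\cX_b\cong\bP^3$ for $b\neq 0$, in which $L$ deforms to $\cO_{\bP^3}(1)$. By miniversality, after shrinking, any small deformation of $X_u$ inside $\ocM^{\rm sm}_{3,64}$ is pulled back from the miniversal family over the smooth curve above, and the nonconstant map $B\to(\text{miniversal base})$ induced by $\pi$ is dominant, so the generic point of the miniversal base parametrizes a smooth Fano threefold, necessarily $\bP^3$ since $\bP^3$ is the unique smooth Fano threefold of degree $64$ (Iskovskikh--Mori--Mukai, as used in Lemma~\ref{lem:L-construct}). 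Since the locus in the miniversal base over which the fiber is singular is closed and the base is an irreducible smooth curve, after further shrinking it is exactly the central point; hence every noncentral fiber is isomorphic to $\bP^3$ and the central fiber is $X_u$, which is the claim. The main obstacle I expect is the bookkeeping in the second paragraph, namely confirming that the miniversal base is exactly one-dimensional (the vanishing of locally trivial $\bQ$-Gorenstein deformations) rather than merely smooth; everything else is a formal consequence of the already-established local deformation theory and the existence of the smoothing to $\bP^3$.
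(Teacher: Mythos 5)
Your overall architecture matches the paper's (extremely terse) proof: combine the local unobstructedness from Corollary~\ref{cor:local-deform} with the vanishing of local-to-global obstructions from Lemma~\ref{lem:local-global-deform}, and then read off the geometry from the smoothing to $\bP^3$ and the uniqueness of the smooth Fano threefold of anti-canonical volume $64$. You are also right that getting the miniversal base to be exactly one-dimensional (equivalently, the absence of locally trivial $\bQ$-Gorenstein deformations) is a genuine further point that must be addressed. The gap is in how you propose to address it.

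The singular Kodaira--Akizuki--Nakano vanishing cited in \cite[Proposition 4.3.2]{GKP} for a klt projective $n$-fold $X$ with ample $A$ gives $H^q(X,\Omega^{[p]}_X\otimes A^{-1})=0$ for $p+q<n$. Lemma~\ref{lem:local-global-deform} applies it with $n=3$, $p=q=1$, $A=-K_{X_u}$ to conclude $H^1(\Omega^{[1]}_{X_u}(K_{X_u}))=0$, and hence $H^2(T_{X_u})=0$ by Serre duality. To deduce $H^1(T_{X_u})=0$ the same way one would need $H^2(\Omega^{[1]}_{X_u}(K_{X_u}))=0$, i.e.\ $p=1$, $q=2$, so $p+q=3=n$; this is outside the range where KAN applies, and indeed $H^1(T_X)\neq 0$ for many smooth Fano threefolds (it computes moduli), so no KAN-type statement can force it to vanish. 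Thus your argument that the base is a curve does not go through as written. A fix that stays in the same spirit: by Lemma~\ref{lem:local-global-deform} the morphism of formal deformation functors $\mathrm{Def}^{QG}(X_u)\to \widetilde{M}\cong\mathrm{Spf}\,\bC[[t]]$ is formally smooth, hence of constant relative dimension; over the generic point the fiber parametrizes deformations of the generic member of the family, which is $\bP^3$ by Lemma~\ref{lem:L-construct}, and $\bP^3$ is rigid, so the relative dimension is $0$ and the total space is one-dimensional as required. The remainder of your argument (general fibers isomorphic to $\bP^3$, central fiber $X_u$, and pulling back the miniversal family to conclude) then goes through.
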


\begin{proof}
This follows from Corollary \ref{cor:local-deform} and Lemma \ref{lem:local-global-deform}.
\end{proof}

\begin{defn-prop}\label{d-p:H_u,c}
Let $c\in (\frac{9}{13},1)$ be a rational number.
Let $\sH_{u,c}$ be the locally closed substack of $\osM_{c}^{\K}$ with reduced structure parametrizing K-semistable pairs $(X,cS)$ where $X\cong X_u$. Then $\sH_{u,c}$ is a closed substack of $\osM_c^{\K}$. 

Denote by $H_{u,c}$ the good moduli space of $\sH_{u,c}$. Then $H_{u,c}$ is a closed subscheme of $\ofM_c^{\K}$ that is isomorphic to the GIT quotient $\bfP\sslash \SL(2,\bC)$. 
\end{defn-prop}

\begin{proof}
We first show that $\sH_{u,c}$ is closed which follows from the existence part of valuative criterion for properness of the map $\sH_{u,c}\hookrightarrow \osM_c^{\K}$. Let $(\cX,c\cS)\to C$ be a $\bQ$-Gorenstein family of K-semistable log Fano pairs over a smooth pointed curve $0\in C$ such that $K_{\cX/C}+\cS\sim_{C} 0$ and $\cX_t\cong X_u$ for any $t\in C\setminus \{0\}$. It suffices to show that $\cX_0\cong X_u$ as well. 

Denote by $C^\circ:=C\setminus \{0\}$, $\cX^\circ:=\cX\times_C C^\circ$, and $\cS^\circ:=\cS|_{\cX^\circ}$. After replacing $(0\in C)$ by a quasi-finite cover if necessary, we may assume that $\cX^\circ\cong X_u\times C^\circ$. Recall from the proof of Theorem \ref{thm:GIT=K-unigonal}(2) that there is a universal family $(X_u\times \bfA,\cS_{\bfA})\to \bfA$ parametrizing $(X_u, S_{(A,B)})$ for $(A,B)\in \bfA$.
Hence by a family version of Lemma \ref{lem:X_u-anti-can}, we can find a map $\gamma^\circ: C^\circ\to \bfA$ such that $(\cX^\circ, \cS^\circ)\cong (X_u\times \bfA,\cS_{\bfA})\times_{\gamma^\circ} C^\circ $. Since $(\cX_t, c\cS_t)$ is K-semistable for $t\in C^\circ$, Theorem \ref{thm:GIT=K-unigonal}(2) implies that $\cS_t$ is the birational transform of $S_{(A,B)}$ where $(A,B)\in \bfA\setminus \{0\}$ is GIT semistable. Hence $\gamma^\circ$ factors as $\gamma^\circ: C^\circ\to \bfA^{\rm ss}\hookrightarrow \bfA$. Since $\bfA^{\rm ss}\sslash (\SL(2,\bC)\times\bG_m)\cong \bfP\sslash \bG_m$ is proper, after replacing $(0\in C)$ by a further quasi-finite base change we can find $g: C^\circ\to \SL(2,\bC)\times \bG_m$ and $\gamma': C\to \bfA^{\rm ss}$ such that $\gamma'|_{C^\circ}=  g \cdot \gamma^\circ$. In particular, we have a K-semistable log Fano family $(X_u\times C,c\cS'):=(X_u\times\bfA,c\cS_{\bfA})\times_{\gamma'} C$ over $C$ such that $(X_u\times C,c\cS')\times_C C^\circ\cong(\cX^\circ, c\cS^\circ)$. Thus $(X_u,c\cS_0')$ and $(\cX_0, c\cS_0)$ are $\mathrm{S}$-equivalent K-semistable log Fano pairs, and by \cite{BX18} they admit a common K-polystable  degeneration $(X'', c S'')$. By Theorem \ref{thm:GIT=K-unigonal}(2), we have that $X''\cong X_u$ and $S''$ is the GIT polystable degeneration of $\cS_0'$. Hence $\cX_0$ is isomorphic to $X_u$ as it not only specially degenerates to $X_u$ but also comes from an isotrivial degeneration of $X_u$. Thus $\sH_{u,c}$ is a closed substack of $\osM_c^{\K}$.

Finally, we show that $H_{u,c}$ is isomorphic to $\bfP\sslash \SL(2,\bC)$. In fact, the universal family $(X_u\times\bfA, \cS_{\bfA})\times_{\bfA} \bfA^{\rm ss}$ parameterizes $c$-K-semistable log Fano pairs by Theorem \ref{thm:GIT=K-unigonal}(2). After taking quotient of $\SL(2,\bC)\times \bG_m$, we get a stack morphism $[\bfA^{\rm ss}/(\SL(2,\bC)\times\bG_m)]\to \sH_{u,c}$. Thus taking good moduli spaces yields a morphism $\bfP\sslash \SL(2,\bC)\to H_{u,c}$ which is bijective by Theorem \ref{thm:GIT=K-unigonal}. By Corollary \ref{cor:X_u-global-deform} we know that $\sH_{u,c}$ is smooth, so $H_{u,c}$ is normal. Therefore, $\bfP\sslash \SL(2,\bC)\to H_{u,c}$ is an isomorphism.
\end{proof}

\begin{thm}\label{thm:T_0-replace}
The K-moduli spaces $\ofM_{c}^{\K}$ has a wall at $c=\frac{9}{13}$. Moreover, we have 
\begin{enumerate}
    \item The wall crossing morphism $\phi^-:\ofM_{\frac{9}{13}-\epsilon}^{\K}\to \ofM_{\frac{9}{13}}^{\K}$ replaces $[(\bP^3, T)]$ by $[(X_u, T_0)]$, and is isomorphic near $[(X_u, T_0)]$.
    \item The wall crossing morphism $\phi^+:\ofM_{\frac{9}{13}+\epsilon}^{\K}\to \ofM_{\frac{9}{13}}^{\K}$ replaces $[(X_u, T_0)]$ by the divisor $H_{u, \frac{9}{13}+\epsilon}$.
    \item For any $c\in (\frac{9}{13},1)$, the birational map $\ofM_{c}^{\K}\dashrightarrow\ofM_{\frac{9}{13}+\epsilon}^{\K}$ is an isomorphism over a neighborhood of $H_{u, \frac{9}{13}+\epsilon}$.
\end{enumerate}
%In particular, the birational map $(\phi^-)^{-1}\circ \phi^+:\ofM_{\frac{9}{13}+\epsilon}^{\K}\dashrightarrow\ofM_{\frac{9}{13}-\epsilon}^{\K}$ is a weighted blow-up at $[(\bP^3, T)]$ near this point.

\end{thm}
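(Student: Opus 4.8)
The plan is to feed the results of this section into the wall-crossing formalism of Theorem~\ref{thm:ADL19-wall}. First I would check that $c=\tfrac{9}{13}$ is one of the finitely many walls $c_1<\dots<c_k$: by Definition-Proposition~\ref{d-p:H_u,c} the space $\ofM^{\K}_{9/13+\epsilon}$ carries a positive-dimensional locus $H_{u,9/13+\epsilon}$ of pairs with underlying variety $X_u$, while by Theorem~\ref{thm:GIT=K-unigonal}(3),(4) no pair $(X_u,cS)$ is K-semistable for $c<\tfrac{9}{13}$; and $(X_u,cT_0)$ is K-semistable only at $c=\tfrac9{13}$ (Proposition~\ref{prop:tangent-Kps-replace} with Theorem~\ref{thm:GIT=K-unigonal}(2),(3)), which pins the wall to $\tfrac9{13}$. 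Proposition~\ref{prop:tangent-Kps-replace} also gives the special degeneration $(\bP^3,\tfrac9{13}T)\rightsquigarrow(X_u,\tfrac9{13}T_0)$, hence $\phi^-([(\bP^3,T)])=[(X_u,T_0)]$.

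For part (1), I would compute the fiber $(\phi^-)^{-1}([(X_u,T_0)])$. A closed point of it is represented by a K-polystable pair $(X,(\tfrac9{13}-\epsilon)D)$ whose S-equivalence class at coefficient $\tfrac9{13}$ equals $(X_u,\tfrac9{13}T_0)$; by Corollary~\ref{cor:X_u-global-deform} the variety $X$ is $\bP^3$ or $X_u$, the case $X_u$ is excluded by Theorem~\ref{thm:GIT=K-unigonal}(3),(4), and if $X=\bP^3$ then $D$ is not slc (an slc quartic keeps $(\bP^3,cD)$ K-polystable for all $c<1$ and admits no degeneration), so $[D]\in W_8\sqcup\{[T]\}$ by \eqref{eq:stratification}. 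A rigidity argument then forces $D\in\PGL(2,\bC)\cdot T$: the degeneration must be a relabelling of the explicit one in the proof of Proposition~\ref{prop:tangent-Kps-replace}, which is induced by the unique $\PGL(2,\bC)$-invariant plt-type divisor $E_0$ centered on the twisted cubic, so $D$ must carry a cuspidal curve along that twisted cubic, and the $\PGL(2,\bC)$-orbit structure of $\bP^3$ leaves only $D\cong T$. Thus the fiber is the single reduced point $[(\bP^3,T)]$. Being a projective birational morphism that is quasi-finite over a neighborhood of $[(X_u,T_0)]$, and having, by Theorem~\ref{thm:ADL19-wall}, a local VGIT presentation near $[(X_u,T_0)]$ whose source slice is the (smooth, by Corollaries~\ref{cor:local-deform} and \ref{cor:X_u-global-deform}) deformation space of $(X_u,\tfrac9{13}T_0)$, $\phi^-$ is then an isomorphism near $[(X_u,T_0)]$.

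For part (2), note $H_{u,9/13+\epsilon}\cong\bfP\sslash\SL(2,\bC)$ with $\bfP\cong\bP(2^9,3^{13})$, so $\dim H_{u,9/13+\epsilon}=21-3=18$, one less than $\dim\ofM^{\K}_{9/13+\epsilon}=\dim\ofM^{\GIT}=19$; it is a divisor. The $\bG_m$-action $(A,B)\mapsto(t^4A,t^6B)$ degenerates every $S_{(A,B)}$ to $T_0$, and $(X_u,\tfrac9{13}T_0)$ is K-polystable (Theorem~\ref{thm:GIT=K-unigonal}(1)), so every point of $H_{u,9/13+\epsilon}$, read at coefficient $\tfrac9{13}$, has S-equivalence class $[(X_u,T_0)]$; hence $\phi^+(H_{u,9/13+\epsilon})=\{[(X_u,T_0)]\}$. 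Conversely $(\phi^+)^{-1}([(X_u,T_0)])$ contains no $\bP^3$-pair, by the analysis in (1) (the only non-slc quartic with an $X_u$-degeneration is $T$, and $(\bP^3,(\tfrac9{13}+\epsilon)T)$ is K-unstable by Theorem~\ref{thm:tangent-kst}), so this fiber is exactly $H_{u,9/13+\epsilon}$; and away from $[(X_u,T_0)]$ the spaces $\ofM^{\K}_{9/13-\epsilon},\ofM^{\K}_{9/13},\ofM^{\K}_{9/13+\epsilon}$ agree because the wall modifies only the locus over $[T]\in\ofM^{\GIT}$. Therefore $\phi^+$ is a divisorial contraction of $H_{u,9/13+\epsilon}$ to $[(X_u,T_0)]$, and with (1) the wall at $c=\tfrac9{13}$ is a weighted blow-up.

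For part (3), I would fix an open neighborhood $U$ of $[T]$ in $\ofM^{\GIT}$ with $U\cap W_8=\emptyset$, which exists because $\{[T]\}$ is a connected component of the closed set $\ofM^{\GIT}\setminus\fM^{\slc}$; then $U\setminus\{[T]\}\subseteq\fM^{\slc}$. Over $\fM^{\slc}$ every $\ofM^{\K}_c$ ($c<1$) is the same open subscheme (Proposition~\ref{prop:k-moduli-irred}), and by Corollary~\ref{cor:X_u-global-deform} and Theorem~\ref{thm:GIT=K-unigonal}(2) the only K-polystable pairs of $\ofM^{\K}_c$ lying over $[T]$ for $c\in(\tfrac9{13},1)$ are the $X_u$-pairs $(X_u,cS)$ with $(A,B)$ GIT-polystable, i.e.\ precisely $H_{u,c}\cong\bfP\sslash\SL(2,\bC)$ — a description independent of $c$ in that range. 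Hence the open subscheme of $\ofM^{\K}_c$ lying over $U$ is a fixed scheme $\calV\supseteq H_{u,c}$ for all $c\in(\tfrac9{13},1)$, and $\ofM^{\K}_c\dashrightarrow\ofM^{\K}_{9/13+\epsilon}$ restricts to the identity on $\calV$. The step I expect to be the main obstacle is the isomorphism claim in (1): upgrading the one-point fiber of $\phi^-$ to an actual local isomorphism forces one to analyze the local VGIT model of the wall crossing — that is, $\mathrm{Def}(X_u,\tfrac9{13}T_0)$ together with the reductive group $\Aut(X_u,\tfrac9{13}T_0)$ acting on it — and the rigidity statement ruling out every quartic other than $T$ as a source of a degeneration to $(X_u,\tfrac9{13}T_0)$ is the other point requiring care.
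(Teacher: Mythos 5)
Your parts (2) and (3), and your identification of the wall at $c=\tfrac{9}{13}$, follow the same lines as the paper: the input is Proposition~\ref{prop:tangent-Kps-replace}, Theorem~\ref{thm:GIT=K-unigonal}, Definition-Proposition~\ref{d-p:H_u,c}, and Corollary~\ref{cor:X_u-global-deform}, used in the same way.

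The genuine gap is in part (1), in the passage ``A rigidity argument then forces $D\in\PGL(2,\bC)\cdot T$ \dots the $\PGL(2,\bC)$-orbit structure of $\bP^3$ leaves only $D\cong T$.'' The uniqueness of the $\PGL(2,\bC)$-invariant plt-type divisor $E_0$ is a statement about the specific, $\PGL(2,\bC)$-symmetric pair $(\bP^3,\tfrac{9}{13}T)$; an arbitrary K-polystable $(\bP^3,(\tfrac{9}{13}-\epsilon)D)$ in the fiber carries no such symmetry, so that uniqueness does not constrain $D$, and a special degeneration to $(X_u,\tfrac{9}{13}T_0)$ does not by itself force $D$ to have a cuspidal locus along a twisted cubic. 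Concretely, once you know $X\cong\bP^3$ and that $D$ is a GIT-polystable non-slc quartic, you still face $[D]\in W_8\sqcup\{[T]\}$, and excluding $W_8$ directly would need the hyperelliptic replacement analysis of Section~\ref{sec:hyperelliptic} (Theorem~\ref{thm:Kpsreplace-W}), a forward reference. The paper avoids classifying the fiber at all: Corollary~\ref{cor:X_u-global-deform} gives smoothness of $\ocM_{3,64}^{\rm sm}$ near $[X_u]$, so by Lemma~\ref{lem:forgetful-smooth} the target $\ofM_{\frac{9}{13}}^{\K}$ is normal near $[(X_u,T_0)]$; Zariski's main theorem then says $(\phi^-)^{-1}([(X_u,T_0)])$ is connected, and since $\phi^-$ restricts to an isomorphism on the open set $U_T\setminus\{[T]\}$ (so the fiber meets the open neighborhood $U_T$ of $[(\bP^3,T)]$ only at $[(\bP^3,T)]$), connectedness forces the fiber to equal $\{[(\bP^3,T)]\}$. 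The local isomorphism then follows immediately. Replacing your rigidity step with this normality-plus-ZMT argument closes the gap.
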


\begin{proof}
(1) Let $U_T:=\ofM^{\GIT}\setminus W_8$ be an open neighborhood of $[T]$. By \eqref{eq:stratification}, we know that any $[S]\in U_T\setminus \{[T]\}$ is slc, thus $(\bP^3, cS)$ is K-stable for any $c\in (0,1)$. Since $\kst(\bP^3, T)=\frac{9}{13}$ by Theorem \ref{thm:tangent-kst}, there are open immersions $U_T\hookrightarrow \ofM_c^{\K}$ when $c\in (0,\frac{9}{13})$ and $U_T\setminus \{[T]\}\hookrightarrow \ofM_c^{\K}$ when $c\in [\frac{9}{13},1)$. Thus the map $\phi^-:\ofM_{\frac{9}{13}-\epsilon}^{\K}\to \ofM_{\frac{9}{13}}^{\K}$ is isomorphic on $U_T\setminus \{[T]\}$. On the other hand, we know that $\phi^-([(\bP^3, T)])=[(X_u, T_0)]$ by Proposition \ref{prop:tangent-Kps-replace}. By Corollary \ref{cor:X_u-global-deform}, we know that $\ocM_{3,64}^{{\rm sm},\delta\geq \epsilon_0}$ is smooth in a neighborhood of $[X_u]$. Thus $\ofM_{\frac{9}{13}}^{\K}$ is normal near $[(X_u, T_0)]$ by Lemma \ref{lem:forgetful-smooth}. By Zariski's main theorem, we know that $(\phi^-)^{-1}([(X_u, T_0)])$ is  connected, thus it has to be the singleton $\{[(\bP^3, T)]\}$. Hence $\phi^-$ is an isomorphism near $[(X_u, T_0)]$.

(2) By Theorem \ref{thm:GIT=K-unigonal}(1) we know that $\phi^+$ contracts $H_{u, \frac{9}{13}+\epsilon}$ to the point $[(X_u, T_0)]$. It suffices to show that any $[(X,S)]\in \ofM_{\frac{9}{13}+\epsilon}^{\K}$ whose $\frac{9}{13}$-K-polystable replacement is $[(X_u, T_0)]$  satisfies $X\cong X_u$. 
By Corollary \ref{cor:X_u-global-deform}, we know that $X$ is isomorphic to $\bP^3$ or $X_u$. If $X\cong \bP^3$, then by interpolation \cite[Proposition 2.13]{ADL19} we know that $(X,\frac{9}{13}S)$ is also K-polystable, a contradiction to the uniqueness of K-polystable degenerations \cite{LWX18}. Hence we have $X\cong X_u$.

(3) Let $U_u:= (\phi^+)^{-1}(\phi^-(U_T))$. By parts (1) and (2), we have $U_u = (U_T\setminus \{[T]\})\sqcup H_{u, \frac{9}{13}+\epsilon}$ as sets. Since every $[S]\in U_T\setminus \{[T\}$ is slc and GIT polystable, Proposition \ref{prop:k-moduli-irred} implies that $(\bP^3, cS)$ is always K-polystable for $c\in (0,1)$. Moreover, Theorem \ref{thm:GIT=K-unigonal}(2) implies that any pair $(X_u, S)$ in $H_{u, \frac{9}{13}+\epsilon}$ is $c$-K-polystable for any $c\in (\frac{9}{13}, 1)$. Thus there are open immersions $U_u \hookrightarrow \ofM_{c}^{\K}$ for any $c\in (\frac{9}{13},1)$, which implies that the birational map $\ofM_c^{\K}\dashrightarrow\ofM_{\frac{9}{13}+\epsilon}^{\K}$ is an isomorphism over $U_u$. The proof is finished.
%Since $T_0$ does not contain the unique singular point $o$ of $X_u$, we know that $o\not\in S$. Hence Lemma \ref{lem:X_u-anti-can} implies that after an automorphism of $X_u$, the surface $S$ is a Weierstrass elliptic surface corresponding to a pair $(A,B)\in \bfA$. Since $(X_u, (\frac{9}{13}+\epsilon)T_0)$ is K-unstable, we know that  $(A,B)\neq 0$. Hence $(A,B)$ is GIT polystable by Theorem \ref{thm:GIT=K-unigonal}. The proof is finished.
\end{proof}

\section{Hyperelliptic K3 surfaces}\label{sec:hyperelliptic}

In this section, we will use the results from \cite{ADL20} to study K-polystable replacements of the locus $W_i$ in $\ofM^{\GIT}$ for $i\in \{0,1,2,3,4,6,7,8\}$ (see Section  \ref{sec:GIT-quartic} for the definition). We will show that the first K-moduli wall crossing extracts the divisor $H_h$ birationally over the point $W_0=\{[2Q]\}$, and subsequential wall crossings precisely replace $W_i~(i\geq 1)$  by  $Z^{i+1}\subset \sF$ inside the hyperelliptic divisor $H_h$ as introduced in Section \ref{sec:laza-ogrady}. 

%In the rest of this article, we denote by $X_h:= C_p(\bP^1\times\bP^1, \cO_{\bP^1\times\bP^1}(2,2))$ as the projective anti-canoncial cone over $\bP^1\times\bP^1$. 

\subsection{A cone construction for hyperelliptic K3 surfaces}\label{sec:cone}

In this subsection, we provide a cone construction to produce K-polystable threefold pairs from K-polystable surface pairs. This is very useful in constructing the K-polystable replacements of the locus $W_i$ in the GIT moduli space based on the replacements from \cite{ADL20}.

\begin{defn}\label{def:cone}
Let $V$ be a Gorenstein log Del Pezzo surface, that is, a $\bQ$-Fano variety of dimension $2$ with $K_V$ Cartier.\footnote{In later discussions, we will very often assume that $V$ is isomorphic to either $\bP^1\times\bP^1$ or $\bP(1,1,2)$.} Let $C\in |-2K_V|$ be an effective Cartier divisor defined by a section $s_C \in H^0(V,-2K_V)$ on $V$. Let $X:=C_p(V, -K_V) = \Proj \left(\oplus_{m \ge 0} \oplus_{r = 0}^m H^0(V, -rK_V)t^{m-r}\right)$ be the projective cone. Let $S$ be the double cover of $V$ branched along $C$; i.e. $S = (t^2 = s_C)$. Then $S$ is naturally embedded into $X$ as an anti-canonical divisor. We denote this construction by $\sC(V, cC):=(X,\frac{4c+1}{3}S)$ where $c\in [0,\frac{1}{2}]$ is a rational number.
\end{defn}

\begin{thm}\label{thm:induce-3-fold}
With the above notation, let $c\in [0, \frac{1}{2})$ be a rational number. Then 
$(V,cC)$ is K-semistable (resp. K-polystable) if and only if $(X,\frac{4c+1}{3}S)=\sC(V,cC)$ is K-semistable (resp. K-polystable).
\end{thm}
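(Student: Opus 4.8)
The plan is to reduce the K-stability of the threefold pair $(X,\frac{4c+1}{3}S)=\sC(V,cC)$ to that of the surface pair $(V,cC)$ via the general theory of cones. The key observation is that $X=C_p(V,-K_V)$ is a projective orbifold cone over the polarized log del Pezzo $(V,-K_V)$, and $S\in|-K_X|$ is the double cover construction that corresponds, under the cone construction, to the pair $(V,\frac{1}{2}C)$ downstairs together with the correct coefficient bookkeeping. Indeed, one first checks the numerics: since $C\in|-2K_V|$ and the double cover $S\to V$ is branched along $C$, adjunction on $X$ gives $K_S\sim 0$, so $S$ is indeed anti-canonical, and a direct computation of $-K_X-\frac{4c+1}{3}S$ shows it is ample for $c<\frac{1}{2}$, so $(X,\frac{4c+1}{3}S)$ is genuinely log Fano. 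The coefficient $\frac{4c+1}{3}$ is engineered precisely so that the log Calabi--Yau structure and the cone correspondence line up; this should be recorded carefully at the start.

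\textbf{Main tool.} I would invoke the cone correspondence for K-stability, namely the results of Li--Liu--Xu / Li--Liu on K-(semi/poly)stability of (orbifold) cones: $(X,\Delta_X)$, the log Fano cone over a polarized log Fano pair $(V,\Delta_V;L)$ with $L\sim_\bQ -\frac{1}{r}(K_V+\Delta_V)$, is K-semistable (resp.\ K-polystable) if and only if $(V,\Delta_V)$ is K-semistable (resp.\ K-polystable) \emph{and} a certain inequality on $r$ (equivalently on the "cone angle", controlled by the Fano index / the pseudo-effective threshold of the tautological divisor) holds; in the polystable case one must additionally rule out degenerations coming from the $\bG_m$-action on the cone, which is exactly Theorem \ref{thm:lwx-polystable} applied to the cone. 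So the steps are: (i) identify $S$ with the pullback of $\frac{1}{2}C$ under the double cover and verify $-K_X-\frac{4c+1}{3}S$ is proportional to the appropriate multiple of the tautological bundle $\cO_X(1)$; (ii) apply the cone theorem to transfer K-semistability between $(X,\frac{4c+1}{3}S)$ and $(V,cC)$ in both directions, checking that the index condition is automatically satisfied in the range $c\in[0,\frac{1}{2})$ because $-K_V$ is Cartier (Gorenstein) and the cone is taken over the anticanonical polarization; (iii) for K-polystability, use Theorem \ref{thm:lwx-polystable}: any K-semistable special degeneration of $(X,\frac{4c+1}{3}S)$ either is $\bG_m$-equivariant, hence a cone over a special degeneration of $(V,cC)$ and thus isomorphic to $X$ when $(V,cC)$ is K-polystable, or it degenerates the cone point in a way that is excluded because the vertex is the unique non-$\bG_m$-fixed-free point — so polystability downstairs forces polystability upstairs, and conversely restricting a degeneration of the cone to the base of the cone (or using the $\bG_m$-action to degenerate any test configuration of $V$ to one of $X$) gives the reverse implication.

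\textbf{Expected main obstacle.} The delicate point is the K-polystable direction, specifically controlling the $\bG_m$-equivariant special degenerations of the cone and matching their Futaki invariants with $\beta$-invariants of divisors over $V$. One must argue that a special test configuration of $(X,\frac{4c+1}{3}S)$ that is not a cone over a test configuration of the base must have central fiber with worse singularities or must be the trivial "cone degeneration" (which is product-type), and that the latter does not destabilize precisely because the polarization is the anticanonical one (the relevant Futaki invariant of the canonical $\bG_m$-action on a cone over a Fano vanishes iff the base is "balanced", which holds here). I would handle this by the standard dévissage: reduce to $\bG_m$-equivariant (and in fact $\bG_m\times$(torus on $V$))-equivariant test configurations using the equivariant criteria already cited in the paper (Theorems \ref{thm:kss-plt} and \ref{thm:lwx-polystable} together with the equivariant degeneration results), then use the explicit description of valuations on a cone (Rees/extended-Rees picture) to identify each such test configuration with either a test configuration of $(V,cC)$ or the trivial cone contraction, and compute $\Fut$ in each case. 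A secondary technical point worth isolating as a lemma is that $S$ (the double cover) has slc/klt singularities exactly when $C$ does, so that the hypotheses of the cone theorem (klt of the pair) are met for $c$ in the stated range; this follows from the standard behaviour of singularities under cyclic covers branched along $C\in|-2K_V|$.
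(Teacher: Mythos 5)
The high-level picture in your proposal (cones, $\bG_m$-equivariance, an LX16-type cone theorem, and ruling out $\bG_m$-degenerations for polystability) is broadly the right territory, but the central step as you describe it would not go through, and the actual proof in the paper contains three moves you do not anticipate.

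The key issue is that $(X, \frac{4c+1}{3}S) = \sC(V,cC)$ is \emph{not} a cone pair in the sense required by any LX16-type "cone theorem." The threefold $X = C_p(V, -K_V)$ is a cone, but $S$ is a degree-$2$ section (the double cover of $V$ branched along $C$, sitting in $|-K_X|$ away from the vertex); it is neither the section at infinity $V_X$ nor a cone over a divisor of $V$, and in particular is not $\bG_m$-invariant. The known cone correspondence (e.g.\ [LX16, Prop.\ 5.3], which the paper does cite and use) transfers K-(semi/poly)stability of $(V,\Delta_V)$ to the cone pair whose boundary is a cone over $\Delta_V$ plus a multiple of $V_X$ — so it simply does not apply to $(X, \frac{4c+1}{3}S)$ directly, and your proposed step (ii) has no theorem to invoke. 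Your numerics $-K_X - \frac{4c+1}{3}S \sim_\bQ \frac{4-8c}{3}V_X$ are correct and establish that the pair is log Fano, but they do not render it a cone pair.

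What the paper actually does, which you do not anticipate, consists of three ideas: (1) Pass to the Galois quotient $Y = C_p(V, -2K_V)$ via the crepant double cover $\pi: X \to Y$ branched along $V_Y$, so that by \cite{LZ20,Zhu20} it suffices to handle the pair $(Y, \frac{1}{2}V_Y + \frac{4c+1}{3}D)$ where $D$ is the image of $S$, now a genuine section of the cone $Y$. (2) Even on $Y$, $D$ is still not $\bG_m$-invariant, so one degenerates $D$ to the cone $D_0$ over $C$, applies [LX16, Prop.\ 5.3] to $(Y, (\frac{5}{6}+\frac{c}{3})V_Y + cD_0)$, exploits the symmetry between the two sections $V_Y$ and $D$ to get K-semistability of both $(Y, (\frac{5}{6}+\frac{c}{3})V_Y + cD)$ and $(Y, cV_Y + (\frac{5}{6}+\frac{c}{3})D)$, and then writes $(\frac{1}{2}, \frac{4c+1}{3})$ as a convex combination of the two weight vectors to conclude by interpolation. (3) For the converse K-semistability direction, the paper does \emph{not} invoke a cone theorem; instead, given a destabilizing divisor $E$ over $V$, it constructs by hand the quasi-monomial valuation $v_t$ on $Y$ obtained as the $(1,t)$-combination of $\ord_{V_Y}$ and $\ord_{E_\infty}$ (the cone over $E$), computes $A_{(Y,\Delta)}(v_t)$ and $S_{(Y,\Delta)}(v_t)$ explicitly in \eqref{eq:A-cone}--\eqref{eq:S-cone-2}, and shows $\beta_{(Y,\Delta)}(v_t)<0$ for a good choice of $t$, contradicting K-semistability. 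The K-polystability arguments then are careful degeneration chases along the lines you sketch, but anchored to the structure above, in particular to the symmetry of the two sections of $Y$.

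So the gap is concrete: without (1) the quotient to $Y$ and (2) the convexity/symmetry interpolation, there is no way to "apply the cone theorem" as your step (ii) proposes; and the converse direction (3) needs an explicit valuative computation that your sketch replaces with a vague appeal to "restricting a degeneration of the cone to the base," which does not by itself produce a destabilizing valuation with the right combination with $\ord_{V_Y}$. Your side remark about $S$ being klt/slc iff $C$ is, is correct but plays a minor role.
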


\begin{proof}
We first treat the ``only if'' part. 
Assume that $(V, cC)$ is K-semistable. Let $V_X\subset X$ be the section at infinity. Let $Y:=C_p(V,-2K_V)$ be a new projective cone with $V_Y$ the section at infinity. Then there exists a finite morphism $\pi: X\to Y$ as a double cover branched along $V_Y$ and $\pi^*V_Y=2V_X$. Denote by $\tau:X\to X$ the involution induced by $\pi$. Then it is clear that $S$ is $\tau$-invariant. Denote by $D:=S/\tau$ as a divisor in $Y$. Clearly $D$ corresponds to a section of $Y$ such that $D|_{V_Y}=C$. The finite morphism $\pi$ is crepant between $(X, \frac{4c+1}{3})$ and $(Y, \frac{1}{2}V_Y+\frac{4c+1}{3}D)$. Hence by \cite{LZ20, Zhu20} it suffices to show that $(Y, \frac{1}{2}V_Y +\frac{4c+1}{3} D)$ is K-semistable. The  natural $\bG_m$-action on $Y$ degenerates $D$ to $D_0$ as the cone over $C$. Let $r:=\frac{1}{2}-c$ be a positive rational number, hence $-2K_V\sim_{\bQ}r^{-1}(-K_V-cC)$. By \cite[Proposition 5.3]{LX16}, we know that $(Y, (1-\frac{r}{3})V_Y + cD_0)$  is K-semistable. Since $1-\frac{r}{3}=\frac{5}{6}+\frac{c}{3}$, we know that $(Y, (\frac{5}{6}+\frac{c}{3})V_Y + cD)$ is K-semistable since it admits a K-semistable special degeneration. Similarly, since $D$ is also section of $Y$, the roles of $V_Y$ and $D$ are interchangeable and we could alternatively degenerate $V_Y$ to ${V_Y}_0$, we know that
$(Y, cV_Y + (\frac{5}{6}+\frac{c}{3}) D)$ is also K-semistable. We know that $(\frac{1}{2}, \frac{4c+1}{3})$ is a convex linear combination of $((\frac{5}{6}+\frac{c}{3}), c)$ and $(c,(\frac{5}{6}+\frac{c}{3}))$ since the sum of two components are the same and $c<\frac{1}{2}<\frac{5}{6}+\frac{c}{3}$. Hence by interpolation (\cite[e.g. Proposition 2.13]{ADL19}), we conclude that $(Y, \frac{1}{2}V_Y+\frac{4c+1}{3}D)$ is K-semistable. Hence $(X, \frac{4c+1}{3}S)$ is K-semistable.

Next we assume that $(V,cC)$ is K-polystable. Since $\pi$ is a Galois morphism, by \cite{LZ20, Zhu20} it suffices to show that $(Y, \frac{1}{2}V_Y+\frac{4c+1}{3}D)$ is K-polystable. By \cite{LWX18} we can choose a special test configuration $(\cY, \frac{1}{2}\cV+\frac{4c+1}{3}\cD)$ of $(Y, \frac{1}{2}V_Y+\frac{4c+1}{3}D)$ whose central fiber $(Y',\frac{1}{2}V'+\frac{4c+1}{3}D')$ is K-polystable. In particular, $\Fut(\cY, \frac{1}{2}\cV+\frac{4c+1}{3}\cD)=0$. Denote by $b:=\frac{5}{6}+\frac{c}{3}$.
By linearity of the generalized Futaki invariant in coefficients and K-semistability of $(Y, bV_Y+cD)$ and $(Y, cV_Y+bD)$, we know that 
\begin{equation}\label{eq:Fut-linear}
\Fut(\cY, b\cV+c \cD)=
\Fut(\cY, c \cV+b\cD)=0.
\end{equation}
By \cite[Theorem 1.4]{LWX18}, we know that the analogous statement of \cite[Proposition 5.3]{LX16} for K-polystability is true (see also \cite[Proposition 2.11]{LZ19}). Hence $(Y, bV_Y+cD_0)$ is the K-polystable since $(V,cC)$ is K-polystable. In particular, $(Y, bV_Y+cD_0)$ is the K-polystable special degeneration of  $(Y, bV_Y+cD)$. By \cite{LX14} and \cite[Lemma 3.1]{LWX18}, \eqref{eq:Fut-linear} implies that $(Y', bV'+cD')$ is a K-semistable special degeneration of $(Y, bV_Y+cD)$. Thus \cite[Theorem 1.3]{LWX18} implies that $(Y, bV_Y+cD_0)$ is isomorphic to the K-polystable special degeneration of $(Y', bV'+cD')$. Thus we have a sequence of special degenerations 
\begin{equation}\label{eq:specialdeg-seq}
(Y,bV_Y+cD)\rightsquigarrow (Y', bV'+cD')\rightsquigarrow (Y, bV_Y+cD_0).
\end{equation}
By forgetting $D$, $D'$, and $D_0$, we obtain $(Y,bV_Y)\rightsquigarrow (Y',bV')\rightsquigarrow (Y,bV_Y)$. This implies that $(Y',V')\cong (Y,V_Y)$. Similarly, since $V_Y$ and $D$ are symmetric, using the second equality in \eqref{eq:Fut-linear} we have that $(Y',D')\cong (Y,V_Y)$. Thus $Y'\cong Y=C_p(V,-2K_V)$ where both $V'$ and $D'$ are sections in $Y'$. Moreover, since $D|_{V_Y}=D_0|_{V_Y}$, after restricting \eqref{eq:specialdeg-seq} to $V_Y$ and $V'$ we see that $(V,C)\cong (V_Y, D|_{V_Y})\cong (V',D'|_{V'})$. Hence $(Y', V'+D')\cong (Y,V_Y+D)$ 
% Hence by forgetting $V'$ and $D'$ respectively, we obtain that 
% \[
% (Y',V')\cong(Y,V_Y)\cong (Y,D)\cong (Y',D').
% \]
% Hence $(Y', V'+D')\cong (Y, V_Y +D_1)$ where $D_1$ is a section of $Y$. By restricting $(\cY, \cV+\cD)$ to $V_Y$ we see that $(V_Y,D_1|_{V_Y})\cong (V,C)$. Hence
which implies that $(Y, \frac{1}{2}V_Y+\frac{4c+1}{3}D) $ is K-polystable.

Next we treat the ``if'' part for K-semistability. Assume that $(X,\frac{4c+1}{3}S)$ is K-semistable, which implies the K-semistability of $(Y, \frac{1}{2}V_Y + \frac{4c+1}{3}D)$ from the above discussion. It suffices to show that $(V,cC)$ is K-semistable.  Assume to the contrary that $(V,cC)$ is K-unstable. By Theorem \ref{thm:valuative}, there exists a prime divisor $E$ over $V$ such that $\beta_{(V,cC)}(E)<0$. Let $v_t$ be the quasi-monomial valuation on $Y$ obtained by taking the $(1,t)$-linear combination of $\ord_{V_Y}$ and $\ord_{E_\infty}$ where $E_\infty$ is a prime divisor over $Y$ by taking cone over $E$. For simplicity, denote by $\Delta:= \frac{1}{2}V_Y+ \frac{4c+1}{3}D$. Then a simple computation shows that
$A_{Y}(v_t)= 1+tA_V(E)$, $v_t(V_Y)=1$, and $v_t(D)= \min \{1, t\ord_E(C)\}$.
Hence we have
\begin{equation}\label{eq:A-cone}
A_{(Y,\Delta)}(v_t)=\frac{1}{2}+tA_{(V,cC)}(E)+ct\ord_E(C)-\frac{4c+1}{3}\min\{1,t\ord_E(C)\}.    
\end{equation}
Next we compute $S_{(Y, \Delta)}(v_t)$. Let $L_Y:=\cO_Y(V_Y)$. Then for $m\in\bZ_{>0}$ it is clear that 
\[
H^0(Y, mL_Y)\cong \oplus_{i=0}^m H^0(V, -2iK_V)\cdot s_0^{m-i},
\]
where $(s_0=0)$ represents the divisor $V_Y$, and $s\in H^0(V, -2iK_V)$ corresponds a section in $H^0(Y, iL_Y)$ by taking the cone. We have that $v_t(s\cdot s_0^{m-i})= t\ord_E(s)+ (m-i)$ for each non-zero section $s\in H^0(V, -2iK_V)$. Hence we have
\begin{equation}\label{eq:S_m-cone}
S_{L_Y, m}(v_t)=\frac{1}{mh^0(Y, mL_Y)}\sum_{i=0}^{m} h^0(V,-2iK_V) ( 2i t S_{-K_V,m}(\ord_E) +(m-i)).    
\end{equation}
Here we refer to \cite{BJ17} for the definition and properties of $S_m$-invariants. It is clear that $h^0(Y,mL_Y)\sim \frac{1}{6}\vol_V(-2K_V)m^3$ and $h^0(V,-2iK_V)\sim \frac{1}{2}\vol_V(-2K_V)i^2$ as $m,i\to \infty$. By taking limit of $\eqref{eq:S_m-cone}$ as $m\to \infty$, we have that 
\begin{equation}\label{eq:S-cone-1}
S_{L_Y}(v_t)=\frac{3}{2}tS_{-K_V}(E)+\frac{1}{4}.
\end{equation}
Since $-K_Y-\Delta\sim_{\bQ}\frac{2-4c}{3}L_Y$ and $-K_V-cC\sim_{\bQ} (1-2c)(-K_V)$, \eqref{eq:S-cone-1} implies that 
\begin{equation}\label{eq:S-cone-2}
S_{(Y,\Delta)}(v_t)=tS_{(V,cC)}(E)+\frac{1-2c}{6}. 
\end{equation}
Combining \eqref{eq:A-cone} and \eqref{eq:S-cone-2}, we have that 
\begin{equation}\label{eq:beta-cone}
\beta_{(Y,\Delta)}(v_t)=t\beta_{(V,cC)}(\ord_E)  +ct\ord_E(C)+\frac{c+1}{3} - \frac{4c+1}{3}\min\{1,t\ord_E(C)\}. 
\end{equation}   
Recall that $\beta_{(V,cC)}(\ord_E)<0$ by our assumption.
If $\ord_E(C)=0$, then we see that $\beta_{(Y,\Delta)}(v_t)<0$ for $t\gg 0$ which implies that $(Y,\Delta)$ is K-unstable by Theorem \ref{thm:valuative}, a contradiction. If $\ord_E(C)\neq 0$, we choose $t=\frac{1}{\ord_E(C)}$. Then \eqref{eq:beta-cone} implies that $\beta_{(Y,\Delta)}(v_t)=t\beta_{(V,cC)}(\ord_E)<0$, again a contradiction. Thus the ``if'' part for K-semistability is proven.

Finally, we treat the ``if'' part for K-polystability. Assume that $(X,\frac{4c+3}{3}S)$ is K-polystable, which implies the K-polystability of $(Y, \frac{1}{2}V_Y + \frac{4c+1}{3}D)$ from the above discussion. Assume to the contrary that $(V,cC)$ is not K-polystable. By the ``if'' part for K-semistability, we know that $(V,cC)$ is K-semistable. Let $(V', cC')$ be a K-polystable degeneration of $(V,cC)$. By the ``only if'' part, we may use the cone construction over $(V',cC')$ to obtain a K-polystable log Fano pair  $(Y', \frac{1}{2}V'_{Y'} +\frac{4c+1}{3}D')$. Then we may take the cone of $(V,cC)\rightsquigarrow (V',cC')$ as in \cite[Proof of Proposition 2.11]{LZ19} to produce a K-polystable degeneration $(Y, \frac{1}{2}V_Y+\frac{4c+1}{3}D)\rightsquigarrow (Y', \frac{1}{2}V'_{Y'} +\frac{4c+1}{3}D')$ which has to be a product test configuration since $(Y, \frac{1}{2}V_Y+\frac{4c+1}{3}D)$ is K-polystable. By restricting to $V_Y\rightsquigarrow V'_{Y'}$ we have that $(V,cC)\cong (V',cC')$ is K-polystable. 
The proof is finished.
\end{proof}

We will apply the cone construction to hyperelliptic degree 4 K3 surfaces, when $V$ is either $\bP^1 \times \bP^1$ or $\bP(1,1,2)$.  In this case, the cone constructed in Definition \ref{def:cone} is either the cone over the anticanonical embedding of the smooth quadric, which we denote by $X_h = C_p(\bP^1 \times \bP^1, \cO(2,2))$, or the cone over the anticanonical embedding of the singular quadric, which is the weighted projective space $\bP(1,1,2,4) = C_p(\bP(1,1,2), \cO(4))$.  

\begin{prop}\label{prop:cone-vertex-unstable}
Let $X$ be a $\bQ$-Fano threefold that is isomorphic to either $X_h$ or $\bP(1,1,2,4)$.  Let $S\sim -K_X$ be an effective Cartier divisor on $X$. If $S$ passes through the cone vertex $o$ of $X$, then $(X,cS)$ is K-unstable for any $c\in [0,1)$. %\footnote{YL: Determine later whether this prop is used or not.}
\end{prop}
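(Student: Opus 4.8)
The plan is to destabilize $(X,cS)$ using the divisorial valuation $\ord_E$ attached to the blow‑up of the cone vertex, together with the normalized volume inequality of \cite{LL16}, exactly as in the proof of Theorem~\ref{thm:GIT=K-unigonal}(4).

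First I would fix the cone picture. Both $X_h$ and $\bP(1,1,2,4)$ are projective anti‑canonical cones $X=C_p(V,-K_V)$ in the sense of Definition~\ref{def:cone}, with $V=\bP^1\times\bP^1$ and $V=\bP(1,1,2)$ respectively; in particular $X$ is Gorenstein canonical, $-K_X$ is Cartier, and $(-K_X)^3=8\cdot\bigl((-K_V)^2\bigr)=64$. Let $\sigma\colon\widetilde X\to X$ be the blow‑up of the cone vertex $o$, with exceptional divisor $E\cong V$. Because the cone is taken with respect to $-K_V$, this is the ``$r=1$'' case, so $A_X(E)=1$; moreover the local volume of the cone‑vertex valuation is $\vol_{X,o}(\ord_E)=\bigl((-K_V)^2\bigr)=8$, as one reads off from the graded ring $\bigoplus_{m\ge 0}H^0(V,-mK_V)$ of the affine cone, on which $\ord_E$ is the degree valuation and $h^0(V,-mK_V)\sim 4m^2$ for both choices of $V$.

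Next I would bound $\ord_E(S)$. When $c>0$, the hypothesis $o\in S$ together with $S$ being Cartier, hence locally principal at $o$ and cut out there by some $f\in\mathfrak m_o$, forces $\ord_E(S)=\ord_E(f)\ge 1$ (when $c=0$ no such input is needed). Hence for every $c\in[0,1)$,
\[
\hvol_{(X,cS),o}(\ord_E)=\bigl(A_X(E)-c\,\ord_E(S)\bigr)^3\cdot\vol_{X,o}(\ord_E)=8\bigl(1-c\,\ord_E(S)\bigr)^3\le 8(1-c)^3,
\]
where the last step uses $1-c\,\ord_E(S)\le 1-c$ (valid since $\ord_E(S)\ge1$ and $c\ge0$), together with the fact that if the quantity being cubed is negative then the cube is already less than $8(1-c)^3$. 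On the other hand $S\sim_{\bQ}-K_X$ gives $(-K_X-cS)^3=(1-c)^3(-K_X)^3=64(1-c)^3$, so $\bigl(\tfrac{3}{4}\bigr)^3(-K_X-cS)^3=27(1-c)^3$. Since $c<1$ we conclude
\[
\hvol_{(X,cS),o}(\ord_E)\le 8(1-c)^3<27(1-c)^3=\Bigl(\tfrac{n}{n+1}\Bigr)^n(-K_X-cS)^n\qquad(n=3),
\]
which by \cite{LL16} (see also \cite{LLX18}) is impossible for a K‑semistable log Fano pair; hence $(X,cS)$ is K‑unstable.

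There is no serious obstacle here: once the cone descriptions of $X_h$ and $\bP(1,1,2,4)$ are available the argument is a short volume estimate. The only points meriting care are the identities $A_X(E)=1$ and $\vol_{X,o}(\ord_E)=8$, which follow from the explicit graded‑ring description of the cone, and the observation that the degenerate case $A_{(X,cS)}(E)\le 0$ — in which $(X,cS)$ is not klt, hence not K‑semistable by \cite{Oda13b} — is automatically covered, since then $\hvol_{(X,cS),o}(\ord_E)\le 0<27(1-c)^3$ as well.
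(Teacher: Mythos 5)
Your proof is correct and follows essentially the same route as the paper's: blow up the cone vertex to obtain the divisorial valuation $\ord_E$, compute $A_X(E)=1$ and $\vol_{X,o}(\ord_E)=8$, use the Cartier hypothesis to get $\ord_E(S)\geq 1$, and then invoke the Liu--Li normalized‑volume criterion to contradict K‑semistability via $8(1-c)^3<27(1-c)^3$. The extra care you take with the degenerate case $A_{(X,cS)}(E)\le 0$ (where the pair is already non‑klt and hence K‑unstable by Odaka) is a reasonable safeguard but not a new idea; the paper leaves it implicit.
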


\begin{proof}
Let $\tX\to X$ be the partial resolution by blowing up the cone vertex. Denote by $E\subset \tX$ the exceptional divisor. Since $X\cong C_p(V, -K_V)$ where  $V$ is $\bP^1\times\bP^1$ or $\bP(1,1,2)$, we know that $A_X(E)=1$ and $\vol_{X,o}(E)= (-K_V)^2=8$.  Since $S$ is Cartier and passes through $o=c_X(E)$, we know that $\ord_E(S)\geq 1$. Hence we have 
\[
\hvol_{(X,cS),o}(E)=( A_{X}(E)-c\ord_E(S))^3 \vol_{X,o}(E)\leq 8(1-c)^3.
\]
On the other hand, we have $\frac{27}{64}(-K_X-cS)^3= 27(1-c)^3>\hvol_{(X,cS),o}(E)$,
which implies that $(X,cS)$ is K-unstable by \cite{LL16}.%\footnote{YL: Probably we can avoid normalized volume, just to save space.}
\end{proof}

% \begin{remark}
% Right now it is reasonable to believe that the K-polystable replacements away from the tangent develop surface have ambient threefolds isomorphic to either $C_p(\bP^1\times\bP^1, \cO(2,2))$ or $\bP(1,1,2,4)$. Indeed, all exceptional loci $E_i^+$ corresponding to walls induced from hyperelliptic ones should be contained in the closure of strict transform of $H_h$. In other words, the $18$-dimensional K-moduli wall crossing can be embedded (up to finite inertia) into the $19$-dimensional K-moduli wall crossing as a divisor under the transformation rule $c\mapsto \frac{4c+1}{3}$. At this point, we already verified that the predicted walls in Prop. \ref{prop:walls-predict} indeed do occur.\footnote{YL: Consider moving this remark somewhere else}
% \end{remark}

\subsection{Deformation theory of cones}

In this subsection, we show that the allowed deformations of $X_h$ or $\bP(1,1,2,4)$ are unobstructed, and any such small deformation of $X_h$ (resp. $\bP(1,1,2,4)$) is isomorphic to itself or $\bP^3$ (resp. itself, $X_h$, or $\bP^3$). Recall that $X_h$ is the projective anti-canonical cone over $\bP^1\times\bP^1$.

\begin{lem}\label{lem:Xh-unobstructed}
The $\bQ$-Gorenstein deformations of $X_h$ or $\bP(1,1,2,4)$ in the moduli stack $\ocM_{3,64}^{{\rm sm},\delta\geq \epsilon_0}$ of $\bQ$-Gorenstein smoothable $\bQ$-Fano varieties are unobstructed. 
\end{lem}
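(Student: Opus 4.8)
\textbf{Proof plan for Lemma \ref{lem:Xh-unobstructed}.} The plan is to reduce to the vanishing of a suitable local-to-global obstruction group, exactly as in the proof of Lemma \ref{lem:local-global-deform} for $X_u$. Both $X_h$ and $\bP(1,1,2,4)$ are Gorenstein log del Pezzo cones $C_p(V,-K_V)$ with $V=\bP^1\times\bP^1$ and $V=\bP(1,1,2)$ respectively, so in these two cases the $\bQ$-Gorenstein deformation theory induced by the index-one cover of $L$ coincides with the ordinary deformation theory, and it suffices to show $H^2(T_{X})=0$ for $X\in\{X_h,\bP(1,1,2,4)\}$. By Serre duality on the Gorenstein Fano threefold $X$ (using reflexivity of $T_X$ and $\Omega^{[1]}_X$), this is equivalent to $H^1(\Omega^{[1]}_X(K_X))=0$. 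Since $-K_X$ is ample and $X$ has klt (in fact canonical Gorenstein) singularities, I would invoke the singular Kodaira-Akizuki-Nakano vanishing theorem \cite[Proposition 4.3.2]{GKP}, precisely as done in the proof of Lemma \ref{lem:local-global-deform}, to conclude $H^1(\Omega^{[1]}_X(K_X))=0$, hence $H^2(T_X)=0$, and therefore the global deformations are unobstructed.

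It remains to check the compatibility of the $\bQ$-Gorenstein condition with the singularity: the cone vertex $o\in X_h$ is the cone over $(\bP^1\times\bP^1, \cO(2,2))$, which is already Gorenstein, so every deformation is automatically $\bQ$-Gorenstein and the local deformation functor is unobstructed (the cone over a del Pezzo surface of degree $8$ smooths, and its miniversal base is smooth — this can be seen directly or via Altmann's method for the toric vertex of $X_h$, a cone over a smooth quadric). For $\bP(1,1,2,4)=C_p(\bP(1,1,2),-K_{\bP(1,1,2)})$ the vertex is the cone over $(\bP(1,1,2),\cO(4))$, again Gorenstein with smooth miniversal base (it smooths to the cone point of $\bP^3$, cf. Lemma \ref{lem:L-construct}); one should also note the additional $\frac12(1,1,1)$-type singularity of $\bP(1,1,2,4)$ away from the vertex, which is rigid in the $\bQ$-Gorenstein sense, so it contributes nothing to the obstruction space. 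Thus $T^1$ is finite-dimensional, $T^2$ (local obstructions) vanishes at every singular point, and combined with the vanishing of the local-to-global obstruction group $H^2(T_X)$ we get unobstructedness of the global $\bQ$-Gorenstein deformations.

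The main obstacle I anticipate is bookkeeping the difference between $\bQ$-Gorenstein deformations (those along which $L$, equivalently the index-one cover, deforms) and all deformations: one must be sure that in the two cases at hand $L$ is in fact Cartier away from a codimension-$\ge 3$ locus with Gorenstein singularities so that the two notions agree, which is why the argument is cleaner here than for $X_u$ (where $L$ had Cartier index $4$). Granting that, the proof is a two-line invocation of Serre duality plus singular KAN vanishing together with the (easy) local unobstructedness of Gorenstein del Pezzo cone vertices; I would write it as: \emph{By \cite[Proposition 2.3]{Petracci} it suffices to show $H^2(T_X)=0$ for $X=X_h$ or $\bP(1,1,2,4)$; by Serre duality this amounts to $H^1(\Omega^{[1]}_X(K_X))=0$, which follows from \cite[Proposition 4.3.2]{GKP} since $-K_X$ is ample; the local deformations are unobstructed because the only non-Gorenstein point of $\bP(1,1,2,4)$ is $\bQ$-Gorenstein rigid and the Gorenstein cone vertices have smooth miniversal base by \cite{Altmann}.}
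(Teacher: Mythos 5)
The proposal is incomplete because it omits the middle term of the local-to-global spectral sequence, and the justification you give for why that term can be ignored rests on a misidentification of the singularities of $\bP(1,1,2,4)$.

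The obstruction group $T^2_{QG,X}$ is estimated by the spectral sequence $H^p(\calT^q_{QG,X})\Rightarrow T^{p+q}_{QG,X}$, which contributes three groups in total degree $2$: the local-to-global piece $H^2(T_X)$, the local obstruction piece $H^0(\calT^2_{QG,X})$, and the ``in between'' piece $H^1(\calT^1_{QG,X})$. Your argument handles the first via Serre duality plus singular Kodaira--Akizuki--Nakano and the third via the observation that the index-one cover of $L$ has only hypersurface singularities, but it does not touch $H^1(\calT^1_{QG,X})$. For $X_h$ this is harmless, since the singular locus is a single point, so $\calT^1$ is a skyscraper and $H^1$ vanishes for free. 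For $\bP(1,1,2,4)$ it is the crux: the singular locus is the whole curve $\{x_0=x_1=0\}\cong\bP(2,4)\cong\bP^1$, not an isolated point. Along that curve the transverse singularity is $\tfrac12(1,1,0)=\bA^1\times A_1$, \emph{not} the terminal isolated $\tfrac12(1,1,1)$ you cite; the former is far from rigid (it has a one-dimensional smoothing of the transverse $A_1$), so $\calT^1_{QG,X}$ is a nontrivial line bundle on this $\bP^1$ and $H^1$ is not a priori zero. The paper's proof is devoted almost entirely to computing this line bundle --- by explicitly determining the index-one covers on two charts, identifying the $G$-invariant pushforward, and gluing transition functions --- and arriving at $\calT^1_{QG,X}\cong\calO_{\bP^1}(1)$, whence $H^1=0$. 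Your sketch would need to supply this computation (or some substitute for it) to close the argument. As a secondary point, $\bQ$-Gorenstein deformations with respect to $L$ (of Cartier index $2$, not $1$) really are a proper subfunctor of all deformations, governed by $\calT^i_{QG,X}$ rather than $\calT^i_X$ even though they happen to agree outside the vertex; so the claim that the two deformation theories coincide is not quite right, although in this instance the genuine discrepancy lives only at the vertex and is handled by the hypersurface-singularity observation.
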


\begin{proof}
We consider the deformations only in the smoothable locus, in particular, we only consider deformations induced by the index 1 cover of $L$, the limit of $\calO_{\bP^3}(1)$.  In this setting, the deformation theory in \cite[Section 3]{Hac01} applies and the obstructions are contained in $T^2_{QG,X}$, defined as follows.  Let $\pi: Z \to X$ be local index 1 cover of $L$ near $x \in X$, with group $G$, and let $p: \calZ \to X$ be the index 1 cover stack.  Then, define $\calT^i_Z = \sExt^i(\Omega^1_Z, \calO_Z)$ and $T^i_Z = \Ext^i(\Omega^1_Z, \calO_Z)$.  The $\bQ$-Gorenstein smoothable deformations of $X$ are controlled by $\calT^i_{QG,X} = \pi_*(\calT^i_Z)^G$ (locally) and $T^i_{QG,X} = \Ext^i(\bL_{\calZ}, \calO_X)$, where $\bL_{\calZ}$ is the cotangent complex of the stack.  

By definition, $\calT^0_{QG,X} = T_X$, the sheaf $\calT^1_Z$ is supported on the singular locus of $Z$ and, by \cite[Corollary 3.1.13(ii)]{Sernesi} the sheaf $\calT^2_Z$ is supported on the non-lci locus of $Z$.  Furthermore, there is a local-to-global spectral sequence $H^p(\calT^q_{QG,X}) \Rightarrow T^{p+q}_{QG,X}$, so it suffices to show that $H^p(\calT^q_{QG,X}) = 0$ for $p+q =2$.  

First, consider $X = X_h$.  The divisor $L$ is $2$-Cartier and passes through the vertex of the cone.  A computation shows $H^2(\calT^0_{QG, X_h}) = H^2(T_{X_h}) = 0$, and $\calT^1_{QG, X_h}$ is supported on the singular locus of $X_h$, a single point, hence $H^1(\calT^1_{X_h}) = 0$.  Finally, the index 1 cover of $L$ on $X_h$ has only hypersurface singularities, hence $\calT^2_Z = 0$, so $H^0(\calT^2_{QG, X_h}) = 0$.  Therefore, $T^2_{QG,X_h} = 0$ and the deformations are unobstructed. 

Now, let $X = \bP(1,1,2,4)$.  The divisor $L = \calO_X(2)$ is $2$-Cartier.  From the Euler sequence and cohomology of weighted projective space, we also obtain $H^2(\calT^0_{QG, X}) = H^2(T_{X}) = 0$.  Let $o \in X$ be the $\frac{1}{4}(1,1,2)$ singularity.  Away from $o$, $L$ is Cartier, and near $o$, we may compute the index one covering of $L$: if the coordinates on $X$ are $[x:y:z:w]$, near $o$, the section $(w = 0)$ is a non-vanishing section of $L^{[2]}$.  So, one can compute the index one cover is given by the map $p: \PP(1,1,2,2) \to X$, where, if the coordinates on $Z = \PP(1,1,2,2)$ are $[x:y:u:v]$, the map is $[x:y:u:v] \mapsto [x:y:u:v^2]$.  Noting that $Z$ is a (singular) quadric threefold in $\bP^4$, we have an exact sequence 
\[ 0 \to \calO_{\bP^4}(-2)|_Z \to \Omega^1_{\bP^4}|_Z \to \Omega^1_Z \to 0. \]
Dualizing, we obtain 
\[ 0 \to T_Z \to T_{\bP^4}|_Z \to \calO_{\bP^4}(2)|_Z \to \calT^1_Z \to 0.\]
Because $\calT^1_Z$ is mapped onto by the line bundle $\calO_{\bP^4}(2)|_Z=\calO_Z(4)$ in the weighted coordinates on $Z$, and is supported on the singular locus $x = y = 0$ of $Z$, $\bP^1_{[u:v]}$, we obtain that $\calT^1_Z = \calO_{\bP^1}(2)$.  Let us assume the branch locus of $p$ is given by $w = 0$.  By definition and because the canonical covering stack is uniquely determined in the \'etale topology, away from the branch locus of $p: Z \to X$, we have $\calT^1_{QG,X} = p_*(T^1_Z)^G$, where $G$ is the action $v \to -v$ on the singular locus $\bP^1_{[u:v]}$.  As $T^1_{QG,X}$ is supported on the singular locus $x = y = 0$ of $X$, which is $\bP(2,4)_{[z:w]} \cong \bP^1_{[z^2:w]}$, we can explicitly compute $p_*(T^1_Z)^G$, where $p := p|_{\bP^1_{[u:v]}} : \bP^1_{[u:v]} \to \bP^1_{[z^2:w]}$.  By computation, since $T^1_Z = \calO_{\bP^1}(2)$, $p_*(T^1_Z) = \calO(1) \oplus \calO$.  As $p$ is given by the map $[u:v] \mapsto [u:v^2]$, we can compute the local charts and transition functions for $p_*(T^1_Z)$.  On the local chart where $u \ne 0$, computation shows that the module $\calO_{\bP^1}(2) = \mathbb{C}[v/u]$ can be viewed as the $\calO_{\bP^1_{[z^2:w]}}$-module $\mathbb{C}[z^2/w] \oplus v/u \mathbb{C}[z^2/w]$, where $z^2/w = v^2/u^2$.   Similarly, on the local chart where $v \ne 0$, computation shows that the module $\calO_{\bP^1}(2) = \mathbb{C}[u/v]$ can be viewed as the $\calO_{\bP^1_{[z^2:w]}}$-module $\mathbb{C}[z/z^2] \oplus u/v \mathbb{C}[w/z^2]$, where $w/z^2 = u^2/v^2$. Furthermore, the transition function on $\calO_{\bP^1}(2)_{u\ne 0} = \mathbb{C}[v/u]$ to $\calO_{\bP^1}(2)_{v\ne 0} = \mathbb{C}[u/v]$ is given by multiplication by $u^2/v^2$ (and $v^2/u^2$ in the other direction), gluing together to give three global sections, $u^2, uv,$ and $v^2$.  With the $\calO_{\bP^1_{[z^2:w]}}$-module structure, we see that the transition functions (from the $w \ne 0$ chart to the $z^2 \ne 0$ chart, and vice versa) become multiplication by $z^2/w = v^2/u^2$ and multiplication by $w/z^2 = u^2/v^2$, so we see that first summand of $p_*(T^1_Z)$ is $G$-invariant and the second is not.  Therefore, $p_*(T^1_Z)^G = \calO(1)$.  

However, this only computes $T^1_{QG,X}$ on the chart $w \ne 0$.  In order to compute $T^1_{QG,X}$ on the entire singular locus, we can compute the canonical covering with a different section (branched over a different point), and use the explicitly computed transition functions to glue the them together. 

Indeed, consider index-one covering using the section $w - z^2$, giving a map $p': Z \to X$ such that the branch locus is $w = z^2$.  This computes $T^1_{QG,X}$ on the chart $w \ne z^2$.  By the same computation as above, $p'_*(T^1_Z)^G = \calO(1)$, given on charts $w-z^2 \ne 0$ by $\mathbb{C}[z^2/(w-z^2)]$ and $z^2 \ne 0$ by $\mathbb{C}[(w-z^2)/z^2]$ with transition function multiplication by $(w-z^2)/z^2$ from $w-z^2 \ne 0$ to $z^2 \ne 0$ and  $z^2/(w-z^2)$ in the opposite direction.  

From the computation of $T^1_{QG,X}$ on both charts, now it is a matter of gluing the charts together.  Note that the singular locus is covered completely by the charts $w \ne 0$ (accurately computing $T^1_{QG,X}$ at all points) and $w-z^2 \ne 0$ (accurately computing $T^1_{QG,X}$ at all points).  Noting that these coincide on their common intersection, we use the previous descriptions to determine the gluing and transition functions.  We then see that $T^1_{QG,X}$ is given by $\mathbb{C}[z^2/w]$ on the $w \ne 0$ chart and $\mathbb{C}[z^2/(w-z^2)]$ on the $w - z^2 \ne 0$ chart, and the transition function is given by multiplication by $w/z^2 \cdot z^2/w-z^2 = w/w-z^2$ from $w \ne 0$ to $w-z^2 \ne 0$, and by $w-z^2/w$ in the other direction.  Then, it is easy to see that there are two global sections $w$ and $z^2$, so in particular, $T^1_{QG,X} = \calO(1)$ and $H^1(T^1_{QG,X}) = 0$.

Finally, the index 1 cover of $L$ on $\bP(1,1,2,4)$ has only hypersurface singularities, so $\calT^2_Z = 0$ and $H^0(\calT^2_{QG, X}) = 0$.  Hence, $T^2_{QG,X} = 0$ and the deformations are unobstructed. 
\end{proof}

\begin{lem}\label{lem:classify-deform-cone}
Let $\pi:\cX\to B$ be a $\bQ$-Gorenstein smoothable $\bQ$-Fano family over a smooth pointed curve $0\in B$.
\begin{enumerate}
    \item If $\cX_0\cong X_h$, then a general fiber $\cX_b$ is isomorphic to $\bP^3$ or $X_h$.
    \item If $\cX_0\cong \bP(1,1,2,4)$, then a general fiber $\cX_b$ is isomorphic to $\bP^3$, $X_h$, or $\bP(1,1,2,4)$. 
\end{enumerate}
\end{lem}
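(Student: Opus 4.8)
The plan is to realise $X=X_h$ and $X=\bP(1,1,2,4)$ as degree-two weighted hypersurfaces in $\bP(1^4,2)$, and to show that all of their $\bQ$-Gorenstein deformations inside $\ocM_{3,64}^{\rm sm}$ come from moving the defining quadric inside the linear system $|\calO_{\bP(1^4,2)}(2)|$; the classification statement then falls out of a quadric-rank computation.

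I would first continue the analysis behind Lemma \ref{lem:Xh-unobstructed}. That lemma shows the relevant deformations of $X$ are unobstructed, so the miniversal $\bQ$-Gorenstein deformation germ $\mathrm{Def}_{QG}(X)$ is smooth; combined with $H^2(T_X)=0$ (from that proof), $H^1(T_X)=0$ (a similar cohomology computation, e.g.\ via the Euler sequence), and the explicit description of $\calT^1_{QG,X}$ there, this gives $T^1_{QG,X}\cong H^0(\calT^1_{QG,X})$. For $X_h$ this space is $1$-dimensional — the unique $\bmu_2$-equivariant smoothing parameter of the ordinary double point which is the index-one cover of the cone vertex — so $\mathrm{Def}_{QG}(X_h)$ is a smooth curve; for $\bP(1,1,2,4)$ it equals $H^0(\bP^1,\calO(1))$ along the singular line, so $\mathrm{Def}_{QG}(\bP(1,1,2,4))$ is a smooth surface.

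Next, with coordinates $u_0,\dots,u_3$ of weight $1$ and $u_4$ of weight $2$ on $\bP(1^4,2)$, every degree-two hypersurface has equation $a\,u_4+Q(u_0,u_1,u_2,u_3)=0$. One records: $(-K)^3=64$ for each such hypersurface by adjunction; $\{a u_4+Q=0\}\cong\bP^3$ if $a\ne0$; and if $a=0$ then $\{Q=0\}$ is the weighted projective cone over the quadric $\{Q=0\}\subset\bP^3$, with vertex the $\bmu_2$-quotient of the cone on $\{Q=0\}\subset\bA^4$, so $\{Q=0\}\cong X_h$ when $\rank(Q)=4$, $\{Q=0\}\cong\bP(1,1,2,4)$ when $\rank(Q)=3$, and $\{Q=0\}$ is reducible or non-reduced when $\rank(Q)\le2$. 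Thus write $X_h=\{Q_0=0\}$ with $\rank(Q_0)=4$ and $\bP(1,1,2,4)=\{Q_0=0\}$ with $\rank(Q_0)=3$. Over the open locus of $\bQ$-Fano members the tautological hypersurface family is a $\bQ$-Gorenstein $\bQ$-Fano family, hence induces a classifying map $c\colon(|\calO_{\bP(1^4,2)}(2)|,[Q_0])\to\mathrm{Def}_{QG}(X)$. I would check its differential is onto $T^1_{QG,X}$ by exhibiting enough first-order hypersurface deformations: $Q_0+t\,u_4$, which degenerates to $\bP^3$, spans $T^1_{QG,X_h}=\bC$; and for $\bP(1,1,2,4)$ the two directions $Q_0+t\,u_4$ (to $\bP^3$) and $Q_0+t\,u_3^2$ (raising $\rank(Q_0)$ to $4$, to $X_h$) must be independent, since otherwise a single curve-germ in $\mathrm{Def}_{QG}$ would have general member isomorphic both to $\bP^3$ and to $X_h$. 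As $\mathrm{Def}_{QG}(X)$ is smooth of dimension $1$ (resp.\ $2$), $c$ is then smooth, hence open, and its image is a neighbourhood of $[X]$.

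It follows that the given family $\pi\colon\cX\to B$ is, near $0$, a pullback of the tautological hypersurface family, so every fiber $\cX_b$ with $b$ near $0$ is isomorphic to a degree-two hypersurface $\{a u_4+Q=0\}$ with $[a:Q]$ near $[0:Q_0]$. Since all fibers of $\pi$ are $\bQ$-Fano we discard the cases $\rank(Q)\le2$, and the classification above yields: if $\rank(Q_0)=4$ the nearby fibers are $\bP^3$ and $X_h$, proving (1); if $\rank(Q_0)=3$ they are $\bP^3$, $X_h$ and $\bP(1,1,2,4)$, proving (2). A general $\cX_b$ is then $\bP^3$, unless the base maps into $\{a=0\}$ (in which case $X_h$), unless it maps further into $\{\rank\le3\}$ (in which case $\bP(1,1,2,4)$); in every case it lies in the asserted list. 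The crux is the versality of $c$, i.e.\ that honest weighted-hypersurface deformations already exhaust $T^1_{QG,X}$: this is where the explicit shape of $\calT^1_{QG,X}$ from Lemma \ref{lem:Xh-unobstructed} is essential, and some care is required because $X$ contains the singular point of $\bP(1^4,2)$, so the normal-bundle comparison should be run on the index-one covering stack rather than naively; the remaining ingredients (the rank stratification of quadrics, the cone identifications, and the degree checks) are routine.
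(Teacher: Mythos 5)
Your proposal takes a genuinely different route from the paper's. The paper avoids deformation theory in this lemma entirely: it uses the limit divisor $\cL$ on $\cX$ (the Zariski closure of $\cO_{\cX_\eta}(1)$, as in Lemma~\ref{lem:L-construct}) and the coherent sheaf $\cF:=\mathrm{coker}(\Sym^2\pi_*\cL\to\pi_*\cL^{[2]})$ on $B$. Since $\dim\cF_0=1$ (one quadric relation among the sections of $\cL_0$), upper semicontinuity gives $\dim\cF_b\leq 1$ near $0$; the case $\cF_b=0$ yields $\cX_b\cong\bP^3$, and the case $\dim\cF_b=1$ yields (after checking $\cL_b^{[2]}$ stays basepoint-free) a finite birational map $\cX_b\to\bP(1^4,2)$ onto a weighted quadric, after which the rank classification you also use takes over. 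Your route instead computes $T^1_{QG}(X)$ exactly and proves the quadric linear system is versal. Both end by classifying quadrics in $\bP(1^4,2)$; the paper's argument is leaner (it needs only that $\pi_*\cL$, $\pi_*\cL^{[2]}$ are locally free with $\cL_0^{[2]}$ globally generated, not the unobstructedness of Lemma~\ref{lem:Xh-unobstructed} nor any exact $T^1$ count), while yours proves the stronger statement that the quadric family realises the full miniversal deformation, which the paper never needs.

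There is a genuine gap in your independence step for $\bP(1,1,2,4)$. You argue that the classes of $Q_0+tu_4$ and $Q_0+tu_3^2$ in $T^1_{QG}$ must be independent ``since otherwise a single curve-germ in $\mathrm{Def}_{QG}$ would have general member isomorphic both to $\bP^3$ and to $X_h$.'' That inference is invalid: proportional tangent vectors only mean the two arcs in $\mathrm{Def}_{QG}$ are \emph{tangent}, not equal, and tangent arcs can have different general members, so no contradiction is produced. The claim itself is true and should be proved directly: with $Q_0=x_0x_1+x_2^2$ the singular locus of $X$ is $\bP(1,2)_{[x_3:x_4]}$, the first-order class of $Q_0+tf$ in $H^0(\calT^1_{QG,X})$ is controlled by the restriction of $f$ to that line, and $u_4,x_3^2$ restrict to a basis of the two-dimensional $H^0(\bP(1,2),\cO(2))\cong H^0(\calT^1_{QG,X})$. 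Two further points are under-justified as written: the identification $T^1_{QG,X}=H^0(\calT^1_{QG,X})$ needs $H^1(T_X)=0$, which Lemma~\ref{lem:Xh-unobstructed} does not verify (it only establishes $H^2(T_X)=0$ and computes the local $\calT^1$), so this vanishing has to be supplied separately; and the step from infinitesimal versality of the classifying map to the conclusion about the actual family $\pi:\cX\to B$ over a curve (rather than an Artinian base) requires an algebraization argument that you do not spell out.
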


\begin{proof}
Since both $X_h$ and $\bP(1,1,2,4)$ belong to $\ocM_{3,64}^{{\rm sm},\delta\geq \epsilon_0}$, we know that $\pi$ is obtained by pulling back the universal family over $\ocM_{3,64}^{{\rm sm},\delta\geq \epsilon_0}$ under some morphism $B\to \ocM_{3,64}^{{\rm sm},\delta\geq \epsilon_0}$.
Let $\eta$ be the generic point of $B$. Since the geometric geometric fiber $[\cX_{\bar{\eta}}]\in \ocM^{{\rm sm},\delta\geq \epsilon_0}_{3,64}$, by Lemma \ref{lem:L-construct} it admits a $\bQ$-Cartier Weil divisor $\cL_{\bar{\eta}}$ such that $4\cL_{\bar{\eta}}\sim -K_{\cX_{\bar{\eta}}}$. After replacing $0\in B$ by a quasi-finite cover, we may assume that $\cL_{\bar{\eta}}$ is
the base change of a $\bQ$-Cartier Weil divisor $\cL_{\eta}$ on the generic fiber $\cX_\eta$. Then we can take the Zariski closure $\cL$ of $\cL_\eta$ as a Weil divisor on $\cX$. By similar arguments to the proof of Lemma \ref{lem:L-construct},  we have $\cL^{[-4]}\cong_{B} \omega_{\cX/B}$, and the sheaves $\pi_*\cL$ and $\pi_*\cL^{[2]}$ are locally free over $B$ of rank $4$ and $10$ respectively. 

Consider the sheaf $\cF:= \mathrm{coker} (\Sym^2 \pi_* \cL \to \pi_*\cL^{[2]})$. Denote by $\cF_b:=\cF\otimes \cO_B/\fm_b$. If $(\cX_0, \cL_0)$ is isomorphic to $(\bP(1,1,2,4), \cO_{\bP(1,1,2,4)}(2))$, we know that $\dim \cF_0= 1$. Since $b\mapsto \dim \cF_b$ is upper semi-continuous, we know that there exists an open neighborhood $0\in B'\subset B$ such that $\dim \cF_b\leq 1$ for any $b\in B'$.  In particular, because $\bP(1,1,2,4)$ can be partially smoothed to $X_h$ in a $\bQ$-Gorenstein smoothable family, we may assume that $\dim \cF_0 \le 1$ if either $(\cX_0, \cL_0)$ is isomorphic to $(\bP(1,1,2,4), \cO_{\bP(1,1,2,4)}(2))$ or isomorphic to $(X_h, \cO_{X_h}(2))$. Moreover, since $\cL_0^{[2]}$ is base point free and $\pi_*\cL^{[2]}$ is locally free, we may assume that $\cL_b^{[2]}$ is base point free (in particular Cartier) for any $b\in B'$. If $\dim \cF_b=0$, then $\cL_b$ is base point free hence Cartier, which implies that $\cX_b\cong \bP^3$ because $\cX_b$ is a canonical Gorenstein threefold with Fano index 4 \cite[Theorem 3.9]{Shin89}. If $\dim \cF_b=1$, then we may pick a basis $s_0, \cdots, s_3$ of $H^0(\cX_b, \cL_b)$ and $s_4\in H^0(\cX_b,\cL_b^{[2]})$ whose image in $\cF_b$ is non-zero. Since $\cL_b^{[2]}$ is base point free, we know that $[s_0, \cdots, s_3, s_4]$ defines a finite morphism $\cX_b\rightarrow \bP(1^4,2)$. Since this map is a closed embedding when $b=0$, it is of degree $1$ for general $b$. Thus its image is a weighted hypersurface of degree $2$ that is not isomorphic to projective space. Then a simple analysis shows that $\cX_b\cong X_h$ or $\bP(1,1,2,4)$: a degree 2 weighted hypersurface $(g = 0)$ in $\bP(1^4,2)$ with coordinates $[x_0:...:x_3:y]$ is not isomorphic to $\bP^3$ if and only if $g(0,0,0,0,y) = 0$, so has equation $g(x_0,\dots, x_3)$ consisting of monomials of degree 2 in the $x_i$s.  As $\bP(1^4,2) = C_p(\bP^3, \cO(2))$, such an equation defines the anticanonical cone over a (possibly singular) quadric in $\bP^3$, so $\cX_b$ is either $X_h$ or $\bP(1,1,2,4)$.  The proof is complete by observing that, if $\cX_0 \cong X_h$, then $\cX_b$ cannot be $\bP(1,1,2,4)$ as the singular locus cannot have larger dimension on the generic fiber.
\end{proof}

The next result shows that there is a smooth open substack of $\osM_c^{\K}$ parametrizing $\bP^3$, $X_h$, $\bP(1,1,2,4)$, or $X_u$. Later on we will see that this open substack is indeed the entire stack  $\osM_c^{\K}$ (see Proposition \ref{prop:no-other-3-fold}).

\begin{cor}\label{cor:K-moduli-smooth}
The subset of $\ocM^{{\rm sm},\delta\geq \epsilon_0}_{3,64}$ that parametrizes $\bP^3$, $X_h$, $\bP(1,1,2,4)$, and $X_u$ is a smooth open substack. In particular, there exists a smooth open substack of $\osM_c^{\K}$ parametrizing $(X, S)$  where $X$ is isomorphic to $\bP^3$, $X_h$, $\bP(1,1,2,4)$, or $X_u$.
\end{cor}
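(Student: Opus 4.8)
The plan is to put $U\subset\ocM_{3,64}^{\rm sm}$ equal to the locus parametrizing $\bP^3$, $X_h$, $\bP(1,1,2,4)$, and $X_u$, show that $U$ is open, check that each of its points is a smooth point of the stack, and then pull the resulting smooth open substack back along the smooth forgetful morphism of Lemma \ref{lem:forgetful-smooth}.

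To prove that $U$ is open I would show that the complement $Z:=\ocM_{3,64}^{\rm sm}\setminus U$ is closed. The stack $\ocM_{3,64}^{\rm sm}$ is irreducible of finite type by Lemma \ref{lem:L-construct}, with generic point $[\bP^3]$; in fact $\cM_{3,64}^{\rm sm}$ consists of the single point $[\bP^3]$, since $\bP^3$ is the only smooth Fano threefold of anti-canonical volume $64$, so $\{[\bP^3]\}$ is open in $\ocM_{3,64}^{\rm sm}$. Consequently $U$ is a finite union of constructible subsets, hence $Z$ is constructible, and it suffices to verify that $Z$ is stable under specialization. So assume $[X]\in Z$ and $[X']\in\overline{\{[X]\}}$; I must deduce $[X']\in Z$. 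If $[X']=[\bP^3]$, then $\overline{\{[X]\}}$ contains the generic point, forcing $[X]=[\bP^3]\in U$, a contradiction, so this case cannot occur. Otherwise I would realize the specialization of $[X]$ to $[X']$ by a $\bQ$-Gorenstein smoothable $\bQ$-Fano family over a smooth pointed curve $(0\in B)$ with $\cX_0$ isomorphic to the variety represented by $[X']$ and generic fiber $\cX_\eta$ isomorphic to the one represented by $[X]$ (the standard reduction of a specialization in a finite-type stack to a valuation ring, followed by spreading out and normalizing). Then Lemma \ref{lem:classify-deform-cone} (when $\cX_0\cong X_h$ or $\bP(1,1,2,4)$) and Corollary \ref{cor:X_u-global-deform} (when $\cX_0\cong X_u$) show that $\cX_\eta$, hence the variety represented by $[X]$, is isomorphic to one of $\bP^3$, $X_h$, $\bP(1,1,2,4)$, $X_u$, so $[X]\in U$, again a contradiction. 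Hence $Z$ is closed and $U$ is open.

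For smoothness, note that $[\bP^3]$ is a smooth point because $\cM_{3,64}^{\rm sm}$ is a smooth stack, that $[X_h]$ and $[\bP(1,1,2,4)]$ are smooth points by Lemma \ref{lem:Xh-unobstructed}, and that $[X_u]$ is a smooth point by Corollary \ref{cor:X_u-global-deform}. Since $U$ is open in $\ocM_{3,64}^{\rm sm}$ and every point of $U$ is a smooth point, $U$ is a smooth open substack. Finally, by Lemma \ref{lem:forgetful-smooth} the forgetful morphism $\osM_c^{\K}\to\ocM_{3,64}^{\rm sm}$ is smooth with connected fibers, so the preimage of $U$ is a smooth open substack of $\osM_c^{\K}$; by construction it parametrizes exactly the pairs $(X,S)$ with $X$ isomorphic to $\bP^3$, $X_h$, $\bP(1,1,2,4)$, or $X_u$.

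The step I expect to be the main obstacle is the openness argument, and specifically turning an abstract specialization in the algebraic stack $\ocM_{3,64}^{\rm sm}$ into a one-parameter $\bQ$-Gorenstein family over a smooth pointed curve so that Lemma \ref{lem:classify-deform-cone} and Corollary \ref{cor:X_u-global-deform} (which are stated for such families) apply, together with confirming that $U$ is constructible so that stability under specialization suffices for closedness of $Z$. Once the deformation classification is available, the remaining points — smoothness and the pullback along the forgetful morphism — are immediate.
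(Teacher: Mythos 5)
Your proof is correct and fills in, in explicit detail, exactly what the paper's one-line argument (``This follows from Corollary \ref{cor:X_u-global-deform} and Lemma \ref{lem:Xh-unobstructed}'') must be relying on: unobstructedness at each of the four points for smoothness, and the classification of one-parameter deformations for openness. One small improvement worth noting: the paper's terse proof omits an explicit citation to Lemma \ref{lem:classify-deform-cone}, which you correctly identify as needed for the openness step when the special fiber is $X_h$ or $\bP(1,1,2,4)$ (since Lemma \ref{lem:Xh-unobstructed} by itself only gives unobstructedness, not the classification of general fibers).
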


\begin{proof}
This follows from Corollary \ref{cor:X_u-global-deform}, Lemma \ref{lem:Xh-unobstructed}, and Lemma \ref{lem:classify-deform-cone}.
\end{proof}

\begin{definition}\label{def:Hilb}
 Consider the reduced locally closed substack $\sT$ of $\ocM_{3,64}^{{\rm sm},\delta\geq \epsilon_0}$ parametrizing $X_h$ or $\bP(1,1,2,4)$. Let $\bfH$ be the locally closed subscheme of $\Hilb(\bP^9)$ parametrizing $X_h$ and $\bP(1,1,2,4)$ embedded by $2L$. Then $\sT\cong [\bfH/\PGL(10,\bC)]$ (see e.g. \cite[Section 3.6]{ADL19} or \cite[Proof of Proposition 5.9]{ADL20}). Let $\pi_c: \osM_c^{\K}\to \ocM_{3,64}^{{\rm sm},\delta\geq \epsilon_0}$ be the forgetful map where $\pi_c([(X,S)])=[X]$. We define $\sH_{h,c}$ to be the locally closed substack $\sH_{h,c}:=\pi_c^{-1}(\sT)$ of $\osM_c^{\K}$. 
\end{definition}

\begin{lemma}\label{lem:hilbsmooth}
Notation as in Definition \ref{def:Hilb}. The stack $\sH_{h,c}$ is smooth.
\end{lemma}

\begin{proof}
The strategy of showing smoothness of $\sT$  is similar to \cite[Section 3.6]{ADL19} and \cite[Proof of Proposition 5.9]{ADL20}.  Let $\cX_{\bfH}\to \bfH$ be the universal family. Since the embedded $X_h$ and $\bP(1,1,2,4)$ are projective cones, there exists a section $\sigma: \bfH\to \cX_{\bfH}$ taking fiberwise cone vertices. Therefore we have a morphism $g:\bfH \to \bP^{9}$ as the composition  $\bfH\xrightarrow{\sigma}\cX_{\bfH}\hookrightarrow\bP^9\times\bfH\to  \bP^9$. It is clear that $g$ is an isotrivial fibration with fiber isomorphic to the Hilbert scheme $\bfH'$ of anti-canonically embedded $\bP^1\times\bP^1$ and $\bP(1,1,2)$ in $\bP^8$. By \cite[Proof of Proposition 5.9]{ADL20} we know that $\bfH'$ is smooth, hence $\bfH$ is also smooth. Thus we obtain the smoothness of $\sT$, thereby obtaining the smoothness of $\sH_{h,c}$.
\end{proof}

\begin{thm}\label{thm:H_h-embed}
Let $\ocK_c$ be the K-moduli stack constructed from \cite{ADL20}. Let $\tcK_c\to \ocK_c$ be the $\bm{\mu}_2$-gerbe obtained by taking fiberwise double covers. Then $\tcK_{\frac{3c-1}{4}}\cong \sH_{h,c}$ for any $c\in (\frac{1}{3},1)$. 
\end{thm}

\begin{proof}Let $f_c: \sH_{h,c} \to \tcK_{\frac{3c-1}{4}}$ be the forgetful functor $f_c([X,D]) = [D]$.  We will show that $f_c$ is separated, stabilizer preserving on $\mathbb{C}$-points, and an isomorphism on $\mathbb{C}$-points. Using this, by Lemma \ref{lem:hilbsmooth} and \cite[Theorem A.5]{AI} (a version of Zariski's main theorem for Artin stacks), we will conclude that $f_c$ is an isomorphism of stacks.  

By the valuative criterion (see e.g. \cite[Chapter 7]{LMB00}), we consider a diagram 

\[
\begin{tikzcd}
  U \arrow{d} \arrow{r} & \sH_{h,c} \arrow[d,"f_c"]\\
 T \arrow{r} \arrow[ru,dashed] & \tcK_{\frac{3c-1}{4}} 
 \end{tikzcd}
\]

\noindent where $T = \Spec R$ is a DVR and $U$ is the complement of the closed point $0 \in T$.  We must show there is at most one dashed arrow completing the diagram.  Suppose for contradiction there are two, i.e. there exist two families $(\calX, \calD) \to T$ and $(\calX', \calD') \to T$.  Because the diagram commutes, the maps agree on $\calD$, so we may assume $\calD = \calD'$, and $\calX$ and $\calX'$ are isomorphic away from the central fiber.  If they are not isomorphic in the central fiber, then by considering the graph of the rational map $\calX \dashrightarrow \calX'$, the image of $\calX_0$ in $\calX'$ is a proper subvariety of $\calX'_0$ containing $\calD_0$, as the map is an isomorphism on $\calD$.  Consider a generic ruling $R$ of the cone $\calX_0$, and let $\mathbb{A}^1$ be the ruling $R$ minus the cone point.  Because the image of $\mathbb{A}^1$ in $\calX'_0$ must be 0-dimensional and the map was an isomorphism on $\calD_0$, the image must be the two points of intersection of $\mathbb{A}^1$ and $\calD_0$.  However, this means the image of the connected variety $\mathbb{A}^1$ is disconnected, a contradiction.  Therefore, $\calX \cong \calX'$ and the map is separated.  %\footnote{\YL{There may be a small gap in the proof: the rational map $\cX\dashrightarrow \cX'$ may not preserve the $\bA^1$-fibration structure for the central fiber.}}

Next, we show the forgetful functor is stabilizer preserving on $\mathbb{C}$-points.  Suppose $\sigma \in \Aut(X,D)$ is an automorphism that is the identity on $D$.  Then, we claim that $\sigma$ is the identity.  Note that $\sigma$ must take rulings of the cone $X$ to rulings of $X$: in the universal family of the threefolds in $\bP^9$, the rulings are lines and hence curves of lowest degree, and intersection numbers are preserved in the automorphism.  Furthermore, any line on $X$ must be a ruling.  Fix any ruling $R \cong \bP^1$ of $X$.  By definition, $\sigma$ fixes the cone point and the two points of intersection with $D$, and $\sigma$ takes rulings to rulings, so $\sigma|_{\bP^1}$ fixes three points on $\bP^1$, and hence must be the identity.  Therefore, $\sigma$ is the identity. 

Now we show that $f_c$ is an isomorphism on $\mathbb{C}$-points. By \cite[Theorem 4.8]{ADL20}, we know that for any point $[(V,C)]\in \tcK_{\frac{3c-1}{4}}$, the underlying surface $V$ is isomorphic to either $\bP^1\times\bP^1$ or $\bP(1,1,2)$. Hence by taking the cone construction $\sC(V,\frac{3c-1}{4}C)=(X,cD)$ (c.f. Definition \ref{def:cone}), we know that $X$ is isomorphic to either $X_h$ or $\bP(1,1,2,4)$.  On the other hand, if $(X,D)\in \sH_{h,c}$, then Proposition \ref{prop:cone-vertex-unstable} implies that $D$ does not pass through the cone vertex of $X$. Hence after an automorphism of $X$ we have $(X,cD)\cong \sC(V,\frac{3c-1}{4}C)$ where $V\cong \bP^1\times\bP^1$ or $\bP(1,1,2)$. Since $(X,cD)$ is K-semistable, Theorem \ref{thm:induce-3-fold} implies that $(V,\frac{3c-1}{4}C)$ is also K-semistable hence belongs to $\tcK_{\frac{3c-1}{4}}$.  As a result, the forgetful map $f_c: \sH_{h,c}(\mathbb{C})\to \tcK_{\frac{3c-1}{4}}(\mathbb{C})$ sending $[(X,D)]$ to $D$ has inverse given by the cone construction $\tcK_{\frac{3c-1}{4}}(\mathbb{C})\to \sH_{h,c}(\mathbb{C})$.

By Lemma \ref{lem:hilbsmooth}, the stack $\sH_{h,c}$ is smooth, and by \cite[Theorem 2.21]{ADL20}, the stack $\tcK_{\frac{3c-1}{4}}$ is smooth. Therefore, we conclude that $f_c$ is actually an isomorphism by \cite[Theorem A.5]{AI}, noting that being  isomorphic and stabilizer preserving on $\mathbb{C}$-points implies fully faithful and essentially surjective.\end{proof}

\begin{defn}\label{defn:H_h,c}
Let $c\in (\frac{1}{3},1)$ be a rational number. By Theorem \ref{thm:H_h-embed}, there exists a closed immersion $\tcK_{\frac{3c-1}{4}} \hookrightarrow \osM_{c}^{\K}$ of Artin stacks. By taking good moduli spaces, we obtain a closed immersion $\oK_{\frac{3c-1}{4}}\hookrightarrow \ofM_c^{\K}$. Let $H_{h,c}$ (resp. $\sH_{h,c}$) be the image of the closed embedding in the K-moduli space (resp. K-moduli stack). Equivalently, by Theorem \ref{thm:H_h-embed} $H_{h,c}$ (resp. $\sH_{h,c}$) is the locus parametrizing K-polystable (resp. K-semistable) pairs $(X,cD)$ where $X\cong X_h$ or $\bP(1,1,2,4)$.
\end{defn}

\subsection{K-polystable replacements}

In this section, we describe all K-polystable replacements of the locus $W_8$ inside $\ofM^{\GIT}$. In particular, we show that the replacements of $(\bP^3, S)$ where $[S]\in W_8$ is  either $(X_h, S')$ or $(\bP(1,1,2,4), S')$. Here $S'$ is a double cover of $\bP^1\times\bP^1$ or $\bP(1,1,2)$.

By Theorem \ref{thm:shah}, every $[S]\in W_8$ is defined by $S=(q^2+g=0)$ where $q$ is a degree $2$ polynomial and $g$ is a degree $4$ polynomial. 
There is a natural closed embedding of such a log pair $(\bP^3, S)$ with into the weighted projective space $\bP(1^4, 2)$ with coordinates $[x_0,x_1,x_2,x_3, z]$ such that the image is given by $(V(z-q), V(z-q, z^2+g))$. 

We start with the K-polystable replacement of $[2Q]$.

\begin{prop}\label{prop:2Q-replace}
Let $S=2Q$ where $Q\subset\bP^3$ is a smooth quadric surface. Let $c\in (0,1)$ be a rational number. Then $(\bP^3,cS)$ is K-semistable (resp. K-polystable) if and only if $c\leq \frac{1}{3}$ (resp. $<\frac{1}{3}$).  Moreover, the K-polystable degeneration of $(\bP^3,\frac{1}{3}S)$ is isomorphic to $(X_h, \frac{1}{3}S_0)$ where $S_0=2 Q_\infty$ and $Q_\infty$ is the section of $X_h=C_p(Q, -K_Q)$ at infinity.
\end{prop}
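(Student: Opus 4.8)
The statement has two halves: computing the K-semistable (resp. K-polystable) threshold of $(\bP^3, cS)$ for $S = 2Q$, and identifying the K-polystable degeneration at the wall $c = \frac{1}{3}$. I will treat these in turn, following the template already established for the tangent developable surface in Theorem~\ref{thm:tangent-kst} and Proposition~\ref{prop:tangent-Kps-replace}, and for the cone construction in Section~\ref{sec:cone}.

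\emph{Step 1: the threshold.} The key observation is that the singularity of $(\bP^3, cS)$ along $Q$ is just the singularity of $(\bA^1, c\cdot 2[0])$ transverse to $Q$, i.e.\ the pair has a nonreduced divisor of multiplicity $2$. The natural destabilizing valuation is $\ord_Q$ itself. By adjunction $-K_{\bP^3} = 4Q$ up to linear equivalence (since $Q \in |\cO(2)|$ and $-K_{\bP^3} = \cO(4)$), so $T_{(\bP^3, cS)}(Q)$ and $S_{(\bP^3, cS)}(Q)$ are straightforward to compute: with $-K_{\bP^3} - cS \sim_{\bQ} (4-2c)Q$, one gets $A_{(\bP^3, cS)}(Q) = 1 - 2c$, while $S_{(\bP^3, cS)}(Q) = \frac{1}{4}(4 - 2c) \cdot \frac{1}{(\dim+1)} = \frac{4-2c}{4\cdot 4}$ after the standard cone-type volume integral $\int_0^{T}\vol(\cO(4) - tQ)\,dt$ over $\bP^3$ — concretely $\vol_{\bP^3}(\cO(4) - t\cdot 2Q) = (4-2t)^3$ so $S_{(\bP^3,cS)}(Q)$ works out to $\frac{4-2c}{4}\cdot\frac{1}{4}$. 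Setting $\beta_{(\bP^3, cS)}(Q) = A - S \ge 0$ gives precisely $c \le \frac{1}{3}$, proving the ``only if'' direction for K-semistability via Theorem~\ref{thm:valuative}. For the ``if'' direction, since $2Q$ is log canonical (it is slc — a nonreduced smooth quadric has slc, hence insignificant, singularities), Lemma~\ref{lem:ADE-K}(2) already gives K-semistability of $(\bP^3, cS)$ for all $c \in [0,1]$... but wait, that would contradict the threshold. The resolution: $2Q$ is \emph{not} slc since coefficient $2 > 1$; the pair $(\bP^3, 2Q)$ is not even a valid boundary. So instead I use interpolation \cite[Proposition~2.13]{ADL19}: it suffices to show $(\bP^3, \frac{1}{3}S)$ is K-semistable, which follows once we exhibit the K-polystable degeneration in Step~3 together with openness/lower-semicontinuity, or more directly by a $G$-equivariant argument à la Theorem~\ref{thm:kss-plt} using $G = \PGL(2)\times\PGL(2) \rtimes \bZ/2 = \Aut(\bP^3, Q)$ — the only $G$-invariant plt-type divisors over $\bP^3$ are $Q$ itself and the blow-up of the cone point of the relevant degeneration, both of which have $\beta \ge 0$ at $c = \frac{1}{3}$. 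The K-polystability statement ($c < \frac{1}{3}$) follows from \cite[Proposition~2.13]{ADL19} since $Q$ is GIT polystable and \cite[Proposition~3.18]{ADL19} gives that $(\bP^3, \frac{1}{3}S)$ is not K-polystable once we know it is strictly semistable.

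\emph{Step 2--3: identifying the degeneration.} The embedding $(\bP^3, S) \hookrightarrow \bP(1^4, 2)$ via $[x_0:\cdots:x_3:z]$ with $S = V(z - q,\, z^2 + g)$ suggests the degeneration: scale $z$ by a one-parameter subgroup $t\cdot(x_i, z) = (x_i, t^{-1}z)$ (or the reverse), which in the limit $t \to 0$ degenerates the hypersurface $(z = q)$ — a copy of $\bP^3$ — to the weighted hypersurface $(z^2 = -g)\subset \bP(1^4,2)$, and when $g = 0$ (the double quadric case) to $X_h = (z^2 = 0)_{\mathrm{red}}$... more precisely the limit is the cone $C_p(Q, -K_Q)$ over the quadric $q = 0$. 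I would make this precise by writing down the explicit test configuration $(\bP^3\times\bA^1, \frac{1}{3}\cS) \rightsquigarrow (X_h, \frac{1}{3}\cdot 2Q_\infty)$ inside $\bP(1^4,2)\times\bA^1$, check it is a special test configuration (central fiber normal, $\cX_0 + \cD_0$ plt — here $\cD_0 = 2Q_\infty$ is nonreduced so instead check log-canonicity after the coefficient normalization, using the cone structure), and compute $\Fut = \beta_{(\bP^3, \frac{1}{3}S)}(Q) = 0$. Then apply the cone construction results: by Theorem~\ref{thm:induce-3-fold} with $V = Q \cong \bP^1\times\bP^1$ and $c' = 0$ (so $C = \emptyset$, $\frac{4c'+1}{3} = \frac{1}{3}$), we have $\sC(Q, 0) = (X_h, \frac{1}{3}S_0)$ with $S_0$ the double-cover-branched-along-empty, i.e.\ $S_0 = 2Q_\infty$ where $Q_\infty$ is the section at infinity. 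Since $(Q, 0) = (\bP^1\times\bP^1, 0)$ is K-polystable (it is K-polystable as a smooth del Pezzo, indeed KE), Theorem~\ref{thm:induce-3-fold} gives that $(X_h, \frac{1}{3}S_0)$ is K-polystable. Finally, uniqueness of K-polystable degeneration \cite{LWX18} and the fact that $X_h$ is the limit forces the K-polystable replacement of $(\bP^3, \frac{1}{3}S)$ to be exactly $(X_h, \frac{1}{3}S_0)$.

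\emph{Main obstacle.} The subtle point is handling the nonreduced boundary $2Q$ throughout: Fujita--Li's criteria and the plt/special-degeneration machinery are stated for effective $\bQ$-divisors with coefficients in $[0,1]$, whereas here the coefficient is $c \cdot 2 \le 1$ exactly when $c \le \frac{1}{2}$, so for the range $c \in (0, \frac{1}{3}]$ we are fine but must be careful that $cS = 2cQ$ has coefficient $2c \le \frac{2}{3} < 1$ — so actually there is no issue, $(\bP^3, 2cQ)$ is an honest klt (for $c$ small) or lc pair. The real work is (a) verifying the volume computation $\vol_{\bP^3}(\cO(4) - 2tQ) = (4-2t)^3$ and the resulting $S$-invariant, which is elementary, and (b) checking that the degeneration test configuration is genuinely special and computing its Futaki invariant — this is where I'd need to be careful about the cone structure of $X_h$ and the behavior of $Q_\infty$, but it parallels \cite[Proposition~5.3]{LX16} and the cone arguments already deployed in Theorem~\ref{thm:induce-3-fold}. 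The cleanest writeup invokes Theorem~\ref{thm:induce-3-fold} directly rather than redoing cone computations by hand.
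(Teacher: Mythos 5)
Your overall strategy — compute $\beta$ for $\ord_Q$ to get the upper bound on the threshold, exhibit an explicit one-parameter degeneration to $(X_h,\tfrac{1}{3}\cdot 2Q_\infty)$ inside $\bP(1^4,2)$, use openness and interpolation — is indeed the route the paper takes. However there are two genuine problems.

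First, the arithmetic in Step~1 is wrong in a way that matters. You write $-K_{\bP^3}=4Q$, but since $Q\in|\cO_{\bP^3}(2)|$ and $-K_{\bP^3}\cong\cO(4)$ we have $-K_{\bP^3}\sim 2Q$, not $4Q$. Carried through, your expression $-K_{\bP^3}-cS\sim(4-2c)Q$ and the volume $\vol(\cO(4)-2tQ)=(4-2t)^3$ are both off (the correct volume is $(4-4t)^3$), and your stated value $S_{(\bP^3,cS)}(Q)=\frac{4-2c}{16}$ does not yield $c\le\frac{1}{3}$ from $A\ge S$ (it yields $c\le\frac{2}{5}$). The correct values are $A_{(\bP^3,cS)}(\ord_Q)=1-2c$ and $S_{(\bP^3,cS)}(\ord_Q)=\frac{1-c}{2}$, which do give $c\le\frac{1}{3}$.

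Second, and more seriously, the application of Theorem~\ref{thm:induce-3-fold} with ``$C=\emptyset$'' in Step~3 is not valid. Definition~\ref{def:cone} requires $C$ to be an effective Cartier divisor in $|-2K_V|$, and the double cover $S$ of $V=Q$ branched along such a $C$ is a genuine (generically K3) surface — it is not, and cannot degenerate formally within the definition to, the nonreduced divisor $2Q_\infty$. If you instead apply the theorem at $c'=0$ with an arbitrary $C\in|-2K_V|$, it would assert that $(X_h,\tfrac{1}{3}S)$ is K-polystable for \emph{every} such $S$; but this contradicts Theorem~\ref{thm:firstwall}, where the entire exceptional divisor $E_1^+$ collapses under $\phi_1^+$ to the single point $[(X_h,2Q_\infty)]$, so a general $(X_h,\tfrac{1}{3}S)$ is strictly K-semistable at $c=\frac{1}{3}$. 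The coefficient $\frac{2}{3}$ on $Q_\infty$ is also not the one produced by the plain \cite[Prop.~5.3]{LX16} cone construction (which would give $\frac{3}{4}$). The paper avoids all of this by invoking \cite{LL16}: since $Q\cong\bP^1\times\bP^1$ admits a K\"ahler--Einstein metric, the cone $(X_h,\tfrac{1}{3}\cdot 2Q_\infty)$ admits a conical K\"ahler--Einstein metric and is therefore K-polystable. You should replace your Theorem~\ref{thm:induce-3-fold} invocation with that argument; the rest of your outline (explicit $1$-PS in $\bP(1^4,2)$, openness of K-semistability, interpolation plus GIT polystability of $2Q$ for $c<\frac{1}{3}$, and \cite[Prop.~3.18]{ADL19} to rule out K-polystability at $c=\frac{1}{3}$) matches the paper.
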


\begin{proof}
We first show that if $(\bP^3,cS)$ is K-semistable, then $c\leq \frac{1}{3}$. Computation shows that
\[
A_{(\bP^3, cS)}(\ord_Q)=1-2c, \quad S_{(\bP^3, cS)}(\ord_Q)=(4-4c)\int_{0}^{1/2} (1-2t)^3 dt=\frac{1-c}{2}.
\]
By Theorem \ref{thm:valuative}, we have 
$A_{(\bP^3, cS)}(\ord_Q)\geq S_{(\bP^3, cS)}(\ord_Q)$ which implies that $c\leq \frac{1}{3}$.

Next, we show that $(\bP^3, \frac{1}{3}S)$ special degenerates to $(X_h, \frac{1}{3}S_0)$. We may embed $(\bP^3, S)$ into $\bP(1^4, 2)$ with image $(V(z-q), V(z-q, z^2))$ where $q=x_0x_1+x_2^2+x_3^2$. Consider a $1$-PS $\tilde{\sigma}$ in $\SL(4,\bC)\times\bG_m$ of weight $(0,0,0,0,-1)$ acting diagonally on $\bP(1^4,2)$. Then $\tilde{\sigma}$ specially degenerates $(\bP^3, \frac{1}{3} S)$ to $(V(q), \frac{1}{3}V(q,z^2))$ which is isomorphic to $(X_h, \frac{1}{3}S_0)$. Since $Q\cong \bP^1\times\bP^1$ is K-polystable and $X_h\cong C_p(Q, -K_Q)$, by \cite[Proposition 2.11(2)]{LZ19} we know that $(X_h, \frac{1}{3}S_0)$ is K-polystable. This implies that $(\bP^3, \frac{1}{3}S)$ is K-semistable by openness of K-semistability \cite{BLX19, Xu19}. Since $S=2Q$ is GIT polystable, Theorem \ref{thm:K=GIT-smallc} implies that $(\bP^3, \epsilon S)$ is K-polystable for $0<\epsilon \ll 1$. Hence interpolation for K-stability \cite[Proposition 2.13]{ADL19} implies that $(\bP^3, cS)$ is K-polystable for any $c\in (0, \frac{1}{3})$. The proof is finished.
\end{proof}

\begin{thm}\label{thm:Kpsreplace-W}
Let $[S]\in W_8\setminus \{[2Q]\}$ be a GIT polystable point with $S=(q^2+g=0)$ as in Theorem \ref{thm:shah}.  Let $V=(q=0)$ and $C=V(q, g)$ be a quadric surface and a $(2,4)$-complete intersection curve in $\bP^3$, respectively. Denote by $c:=\kst(\bP^3, S)$. Then we have the following. 
\begin{enumerate}
    \item The log Fano pair $(V, \frac{3c-1}{4}C)$ is K-semistable but not K-polystable. 
    \item There exists a $1$-PS $\sigma$ in $\SL(4,\bC)$ that induces a K-polystable degeneration $(V_0, \frac{3c-1}{4}C_0)$ of $(V, \frac{3c-1}{4}C)$ in $\bP^3$.
    \item There exists a $1$-PS $\tilde{\sigma}$ in $\SL(4,\bC)\times \bG_m$ acting diagonally on $\bP(1^4,2)$ such that the $\SL(4,\bC)$-component of $\tilde{\sigma}$ is a positive rescaling of $\sigma$, and $\tilde{\sigma}$ induces a K-polystable degeneration $\sC(V_0, \frac{3c-1}{4}C_0)$ of $(\bP^3, cS)$ in $\bP(1^4,2)$. 
\end{enumerate}
Moreover, the K-semistable thresholds and the destabilizing $1$-PS' in $\SL(4,\bC)\times\bG_m$ are given in Table \ref{table:singularities}, where $0<\alpha\ll 1$ and $\kst(W_i^\circ):=\kst(\bP^3,S)$ for any $[S]\in W_i^\circ$.
\end{thm}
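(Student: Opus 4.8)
The plan is to prove Theorem~\ref{thm:Kpsreplace-W} by combining the cone construction of Section~\ref{sec:cone} with the variation-of-GIT analysis of \cite{ADL20} for $(\bP^1\times\bP^1, tC)$ and $(\bP(1,1,2), tC)$. The key reduction is that a quartic surface $[S]\in W_8\setminus\{[2Q]\}$ with equation $q^2+g=0$ naturally embeds into $\bP(1^4,2)_{[x_0,\dots,x_3,z]}$ as the pair $\bigl(V(z-q),\,V(z-q,\,z^2+g)\bigr)$, and any $1$-PS $\tilde{\sigma}$ in $\SL(4,\bC)\times\bG_m$ acting diagonally (with the $\bG_m$-factor scaling $z$) degenerates this to a pair of the form $\sC(V_0,\tfrac{3c-1}{4}C_0)$, where $V_0$ is a degeneration of the quadric $V=(q=0)$ in $\bP^3$ and $C_0\subset V_0$ is a degeneration of the complete intersection curve $C=V(q,g)$. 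Thus part~(3) follows from part~(2) together with Theorem~\ref{thm:induce-3-fold} (the cone construction preserves K-polystability, with $c$ and $\tfrac{3c-1}{4}$ related exactly as in Definition~\ref{def:cone}) once one checks that the prescribed diagonal $1$-PS indeed realizes the fiberwise cone of the degeneration $(V,\tfrac{3c-1}{4}C)\rightsquigarrow(V_0,\tfrac{3c-1}{4}C_0)$ inside $\bP(1^4,2)$; this is a direct weight computation comparing the action on $q$, on $g$, and on $z$.

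For part~(1), I would argue as follows. Since $[S]\in W_8$ is GIT polystable, $(\bP^3,\epsilon S)\cong(\bP^3,\epsilon S)^{\rm GIT\text{-}ps}$ is K-polystable for $0<\epsilon\ll1$ by Theorem~\ref{thm:K=GIT-smallc}, so $c=\kst(\bP^3,S)\in(\tfrac13,1)$ is well-defined and rational (the lower bound $c>\tfrac13$ comes from the fact that $S$ is a limit of slc surfaces and is strictly better than the generic point of $W_8$, which degenerates to $2Q$ via Shah's normal form; alternatively one reads it off Table~\ref{table:singularities}). By the computation in the proof of Theorem~\ref{thm:H_h-embed}, the pair $(\bP^3,cS)$ with $S$ having the $q^2+g$ form admits a $\bG_m$-degeneration (scaling $z$) to the cone pair $\sC(V,\tfrac{3c-1}{4}C)$, which is therefore K-semistable by openness of K-semistability \cite{BLX19,Xu19}. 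Then Theorem~\ref{thm:induce-3-fold} forces $(V,\tfrac{3c-1}{4}C)$ to be K-semistable. To see it is not K-polystable: if it were, then by Theorem~\ref{thm:induce-3-fold} $\sC(V,\tfrac{3c-1}{4}C)$ would be K-polystable, and since it is the $\bG_m$-limit of $(\bP^3,cS)$ that would give two non-isomorphic K-polystable pairs that are S-equivalent — unless $(\bP^3,cS)\cong\sC(V,\tfrac{3c-1}{4}C)$, i.e. $\bP^3\cong X_h$ or $\bP(1,1,2,4)$, which is false. Hence $(V,\tfrac{3c-1}{4}C)$ is strictly K-semistable.

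For part~(2), I would invoke the main results of \cite{ADL20}: the K-moduli space $\oK_t$ of $(\bP^1\times\bP^1,tC)$ (and its $\bP(1,1,2)$ boundary) is identified with the VGIT moduli space of $(4,4)$-curves at slope $t=\tfrac{3c}{2c+2}$ (equivalently our parameter $\tfrac{3c-1}{4}$ after the cone normalization), and the K-polystable replacement of any such strictly semistable $(V,tC)$ is given by an explicit $1$-PS coming from that VGIT chamber structure; this is exactly the source of the destabilizing $1$-PS in Table~\ref{table:singularities}. Concretely, for each stratum $W_i^\circ$ one takes the normal form of Theorem~\ref{thm:shah}, writes $C=V(q,g)$ in those coordinates, reads off which VGIT wall it sits on from the valuations of $q$ and $g$ along the relevant coordinate flags, and takes $\sigma$ to be the corresponding diagonal $1$-PS in $\SL(4,\bC)$; the K-polystability of the limit $(V_0,\tfrac{3c-1}{4}C_0)$ is then exactly \cite[Theorem 4.8 (and the VGIT wall-crossing analysis)]{ADL20}. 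The value $c=\kst(\bP^3,S)$ is pinned down by the relation $t=\tfrac{3c-1}{4}$ together with the known VGIT wall values of \cite{LO18a,ADL20}; comparing these gives the list of $\kst(W_i^\circ)$ in the table, and one checks case-by-case (using Theorem~\ref{thm:valuative} and the explicit $\sigma$) that at $c=\kst(W_i^\circ)$ the pair $(\bP^3,cS)$ is strictly K-semistable with $\tilde\sigma$ the only (up to conjugacy and the $\bG_m$-rescaling of $z$) destabilizing $1$-PS. The main obstacle I expect is the bookkeeping in part~(2)/Table~\ref{table:singularities}: matching each of the eight strata $W_i^\circ$ with the correct VGIT chamber of \cite{ADL20} and verifying that the cone $1$-PS $\tilde\sigma$ — in particular the choice of its $\bG_m$-weight on $z$ and the $0<\alpha\ll1$ perturbations — genuinely computes the K-semistable threshold and produces a K-polystable (not merely K-semistable) central fiber; this requires carefully tracking how the normalization $z=q$ interacts with the degeneration of $q$ under $\sigma$, and ruling out competing destabilizations via the equivariant criterion Theorem~\ref{thm:kss-plt} for the relevant symmetry group of each normal form.
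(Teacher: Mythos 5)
Your overall pipeline — the normal form of Theorem~\ref{thm:shah} embedded into $\bP(1^4,2)$, the VGIT results of \cite{LO18a}, the K-moduli/VGIT dictionary of \cite{ADL20} for $(V,tC)$, and the cone construction of Theorem~\ref{thm:induce-3-fold} — is the same as the paper's, and parts~(2)--(3) of your outline are on target. But the argument you give for part~(1) is logically backwards. You claim the cone $\sC(V,\tfrac{3c-1}{4}C)$ is K-semistable because it is the $\bG_m$-limit of the K-semistable $(\bP^3,cS)$, citing openness of K-semistability. Openness of K-semistability \cite{BLX19,Xu19} says that if the \emph{central} fiber of a $\bQ$-Gorenstein family is K-semistable, then nearby fibers are K-semistable — not the reverse. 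A K-semistable (even K-polystable) Fano can specially degenerate to a K-unstable one (e.g.\ $\bP^2\rightsquigarrow \bP(1,1,4)$). So K-semistability of $(\bP^3,cS)$ at $c=\kst(\bP^3,S)$ alone tells you nothing about its cone degeneration. Similarly, your "two non-isomorphic K-polystable pairs that are S-equivalent" argument for non-K-polystability of $(V,\tfrac{3c-1}{4}C)$ is a non sequitur: $(\bP^3,cS)$ is strictly K-semistable at $c=\kst$, so it is entirely consistent for it to have a non-isomorphic K-polystable degeneration. No contradiction arises, and you cannot conclude that $(V,\tfrac{3c-1}{4}C)$ fails K-polystability this way.

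The correct order of deductions, which is the one the paper takes, runs the other way. Part~(1) should be read off directly from \cite[Theorem 1.1(2)]{ADL20} applied at the VGIT slope $t_{i-1}$ from \cite[Theorem 6.2, Proposition 6.6, Table 2]{LO18a}: since $(V,C)$ is GIT semistable but not polystable at slope $t_{i-1}$, the pair $(V,\tfrac{3c-1}{4}C)=(V,\tfrac{2t_{i-1}}{3-2t_{i-1}}C)$ is K-semistable but not K-polystable, with K-polystable replacement $(V_0,\tfrac{3c-1}{4}C_0)$ given by $\sigma$. Coning up via Theorem~\ref{thm:induce-3-fold} then gives that $(X_0,cS_0):=\sC(V_0,\tfrac{3c-1}{4}C_0)$ is K-polystable, and one then builds $\tilde\sigma$ — choosing the $\bG_m$-weight so that $z^2$ and $g_0$ have the same weight, and the weight on $z$ is strictly smaller than on $q_0$ — so that $\tilde\sigma$ degenerates $(\bP^3,cS)$ \emph{directly} to $(X_0,cS_0)$, not to the intermediate cone $\sC(V,\tfrac{3c-1}{4}C)$. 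With the K-polystable $(X_0,cS_0)$ sitting in the central fiber, openness of K-semistability is now being used in the correct direction and yields K-semistability of $(\bP^3,cS)$; and since $X_0\not\cong\bP^3$, the pair $(\bP^3,cS)$ is not K-polystable, which forces $c=\kst(\bP^3,S)=\tfrac{1+2t_{i-1}}{3-2t_{i-1}}$, matching Table~\ref{table:singularities}.
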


\begin{table}[htbp!]\renewcommand{\arraystretch}{1.5}
\caption{K-polystable replacements of $W_i^{\circ}$}\label{table:singularities}
\begin{tabular}{|c|c|l|l|}
\hline 
$i$ & $\kst(W_i^\circ)$   & Sing. of $S$ in $W_i^{\circ}$ & Destabilizing $1$-PS\\ \hline \hline 
0 & $\frac{1}{3}$ & double quadric & $(0,0,0,0,-1)$\\
1 & $\frac{1}{2}$  & two quadrics tangent along a conic &   $(1,\alpha,2\alpha-1,-3\alpha,-6\alpha)$  \\ 
2 & $\frac{3}{5}$  & cuspidal along a conic  &  $(1,\alpha,2\alpha-1,-3\alpha,-4\alpha)$\\ 
3 & $\frac{2}{3}$  & $J_{4,\infty}$     &  $(7,3,-1,-9,-10)$\\ 
4 & $\frac{5}{7}$  & $J_{3,0}$, $J_{3,r}$, or $J_{3,\infty}$       &  $(3,1,-1,-3,-3)$ \\ 
6 & $\frac{3}{4}$ & $E_{14}$  &     $(17,5,-7,-15,-14)$\\ 
7 & $\frac{7}{9}$  & $E_{13}$      & $(11,3,-5,-9,-8) $  \\ 
8 & $\frac{9}{11}$ & $E_{12}$      &  $(8,2,-4,-6,-5)$   \\ \hline
\end{tabular}
\end{table}

\begin{proof}

%\textcolor{blue}{YL: will add more details. Follow \cite[6.7, 6.8, Table 2]{LO18a}.}

Let $S=(q^2+2g=0)$ be a GIT polystable quartic surface in $W_i^\circ$ for $i\in \{1,2,3,4,6,7,8\}$. As mentioned earlier, the pair $(\bP^3, S)$ admits an embedding into $\bP(1^4,2)$ given by the equations $(V(z-q), V(z-q, z^2+g))$. The pair $(V,C)=(V(q),V(q,g))\subset\bP^3$ provides a threefold pair $(X,S'):=\sC(V,C)$. It is clear that $(X,S')$ admits a embedding into $\bP(1^4,2)$ given by the equations $(V(q), V(q, z^2+g))$. According to \cite[Theorem 6.2, Proposition 6.6, and Table 2]{LO18a}, for each $i\in \{1,2,3,4,6,7,8\}$ and $[S]\in W_i^\circ$, there exists a 1-PS $\sigma$ of $\SL(4,\bC)$ acting diagonally on $\bP^3$ and a rational number $t_{i-1}$ depending on $i$ such that the following properties hold in the context of VGIT from \cite[Section 6]{LO18a}. 
\begin{itemize}
    \item $(V,C)$ is GIT polystable at slope $(t_{i-1}-\epsilon)$ for $0< \epsilon\ll 1$, and is GIT semistable but not polystable at slope $t_{i-1}$;
    \item $\sigma$ degenerates $(V,C)$ to a GIT polystable pair $(V_0, C_0)$ at slope $t_{i-1}$.
\end{itemize}
Here the correspondence between $i$ and $t_{i-1}$ are given in the following table. Indeed, the $\sigma$ from \cite[Table 2]{LO18a} matches the $\SL(4,\bC)$-component of $\tsigma$ from Table \ref{table:singularities} up to positive rescaling.

\begin{table}[htbp!]\renewcommand{\arraystretch}{1.5}
\caption{VGIT slopes and K-semistable thresholds}\label{table:VGIT-slope}
\begin{tabular}{|c||c|c|c|c|c|c|c|}
\hline 
$i$ & $1$ & $2$ & $3$ & $4$ & $6$ & $7$ & $8$\\ \hline 
%$k$ & $0$ & $1$ & $2$ & $3,4$ & $5$ & $6$ & $7$ \\\hline
VGIT slope: $t_{i-1}$ & $\frac{1}{6}$ & $\frac{1}{4}$ & $\frac{3}{10}$ & $\frac{1}{3}$ & $\frac{5}{14}$ & $\frac{3}{8}$ & $\frac{2}{5}$ \\  \hline
$\kst(W_i^\circ)$: $\frac{1+2t_{i-1}}{3-2t_{i-1}}$ & $\frac{1}{2}$ & $\frac{3}{5}$ & $\frac{2}{3}$ & $\frac{5}{7}$ & $\frac{3}{4}$ & $\frac{7}{9}$ & $\frac{9}{11}$ \\\hline
\end{tabular}
\end{table}

By \cite[Theorem 1.1(2)]{ADL20}, we know that $(V,\frac{2t_{i-1}}{3-2t_{i-1}}C)$ is K-semistable but not K-polystable, whose K-polystable degeneration is $(V_0, \frac{2t_{i-1}}{3-2t_{i-1}}C_0)$. Let $c:=\frac{1+2t_{i-1}}{3-2t_{i-1}}$ which depends on the choice of $i$. Hence Theorem \ref{thm:induce-3-fold} implies that $(X_0, cS_0):=\sC(V_0, \frac{2t_{i-1}}{3-2t_{i-1}}C_0)$ is K-polystable. Suppose $(V_0,C_0)=(V(q_0), V(q_0,g_0))\subset \bP^3$. Then clearly $(X_0, S_0)= (V(q_0), V(q_0, z^2+g_0))\subset \bP(1^4,2)$. We choose the $\bG_m$-component of $\tsigma$ so that the weight of $z^2$ is the same as the weight of $g_0$, while the weight of $z$ is smaller than the weight of $q_0$. It is straightforward to check that the column of $\tsigma$ from Table \ref{table:singularities} satisfies this property. Thus we know that $\tsigma$ degenerates both $(\bP^3, cS)$ and $(X,cS')$ to the K-polystable pair $(X_0, cS_0)$. By openness of K-semistability \cite{BLX19, Xu19}, this implies that $(\bP^3, cS)$ is K-semistable but not K-polystable, as $X_0$ is either $X_h$ or $\bP(1,1,2,4)$ so $\bP^3\not\cong X_0$. Thus $c=\kst(\bP^3, S)=\frac{1+2t_{i-1}}{3-2t_{i-1}}$ is listed in the last row of Table \ref{table:VGIT-slope} for every quartic surface $[S]\in W_i^\circ$. The proof is finished.
\end{proof}

\subsection{K-moduli wall crossings}\label{sec:wallcrossings}

Recall from Theorem \ref{thm:ADL19-wall} that K-moduli spaces $\ofM_c^{\K}$ display wall crossing phenomena as we vary $c$. The goal of this subsection is to completely describe all K-moduli walls $\{c_i\}_{1\leq i\leq k-1}$ and all wall crossing birational morphisms $\phi_i^{\pm}:\ofM_{c_i\pm \epsilon}^{\K}\to \ofM_{c_i}^{\K}$. We first give a definition of the exceptional loci of $\phi_{i}^{\pm}$.

\begin{defn}
For each K-moduli wall $c_i\in (0,1)$, the \emph{exceptional locus} $E_i^{\pm}$ is defined as
\[
E_i^{\pm}:=\{[(X,S)]\in \ofM_{c_i\pm \epsilon}^{\K}\mid (X, S) \textrm{ is $(c_i\pm \epsilon)$-K-polystable but not $c_i$-K-polystable}\}.
\]
By \cite[Theorem 1.2]{ADL19}, we know that $E_i^{\pm}$ is a Zariski closed subset of $\ofM_{c_i\pm \epsilon}^{\K}$.
\end{defn}

We start from describing the first wall crossing.

\begin{thm}[First wall crossing]\label{thm:firstwall}
The first wall of K-moduli stacks $\osM_c^{\K}$ is $c_1=\frac{1}{3}$.  The first wall crossing has the following description.
\begin{enumerate}
    \item For any $0<c<\frac{1}{3}$, we have $\osM_c^{\K}\cong \osM^{\GIT}$ and $\ofM_c^{\K}\cong \ofM^{\GIT}$.
    \item The wall crossing morphism $\phi_1^-:\ofM_{\frac{1}{3}-\epsilon}^{\K}\xrightarrow{\cong} \ofM_{\frac{1}{3}}^{\K}$ is an isomorphism which only replaces $(\bP^3, 2Q)$ by $(X_h, 2Q_\infty)$. 
    \item The wall crossing morphism $\rho: \ofM_{\frac{1}{3}+\epsilon}^{\K}\to \ofM_{\frac{1}{3}-\epsilon}^{\K}=\ofM^{\GIT}$ is a divisorial contraction with image $[2Q]$ such that $\phi_1^+=\phi_1^- \circ \rho: \ofM_{\frac{1}{3}+\epsilon}^{\K}\to \ofM_{\frac{1}{3}}^{\K}$. The exceptional locus $E_1^+$ of $\rho$ parametrizes $[(X_h, S)]$ where $S$ is a double cover of $\bP^1\times \bP^1$ branched along a GIT polystable $(4,4)$-curve $D$. 
    %at the point $[2Q]$ such that $\phi_1^+=\phi_1^- \circ \rho: \ofM_{\frac{1}{3}+\epsilon}^{\K}\to \ofM_{\frac{1}{3}}^{\K}$. Moreover, the exceptional divisor $E_1^+$ of $\rho$ parametrizes $[(X_h, S)]$ where $S$ is a double cover of $\bP^1\times \bP^1$ branched along a GIT polystable $(4,4)$-curve $D$.
\end{enumerate}
\end{thm}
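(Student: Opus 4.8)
\textbf{Proof proposal for Theorem \ref{thm:firstwall}.}

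The plan is to assemble this from results already in place. Part (1) is essentially a restatement: by Theorem \ref{thm:K=GIT-smallc} we have $\osM_\epsilon^{\K}\cong\osM^{\GIT}$, and to see that this persists up to $c=\frac13$ I would invoke Theorem \ref{thm:ADL19-wall} together with the observation that the first wall cannot occur before $\frac13$. Indeed, by Theorem \ref{thm:shah} and the stratification \eqref{eq:stratification}, the only GIT polystable quartic surfaces whose K-semistable threshold could produce a wall are those in $W_8\sqcup\{[T]\}$; by Theorem \ref{thm:tangent-kst} we have $\kst(\bP^3,T)=\frac9{13}>\frac13$, and by Proposition \ref{prop:2Q-replace} and Theorem \ref{thm:Kpsreplace-W} (Table \ref{table:VGIT-slope}) the smallest K-semistable threshold among points of $W_8$ is $\kst(\bP^3,2Q)=\frac13$. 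Hence no K-polystable replacement happens for $c<\frac13$, so $\osM_c^{\K}\cong\osM^{\GIT}$ and $\ofM_c^{\K}\cong\ofM^{\GIT}$ on that interval. The first wall is therefore at $c_1=\frac13$ because $(\bP^3,\frac13\cdot 2Q)$ is K-semistable but not K-polystable by Proposition \ref{prop:2Q-replace}.

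For part (2), the morphism $\phi_1^-\colon\ofM_{\frac13-\epsilon}^{\K}\to\ofM_{\frac13}^{\K}$ exists by Theorem \ref{thm:ADL19-wall}. I would argue it is an isomorphism on the open locus $\fM^{\slc}$ (Proposition \ref{prop:k-moduli-irred}) and that the only point of $\ofM^{\GIT}\setminus\fM^{\slc}$ with threshold exactly $\frac13$ is $[2Q]$ — again via Table \ref{table:VGIT-slope} and Theorem \ref{thm:tangent-kst}, which show every other non-slc point has threshold $>\frac13$. So $\phi_1^-$ is an isomorphism away from $[2Q]$, and at $[2Q]$ it replaces $(\bP^3,2Q)$ by its $\frac13$-K-polystable degeneration, which is $(X_h,2Q_\infty)$ by Proposition \ref{prop:2Q-replace}. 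To conclude $\phi_1^-$ is an isomorphism (not just bijective) I would use that $\ocM_{3,64}^{\rm sm}$ is smooth near $[X_h]$ (Corollary \ref{cor:K-moduli-smooth}), hence $\ofM_{\frac13}^{\K}$ is normal near $[(X_h,2Q_\infty)]$ by Lemma \ref{lem:forgetful-smooth}, and then Zariski's main theorem forces $\phi_1^-$ to be an isomorphism, exactly as in the proof of Theorem \ref{thm:T_0-replace}(1).

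For part (3), the key input is Theorem \ref{thm:H_h-embed} (and Definition \ref{defn:H_h,c}): for $c>\frac13$ the cone construction $\sC(\cdot,\cdot)$ gives a closed embedding $\oK_{\frac{3c-1}{4}}\hookrightarrow\ofM_c^{\K}$ with image $H_{h,c}$ parametrizing pairs with total space $X_h$ or $\bP(1,1,2,4)$. Taking $c=\frac13+\epsilon$, so $\frac{3c-1}{4}=\epsilon'\ll1$, and recalling from \cite{ADL20} that $\oK_{\epsilon'}\cong\ofM_{(4,4)}^{\GIT}$ (the GIT quotient of $(4,4)$-curves, which at small slope matches the naive GIT quotient), I would identify the exceptional divisor $E_1^+$ of $\phi_1^+$ with $H_{h,\frac13+\epsilon}\cong\ofM_{(4,4)}^{\GIT}$, and by Proposition \ref{prop:cone-vertex-unstable} every pair in it has $S$ not through the cone vertex, hence is a double cover of $\bP^1\times\bP^1$ branched over a GIT polystable $(4,4)$-curve (the $\bP(1,1,2)$ locus being in the boundary). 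To see $E_1^+$ is a divisor contracted to the point $[2Q]=[(X_h,2Q_\infty)]$ and that $\rho:=\phi_1^-\circ\phi_1^+$ is a weighted blow-up of Kirwan type, I would run the Paul–Tian / local VGIT analysis of Theorem \ref{thm:ADL19-wall} and \cite[(1.2)]{AFS17} at $[2Q]$: the relevant $1$-PS is the diagonal weight $(0,0,0,0,-1)$ on $\bP(1^4,2)$ from Table \ref{table:singularities}, whose fixed-locus data identifies the positive side of the VGIT wall with $\bP^{34}(\text{weighted})\sslash\mathrm{SL}$, the GIT quotient of $(4,4)$-curves, realized as a weighted-projective-bundle fiber over a point — precisely the shape of a Kirwan-type weighted blow-up. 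The dimension count $\dim\ofM_{(4,4)}^{\GIT}=18=\dim\ofM^{\GIT}-1$ confirms $E_1^+$ is a divisor.

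\textbf{Main obstacle.} The genuinely delicate step is part (3): verifying that $\rho$ is a \emph{weighted blow-up of Kirwan type} rather than merely a birational contraction of a divisor. This requires making the local VGIT presentation at $[2Q]$ completely explicit — computing the weights of the $1$-PS $(0,0,0,0,-1)$ on the tangent space (equivalently, on the space of quartics modulo the relevant group action), checking that the negative-weight part corresponds exactly to the automorphisms/deformations being quotiented on the GIT side, and matching the positive-weight part with $H^0(\bP^1\times\bP^1,\cO(4,4))$ up to the $\mathrm{SL}_2\times\mathrm{SL}_2$-action. The interpolation/openness arguments (Theorem \ref{thm:induce-3-fold}, \cite{BLX19,Xu19}) handle the semistability bookkeeping, but the precise local model is where the real work lies.
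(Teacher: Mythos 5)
Parts (1) and (2) of your proposal track the paper's proof closely: part (1) reduces to showing $\kst(\bP^3,S)\geq\frac{1}{3}$ for every GIT semistable $S$ via the stratification and Table \ref{table:VGIT-slope} (the paper phrases this through the GIT polystable degeneration $S_0$ and the monotonicity $\kst(\bP^3,S_0)\leq\kst(\bP^3,S)$, but the content is the same), and part (2) combines irreducibility, Proposition \ref{prop:2Q-replace}, smoothness of $\osM_{1/3}^{\K}$ via Corollary \ref{cor:K-moduli-smooth} and Lemma \ref{lem:forgetful-smooth}, and Zariski's main theorem — exactly as you outline.

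The genuine divergence is in part (3), and here what you flag as the ``main obstacle'' is precisely the step the paper handles by a different, more economical tool. You propose to identify $E_1^+$ with $H_{h,\frac{1}{3}+\epsilon}$ by making the local VGIT presentation at $[2Q]$ explicit and matching the positive-weight part of the $(0,0,0,0,-1)$ action with $H^0(\bP^1\times\bP^1,\cO(4,4))$. That would work, but it is circular in spirit: to set up the local VGIT slice you first need to know which threefolds appear on the plus side of the wall. The paper instead proves the inclusion $E_1^+\subset H_{h,\frac{1}{3}+\epsilon}$ purely by deformation theory: if $[(X,S)]\in E_1^+$ then $X$ admits a special degeneration to $X_h$, so by Lemma \ref{lem:classify-deform-cone} (small $\bQ$-Gorenstein deformations of $X_h$ in $\ocM_{3,64}^{\rm sm}$ are isomorphic to $\bP^3$ or $X_h$) one has $X\cong\bP^3$ or $X_h$; and $X\cong\bP^3$ is excluded because then $(X,\frac{1}{3}S)$ would already be $\frac{1}{3}$-K-polystable by interpolation, contradicting $[(X,S)]\in E_1^+$. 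The reverse inclusion $H_{h,\frac{1}{3}+\epsilon}\subset E_1^+$ comes from the $\bG_m$-degeneration of any $(X_h,S)$ to $(X_h,2Q_\infty)$, as you note. The ``Kirwan-type'' assertion in the statement is then supplied by the local VGIT presentations that already come with Theorem \ref{thm:ADL19-wall}, rather than by a fresh explicit weight computation. In short: the paper does not run the explicit local VGIT analysis you anticipate; the unobstructedness results of Section \ref{sec:hyperelliptic} (Lemma \ref{lem:Xh-unobstructed}, Lemma \ref{lem:classify-deform-cone}) do the work, and you should invoke them in place of your proposed tangent-space computation.
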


%\begin{remark}
%By Remark \ref{rmk:weightedblowup} (a combination of \cite[Section 5.2]{LO18b} and the results of Section \ref{sec:proofs}, where we identify our K-moduli spaces with the spaces defined in Laza-O'Grady), we will see that the morphism $\rho$ in Theorem \ref{thm:firstwall}(3) is a weighted blowup of Kirwan type at the point $[2Q]$.\footnote{check}
%\end{remark}

\begin{proof}
We first prove part (1). It follows from Theorem \ref{thm:K=GIT-smallc} that we have $\osM_\epsilon^{\K}\cong \osM^{\GIT}$ for $0<\epsilon\ll 1$. By Proposition \ref{prop:k-moduli-irred}, the K-moduli stack $\osM_{c}^{\K}$ is irreducible. Hence it suffices to show that any GIT semistable quartic surface $S\subset\bP^3$ satisfies $\kst(\bP^3, S)\geq \frac{1}{3}$. Let $S_0$ be the GIT polystable degeneration of $S$. Then by openness of K-semistability \cite{BLX19, Xu19}, we know that $\kst(\bP^3, S_0)\leq \kst(\bP^3, S)$. If $S_0$ is slc, then Lemma \ref{lem:ADE-K} implies that $\kst(\bP^3, S_0)=1$. If $S_0$ is not slc, then either $[S_0]=[T]$ or $[S_0]\in W_8$ in $\ofM^{\GIT}$. Thus we have $\kst(\bP^3, S_0)\geq \frac{1}{3}$ by Theorems \ref{thm:tangent-kst} and  \ref{thm:Kpsreplace-W}. This finishes the proof of part (1).

For part (2), Theorem \ref{thm:ADL19-wall} implies that  $\phi_1^{\pm}$ are projective and birational, hence surjective. By Theorems \ref{thm:tangent-kst} and \ref{thm:Kpsreplace-W}, we know that the only GIT polystable quartic surface $S$ with $\kst(\bP^3, S)=\frac{1}{3}$ is the double quadric surface $[S]=[2Q]$. Hence $\phi_1^{-}$ only replaces $(\bP^3, 2Q)$ by $(X_h, 2Q_\infty)$ by Proposition \ref{prop:2Q-replace}. Since $\ofM_c^{\K}$ is irreducible, by \cite[Theorem 1.2]{ADL19} the morphism $\phi_1^-$ is  projective and bijective. Since any closed point $[(X,S)]$ of $\osM_\frac{1}{3}^{\K}$ satisfies that $X\cong \bP^3$ or $X_h$, Corollary \ref{cor:K-moduli-smooth} implies that $\osM_{\frac{1}{3}}^{\K}$ is smooth. Hence $\ofM_{\frac{1}{3}}^{\K}$ is normal, and $\phi_1^-$ is an isomorphism by Zariski's main theorem.

For part (3), we use the deformation theory of $X_h$.
%local VGIT presentation\footnote{YL: Here more details are needed.} of $\phi_1^\pm$ to obtain that $\phi_1^+$ is a weighted blow-up at $[(X_h, 2Q_\infty)]$ with a prime exceptional divisor $E_1^+$. 
By Theorem \ref{thm:H_h-embed} and Definition \ref{defn:H_h,c}, there is a closed embedding $\oK_{\epsilon/4} \hookrightarrow \ofM_{\frac{1}{3}+\epsilon}^{\K}$ whose image we denote by $H_{h,\frac{1}{3}+\epsilon}$. Note that  $H_{h,\frac{1}{3}+\epsilon}$ is a divisor since the locus of hyperelliptic K3s forms a divisor (see e.g. \cite{LO18a}).   By \cite[Theorem 1.1(1)]{ADL20}, we know that  $H_{h,\frac{1}{3}+\epsilon}$ parametrizes $(X_h, S)$ where $S$ is a double cover of $\bP^1\times \bP^1$ branched along a GIT polystable $(4,4)$-curve $D$. Since $(X_h,S)$ admits a special degeneration to $(X_h, 2Q_\infty)$, we know that $\phi_1^+(H_{h,\frac{1}{3}+\epsilon})=[(X_h, 2Q_\infty)]$. Thus $H_{h,\frac{1}{3}+\epsilon}$ is contained in $E_1^+=(\phi_1^+)^{-1}([(X_h, 2Q_\infty)])$.
On the other hand, if $(X,S)\in E_1^+$ i.e. $\phi_1^+([(X,S)])=[(X_h, 2Q_\infty)]$, then $X$ admits a special degeneration to $X_h$. Thus Lemma \ref{lem:classify-deform-cone} implies that $X\cong \bP^3$ or $X_h$. If $X\cong \bP^3$, then $(X,\frac{1}{3} S)$ is K-polystable by interpolation, a contradiction. Thus we have $X\cong X_h$ and hence $[(X,S)]\in H_{h,\frac{1}{3}+\epsilon}$ by Theorem \ref{thm:H_h-embed}. Thus we have $H_{h,\frac{1}{3}+\epsilon}=E_1^+$.
%\textcolor{blue}{YL: To prove our main result, it is possible that we don't need the full stack version here, although it looks better.}
\end{proof}

The following result shows that all but one K-moduli wall crossings of $\ofM_c^{\K}$ for $c\in (\frac{1}{3},1)$ are directly induced by those of $H_{h,c}$, i.e. the divisor  parametrizing hyperelliptic K3 surfaces. 

\begin{prop}\label{prop:no-other-3-fold}
    Let $c\in (\frac{1}{3}, 1)$ be a rational number. Then  any point in $\ofM_{c}^{\K}\setminus H_{h,c}$ is either $[(\bP^3, S)]$ where $S$ is a GIT polystable quartic surface, or $[(X_u, S')]$ where $S'$ is an anti-canonical divisor on $X_u$. %In particular, $\osM_c^{\K}$ is smooth, and $\ofM_c^{\K}$ is normal with $H_{h,c}$ a prime divisor.
\end{prop}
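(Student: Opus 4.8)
The plan is to reduce the statement to a classification of the underlying $\bQ$-Fano threefolds and then run an induction over the K-moduli walls. By Theorem~\ref{thm:H_h-embed} (cf.\ Definition~\ref{defn:H_h,c}) a closed point $[(X,cD)]\in\ofM_c^{\K}$ lies in $H_{h,c}$ if and only if $X\cong X_h$ or $X\cong\bP(1,1,2,4)$, so it suffices to prove the claim $(\star)$: for every $c\in(0,1)\cap\bQ$, every closed point of $\ofM_c^{\K}$ has underlying threefold isomorphic to one of $\bP^3$, $X_h$, $\bP(1,1,2,4)$, $X_u$. Granting $(\star)$, if $[(X,cD)]\notin H_{h,c}$ then $X\cong\bP^3$ or $X\cong X_u$; in the first case interpolation of K-stability \cite[Proposition~2.13]{ADL19} (applied downward, using that $\bP^3$ is K-polystable) shows $(\bP^3,\epsilon D)$ is K-polystable for $0<\epsilon\ll1$, so $D$ is a GIT polystable quartic surface by Theorem~\ref{thm:K=GIT-smallc}; in the second case $D\sim_{\bQ}-K_{X_u}$ (as $r=1$ in the K-moduli problem), and since $X_u$ is Gorenstein of Picard rank one (Proposition~\ref{prop:X_u-construct}) this forces $D$ to be an anti-canonical divisor.

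To prove $(\star)$ I would induct on the walls $0=c_0<c_1<\dots<c_k=1$ of Theorem~\ref{thm:ADL19-wall}, with $c_1=\tfrac{1}{3}$ by Theorem~\ref{thm:firstwall}. For $c\in(c_0,c_1)$ we have $\ofM_c^{\K}\cong\ofM^{\GIT}$, so every closed point has underlying threefold $\bP^3$. For the step at a wall $c_j$: by \eqref{eq:stratification} every GIT polystable quartic surface $S$ outside $\{[T]\}$ and the tower $W_8\supset W_7\supset\cdots$ is slc, hence $(\bP^3,cS)$ stays K-polystable for all $c\in(0,1)$ by Lemma~\ref{lem:ADE-K} and interpolation; so any closed point of $\ofM_{c_j}^{\K}$ that is not already a closed point of $\ofM_{c_j-\epsilon}^{\K}$ is a $c_j$-K-polystable degeneration of some $(\bP^3,c_jS)$ with $\kst(\bP^3,S)=c_j$, i.e.\ with $[S]\in W_i^\circ$ for some $i\in\{0,\dots,8\}$ or $[S]=[T]$. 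These degenerations are precisely the ones identified in Theorems~\ref{thm:firstwall}, \ref{thm:Kpsreplace-W}, \ref{thm:tangent-kst} and \ref{thm:T_0-replace}: the cone pairs $\sC(V_0,\cdot)$ with underlying threefold $X_h$ or $\bP(1,1,2,4)$ when $[S]\in W_i^\circ$, and the pair $(X_u,T_0)$ when $[S]=[T]$. Since the birational morphisms $\phi_j^{\pm}$ of Theorem~\ref{thm:ADL19-wall} are isomorphisms away from their exceptional loci $E_j^{\pm}$, which map to a common center $Z_j\subset\ofM_{c_j}^{\K}$, and over $\ofM_{c_j}^{\K}\setminus Z_j$ the underlying threefold is unchanged across the wall (so it lies in the list by the inductive hypothesis), it only remains to control the threefolds occurring in $E_j^{+}$.

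This last point is the crux. When $c_j=\tfrac{1}{3}$ one has $E_1^{+}=H_{h,\frac{1}{3}+\epsilon}$ (Theorem~\ref{thm:firstwall}(3)) and when $c_j=\tfrac{9}{13}$ one has $E_j^{+}=H_{u,\frac{9}{13}+\epsilon}$ (Theorem~\ref{thm:T_0-replace}(2)), and in both cases the threefolds are $X_h$, respectively $X_u$, as claimed. For the remaining walls $c_j=\kst(W_i^\circ)\in\{\tfrac{1}{2},\tfrac{3}{5},\tfrac{2}{3},\tfrac{5}{7},\tfrac{3}{4},\tfrac{7}{9},\tfrac{9}{11}\}$, a point $[(X,D)]\in E_j^{+}$ has $(X,(c_j+\epsilon)D)$ K-polystable and, via the destabilizing $1$-PS of Table~\ref{table:singularities}, $X$ admits a $\bQ$-Gorenstein degeneration to $X_h$ or $\bP(1,1,2,4)$; applying Lemma~\ref{lem:classify-deform-cone} to this degeneration forces $X\in\{\bP^3,X_h,\bP(1,1,2,4)\}$, and $X\cong\bP^3$ is impossible because then $(\bP^3,(c_j+\epsilon)D)$ K-polystable would, by interpolation from the K-polystable $(\bP^3,0)$, also be $c_j$-K-polystable, contradicting $[(X,D)]\in E_j^{+}$. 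This establishes $(\star)$ and hence the proposition. The main obstacle is exactly this control of the \emph{entire} exceptional divisor $E_j^{+}$ rather than merely its center: here the deformation theory of the cones $X_h$ and $\bP(1,1,2,4)$ (Lemma~\ref{lem:classify-deform-cone}, resting on Lemma~\ref{lem:Xh-unobstructed}) and of $X_u$ (Corollary~\ref{cor:X_u-global-deform}) is indispensable, as is the rigidity afforded by the K-polystability of $\bP^3$; one must also confirm that the list of walls is genuinely exhausted by the K-semistable thresholds of the GIT-polystable non-slc surfaces computed in Theorems~\ref{thm:tangent-kst} and \ref{thm:Kpsreplace-W}.
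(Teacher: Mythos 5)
Your overall strategy coincides with the paper's: induct on the K-moduli walls, show that the underlying Fano threefold at every closed point lies in $\{\bP^3, X_h, \bP(1,1,2,4), X_u\}$, and use the deformation theory of the cones (Lemma~\ref{lem:classify-deform-cone}, Corollary~\ref{cor:X_u-global-deform}) together with the characterization of $H_{h,c}$ as the locus of cone threefolds (Theorem~\ref{thm:H_h-embed}) to control what appears after each wall. Your reduction to the claim $(\star)$, and your treatment of $E_j^+$ (the underlying $X$ degenerates to a threefold in $Z_j$; deformation theory then pins down $X$; $X\cong\bP^3$ is excluded by interpolation from the K-polystable $\bP^3$), are exactly the arguments in the paper's proof.

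There is, however, a genuine gap in the wall step. You write that since $(\bP^3,cS)$ remains K-polystable for slc $S$, ``any closed point of $\ofM_{c_j}^{\K}$ that is not already a closed point of $\ofM_{c_j-\epsilon}^{\K}$ is a $c_j$-K-polystable degeneration of some $(\bP^3,c_jS)$ with $\kst(\bP^3,S)=c_j$.'' That inference does not follow. At each wall $c_j\in\{\tfrac{1}{2},\tfrac{3}{5},\tfrac{2}{3},\tfrac{5}{7},\tfrac{3}{4},\tfrac{7}{9},\tfrac{9}{11}\}$ the exceptional locus $E_j^-$ is not just the stratum of $(\bP^3,S)$ with $\kst=c_j$; it also contains the locus inside $H_{h,c_j-\epsilon}$ undergoing the corresponding internal wall-crossing of $\oK_{(3c-1)/4}$, and the $c_j$-K-polystable replacements of those pairs are \emph{not} obtained as degenerations of any $(\bP^3,c_jS)$. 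Ruling out slc quartics tells you nothing about these. The missing ingredient is precisely what the paper uses at this point: since $\sH_{h,c_j}$ is a \emph{closed} substack (Theorem~\ref{thm:H_h-embed}), the $c_j$-K-polystable replacement of any pair in $H_{h,c_j-\epsilon}$ lies again in $H_{h,c_j}$, so its underlying threefold is still $X_h$ or $\bP(1,1,2,4)$. Once this is added, your description of $Z_j$ becomes correct and the rest of the induction goes through. (One also needs, as you implicitly use, that there are no wall-crossings within $H_{u,c}$ for $c>\tfrac{9}{13}$, by Theorem~\ref{thm:T_0-replace}(3).)

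A smaller imprecision: your phrase ``via the destabilizing $1$-PS of Table~\ref{table:singularities}, $X$ admits a $\bQ$-Gorenstein degeneration to $X_h$ or $\bP(1,1,2,4)$'' is not the right justification. Those $1$-PS's act on $(\bP^3,S)$ for $[S]\in W_i^\circ$, not on an arbitrary $(X,D)\in E_j^+$. The degeneration of $X$ comes simply from the fact that the $c_j$-K-polystable replacement of $(X,c_jD)$ lies in $Z_j$, whose threefolds have by then been shown to lie in the list; this is how the paper phrases it. The conclusion you draw from Lemma~\ref{lem:classify-deform-cone} is nevertheless correct once the degeneration is justified this way.
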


\begin{proof}
We do induction on the K-moduli walls. When $c=\frac{1}{3}+\epsilon$, by Theorem \ref{thm:firstwall} we know that $\ofM_{c}^{\K}= (\ofM^{\GIT}\setminus \{[2Q]\})\cup H_{h,c}$. Assume that we hit a K-moduli wall $c_i$ such that the statement is true for any $\frac{1}{3}<c<c_i$. Then we analyze the wall crossing morphisms 
\[
\ofM_{c_i-\epsilon}^{\K}\xrightarrow{\phi_i^-}\ofM_{c_i}^{\K}\xleftarrow[]{\phi_i^+}\ofM_{c_i+\epsilon}^{\K}.
\]
We first show that the statement is true for $c=c_i$. Indeed, all $c_i$-K-polystable replacements of $H_{h,c_i-\epsilon}$ belongs to $H_{h, c_i}$ by Theorem \ref{thm:H_h-embed}. For $[(\bP^3, S)]\in \ofM_{c_i-\epsilon}^{\K}\setminus H_{h,c_i-\epsilon}$ and $[S]\in W_8$, its K-polystable replacement belongs to $H_{h,c_i}$ by Theorem \ref{thm:Kpsreplace-W}. For $(\bP^3, T)$, its K-polystable replacement at $c_i=\frac{9}{13}$ is $(X_u, T_0)$. Since any Fano threefold appearing in $H_{h,c_i}$ is either $X_h$ or $\bP(1,1,2,4)$, Lemma \ref{lem:Xh-unobstructed} implies that $\osM_{c_i}^\K$ is a smooth stack. Thus the statement holds for $c=c_i$.

Next, we show that the statement is true for $c=c_i+\epsilon$. 
Since $\osM_{c_i+\epsilon}^{\K}$ is an open substack of $\osM_{c_i}^{\K}$, it is also a smooth stack. 
Assume that $[(X,S)]\in \ofM_{c_i+\epsilon}^{\K}\setminus H_{h,c_i+\epsilon}$. If $(X,S)$ is $c_i$-K-polystable, then it is also $(c_i-\epsilon)$-K-polystable by \cite[Proposition 3.18]{ADL19}. Thus $X\cong \bP^3$ or $X_u$. If $(X,S)$ is not $c_i$-K-polystable, then let $(X_0, S_0)$ be its $c_i$-K-polystable replacement. Then either $(X_0,S_0)\in H_{h,c_i}$ or $(X_0,S_0)\cong (X_u, T_0)$. In the first case, we know that $X_0\cong X_h$ or $\bP(1,1,2,4)$ which implies that $X\cong \bP^3$, $X_h$, or $\bP(1,1,2,4)$ by Lemma \ref{lem:classify-deform-cone}. In fact, $X$ cannot be isomorphic to $\bP^3$ since otherwise $(X, c_i S)$ is K-polystable by interpolation, a contradiction. Thus $X\cong X_h$ or $\bP(1,1,2,4)$ which implies that $[(X,S)]\in H_{h,c_i+\epsilon}$ by Theorem \ref{thm:H_h-embed}, again a contradiction. In the second case, by Corollary \ref{cor:X_u-global-deform} we have that $X\cong \bP^3$ or $X_u$. Hence the proof is finished.
%Indeed, this follows from the fact that fibers of VGIT wall crosssings have complementary dimensions (see \cite{thaddeus, dolgachevhu}). Hence by the local VGIT presentation from \cite[Theorem 1.2]{ADL19}, the exceptional locus of $\phi_i^+|_{H_{h,c_i+\epsilon}}$ has the same dimension as the exceptional locus of $\phi_i^+$. Then we know that they must have the same exceptional loci by smoothness of $\osM_{c_i}^{\K}$ (here we use irreducibility of exceptional loci for VGIT on a smooth variety). In other words, the K-moduli wall crossings always flip the $W_*$ into $H_{h, *}$.   Thus the proof is finished.
\end{proof}

The following theorem summarizes the results we have obtained, which provides a detailed description of wall crossings for K-moduli spaces $\ofM_{c}^{\K}$. 

\begin{thm}\label{thm:k-moduli-walls}
The K-moduli space $\ofM_c^{\K}$ (resp. K-moduli stack $\osM_c^{\K}$) is irreducible and normal (resp. smooth) for any $c\in (0,1)$. Moreover, the list of K-moduli walls of $\ofM_c^{\K}$ is given as follows.
\begin{equation}\label{eq:k-moduli-walls}
(c_1,c_2,\cdots, c_9)= \left( \frac{1}{3}, \frac{1}{2}, \frac{3}{5}, \frac{2}{3}, \frac{9}{13}, \frac{5}{7}, \frac{3}{4}, \frac{7}{9}, \frac{9}{11}  \right).
\end{equation}
In the below, we give precise description of the wall crossing morphisms.
\begin{enumerate}
    \item When $c=c_1=\frac{1}{3}$, the K-moduli wall crossing map decreasing from $c = \frac{1}{3}+\epsilon$ to $c = \frac{1}{3} - \epsilon$ is a divisorial contraction of the exceptional divisor $E_1^+$, which is the birational transform of $H_h$, to the point $[(\bP^3, 2Q)]$.
    %When $c=c_1=\frac{1}{3}$, the K-moduli wall crossing is a weighted blow-up at $[2Q]$ with exceptional divisor $E_1^+$ being the birational transform of $H_h$.
    \item When $c=c_5=\frac{9}{13}$, the K-moduli wall crossing map decreasing from $c = \frac{9}{13} + \epsilon$ to $\frac{9}{13}-\epsilon$ is a divisorial contraction of the exceptional divisor $E_5^+$, which is the birational transform of $H_u$, to the point $[(\bP^3, T)]$.
    %When $c=c_5=\frac{9}{13}$, the K-moduli wall crossing is a weighted blow-up at $[T]$ with exceptional divisor $E_5^+$ being the birational transform of $H_u$.
    \item When $c=c_i\in \{ \frac{1}{2}, \frac{3}{5}, \frac{2}{3}, \frac{5}{7}, \frac{3}{4}, \frac{7}{9}, \frac{9}{11} \}$, i.e. $2\leq i\leq 9$ and $i\neq 5$, K-moduli wall crossings are flips. Moreover, if $i\in \{2,3,4, 7,8,9\}$ (resp. if $i=6$) then the exceptional locus $E_i^{-}$ is the birational transform of $W_{i-1}$ (resp.  $W_{i-2}$), while $E_i^+$ is the birational transform of  $Z^{i}$ (resp.  $Z^{i-1}$). %for $i\in \{1,2,3,4,6,7,8\}$
   % \footnote{YL: here is Laza-O'Grady terminology} 
\end{enumerate}
\end{thm}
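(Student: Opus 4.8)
\textbf{Proof strategy for Theorem \ref{thm:k-moduli-walls}.}

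The plan is to assemble the statement from the accumulated results of Sections \ref{sec:unigonal} and \ref{sec:hyperelliptic}. The irreducibility and normality of $\ofM_c^{\K}$ (resp. smoothness of $\osM_c^{\K}$) for all $c\in(0,1)$ follows by combining Proposition \ref{prop:k-moduli-irred} (irreducibility) with Proposition \ref{prop:no-other-3-fold}, which shows that every point of $\ofM_c^{\K}$ parametrizes a pair $(X,cD)$ with $X\in\{\bP^3,X_h,\bP(1,1,2,4),X_u\}$, together with Corollary \ref{cor:K-moduli-smooth} and Corollary \ref{cor:X_u-global-deform}, which establish that the locus of such $X$ is a smooth open substack of $\ocM^{\rm sm}_{3,64}$; smoothness of $\osM_c^{\K}$ then follows since the forgetful map is smooth (Lemma \ref{lem:forgetful-smooth}), and normality of $\ofM_c^{\K}$ follows by taking good moduli spaces. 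Here one must note that $c=\frac{9}{13}$ requires no special treatment since $X_u$ also has unobstructed deformations.

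For the list of walls \eqref{eq:k-moduli-walls}: by Theorem \ref{thm:ADL19-wall} the walls are finite in number and occur exactly at the values $c$ where the K-polystable locus changes; by Proposition \ref{prop:k-moduli-irred} and the openness of the slc locus, the birational map $\ofM_c^{\K}\dashrightarrow\ofM^{\GIT}$ is an isomorphism over $\fM^{\slc}$, so walls can only be caused by K-polystable replacements of the non-slc locus $\ofM^{\GIT}\setminus\fM^{\slc}=W_8\sqcup\{[T]\}$. By Theorem \ref{thm:Kpsreplace-W} and Table \ref{table:VGIT-slope}, the K-semistable thresholds of quartic surfaces in the strata $W_i^\circ$ are $\frac{1}{3},\frac{1}{2},\frac{3}{5},\frac{2}{3},\frac{5}{7},\frac{3}{4},\frac{7}{9},\frac{9}{11}$ for $i=0,1,2,3,4,6,7,8$, and by Theorem \ref{thm:tangent-kst} the threshold of $[T]$ is $\frac{9}{13}$; moreover by Theorem \ref{thm:H_h-embed} and the wall structure of $\ocK_c$ from \cite{ADL20} (via $c\mapsto\frac{3c-1}{4}$), no additional walls arise along $H_{h,c}$ beyond those forced by the $W_i$'s. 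Inserting $\frac{9}{13}$ into its place among the hyperelliptic thresholds yields precisely the nine values in \eqref{eq:k-moduli-walls}; each is genuinely a wall since at each value the corresponding pair is K-semistable but not K-polystable (Theorems \ref{thm:tangent-kst}, \ref{thm:Kpsreplace-W}) with a strictly smaller stabilizer after destabilization.

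The descriptions of the individual wall crossings are then read off as follows. Statement (1) is Theorem \ref{thm:firstwall}(3), identifying $E_1^+$ with $H_{h,\frac{1}{3}+\epsilon}$, the birational transform of $H_h$. Statement (2) is Theorem \ref{thm:T_0-replace}, identifying $E_5^+=H_{u,\frac{9}{13}+\epsilon}$ with the birational transform of $H_u$, together with the weighted-blow-up description there and in Proposition \ref{prop:tangent-Kps-replace}. For statement (3): by Proposition \ref{prop:no-other-3-fold} and Theorem \ref{thm:H_h-embed}, for $i\neq 1,5$ the wall crossing is supported entirely inside $H_{h,c}$, so it is governed by the corresponding wall of $\ocK_c$ from \cite{ADL20}; since the $\oK_c$ wall crossings in the relevant range are flips (after the initial blow-up and away from the final contraction, per the VGIT analysis of \cite{LO18a} and \cite[Theorem 1.1]{ADL20}), so are the $\ofM_c^{\K}$ wall crossings — here one uses that the log CM line bundle is ample (Theorem \ref{thm:k-moduli}, \cite{XZ19}) to rule out divisorial contractions, since neither $\phi_i^-$ nor $\phi_i^+$ contracts a divisor for $i\neq 1,5$ (both $\ofM_{c_i\pm\epsilon}^{\K}$ being normal with the same Picard rank outside the flipping loci). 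The identification of $E_i^-$ with the birational transform of $W_{i-1}$ (resp.\ $W_{i-2}$ when $i=6$) comes from Theorem \ref{thm:Kpsreplace-W}: the $c_i$-K-unstable locus of $\ofM_{c_i-\epsilon}^{\K}$ is exactly the birational transform of the GIT stratum whose surfaces have K-semistable threshold $c_i$, namely $W_{i-1}$ (the index shift by the missing $W_5$ accounting for the $i=6$ case). Dually, $E_i^+$ is identified with the birational transform of $Z^{i}$ (resp.\ $Z^{i-1}$) by matching, via the cone construction of Theorem \ref{thm:induce-3-fold} and the isomorphism $\oK_{\frac{3c-1}{4}}\cong$ (VGIT moduli) of \cite{ADL20}, the flipped locus with the corresponding Shimura-type stratum inside $H_h$, which by \cite{LO18a, LO19} is precisely $Z^{i}$. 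The main obstacle I expect is bookkeeping the index shifts consistently — verifying that the stratum $W_{i-1}\subset\ofM^{\GIT}$, its K-semistable threshold $c_i$, the flipping locus $E_i^-$, and the flipped locus $E_i^+\cong Z^i$ all line up with the combinatorics of Table \ref{table:singularities} and the missing stratum $W_5$ — and in checking that all the birational maps between $\ofM_c^{\K}$'s outside $H_{h,c}\cup H_{u,c}$ are genuinely isomorphisms (Proposition \ref{prop:no-other-3-fold} together with Theorems \ref{thm:H_h-embed}, \ref{thm:T_0-replace}), so that no hidden walls contribute.
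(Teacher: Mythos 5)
Your proposal tracks the paper's own proof quite closely: irreducibility from Proposition \ref{prop:k-moduli-irred}, smoothness and normality via the classification of underlying Fano threefolds (Proposition \ref{prop:no-other-3-fold} combined with Corollary \ref{cor:K-moduli-smooth}), the wall list from the K-semistable thresholds in Theorem \ref{thm:Kpsreplace-W} and Theorem \ref{thm:tangent-kst} cross-checked against the walls of $\oK_{\frac{3c-1}{4}}$, and the wall-crossing descriptions from Theorem \ref{thm:firstwall}, Theorem \ref{thm:T_0-replace}, and the cone identification of Theorem \ref{thm:H_h-embed} together with the VGIT analysis of \cite{ADL20, LO18a}. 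The overall architecture is correct and is the same as the paper's.

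One point is imprecisely stated and worth correcting. For $i\neq 1,5$ the flipping locus $E_i^-$ is \emph{not} supported inside $H_{h,c_i-\epsilon}$: it is the disjoint union of the stratum $W_j^\circ$ (whose points parametrize pairs $(\bP^3,S)$ with $[S]$ in the open GIT stratum of threshold $c_i$, hence living outside $H_{h,c_i-\epsilon}$) and the piece $E_i^-\cap H_{h,c_i-\epsilon}$ coming from the already-flipped lower strata. What is true is that $E_i^+$ lies entirely inside $H_{h,c_i+\epsilon}$, and that the center $\phi_i^-(E_i^-)=\phi_i^+(E_i^+)$ lies in $H_{h,c_i}$. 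The paper's proof makes this decomposition explicit and then assembles $E_i^-$ as the strict transform of $W_j$ from the two pieces; your later sentence about $E_i^-$ being the birational transform of the GIT stratum with threshold $c_i$ recovers the right conclusion, but the intermediate claim about $H_{h,c}$ is a misstatement. Relatedly, "ampleness of the log CM line bundle rules out divisorial contractions" does not by itself do the job; the reason these wall crossings are flips is the dimension count once one knows $E_i^-$ is the strict transform of $W_{i-1}$ (of dimension $\leq 8$) and $E_i^+$ of $Z^i$ (of codimension $\geq 2$), both small in the $19$-dimensional moduli space, combined with the local VGIT structure from Theorem \ref{thm:ADL19-wall}. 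So derive the codimension bounds from the loci identification first, then conclude flip, rather than the other way around.
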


\begin{remark}\label{rmk:Kirwan1stpaper}
Using  techniques similar to \cite[Section 5.2]{ADL19}, one should be able to show that the wall crossing morphisms in Theorem \ref{thm:k-moduli-walls} (1) and (2) are weighted blow-ups of Kirwan type. Since this is not necessary for our main results, we omit the calculation.  By Remark \ref{rmk:weightedblowup} (a combination of \cite[Sections 5.1 and 5.2]{LO18b} and the results of Section \ref{sec:proofs}, where we identify our K-moduli spaces with the spaces defined in Laza-O'Grady), we will see that the wall crossing morphisms in Theorem \ref{thm:k-moduli-walls} (1) and (2) are indeed weighted blowups of Kirwan type at the point $[2Q]$ and $[T]$ respectively.
\end{remark}

\begin{proof}
By Theorem \ref{thm:firstwall} and Proposition \ref{prop:no-other-3-fold}, we know that the only possible Fano threefolds appearing in $\ofM_c^{\K}$ are $\bP^3$, $X_h$, $\bP(1,1,2,4)$, or $X_u$ for any $c\in (0,1)$. Thus the smoothness of $\osM_c^{\K}$ and normality of $\ofM_c^{\K}$ follow from Corollary \ref{cor:K-moduli-smooth}, while irreducibility is proven in Proposition \ref{prop:k-moduli-irred}.

Next, we turn to the list of K-moduli walls. By Proposition \ref{prop:no-other-3-fold}, a K-moduli wall $c_i$ of $\ofM_c^{\K}$ either satisifies $c_i=\kst(\bP^3, S)$ for some $[S]\in W_8\cup \{[T]\}$, or it is a wall of the K-moduli spaces $\oK_{\frac{3c-1}{4}}\cong H_{h,c}$ from \cite{ADL20}, as there are no wall crossings on $H_{u,c}$ when $c\in (\frac{9}{13},1)$ by  Theorem \ref{thm:T_0-replace}. In the former case, we precisely obtain the right-hand-side of \eqref{eq:k-moduli-walls}. In the latter case, by \cite[Remark 5.13]{ADL20} the collection of walls is $\{c\mid \frac{3c-1}{4}\in \{\frac{1}{8}, \frac{1}{5}, \frac{1}{4}, \frac{2}{7}, \frac{5}{16}, \frac{1}{3}, \frac{4}{11}\}\}$, which equals $\{\frac{1}{2}, \frac{3}{5}, \frac{2}{3},  \frac{5}{7}, \frac{3}{4}, \frac{7}{9}, \frac{9}{11}\}$ as a subset of the right-hand-side of \eqref{eq:k-moduli-walls}. Thus we have verified the list of all K-moduli walls \eqref{eq:k-moduli-walls}.

Next, we characterize the K-moduli wall crossing morphisms.
Part (1) follows from Theorem \ref{thm:firstwall}. Part (2) follows from Theorems \ref{thm:GIT=K-unigonal} and \ref{thm:T_0-replace}. We focus on part (3). Let $j:= i-1$ (resp. $j:=i-2$) when $i\in \{2,3,4,7,8,9\}$ (resp. when $i=6$). By Theorem \ref{thm:Kpsreplace-W} and Proposition \ref{prop:no-other-3-fold}, we know that $E_i^-=W_j^\circ \sqcup (E_i^-\cap H_{h, c_i-\epsilon})$ as sets, and $E_i^+\subset H_{h,c_i+\epsilon}$. Since $H_{h,c}\cong \oK_{\frac{3c-1}{4}}$ by Theorem \ref{thm:H_h-embed}, we know that $E_i^\pm\cap H_{h, c_i\pm\epsilon}$ is isomorphic (via the operation $\sC(\cdot,\cdot)$) to the exceptional locus $E_i'^\pm$ of $\oK_{c_i'\pm\epsilon}\to \oK_{c_i'}$ where $c_i':=\frac{3c_i-1}{4}$. By \cite[Theorem 1.1]{ADL20}, the locus $E_i'^\pm$ is the same as the VGIT exceptional locus of slope $t_i:=\frac{3c_i'}{2c_i'+2}$ for $(2,4)$ complete intersections in $\bP^3$. By \cite[Theorem 1.1]{LO18a}, we know that $E_i'^-$ is the strict transform of $\rho_*^{-1}W_j\cap H_{h, \frac{1}{3}+\epsilon}$, while $E_i'^+$ is the strict transform of $Z^{j+1}$. Thus we see that $E_i^-$ (resp. $E_i^+$) is the strict transform of $W_j$ (resp. $Z^{j+1}$). The fact that these morphims are flips will follow from the calculations in Theorem \ref{thm:CM-proportional}, as the morphisms are shown to be MMP with scaling with respect to the CM line bundle.
\end{proof}

\section{Proof of main theorems}\label{sec:proofs}

%\textcolor{blue}{This section's previous title is ``Prediction based on Laza-O'Grady''. Here we will try to prove the main theorems, including CM line bundle computations and use Xu-Zhuang to obtain a Hassett-Keel type result.}

In this section we present proofs of main theorems. 

\subsection{CM line bundles on K-moduli spaces}

In this subsection, we compute the log CM $\bQ$-line bundles on the K-moduli spaces $\ofM_c^{\K}$, and prove Theorem \ref{mthm:Kmod}.

%Here we give a computation of the variation of CM line bundles on $\ofM_c^{\K}$ based on Laza-O'Grady. Let $\sF$ be the arithmetic quotient of the period domain of polarized quartic K3 surfaces with Du Val singularities. Global Torelli tells us that $\sF$ is isomorphic to the coarse moduli space of such K3 surfaces. Denote by $H_h$ and $H_u$ the Heegner divisors in $\sF$ of types hyperelliptic and unigonal, respectively.

The following result describes the locus of K3 surfaces with Du Val singularities inside K-moduli stacks and spaces.

\begin{prop}\label{prop:loci-ADE-K3}
Let $c\in (0,1)$ be a rational number.
There exists a saturated open substack $\sM_{c}^{\K, \circ}$ of $\osM_{c}^{\K}$ consisting of $c$-K-stable log pairs $[(X,S)]$ where $S$ has Du Val singularities. Moreover,  $\sM_{c}^{\K, \circ}$ is a smooth Deligne-Mumford stack with coarse moduli space $\fM_{c}^{\K, \circ}$. The birational period map $\fp_c:\ofM_c^{\K}\dashrightarrow \sF$ for boundary divisors induces an open immersion $\fp_c^\circ:\fM_{c}^{\K, \circ}\hookrightarrow \sF$ satisfying the following properties.
\begin{enumerate}
    \item  $\fM_{c}^{\K, \circ}$ is a big open subset of $\ofM_{c}^{\K}$ for any $c\in (0,1)$.
    \item The divisorial components of $\sF\setminus \fp_c^\circ(\fM_{c}^{\K, \circ})$ are
    $\begin{cases}
         \textrm{none} & \textrm{if }c\in (\frac{9}{13},1);\\
        H_u & \textrm{if }c\in (\frac{1}{3},\frac{9}{13}];\\
        H_h\cup H_u & \textrm{if }c\in (0,\frac{1}{3}]. 
    \end{cases}$
    \item $\fp_{c}^\circ$ is an isomorphism if and only if $c\in (\frac{9}{11},1)$. 
\end{enumerate}
\end{prop}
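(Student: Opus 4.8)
\textbf{Proof strategy for Proposition \ref{prop:loci-ADE-K3}.}

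The plan is to construct $\sM_c^{\K,\circ}$ and $\fp_c^\circ$ first, then verify the three enumerated properties, with property (2) being the crux since it packages the birational geometry of the wall crossings.

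\emph{Construction of the open substack and the period map.} Inside $\osM_c^{\K}$, consider the locus of pairs $(X,S)$ with $X \cong \bP^3$ and $S$ a quartic surface with Du Val singularities. By Lemma \ref{lem:ADE-K}(1), such $(\bP^3, cS)$ are K-stable for all $c \in (0,1)$, so this locus consists of closed points with finite stabilizers; combined with openness of the ADE condition (openness of klt, \cite[Corollary 7.6]{SingularitiesOfPairs}) and openness of K-stability \cite{BLX19, Xu19}, it defines an open substack $\sM_c^{\K,\circ}$, and its saturatedness follows exactly as in the proof of Proposition \ref{prop:k-moduli-irred} for $\sM^\circ$. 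Smoothness: the forgetful map to $\ocM_{3,64}^{\rm sm}$ is smooth with connected fibers (Lemma \ref{lem:forgetful-smooth}), and over the $\bP^3$-point the fiber is an open subscheme of $\bP H^0(\bP^3, \cO(4))$, which is smooth; alternatively one invokes that $\sM_c^{\K,\circ} \cong \sM^\circ$ as stacks (the K-stability is insensitive to $c$ on this locus). Since $\sM^\circ$ is the stack of ADE quartic surfaces, which by the global Torelli theorem (as recalled in the introduction, $\fp: \fM^\circ \hookrightarrow \sF$ is an open immersion with image $\sF \setminus (H_h \cup H_u)$) has an open immersion of coarse spaces into $\sF$, we obtain $\fp_c^\circ: \fM_c^{\K,\circ} \hookrightarrow \sF$.

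\emph{Property (1).} We must show the complement $\osM_c^{\K} \setminus \sM_c^{\K,\circ}$ has codimension $\geq 2$ in $\ofM_c^{\K}$. By Proposition \ref{prop:no-other-3-fold} and Theorem \ref{thm:firstwall}, every point of $\ofM_c^{\K}$ outside $\sM_c^{\K,\circ}$ lies in $H_{h,c} \cup H_{u,c} \cup \{[(\bP^3,S)] : S \text{ GIT polystable, not ADE}\}$. The threefold loci $H_{h,c}$ ($c > \frac13$) and $H_{u,c}$ ($c > \frac{9}{13}$) are divisors, but their $19$-dimensional points correspond to K3 surfaces with Du Val singularities inside $X_h$, $\bP(1,1,2,4)$, or $X_u$ — these are \emph{not} in $\sM_c^{\K,\circ}$ as defined (which requires $X\cong\bP^3$), so I must instead \emph{enlarge} the definition of $\sM_c^{\K,\circ}$ to allow any $X$ with $S \sim -K_X$ Du Val, i.e. the locus where the anticanonical pair degenerates only along $S$ and $S$ itself has ADE singularities. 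With this (correct) definition, a point fails to lie in $\sM_c^{\K,\circ}$ only if $S$ is worse than Du Val; on the $\bP^3$-locus such $S$ form the locus $W_8 \cup \{[T]\} \cup (\ofM^{\GIT} \setminus \fM^{\slc\text{-smooth}})$, and $W_8$ is the only divisorial piece — but after the first wall at $c = \frac13$ the point $[2Q]$ is replaced and after subsequent walls the generic point of $W_8$ is flipped or its image is pushed into $H_{h,c}$, which generically parametrizes \emph{Du Val} hyperelliptic K3s. Concretely: for $c > \frac13$, the generic point of $H_{h,c}$ is a double cover of $\bP^1\times\bP^1$ along a smooth $(4,4)$-curve (Theorem \ref{thm:firstwall}(3)), hence a \emph{smooth} K3, so $H_{h,c} \subset \sM_c^{\K,\circ}$ generically; likewise $H_{u,c}$ for $c>\frac{9}{13}$ generically parametrizes smooth unigonal K3s (Theorem \ref{thm:GIT=K-unigonal}, Lemma \ref{lem:X_u-anti-can}). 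Thus the only divisor potentially in the bad locus is, before the first wall, $W_8$ inside $\ofM^{\GIT}$ — but $W_8$ parametrizes quartics $q^2+g=0$ which are not even reduced-plus-ADE, hence genuinely excluded. However $\ofM^{\GIT} \cong \ofM_c^{\K}$ only for $c < \frac13$, and for such $c$, $W_8 \subset \ofM_c^{\K}$ \emph{is} a divisor in the bad locus. The resolution is that Property (1) should be read as: the bad locus has codimension $\geq 2$ \emph{for $c$ in the relevant range}, or one notes $W_8$ maps to the closure of $H_{h,c}$-type locus. I would carefully track this: the cleanest statement is that $\sM_c^{\K,\circ}$ (allowing all $X$, $S$ Du Val) is big because the non-Du-Val locus in $\ofM^{\GIT}$ is $W_8 \cup \{[T]\}$ with $W_8$ of dimension $8$, hence codimension $11$ in the $19$-dimensional moduli space — \emph{this is the actual reason}, and it holds for \emph{every} $c$ since the bad locus always has the same small dimension (it is either $W_8 \cup \{[T]\}$ or its birational transform, all of dimension $\leq 8$).

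\emph{Properties (2) and (3).} Property (2) is the main content. By the global Torelli theorem, $\sF \setminus \fp(\fM^\circ) = H_h \cup H_u$, and $\fp_c^\circ(\fM_c^{\K,\circ})$ differs from $\fp(\fM^\circ)$ precisely by the Du Val K3s that appear \emph{inside} the new threefolds $X_h, \bP(1,1,2,4), X_u$. So I track which Heegner divisor each wall crossing ``opens up'': for $c > \frac13$, the first wall (Theorem \ref{thm:firstwall}) inserts $H_{h,c}$ whose generic point is a smooth hyperelliptic K3, hence $H_h$ enters the image; for $c > \frac{9}{13}$, the wall at $\frac{9}{13}$ (Theorem \ref{thm:T_0-replace}, Theorem \ref{thm:GIT=K-unigonal}) inserts $H_{u,c}$ with generic point a smooth unigonal K3, so $H_u$ also enters. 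This gives exactly the three cases. The divisorial components claim then follows because no \emph{other} divisor of $\sF$ can be missed — the interior $\fM^\circ$ already covers $\sF \setminus (H_h \cup H_u)$, and the subsequent flips ($2 \le i \le 9$, $i \ne 5$) only modify loci of codimension $\geq 2$ inside the Heegner divisors (the strata $W_j$ and $Z^{j+1}$), never removing a whole divisor. For Property (3): $\fp_c^\circ$ is an isomorphism iff both Heegner divisors have been fully ``filled in'' as Du Val loci, which by Theorem \ref{thm:k-moduli-walls} happens after the last flip that touches a stratum of $H_h$; the last wall is $c_9 = \frac{9}{11}$, so for $c \in (\frac{9}{11}, 1)$ there are no further modifications and $\fp_c^\circ$ is an isomorphism, whereas for $c \le \frac{9}{11}$ some stratum $Z^j \subset H_h$ is not yet in the Du Val locus (it is flipped in at $c_i$), so $\fp_c^\circ$ misses that positive-codimension locus and fails to be surjective. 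The main obstacle I anticipate is \emph{bookkeeping}: making precise that the generic points of $H_{h,c}$, $H_{u,c}$ and of each flipped-in stratum $Z^j$ parametrize K3 surfaces with \emph{only} Du Val singularities (so they land in $\sM_c^{\K,\circ}$), which requires re-examining the explicit degenerations in Theorems \ref{thm:Kpsreplace-W} and \ref{thm:GIT=K-unigonal} and the identification $H_{h,c} \cong \oK_{\frac{3c-1}{4}}$ from \cite{ADL20}, together with the description there of which double covers are Du Val. Once that is in hand, Property (3) is a matter of reading off the wall list.
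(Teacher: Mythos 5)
Your overall strategy for (1)--(3) matches the paper's, but there is a genuine gap at the center of the argument: after you (correctly) enlarge the definition of $\sM_c^{\K,\circ}$ to allow all ambient threefolds $X$ (not just $\bP^3$) with $S\in|-K_X|$ Du Val, you never prove that the induced map $\fp_c^\circ:\fM_c^{\K,\circ}\to\sF$ is an open immersion. Your justification — ``$\sM^\circ$ is the stack of ADE quartic surfaces, which by global Torelli has an open immersion of coarse spaces into $\sF$'' — only covers the $\bP^3$-locus. The nontrivial content is that $\fp_c^\circ$ is \emph{injective on closed points}: given $(X,S)$ and $(X',S')$ in $\fM_c^{\K,\circ}$ with $S\cong S'$ as polarized K3s, why is $(X,S)\cong(X',S')$? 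This requires a case analysis (quartic, hyperelliptic, unigonal), and in the hyperelliptic case one must use the cone construction together with Theorem~\ref{thm:H_h-embed} and Proposition~\ref{prop:no-other-3-fold} to recover $X$ from $S$; in the unigonal case one uses Lemma~\ref{lem:X_u-anti-can}. Only then can Zariski's main theorem, together with the normality of $\ofM_c^{\K}$ from Theorem~\ref{thm:k-moduli-walls}, upgrade the injective birational morphism to an open immersion. Without this step the whole statement is unestablished.

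Two smaller points. First, your initial construction of $\sM_c^{\K,\circ}$ as an open substack is pinned to the $X\cong\bP^3$ locus, and the self-correction mid-proof does not replace it with an actual openness argument for the enlarged locus; the paper does this by first cutting out the lc locus (openness of K-stability plus semicontinuity of lct), applying adjunction to get a universal family of slc $K$-trivial surfaces, and then using openness of klt (ADE $=$ Gorenstein klt for surfaces). Second, your ``cleanest'' argument for Property~(1) — that the bad locus is always $W_8\cup\{[T]\}$ or its birational transform, of dimension $\leq 8$ — is not quite right as stated, because after the first and fifth walls the exceptional divisors $H_{h,c}$ and $H_{u,c}$ contain their own strata of non-Du-Val K3s, so the bad locus is not just a transform of $W_8\cup\{[T]\}$. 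The correct argument (and the paper's) is softer: $\fM^\circ$ embeds as a big open subset for $c\leq\frac13$, and the generic points of $H_{h,c}$ (for $c>\frac13$) and $H_{u,c}$ (for $c>\frac{9}{13}$) do parametrize Du Val K3s, so since all other wall crossings are flips, the open locus $\fM_c^{\K,\circ}$ stays big.
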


\begin{proof}
We first show that $\sM_{c}^{\K, \circ}$ is a saturated open substack of $\osM_c^{\K}$. By the openness of K-stability \cite{BLX19} and lower semi-continuity of $\lct$, there exists an open substack $\sM_{c}^{\rm lc}$ of $\osM_c^{\K}$ parametrizing $c$-K-stable log pairs $(X,S)$ that are log canonical. By applying adjunction, we obtain a $\bQ$-Gorenstein universal family $\sS\to \sM_{c}^{\rm lc}$ with fibers being semi-log-canonical surfaces with trivial canonical divisor. Since klt is an open condition in $\bQ$-Gorenstein families, and Gorenstein klt is the same as having ADE singularities for surfaces, we know that $\sM_c^{\K,\circ}$ is an open substack of $\sM_{c}^{\rm lc}$ and hence an open substack of $\osM_c^{\K}$. Since $\sM_c^{\K,\circ}$ consists of K-stable log pairs, every point is closed with finite stabilizers. Hence we know that $\sM_c^{\K,\circ}$ is a saturated open Deligne-Mumford substack of $\osM_c^{\K}$. By taking period map for boundary divisors, we obtain a morphism $\sM_c^{\K, \circ}\to \sF$ which descends to a morphism $\fp_c^{\circ}: \fM_c^{\K, \circ}\to \sF$. 

Since $\fp_c$ is birational, so is $\fp_c^{\circ}$. Next we show that $\fp_c^{\circ}$ is injective on closed points. By global Torelli theorem it suffices to show that if $(X,S)$ and $(X',S')$ both belong to $\fM_c^{\K,\circ}$ and $S\cong S'$ as polarized K3 surfaces, then $(X,S)\cong (X',S')$. This clearly holds when $S$ and $S'$ are quartic surfaces in $\bP^3$. If $S$ and $S'$ are hyperelliptic, then their quotients  $(V,C)$ and $(V',C')$ by the hyperelliptic involutions are isomorphic. By Theorem \ref{thm:H_h-embed} and Proposition \ref{prop:no-other-3-fold}, we know that $(X,cS)\cong \sC(V,\frac{3c-1}{4}C)\cong \sC(V',\frac{3c-1}{4}C')\cong (X',cS')$. If $S$ and $S'$ are unigonal, then Proposition \ref{prop:no-other-3-fold} implies that $X\cong X'\cong X_u$, and hence $(X,S)\cong (X',S')$ by Lemma \ref{lem:X_u-anti-can}. Thus $\fp_c^\circ$ is injective on closed points. By Zariski's main theorem and normality of K-moduli spaces (Theorem \ref{thm:k-moduli-walls}), we conclude that 
$\fp_c^{\circ}$ is an open immersion.

Next we turn to parts (1) -- (3). For part (1), if $c\in (0,\frac{1}{3}]$, then by Lemma \ref{lem:ADE-K} and Theorem \ref{thm:firstwall} we know that $\fM_{c}^{\K,\circ}\cong \fM^\circ$ under the isomorphism $\ofM_c^{\K}\cong \ofM^{\GIT}$, which implies that $\fM_{c}^{\K,\circ}$ is a big open subset of $\ofM_c^{\K}$. If $c>\frac{1}{3}$, then a general hyperelliptic K3 surface $S\subset X_h$ satisfies that $(X_h, cS)$ is K-stable by Theorem \ref{thm:H_h-embed} and \cite[Theorem 1.1]{ADL20}. If $c>\frac{9}{13}$, then a general unigonal K3 surface $S'\subset X_u$ satisfies that $(X_u,cS')$ is K-stable by Theorem \ref{thm:T_0-replace}. Thus we know that $\fM_c^{\K,\circ}$ has non-empty intersection with $H_{h,c}$ (resp. $H_{u,c}$) if $c>\frac{1}{3}$ (resp. if $c>\frac{9}{13}$). Since there is an open immersion $\fM^\circ\hookrightarrow \fM_c^{\K,\circ}$ for any $c$ by Lemma \ref{lem:ADE-K}, and all but two wall crossings are flips by Theorem \ref{thm:k-moduli-walls}, we know that $\fM_c^{\K,\circ}$ is a big open subset of $\ofM_c^{\K}$ for any $c\in (0,1)$. Part (2) follows for similar reasons to part (1) as $\fM^\circ \cong \sF\setminus(H_h\cup H_u)$ under $\fp$. For part (3), since $\fp_c^\circ$ is an open immersion, it suffices to show that it is surjective if and only if $c\in (\frac{9}{11},1)$. We know that $\sF\setminus (H_h\cup H_u)\cong \fM^\circ \hookrightarrow \fM_c^{\K,\circ}$ for any $c$, and $H_u\subset \fM_c^{\K,\circ}$ if and only if $c> \frac{9}{13}$ by Theorem \ref{thm:T_0-replace}. Thus the surjectivity of $\fp_c^\circ$ is equivalent to the containment $H_h\subset  H_{h,c}$. Since $H_{h,c}\cong \oK_{\frac{3c-1}{4}}$ by Theorem \ref{thm:H_h-embed}, from the explicit wall crossings for K-moduli of hyperelliptic quartic K3 surfaces (see \cite[Remarks 5.13 and 5.14]{ADL20} and \cite[Section 6]{LO18a}) we know  that $H_h\subset  H_{h,c}$ if and only if $\frac{3c-1}{4}>\frac{4}{11}$, i.e. $c>\frac{9}{11}$. Thus the proof is finished.
\end{proof}

Next, we recall the definition of log CM line bundles on K-moduli stacks and spaces from \cite[Definition 3.34]{ADL19}. By the construction of K-moduli stacks from \cite[Section 3]{ADL19}, we may write $\osM_c^{\K}=[Z_{c}^{\red} /\PGL(N+1)]$ where $Z_c^{\red}$ is a reduced locally closed subscheme of certain relative Hilbert scheme in $\bP^N$ parametrizing $c$-K-semistable pairs in $\osM_c^{\K}$. Let $\pi_c: (\cX_c,\cD_c)\to Z_c^{\red}$ be the universal family. Let $c'\in (0,1)$ be a rational number. Then the log CM $\bQ$-line bundle $\lambda_{\CM,\pi_c, c'\cD_c}$ (resp. the Hodge $\bQ$-line bundle $\lambda_{\Hdg,\pi_c, \cD_c}$) on $Z_c^{\red}$ descends to a $\bQ$-line bundle $\lambda_{c,c'}$ (resp. $\lambda_{c,\Hdg}$) on $\osM_c^{\K}$. We simply denote $\lambda_c:=\lambda_{c,c}$. By \cite[Proposition 3.35]{ADL19}, we know that $\lambda_c$ descends to a $\bQ$-line bundle $\Lambda_c$ on $\ofM_{c}^{\K}$ for any $c\in (0,1)$. Moreover, if $c\in (0,1)$ is not a K-moduli wall listed in Theorem \ref{thm:k-moduli-walls}, then both $\lambda_{c,c'}$ and $\lambda_{c,\Hdg}$ descend to $\bQ$-line bundles $\Lambda_{c,c'}$ and $\Lambda_{c,\Hdg}$ on $\ofM_{c}^{\K}$. By \cite{XZ19}, we know that the CM $\bQ$-line bundle $\Lambda_c$ is ample on $\ofM_c^{\K}$ for any $c\in (0,1)$. Moreover, the Hodge $\bQ$-line bundle $\Lambda_{c,\Hdg}$ is nef on $\ofM_c^{\K}$ for any $c\in (\frac{9}{11},1)$ by Theorem \ref{thm:k-moduli-walls} and \cite[Proposition 3.35]{ADL19}.

%Denote by $\lambda$ the Hodge line bundle on $\sF$.  Denote by $\Delta:=\frac{1}{2}( H_h+H_u )$ and $\Delta^{\K}:= \frac{1}{4}H_h+ \frac{9}{8}H_u$.

\begin{thm}\label{thm:CM-proportional}
Let $\lambda$ be the Hodge line bundle on $\sF$.
Let $\Delta^{\K}:= \frac{1}{4}H_h +\frac{9}{8} H_u$. Then for any $c\in (0,1)\cap \bQ$ we have \[
\ofM_c^{\K}\cong \Proj ~ R(\sF, c\lambda + (1-c)\Delta^{\K}).
\]
Moreover, the CM $\bQ$-line bundle $\Lambda_{c}$ on $\ofM_c^{\K}$ is proportional to $(\fp_c^{-1})_*(c\lambda+ (1-c)\Delta^{\K})$  up to a positive constant.
\end{thm}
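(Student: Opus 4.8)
The plan is to deduce the theorem from two facts: that the log CM $\bQ$-line bundle $\Lambda_c$ is ample on $\ofM_c^{\K}$ (which holds by \cite{XZ19}), and that the birational map $\fp_c^{-1}\colon\sF^{*}\dashrightarrow\ofM_c^{\K}$ is a birational contraction. Granting the latter, ampleness of $\Lambda_c$ gives $\ofM_c^{\K}=\Proj R(\ofM_c^{\K},\Lambda_c)$, and for a birational contraction $\psi\colon Z\dashrightarrow Z'$ with $Z'$ normal projective and $A$ ample on $Z'$ one has $R(Z,\psi_{*}^{-1}A)\cong R(Z',A)$; applied to $\psi=\fp_c^{-1}$ this yields $\ofM_c^{\K}\cong\Proj R(\sF^{*},(\fp_c)_{*}\Lambda_c)$. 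Since the boundary $\sF^{*}\setminus\sF$ has codimension $\geq 2$ and $\lambda$, $H_h$, $H_u$ extend across it, $R(\sF^{*},-)=R(\sF,-)$, so the whole theorem reduces to the single computation that the class $(\fp_c)_{*}\Lambda_c\in N^{1}(\sF^{*})_{\bQ}$ is a positive rational multiple of $c\lambda+(1-c)\Delta^{\K}$.

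First I would check that $\fp_c^{-1}$ is a birational contraction. By Proposition \ref{prop:loci-ADE-K3}(1) the locus $\fM_c^{\K,\circ}$ is a big open subset of $\ofM_c^{\K}$ and $\fp_c^{\circ}$ realizes it as an open subset of $\sF$; hence every prime divisor of $\ofM_c^{\K}$ is the strict transform of a prime divisor of $\sF^{*}$, i.e. $\fp_c^{-1}$ extracts no divisor. (Equivalently one reads this off Theorem \ref{thm:k-moduli-walls}: passing from $\sF^{*}$ towards $\ofM_c^{\K}$ one only performs flips and divisorial contractions of the strict transforms of $H_h$ and $H_u$ to points.)

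The heart of the argument is the identification of $(\fp_c)_{*}\Lambda_c$, carried out through the interpolation formula \eqref{eq:CM-Hodge}. On the big open substack $\sM_c^{\K,\circ}$ the ambient Fano threefold is the constant family $\bP^{3}$ with polarization $\cO_{\bP^{3}}(4(1-c))$, so in the log CM formula of Definition \ref{defn:logCM} the absolute CM and Chow terms drop out and $\Lambda_c$ restricts to a positive multiple (depending on $c$) of the Chow line bundle of the universal family of quartic K3 surfaces. A Grothendieck--Riemann--Roch computation, using that the relative dualizing sheaf of that family is the pullback of its Hodge line bundle, identifies this Chow line bundle with a positive multiple of $\fp_c^{*}\lambda$; chasing constants gives $\Lambda_c|_{\fM_c^{\K,\circ}}=256\,c(1-c)^{3}\,\fp_c^{*}\lambda$. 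Since the class group of $\ofM_c^{\K}$ injects into that of its big open subset $\fM_c^{\K,\circ}$, and the complement of the latter in $\sF^{*}$ meets $\sF$ only along $H_h\cup H_u$, it follows that $(\fp_c)_{*}\Lambda_c=256\,c(1-c)^{3}\lambda+b_cH_h+d_cH_u$ for some $b_c,d_c\in\bQ$. It remains to compute the boundary coefficients: I would write $\Lambda_c$ via \eqref{eq:CM-Hodge} as an explicit polynomial combination in $c$ of the Hodge $\bQ$-line bundle and the CM $\bQ$-line bundle of the family of ambient Fano threefolds, the latter having support (after pushforward to $\sF^{*}$) only on $H_h\cup H_u$ since the ambient is $\bP^{3}$ on $\fM_c^{\K,\circ}$. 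Its $H_h$-coefficient is extracted from the cone geometry along $H_{h,c}$ (ambient $X_h$ or $\bP(1,1,2,4)$) using the embedding $\oK_{\frac{3c-1}{4}}\hookrightarrow\ofM_c^{\K}$ of Theorem \ref{thm:H_h-embed} together with the CM-line-bundle computations of \cite{ADL20}, and its $H_u$-coefficient from the geometry along $H_{u,c}$ (ambient $X_u$) using Theorem \ref{thm:GIT=K-unigonal} (the identification $H_{u,c}\cong\bfP\sslash\SL(2,\bC)$) and the unobstructedness of Corollary \ref{cor:X_u-global-deform}. These computations should produce $b_c=64(1-c)^{4}$ and $d_c=288(1-c)^{4}$, so that $(\fp_c)_{*}\Lambda_c=256(1-c)^{3}\bigl(c\lambda+(1-c)(\tfrac14H_h+\tfrac98H_u)\bigr)=256(1-c)^{3}\bigl(c\lambda+(1-c)\Delta^{\K}\bigr)$.

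Combining these, $\ofM_c^{\K}\cong\Proj R(\sF^{*},(\fp_c)_{*}\Lambda_c)=\Proj R(\sF,c\lambda+(1-c)\Delta^{\K})$, and this section ring is automatically finitely generated because $\Lambda_c$ is ample. The proportionality of the CM line bundle then follows by pulling the identity for $(\fp_c)_{*}\Lambda_c$ back along $\fp_c^{-1}$: on $\fM_c^{\K,\circ}$ the classes $\Lambda_c$ and $(\fp_c^{-1})_{*}(c\lambda+(1-c)\Delta^{\K})$ agree up to the factor $256(1-c)^{3}$, and the class-group injectivity used above upgrades this to an identity on all of $\ofM_c^{\K}$. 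The main obstacle is clearly the boundary-coefficient computation in the previous paragraph: it requires the precise variation formula \eqref{eq:CM-Hodge} for log CM line bundles under change of coefficient together with honest intersection-number computations on the singular Fano threefolds $X_h$, $\bP(1,1,2,4)$, $X_u$ and a comparison with the $18$-dimensional hyperelliptic picture of \cite{ADL20} and the Weierstrass GIT. Everything else — the reduction to a $\Proj$, finite generation, and the proportionality statement — is formal once the class of $\Lambda_c$ has been pinned down.
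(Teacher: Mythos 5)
Your overall plan — reduce to ampleness of $\Lambda_c$ (from \cite{XZ19}) plus the fact that $\fp_c^{-1}$ is a birational contraction, then pin down the class $(\fp_c)_*\Lambda_c$ via the interpolation formula \eqref{eq:CM-Hodge} — is exactly the paper's strategy, and your conjectured final answer for the boundary coefficients ($b_c = 64(1-c)^4$, $d_c = 288(1-c)^4$, equivalently $b_h=\tfrac14$, $b_u=\tfrac98$ once the factor $4^4(1-c)^4$ is pulled out) is correct. However, the key step — actually computing those coefficients — is where you hand-wave, writing ``these computations should produce\ldots'' and gesturing at intersection theory on $X_h$, $\bP(1,1,2,4)$, $X_u$, the embedding of $\oK_{\frac{3c-1}{4}}$, and \cite{ADL20}. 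The paper takes a genuinely different and considerably more economical route at this point. Rather than doing any intersection-number computation on the singular ambient threefolds, it observes (i) that the restriction to $H_u$ of the pullback $(\phi_5^+)^*\Lambda_{9/13}$ is zero because $\phi_5^+$ contracts $H_{u,9/13+\epsilon}$ to the single point $[(X_u,T_0)]$, and combines this with the Laza--O'Grady relation \eqref{eq:GIT-lambda} $\fp^*\Lambda^{\GIT}=\lambda+\tfrac12 H_h+\tfrac12 H_u$ and the disjointness of $H_h$ and $H_u$ to read off $b_u=\tfrac98$; then the analogous argument at the wall $c=\tfrac13$ (where $\phi_1^+$ contracts $H_{h,1/3+\epsilon}$ to $[(X_h,2Q_\infty)]$) gives $b_h=\tfrac14$. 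So the coefficients come out of the global wall-crossing structure for free, not from a local GRR or CM computation on cones. You should either carry out your proposed direct computation honestly (which I expect is substantially more work and error-prone) or replace it with this wall-crossing argument. One more small point: you assert ``$\Lambda_c|_{\fM_c^{\K,\circ}}=256\,c(1-c)^3\,\fp_c^*\lambda$,'' but $\fM_c^{\K,\circ}$ contains points over $H_h\cup H_u$ (where the ambient is $X_h$, $\bP(1,1,2,4)$, or $X_u$, not $\bP^3$), so this identity cannot hold on all of $\fM_c^{\K,\circ}$ — it only holds over $\fp_c^{-1}(\sF\setminus(H_h\cup H_u))$. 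Your next sentence effectively retracts this by allowing $b_c,d_c\neq 0$, but as written the two statements contradict each other; the support of the absolute CM part $\Lambda_{c,0}$ is contained in $H_h\cup H_u$ precisely because the ambient is $\bP^3$ only away from those divisors.
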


\begin{proof}
From Theorem \ref{thm:k-moduli-walls}, we know that the birational map $\ofM_{c'}^{\K}\dashrightarrow \ofM_c^{\K}$ is a birational contraction for any $0<c<c'<1$. Moreover, by \cite{XZ19} we know that $\Lambda_c$ is ample on $\ofM_c^{\K}$. Thus similar arguments to \cite[Theorem 9.4]{ADL19} imply that for any $c\in (0,1)\cap\bQ$ and $0<\epsilon\ll 1$ we have
\[
\ofM_c^{\K}\cong \Proj ~R(\ofM_{1-\epsilon}^{\K}, \Lambda_{1-\epsilon,c}),
\]
and $\Lambda_c$ is the same as the pushforward of $\Lambda_{1-\epsilon,c}$ under $\ofM_{1-\epsilon}^{\K}\dashrightarrow\ofM_c^{\K}$. By Proposition \ref{prop:loci-ADE-K3}(3), we know that $\fp_{1-\epsilon}^{-1}: \sF\hookrightarrow \ofM_{1-\epsilon}^{\K}$ is a regular open immersion whose image $\fM_{1-\epsilon}^{\K,\circ}$ is a big open subset of $\ofM_{1-\epsilon}^{\K}$. Since $\ofM_{1-\epsilon}^{\K}$ is normal by Theorem \ref{thm:k-moduli-walls},  to prove the theorem it suffices to show that $(\fp_{1-\epsilon}^{-1})^* \Lambda_{1-\epsilon,c}$ is proportional to $c\lambda+(1-c)\Delta^{\K}$ up to a positive constant.

% We follow the notation of \cite[Definition 3.34]{ADL19}. We have shown in ???? that 
% \[
% (\ofM_{1-\epsilon}^{\K}, ~\Lambda_{\Hdg, 1-\epsilon})\cong (\hsF,~ \widehat{\lambda}).
% \]
By \cite[Proposition 3.35]{ADL19} we know that 
\begin{equation}\label{eq:CM-Hodge}
(1-c)^{-3}\Lambda_{1-\epsilon, c}=(1-c)\Lambda_{1-\epsilon,0}+4^4 c\Lambda_{1-\epsilon,\Hdg}.
\end{equation}
By adjunction, we have $(\fp_{1-\epsilon}^{-1})^* \Lambda_{1-\epsilon,\Hdg}=\lambda$. Since $\fp_{1-\epsilon}^{-1}(\sF\setminus (H_h\cup H_u))$ parametrizes pairs $(\bP^3, S)$, we know that the pullback of $\Lambda_{1-\epsilon,0}$ to $\sF\setminus (H_h\cup H_u)$ is trivial as the underlying family of Fano threefolds is an isotrivial $\bP^3$-fibration. Thus the support of  $4^{-4}(\fp_{1-\epsilon}^{-1})^*\Lambda_{1-\epsilon,0}$ is contained in $H_h\cup H_u$, and we may write   
$4^{-4}(\fp_{1-\epsilon}^{-1})^*\Lambda_{1-\epsilon,0}= b_h H_h+b_u H_u$ for some $b_h,b_u\in\bQ$. Hence we have
\begin{equation}\label{eq:CM-variation}
4^{-4}(1-c)^{-3}(\fp_{1-\epsilon}^{-1})^*\Lambda_{1-\epsilon, c}=c\lambda + (1-c)(b_h H_h+b_u H_u).
\end{equation}
Therefore, the theorem reduces to showing $4^{-4} (\fp_{1-\epsilon}^{-1})^*\Lambda_{1-\epsilon,0}=\Delta^{\K}$, i.e. $b_h=\frac{1}{4}$ and $b_u=\frac{9}{8}$.
% By Theorem ??? we know that the birational map $\widehat{\psi}_{c}^{\K}:\hsF\dashrightarrow \ofM_c^{\K}$ is always a birational contraction. Moreover, the CM line bundle $\lambda_{\CM,c}$ is ample on $\ofM_c^{\K}$ by \cite{XZ19}. Hence similar arguments to \cite[Theorem 9.4]{ADL19} implies that 
% \[
% \ofM_c^{\K}\cong \Proj R(\hsF,c\widehat{\lambda}+(1-c)4^{-4}\Lambda_{1-\epsilon,0}),
% \]
% and 
% \[
% \lambda_{\CM, c}= 4^4(1-c)^3 (\widehat{\psi}_{c}^{\K})_*(c\widehat{\lambda}+(1-c)4^{-4}\Lambda_{1-\epsilon,0}).
% \]

Let $\Lambda^{\GIT}$ be the ample $\bQ$-line bundle on $\ofM^{\GIT}$ induced by the hyperplane line bundle on $|\cO_{\bP^3}(4)|$. Then by \cite[(4.1.2)]{LO19} we have
\begin{equation}\label{eq:GIT-lambda}
\fp^*\Lambda^{\GIT}=\lambda+\frac{H_h}{2}+\frac{H_u}{2},
\end{equation}
where $\fp:\ofM^{\GIT}\dashrightarrow \sF$ is the birational period map.

Next we compute $b_u$.
By Theorem \ref{thm:T_0-replace}, for every $c\in (\frac{9}{13},1)$ we have that $H_u\subset H_{u,c}$, and  $\ofM_c^{\K}\dashrightarrow \ofM^{\GIT}$ is regular near $H_{u,c}$ which contracts $H_{u,c}$ to the point $[T]$. Thus $(\fp^*\Lambda^{\GIT})|_{H_u}$ is $\bQ$-trivial. Since $H_h\cap H_u=\emptyset$ by \cite[Lemma 1.7.3]{LO19}, \eqref{eq:GIT-lambda} implies that 
\begin{equation}\label{eq:bu-1}
\left.\left(\lambda+\frac{H_u}{2}\right)\right|_{H_u}= 0.
\end{equation}
On the other hand, by \cite[Theorem 3.36]{ADL19} we know that 
$\Lambda_{\frac{9}{13}+\epsilon,\frac{9}{13}}=(\phi_5^+)^*\Lambda_{\frac{9}{13}}$ which is trivial along $H_u\subset H_{u, \frac{9}{13}+\epsilon}$. Since $\ofM_{1-\epsilon}^{\K}\dashrightarrow\ofM_{\frac{9}{13}+\epsilon}^{\K}$ is isomorphic near $H_u$, we have that 
$0 = \Lambda_{\frac{9}{13}+\epsilon,\frac{9}{13}}|_{H_u} = \Lambda_{1-\epsilon,\frac{9}{13}}|_{H_u}$. 
By \eqref{eq:CM-variation} we get 
\begin{equation}\label{eq:bu-2}
0= \left.\left(\frac{9}{13}\lambda+ \frac{4}{13} (b_h H_h+b_u H_u)\right)\right|_{H_u}=\frac{9}{13}\left.\left(\lambda+\frac{4}{9}b_u H_u\right)\right|_{H_u}
\end{equation}
Since $\lambda|_{H_u}$ is ample but not $\bQ$-trivial, equations \eqref{eq:bu-1} and \eqref{eq:bu-2} imply that $b_u=\frac{9}{8}$.

Finally, we compute $b_h$. Denote by $H_h^\circ:= H_h \setminus Z^2$ parametrizing hyperelliptic K3 surfaces $S$ with ADE singularities that are double covers of $\bP^1\times\bP^1$. By Theorem \ref{thm:firstwall}, we know that $(X_h, (\frac{1}{3}+\epsilon)S)$ is K-stable. Thus interpolation \cite[Proposition 2.13]{ADL19} implies that $(X_h, cS)$ is K-stable for any $c\in (\frac{1}{3}, 1)$, i.e. $H_h^\circ \subset H_{h,c}$. Similar to the $H_u$ case, we have $\Lambda_{\frac{1}{3}+\epsilon,\frac{1}{3}}=(\phi_1^+)^*\Lambda_{\frac{1}{3}}$ which is trivial along $H_h^\circ$, and $\ofM_{1-\epsilon}^{\K}\dashrightarrow\ofM_{\frac{1}{3}+\epsilon}^{\K}$ is isomorphic near $H_h^\circ$. Thus \eqref{eq:CM-variation} implies 
\begin{equation}\label{eq:bh-1}
0 =  \left.\left(\frac{1}{3}\lambda+ \frac{2}{3} (b_h H_h+b_u H_u)\right)\right|_{H_h^\circ}=\frac{1}{3}\left.(\lambda+2b_h H_h)\right|_{H_h^\circ}.
\end{equation}
Meanwhile, \eqref{eq:GIT-lambda} implies that 
\begin{equation}\label{eq:bh-2}
(\lambda+\frac{1}{2}H_h)|_{H_h^\circ}=0.
\end{equation}
It is clear that $H_h^\circ$ is a big open subset of $H_{h,\frac{1}{3}+\epsilon}$ which is isomorphic to the GIT moduli space of $(4,4)$-curves on $\bP^1\times\bP^1$. Thus $\lambda|_{H_h^\circ}$ is ample and not $\bQ$-trivial. 
%, and $H_{h,\frac{1}{3}+\epsilon}$ restricting to itself is anti-ample as it is the exceptional divisor of the weighted blow-up $\ofM_{\frac{1}{3}+\epsilon}^{\K}\to \ofM^{\GIT}$, thus $H_h|_{H_h^\circ}$ is anti-ample but not $\bQ$-trivial.
This combining with \eqref{eq:bh-1} and \eqref{eq:bh-2} implies that $b_h=\frac{1}{4}$. 
\end{proof}
% Since $\hsF\setminus (\widehat{H}_h\cup \widehat{H}_u)$ parametrizes log pairs $(\bP^3, S)$, we know that the support of $\Lambda_{1-\epsilon,0}$ is contained in $(\widehat{H}_h\cup \widehat{H}_u)$. Denote by $4^{-4}\Lambda_{1-\epsilon,0}= b_h \widehat{H}_h+b_u \widehat{H}_u$ on $\hsF$. 
% By Laza-O'Grady, we know that $H_h\cap H_u=\emptyset$, and $\lambda+\Delta$ is trivial after restricting to $H_h$ or $H_u$. By Theorem ???, we know that when $c=\frac{1}{3}$ (resp. $c=\frac{9}{13}$) the birational transform of the divisor $H_h$ (resp. $H_u$) is contracted under $\ofM_{c+\epsilon}^{\K}\to \ofM_c^{\K}$. Hence we have that 
% \[
% (\frac{1}{3}\lambda+\frac{2}{3}(b_h H_h + b_u H_u))|_{H_h}\sim_{\bQ} 0,\quad 
% (\frac{9}{13}\lambda+\frac{4}{13}(b_h H_h + b_u H_u))|_{H_u}\sim_{\bQ} 0.
% \]
% Thus we get $b_h=\frac{1}{4}$ and $b_u=\frac{9}{8}$. The proof is finished.
%\end{proof}

\begin{lem}\label{lem:small-variation}
For any $\epsilon\in (0,\frac{2}{11})\cap \bQ$, $a\in (0, \frac{1}{9})\cap\bQ$, and $b\in (0,\frac{1}{2})\cap\bQ$, we have 
\[
\ofM_{1-\epsilon}^{\K}\cong \Proj ~ R(\sF, \lambda + \tfrac{a}{2}H_h + \tfrac{b}{2} H_u)\cong \hsF.
\]
\end{lem}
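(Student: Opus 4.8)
The plan is to read off $\ofM_{1-\epsilon}^{\K}$ as the $\Proj$ of a section ring on $\sF$ using the CM computation already available, and then to match the resulting model with Looijenga's $\bQ$-Cartierization.

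First I would use Theorem~\ref{thm:CM-proportional}: for every rational $\epsilon\in(0,1)$ we have $\ofM_{1-\epsilon}^{\K}\cong\Proj R\bigl(\sF,(1-\epsilon)\lambda+\epsilon\Delta^{\K}\bigr)$ with $\Delta^{\K}=\tfrac14H_h+\tfrac98H_u$. Rescaling the polarizing class by the positive constant $(1-\epsilon)^{-1}$ does not change $\Proj$, so $\ofM_{1-\epsilon}^{\K}\cong\Proj R\bigl(\sF,\lambda+\tfrac{a^{\ast}}{2}H_h+\tfrac{b^{\ast}}{2}H_u\bigr)$ where $a^{\ast}=\tfrac{\epsilon}{2(1-\epsilon)}$ and $b^{\ast}=\tfrac{9\epsilon}{4(1-\epsilon)}$. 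As $\epsilon$ runs over $(0,\tfrac{2}{11})$, the point $(a^{\ast},b^{\ast})$ traces the open segment of slope $\tfrac92$ joining $(0,0)$ to $(\tfrac19,\tfrac12)$, and hence lies in the open rectangle $\mathcal R:=(0,\tfrac19)\times(0,\tfrac12)$; moreover $\tfrac{9}{11}=c_9$ is the largest K-moduli wall, so by Theorem~\ref{thm:k-moduli-walls} the space $\ofM_{1-\epsilon}^{\K}$ is independent of $\epsilon\in(0,\tfrac{2}{11})$. It therefore remains to show that $\Proj R\bigl(\sF,\lambda+\tfrac a2H_h+\tfrac b2H_u\bigr)$ is the same for all $(a,b)\in\mathcal R$ and equals $\hsF$.

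For the identification, I would recall that $\sF^{\ast}=\Proj R(\sF,\lambda)$ and that $\hsF$ is Looijenga's $\bQ$-Cartierization of $\sF^{\ast}$ along $H_h$ and $H_u$: the small birational modification $g\colon\hsF\to\sF^{\ast}$ which is an isomorphism over the locus where $\sF^{\ast}$ is $\bQ$-factorial along $H_h\cup H_u$, and along which the strict transforms of $H_h$ and $H_u$ become $\bQ$-Cartier. Since $H_h\cap H_u=\varnothing$ in $\sF$ by \cite[Lemma 1.7.3]{LO19}, the modifications along the two divisors are independent, so the birational models $\Proj R(\sF,\lambda+\tfrac a2H_h+\tfrac b2H_u)$ have a product wall-chamber structure near $(a,b)=(0,0)$, and by Looijenga's construction (and the structure of the Hassett–Keel–Looijenga program recalled in Section~\ref{sec:laza-ogrady}) the chamber adjacent to $\sF(0,0)=\sF^{\ast}$ is precisely $\hsF$. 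The open K-moduli segment from the first paragraph lies in this first chamber and exits it exactly at $(\tfrac19,\tfrac12)$, since $c_9=\tfrac9{11}$ is the first wall of $\ofM_c^{\K}$ from above; by the product structure this forces the whole rectangle $\mathcal R\subset(0,\tfrac19)\times(0,1)$ to lie in the first chamber. Hence $\Proj R(\sF,\lambda+\tfrac a2H_h+\tfrac b2H_u)\cong\hsF$ for all $(a,b)\in\mathcal R$, and combined with the first paragraph the lemma follows.

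The hard part will be the two-dimensional invariance: upgrading the one-parameter family produced by the CM computation to a statement that the models $\sF(a,b)$ are locally constant on a genuine region around the origin. This is exactly where the disjointness $H_h\cap H_u=\varnothing$ and Looijenga's explicit description of the $\bQ$-Cartierization are needed, and where the coefficients $\tfrac14$ and $\tfrac98$ of $\Delta^{\K}$ do their work: they are what makes the K-moduli segment exit the first chamber at $(\tfrac19,\tfrac12)$, matching the first K-moduli wall $c_9=\tfrac9{11}$. (An alternative, more internal, route would be to observe that $\ofM_{1-\epsilon}^{\K}$ is normal projective with $\sF$ as a big open subset, that $\Lambda_{1-\epsilon,\Hdg}$ is nef with ample model $\sF^{\ast}$, so the ample-model morphism $\ofM_{1-\epsilon}^{\K}\to\sF^{\ast}$ is small and $\bQ$-Cartierizes $H_h$ and $H_u$; one then checks minimality and controls its relative ample cone to deduce that $\lambda+\tfrac a2H_h+\tfrac b2H_u$ is ample on $\ofM_{1-\epsilon}^{\K}$ for every $(a,b)\in\mathcal R$.)
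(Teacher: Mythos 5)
Your first paragraph is sound: Theorem~\ref{thm:CM-proportional} identifies $\ofM_{1-\epsilon}^{\K}$ with the $\Proj$ along the slope-$\tfrac92$ segment through the rectangle $\mathcal R=(0,\tfrac19)\times(0,\tfrac12)$, and Theorem~\ref{thm:k-moduli-walls} makes this independent of $\epsilon$. The problem is the second paragraph: the passage from the segment to the whole rectangle is asserted, not proved. Saying that $H_h\cap H_u=\varnothing$ gives a ``product wall-chamber structure'' is exactly the statement that needs justification. In general, knowing that $\Proj R(\sF,\lambda+\tfrac a2 H_h+\tfrac b2 H_u)$ is constant along a single line segment through $\mathcal R$ says nothing about other points of $\mathcal R$ (a wall $a=a_0<\tfrac19$ or $b=b_0<\tfrac12$ would cross the rectangle without touching the conclusion you have), and the ``product'' behaviour of the chamber decomposition is not an a priori feature of section rings with disjoint supports — it has to be extracted from a positivity computation on a model where both divisors are $\bQ$-Cartier. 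Without that, the argument is circular.

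The paper's proof does precisely the missing verification, and it is the same idea you gesture at in your parenthetical ``alternative, more internal route.'' Since $\sF(a,b)$'s differ from $\ofM_{1-\epsilon}^{\K}$ only in codimension $\geq 2$, it suffices to show that $(\fp_{1-\epsilon}^{-1})_*\bigl(\lambda+\tfrac a2 H_h+\tfrac b2 H_u\bigr)$ is ample on $\ofM_{1-\epsilon}^{\K}$ for all $(a,b)\in\mathcal R$. The paper does this by splitting into $b\le\tfrac92 a$ and $b>\tfrac92 a$ and writing the class in question as a convex combination of an ample CM line bundle (from \cite{XZ19}) and either $\lambda+\tfrac a2 H_h$ or $\lambda+\tfrac b2 H_u$; the nefness of the latter is then verified by restricting to $H_{h,1-\epsilon}$ (resp.\ $H_{u,1-\epsilon}$), using $\lambda$ nef, effectivity of the divisor, and — this is where the disjointness actually enters rigorously — the identity
\[
\Bigl(\lambda+\tfrac a2 H_h\Bigr)\Big|_{H_{h,1-\epsilon}}=\Bigl(\lambda+\tfrac a2 H_h+\tfrac{9a}{4}H_u\Bigr)\Big|_{H_{h,1-\epsilon}},
\]
the right side being a restriction of the ample CM class. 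So the disjointness is used to reduce a restricted nefness check to a known ampleness, not to produce a ``product chamber structure'' by fiat. You should develop this route explicitly rather than leaving it as an aside; as written, the main argument does not close. The final identification with $\hsF$ is then a direct citation of \cite[Proposition 17]{LO18b}, which you correctly recalled but did not anchor.
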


\begin{proof}
We focus on the first isomorphism, as the second isomorphism is an easy consequence of \cite[Proposition 17]{LO18b} where it is shown that $\hsF \cong \Proj~ R(\sF, \lambda+\epsilon\Delta)$ for $0<\epsilon \ll 1$.
By Theorem \ref{thm:k-moduli-walls}, we know that $\ofM_{1-\epsilon}^{\K}$ is independent of the choice of $\epsilon \in (0,\frac{2}{11})\cap\bQ$. 
Since $\ofM_{1-\epsilon}^{\K}$ and $\sF$ are isomorphic in codimension $1$, it suffices to show that  $(\fp_{1-\epsilon}^{-1})_*(\lambda+\frac{1}{2} (aH_h + b H_u))$ is ample on $\ofM_{1-\epsilon}^{\K}$ for $\epsilon\in (0,\frac{2}{11})\cap \bQ$, $a\in (0, \frac{1}{9})\cap\bQ$, and $b\in (0,\frac{1}{2})\cap\bQ$. We split into two cases.

\textbf{Case 1: $b\leq \frac{9}{2}a$.} 
Since $a<\frac{1}{9}$, we may choose $\epsilon :=\frac{2a}{1+2a}<\frac{2}{11}$.  As $\Delta^{\K}=\frac{1}{4}H_h+\frac{9}{8}H_u$, we have 
\begin{equation}\label{eq:case1}
\lambda+\frac{1}{2} (aH_h + bH_u) = \frac{2b}{9a}(\lambda + 2a \Delta^{\K}) + (1-\frac{2b}{9a})(\lambda+\frac{a}{2}H_h).
\end{equation}
By Theorems \ref{thm:k-moduli-walls} and \ref{thm:CM-proportional},  $(\fp_{1-\epsilon}^{-1})_*(\lambda+2a \Delta^{\K})=(1+2a)(\fp_{1-\epsilon}^{-1})_*((1-\epsilon)\lambda+\epsilon \Delta^{\K})$ is a positive multiple of the CM $\bQ$-line bundle $\Lambda_{1-\epsilon}$ on $\ofM_{1-\epsilon}^{\K}$ hence is ample by \cite{XZ19}. As we mentioned earlier, 
$(\fp_{1-\epsilon}^{-1})_*\lambda=\Lambda_{1-\epsilon,\Hdg}$ is nef on $\ofM_{1-\epsilon}^{\K}$. Thus by \eqref{eq:case1} it suffices to show that $(\fp_{1-\epsilon}^{-1})_*(\lambda+ \frac{a}{2}H_h)=\Lambda_{1-\epsilon,\Hdg} + \frac{a}{2}H_{h,1-\epsilon}$ is nef on $\ofM_{1-\epsilon}^{\K}$. Since $\Lambda_{1-\epsilon,\Hdg}$ is nef and $H_{h,1-\epsilon}$ is effective $\bQ$-Cartier, it suffices to show that 
$(\Lambda_{1-\epsilon,\Hdg} + \frac{a}{2}H_{h,1-\epsilon})|_{H_{h,1-\epsilon}}$
is nef. By Theorem \ref{thm:k-moduli-walls}, we know that $H_{h,1-\epsilon}$ and $H_{u, 1-\epsilon}$ are disjoint. Thus 
\[
(\Lambda_{1-\epsilon,\Hdg} + \frac{a}{2}H_{h,1-\epsilon})|_{H_{h,1-\epsilon}}=(\Lambda_{1-\epsilon,\Hdg} + \frac{a}{2}H_{h,1-\epsilon}+\frac{9a}{4} H_{u, 1-\epsilon})|_{H_{h,1-\epsilon}}
\]
is a positive multiple of $\Lambda_{1-\epsilon}|_{H_{h,1-\epsilon}}$ which is ample. Thus Case 1 is proved.

\textbf{Case 2: $b > \frac{9}{2}a$.}
Since $b<\frac{1}{2}$, we may choose $\epsilon :=\frac{4b}{9+4b}<\frac{2}{11}$.  Then we have 
\[
\lambda+\frac{1}{2} (aH_h + bH_u) = \frac{9a}{2b}(\lambda + \frac{4b}{9} \Delta^{\K}) + (1-\frac{9a}{2b})(\lambda+\frac{b}{2}H_u).
\]
Similarly to Case 1, $(\fp_{1-\epsilon}^{-1})_*(\lambda+\frac{4b}{9} \Delta^{\K})=(1+\frac{4b}{9})(\fp_{1-\epsilon}^{-1})_*((1-\epsilon)\lambda+\epsilon \Delta^{\K})$ is ample as a positive multiple of the CM $\bQ$-line bundle $\Lambda_{1-\epsilon}$. Thus it suffices to show the nefness of 
$(\Lambda_{1-\epsilon,\Hdg} + \frac{b}{2}H_{u,1-\epsilon})|_{H_{u,1-\epsilon}}$ as $H_{u,1-\epsilon}$ is also effective $\bQ$-Cartier. This again follows from the disjointness of  $H_{h,1-\epsilon}$ and $H_{u, 1-\epsilon}$  and the ampleness of  $\Lambda_{1-\epsilon}|_{H_{u,1-\epsilon}}$. Thus Case 2 is proved.
\end{proof}

\begin{proof}[Proof of Theorem \ref{mthm:Kmod}]
Part (1) is a consequence of Theorem \ref{thm:firstwall}.  Part (2) is exactly Theorem \ref{thm:CM-proportional}. Part (4) follows from Theorem \ref{thm:k-moduli-walls}. Hence we only need to prove part (3).
By construction, $\sF$, $\hsF$, and $\sF^*$ are all isomorphic in codimension $1$. Hence Lemma \ref{lem:small-variation} implies $(\hsF,\hat{\lambda})\cong (\ofM_{1-\epsilon}^{\K},\Lambda_{1-\epsilon,\Hdg})$ where $\hat{\lambda}$ is the unique extension of $\lambda$ on $\hsF$. Clearly, $\lambda$ uniquely extends to an ample $\bQ$-line bundle $\lambda^*$ on $\sF^*$ whose pullback under the morphism $\hsF\to \sF^*$ is exactly $\hat{\lambda}$. Thus (3) is proved.
\end{proof}

\subsection{Proof of Laza-O'Grady's prediction}

% According to the above computation, we have 
% \[
% c\lambda+(1-c)\Delta^{\K}=c\lambda + \frac{1-c}{4}H_h +\frac{9(1-c)}{8}H_u = c(\lambda + \frac{1-c}{2c}\Delta)+\frac{7(1-c)}{8} H_u.
% \]
% Thus the K-moduli walls for $c$ (except $c=\frac{9}{13}$) relates to the Laza-O'Grady predicted walls as 
% \[
% \beta=\frac{1-c}{2c}, \quad c=\frac{1}{1+2
% \beta}.
% \]

%In \cite{LO19}, Laza and O'Grady predicted that the section ring  $R(\sF, \lambda+ \beta \Delta)$ is finitely generated for $\beta\in [0,1]\cap\bQ$ and $\Delta=\frac{1}{2}(H_h+H_u)$.

In this section, we prove Theorem \ref{mthm:fg} which implies  Laza-O'Grady's prediction. The key idea is to construct $\sF(a,b)$ from modifications of K-moduli spaces $\ofM_{c}^{\K}$, use positivity of the log CM line bundle \cite{CP18, Pos19, XZ19}, and follow the MMP with scaling from \cite{KKL16}. 

We recall some notation and consequences from the proof of Theorem \ref{thm:T_0-replace}. We define $U_T=\ofM^{\GIT}\setminus W_8$ and $U_u=(\phi_5^+)^{-1} (\phi_5^{-}(U_T))\subset \ofM^{\K}_{\frac{9}{13}+ \epsilon}$ where $\phi_5^{\pm}:\ofM^{\K}_{\frac{9}{13}\pm \epsilon}\to \ofM^{\K}_{\frac{9}{13}}$ are the K-moduli wall crossing morphisms. Moreover, there are canonical open immersions $U_T\hookrightarrow \ofM_c^{\K}$ and $U_u \hookrightarrow \ofM_{c'}^{\K}$ for any $0< c< \frac{9}{13}<c'<1$. Let $\rho_u: U_u\to U_T$ be the composition $\rho_u : = ((\phi_5^{-})^{-1}\circ \phi_5^+)|_{U_u}$. Then $\rho_u$ is a projective birational morphism that contracts the divisor $H_{u, \frac{9}{13}+\epsilon}$ to the point $[T]\in U_T$. In addition, the restriction of $\rho_u$ to $U_u \setminus H_{u, \frac{9}{13}+\epsilon}$, denoted by $\rho_u^\circ$, is  an isomorphim onto $U_T\setminus\{[T]\}$.

\begin{defn}\label{def:F(a,b)}%\footnote{needs to be fixed \YL{done}}
Let $a,b\in \bQ_{>0}$. Denote by $c=c(a):=\frac{1}{1+2a}$. We define the schemes $\sF(a,b)$ as follows.
\begin{enumerate}
%    \item $\sF(0):= \sF^*$, $\sF(1):=\ofM^{\GIT}$;
    \item For $a\in (0,\frac{2}{9})$ and $b\in (0,1)$, we define $\sF(a,b):=\ofM_c^{\K}$;
    \item For $a\in (\frac{2}{9},+\infty)$ and $b\in (0,1)$, we define $\sF(a,b)$ as the gluing of $\ofM_c^{\K}\setminus \{[(\bP^3, T)]\}$ and $U_u$ through the isomorphism of open subschemes $U_T\setminus\{[T]\}\xrightarrow[\cong]{(\rho_u^\circ)^{-1}} U_u\setminus H_{u, \frac{9}{13}+\epsilon}$, and $\sF(\frac{2}{9},b):=\sF(\frac{2}{9}+\epsilon,b)$ for $0<\epsilon\ll 1$;
    %we define $\sF(a,b)$ as the Kirwan weighted blow-up of $\ofM_c^{\K}$ at $[(\bP^3, T)]$ which extracts the birational transform of $H_u$, and $\sF(\frac{2}{9},b):=\sF(\frac{2}{9}+\epsilon,b)$ for $0<\epsilon\ll 1$;
   % \footnote{replace (2) and (3) by defining F(a,b) using gluing; make it by gluing together $U$ and $V$ where $U$ is a neighborhood of $[T]$ (defined in proof of 4.20) and $V$ is a neighborhood not including $[T]$; then after wall we glue $U'$ and $V$ instead where $U'$ is extracting the divisor over $[T]$...gluing should be okay (proper, projective, etc) because we are only gluing two sets and the only thing modified is in $U - V$ (so their intersection remains the same) \YL{done}}
    \item For $a\in (0,\frac{2}{9})$ and $b\in [1,+\infty)$, we define $\sF(a,b)$ as the gluing of $\ofM_c^{\K}\setminus H_{u,c}$ and $U_{T}$ through the isomorphism of open subschemes $U_u\setminus H_{u, \frac{9}{13}+\epsilon} \xrightarrow[\cong]{\rho_u^\circ} U_T\setminus\{[T]\} $;
    %the Kirwan weighted blow-down of $\ofM_c^{\K}$ that contracts $H_{u,c}$ to a point;
    \item For $a\in [\frac{2}{9},+\infty)$ and $b \in [1,+\infty)$ we define $\sF(a,b):=\ofM_c^{\K}$.
\end{enumerate}

\begin{prop}\label{prop:F(a,b)proper}
For every $a,b\in \bQ_{>0}$, the scheme $\sF(a,b)$ is an irreducible normal proper scheme. Denote by $c=c(a):=\frac{1}{1+2a}$. Moreover, if $a\in (\frac{2}{9},+\infty)$ and $b\in (0,1)$, then there is a birational morphism $\sigma_{a,b}:\sF(a,b) \to \ofM_c^{\K}$ which contracts $H_{u, \frac{9}{13}+\epsilon}$ to the point $[(\bP^3, T)]$, and is isomorphic elsewhere; if $a\in (0,\frac{2}{9})$ and $b\in [1,+\infty)$, then there is a birational morphism $\sigma_{a,b}':\ofM_c^{\K} \to \sF(a,b)$ which contracts $H_{u,c}$ to the point $[T]$, and is isomorphic elsewhere.
\end{prop}

\begin{proof}
We first construct the birational morphisms.

Suppose $a\in (\frac{2}{9}, +\infty)$ and $b\in (0,1)$ which implies $c\in (0, \frac{9}{13})$. Hence by Theorem \ref{thm:T_0-replace} we know that $\ofM_c^{\K}$ is the gluing of $\ofM_c^{\K}\setminus \{[(\bP^3, T)]\}$ and $U_T$ through the common open subschemes $U_T\setminus\{[T]\}$. Thus by Definition \ref{def:F(a,b)}(2) we know that there is a birational morphism $\sigma_{a,b}:\sF(a,b) \to \ofM_c^{\K}$ obtained by gluing    the identity map on  $\ofM_c^{\K}\setminus \{[(\bP^3, T)]\}$ and $\rho_u: U_u \to U_T$. Since $\rho_u$ is a projective morphism, so is $\sigma_{a,b}$. Thus $\sF(a,b)$ is a projective scheme. 

Suppose $a\in (0,\frac{2}{9})$ and $b\in [1,+\infty)$ which implies $c\in (\frac{9}{13},1)$. Hence by Theorem \ref{thm:T_0-replace} we know that $\ofM_c^{\K}$ is the gluing of two $\ofM_c^{\K}\setminus H_{u,c}$ and $U_u$ through the common open subschemes $U_u\setminus H_{u, \frac{9}{13}+\epsilon}$. Thus by Definition \ref{def:F(a,b)}(3) we know that there is a birational morphism $\sigma_{a,b}':\ofM_c^{\K}\to\sF(a,b)$ obtained by gluing   the identity map on  $\ofM_c^{\K}\setminus H_{u,c}$ and $\rho_u: U_u \to U_T$. Then it is clear that $\sigma_{a,b}$ is a  surjective proper morphism, which implies that $\sF(a,b)$ is a proper scheme by \cite[\href{https://stacks.math.columbia.edu/tag/03GN}{Tag 03GN} and \href{https://stacks.math.columbia.edu/tag/09MQ}{Tag 09MQ}]{stacks-project}.

Finally, the irreducibility and normality of $\sF(a,b)$ come from the corresponding properties of $\ofM_c^{\K}$ by Theorem \ref{thm:k-moduli-walls}.
\end{proof}

Denote by $\psi_{a,b}: \sF\dashrightarrow \sF(a,b)$ the inverse of the birational period map. Let $\lambda(a,b)$, $H_h(a,b)$, and $H_u(a,b)$ be the pushforward of $\lambda$, $H_h$, and $H_u$ under $\psi_{a,b}$, respectively. By Theorem \ref{thm:k-moduli-walls}, Definition \ref{def:F(a,b)} and Proposition \ref{prop:F(a,b)proper}, we know that such $\sF(a,b)$'s undergo wall crossings at $a=a_i$ or $b=1$ where
\[
(a_1, \cdots, a_8)=\left(\frac{1}{9},  \frac{1}{7}, \frac{1}{6}, \frac{1}{5}, \frac{1}{4}, \frac{1}{3}, \frac{1}{2},1\right)
\]
We denote the wall crossing morphisms for a fixed $b\in (0,1)$ by $\sF(a_i-\epsilon,b)\xrightarrow{\varphi_i^-} \sF(a_i,b)\xleftarrow{\varphi_i^+}\sF(a_i+\epsilon,b)$.
\end{defn}

\begin{lem}\label{lem:antiample}%\footnote{\YL{This is a new lemma.}}
Recall the birational morphisms $\rho: \ofM_{\frac{1}{3}+\epsilon}^{\K}\to \ofM^{\GIT}$ from Theorem \ref{thm:firstwall} and $\rho_u: U_u\to U_T$ from Definition \ref{def:F(a,b)}. Then $H_{h, \frac{1}{3}+\epsilon}$ is $\rho$-anti-ample, and $H_{u, \frac{9}{13}+\epsilon}$ is $\rho_u$-anti-ample.
\end{lem}

\begin{proof}
Since $\rho$ and $\rho_u$ come from wall-crossing morphisms of $\ofM_c^{\K}$ at $c=\frac{1}{3}$ and $c=\frac{9}{13}$ respectively, by Theorem \ref{thm:CM-proportional} we know that $(\fp_{c+\epsilon}^{-1})_*\Delta^{\K}$ on $\ofM_{c+\epsilon}^{\K}$ is relatively anti-ample over $\ofM_c^{\K}$ for $c\in \{\frac{1}{3}, \frac{9}{13}\}$. For $c=\frac{1}{3}$, we know that $(\fp_{\frac{1}{3}+\epsilon}^{-1})_*\Delta^{\K} = \frac{1}{4}H_{h, \frac{1}{3}+\epsilon}$ which implies the first statement as $\ofM_{\frac{1}{3}}^{\K}\cong \ofM^{\GIT}$. For $c=\frac{9}{13}$, we know that   $(\fp_{\frac{1}{3}+\epsilon}^{-1})_*\Delta^{\K} = \frac{1}{4}H_{h, \frac{9}{13}+\epsilon} + \frac{9}{8}H_{u, \frac{9}{13}+\epsilon} $. From the definition we know that $H_{h, \frac{9}{13}+\epsilon}$ is disjoint from $U_u$, hence $\frac{9}{8}H_{u, \frac{9}{13}+\epsilon} $ is relatively anti-ample over $U_T$ which implies the second statement. 
\end{proof}

\begin{lem}\label{lem:CM-perturb}
Let $a,b\in (0,1)\cap\bQ$. Then the birational map $\psi_{a,b}:\sF\dashrightarrow \sF(a,b)$ is an isomorphism in codimension $1$. Moreover, we have the following. 
\begin{enumerate}
    \item If $a\in (0, \frac{2}{9})$, then $\lambda(a,b) +\frac{a}{2}H_h(a,b)+ \frac{9a}{4} H_u(a,b)$ is ample on $\sF(a,b)$;
    \item If $a\in [\frac{2}{9},1)$, then $\lambda(a,b) +\frac{a}{2}H_h(a,b) + \frac{1-\epsilon}{2} H_u(a,b)$ is ample on $\sF(a,b)$ for $0<\epsilon\ll 1$. 
    \item $\lambda(a,b)+\frac{a}{2}H_h(a,b)$ is nef for any $a,b\in (0,1)\cap\bQ$.
\end{enumerate}
\end{lem}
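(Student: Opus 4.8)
\textbf{Proof strategy for Lemma \ref{lem:CM-perturb}.}

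The plan is to deduce everything from Theorem \ref{thm:CM-proportional}, which identifies the CM $\bQ$-line bundle $\Lambda_c$ on $\ofM_c^{\K}$ with a positive multiple of $(\fp_c^{-1})_*(c\lambda+(1-c)\Delta^{\K})$, together with ampleness of $\Lambda_c$ from \cite{XZ19}, and from the wall-crossing description in Theorem \ref{thm:k-moduli-walls}. First I would record the $\bQ$-linear-equivalence dictionary: with $c=c(a)=\frac{1}{1+2a}$ one has $c\lambda+(1-c)\Delta^{\K}=\frac{1}{1+2a}\bigl(\lambda+2a\Delta^{\K}\bigr)=\frac{1}{1+2a}\bigl(\lambda+\frac{a}{2}H_h+\frac{9a}{4}H_u\bigr)$, so ampleness of $\Lambda_c$ on $\ofM_c^{\K}$ is exactly ampleness of $\lambda(a,b)+\frac{a}{2}H_h(a,b)+\frac{9a}{4}H_u(a,b)$ in the range $a\in(0,\tfrac29)$ where, by Definition \ref{def:F(a,b)}(1), $\sF(a,b)=\ofM_c^{\K}$. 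That the birational map $\psi_{a,b}$ is an isomorphism in codimension $1$ follows because all but two K-moduli wall crossings are flips (Theorem \ref{thm:k-moduli-walls}), and the two divisorial contractions only contract $H_{h,c}$ and $H_{u,c}$ onto points in $\ofM^{\GIT}$, which are loci already recorded in $\sF$ via $H_h$ and $H_u$ — so passing between $\sF$ and $\sF(a,b)$ never contracts or extracts a divisor not accounted for by $\lambda,H_h,H_u$. This gives part (1) immediately.

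For part (2), i.e. $a\in[\tfrac29,1)$: here $\sF(a,b)$ is no longer $\ofM_c^{\K}$ but (Definition \ref{def:F(a,b)}(2)) a Kirwan weighted blow-up $\mu\colon\sF(a,b)\to\ofM_c^{\K}$ extracting the birational transform of $H_u$, which by Theorem \ref{thm:T_0-replace} had been contracted to $[(\bP^3,T)]$ (or to $[T]\in\ofM^{\GIT}$). I would write $\Lambda_c$ pulled back under $\mu$ as $\mu^*\Lambda_c$, which is nef and big but fails to be ample precisely along the $\mu$-exceptional divisor $E=H_u(a,b)$. The standard trick is to perturb by a small negative multiple of $E$: since $-E|_E$ is $\mu$-ample (the defining property of a Kirwan/weighted blow-up), $\mu^*\Lambda_c-\eta E$ is ample for $0<\eta\ll1$. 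Translating this back through the dictionary above, $\mu^*\bigl(\lambda+\frac{a}{2}H_h+\frac{9a}{4}H_u\bigr)-\eta H_u(a,b)$ is ample; rescaling the coefficient of $H_u$ to the form $\frac{1-\epsilon}{2}$ (legitimate since $\frac{9a}{4}\ge\frac{1}{2}$ when $a\ge\frac29$, so decreasing it by a small amount stays in the positive cone and hits $\frac{1-\epsilon}{2}$ for a suitable small $\epsilon$) yields part (2). I should double-check the elementary convexity computation that $\lambda(a,b)+\frac{a}{2}H_h(a,b)+\frac{1-\epsilon}{2}H_u(a,b)$ is a convex combination of an ample class $\mu^*\Lambda_c$-type class and the nef class from part (3), lying strictly inside the ample cone.

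For part (3), nefness of $\lambda(a,b)+\frac{a}{2}H_h(a,b)$ for all $a,b\in(0,1)\cap\bQ$: I would run the same argument as in Lemma \ref{lem:small-variation}, Case 1, but now on $\sF(a,b)$ rather than only on $\ofM_{1-\epsilon}^{\K}$. Write $\lambda(a,b)+\frac{a}{2}H_h(a,b)=\bigl(\lambda(a,b)+\frac{a}{2}H_h(a,b)+\frac{9a}{4}H_u(a,b)\bigr)-\frac{9a}{4}H_u(a,b)$: the first bracket is ample (part (1)) or nef-and-big after blow-up (part (2)), and the subtracted term is an effective $\bQ$-Cartier divisor, so nefness reduces to checking the restriction $\bigl(\lambda(a,b)+\frac{a}{2}H_h(a,b)\bigr)|_{H_u(a,b)}$ is nef. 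By Theorem \ref{thm:k-moduli-walls} the divisors $H_{h,c}$ and $H_{u,c}$ are disjoint on every $\ofM_c^{\K}$, so $H_h(a,b)|_{H_u(a,b)}=0$; hence the restriction equals $\lambda(a,b)|_{H_u(a,b)}$, and on the unigonal locus $\lambda$ restricts to a positive multiple of an ample CM-type class (as in the $H_u$ computation inside the proof of Theorem \ref{thm:CM-proportional}), hence is nef. One also has to handle the case $b\ge1$ (Definition \ref{def:F(a,b)}(3)–(4)), where $H_u$ has been contracted; there $H_u(a,b)$ is a point and the statement is vacuous on that locus, so nefness follows a fortiori. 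The main obstacle I expect is bookkeeping: carefully tracking which birational model $\sF(a,b)$ is in play for each $(a,b)$-range, and verifying that the perturbation coefficients ($\frac{9a}{4}$ versus $\frac{1-\epsilon}{2}$, the size of $\eta$ and $\epsilon$) stay coherent across the wall at $a=\frac29$ — i.e. checking the inequality $\frac{9a}{4}\ge\frac12\iff a\ge\frac29$ is the precise hinge that makes the two regimes in (1) and (2) match up.
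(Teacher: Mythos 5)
Your part (1) is correct and matches the paper's argument. The other two parts have genuine errors.

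\textbf{Part (2).} You claim that $\sigma_{a,b}^*\Lambda_c$ "translates" to $\sigma_{a,b}^*\bigl(\lambda+\tfrac{a}{2}H_h+\tfrac{9a}{4}H_u\bigr)$ and then that shaving the $H_u$ coefficient down from $\tfrac{9a}{4}$ to $\tfrac{1-\epsilon}{2}$ is a small perturbation. Neither is right. Once $a\ge\tfrac{2}{9}$, i.e. $c\le\tfrac{9}{13}$, the period map $\fp_c^{-1}\colon\sF\dashrightarrow\ofM_c^{\K}$ contracts $H_u$, so $(\fp_c^{-1})_*H_u=0$; the ample class on $\ofM_c^{\K}$ is $(\fp_c^{-1})_*\bigl(\lambda+\tfrac{a}{2}H_h\bigr)$, with no $H_u$ left. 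Pulling that back along the Kirwan blow-up $\sigma_{a,b}$ reintroduces $H_u(a,b)$, but its coefficient is dictated by the discrepancy of $\lambda$ along $H_u$, not by $\tfrac{9a}{4}$. The identity $(\lambda+\tfrac{1}{2}H_u)|_{H_u}\sim_{\bQ}0$ from \eqref{eq:bu-1} gives $\sigma_{a,b}^*(\fp_c^{-1})_*(\lambda+\tfrac{a}{2}H_h)=(\psi_{a,b})_*\bigl(\lambda+\tfrac{a}{2}H_h+\tfrac{1}{2}H_u\bigr)$, and only then does subtracting $\eta H_u(a,b)$ (which is $\sigma_{a,b}$-ample with reversed sign) land you on $\tfrac{1-\epsilon}{2}$. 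Your route skips \eqref{eq:bu-1}, and the resulting "decrease from $\tfrac{9a}{4}$ to $\tfrac{1-\epsilon}{2}$" is not small once $a$ moves away from $\tfrac{2}{9}$ (e.g. $a=\tfrac{1}{2}$ gives $\tfrac{9a}{4}=\tfrac{9}{8}$).

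\textbf{Part (3).} Your reduction step is backwards. Writing $D=\lambda(a,b)+\tfrac{a}{2}H_h(a,b)=A-\tfrac{9a}{4}H_u(a,b)$ with $A$ ample does \emph{not} reduce nefness of $D$ to nefness of $D|_{H_u(a,b)}$: for a curve $C\not\subset H_u(a,b)$ one has $D\cdot C=A\cdot C-\tfrac{9a}{4}H_u(a,b)\cdot C$, and the subtracted term can dominate (think of $\mathrm{Bl}_p\PP^2$, $D=2H-4E$: $D|_E$ is positive but $D$ meets the line through $p$ negatively). The useful version of the trick requires $D=N+\alpha E$ with $N$ already nef and $\alpha>0$, which is exactly how the paper sets it up (with the effective term on $H_h$, not $H_u$) inside a wall-by-wall induction: across each wall $a_i$ one transports nefness of $\lambda+\tfrac{a_i}{2}H_h$ via pullback/pushforward along $\varphi_i^\pm$, then inside a chamber one writes $\lambda+\tfrac{a}{2}H_h=\bigl(\lambda+\tfrac{a_i}{2}H_h\bigr)+\tfrac{a-a_i}{2}H_h$ and checks the restriction to $H_h(a,b)$, using disjointness of $H_h$ and $H_u$ together with ampleness of the CM class from parts (1)–(2). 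The base case $a<\tfrac{1}{9}$ comes from Lemma \ref{lem:small-variation} by letting $b\to 0$. Your proposal does not contain this induction, and the restriction-to-$H_u$ substitute does not close the gap.
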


\begin{proof}
By Theorem \ref{thm:CM-proportional} we know that  $(\fp_c^{-1})_*(\lambda+\frac{1-c}{c}\Delta^{\K})$ is ample on $\ofM_c^{\K}$.

(1) If $a\in (0,\frac{2}{9})$, we know that $\fp_c^{-1}=\psi_{a,b}$, hence $(\psi_{a,b})_* (\lambda+\frac{1-c}{c}\Delta^{\K})$ is ample on $\sF(a,b)$ for $c=\frac{1}{1+2a}$. It is clear that 
\[
\lambda+\frac{1-c}{c}\Delta^{\K}= \lambda + 2a (\frac{1}{4} H_h + \frac{9}{8}H_u)=\lambda + \frac{a}{2} H_h + \frac{9a}{4}H_u.
\]
Hence $\lambda(a,b)+\frac{a}{2}H_h(a,b)+ \frac{9a}{4} H_u(a,b)$ is ample.

(2) If $a\in [\frac{2}{9}, 1)$, by Theorem \ref{thm:k-moduli-walls} we know that $\fp_c^{-1}$ is a birational contraction which only contracts the divisor $H_u$ since $c=\frac{1}{1+2a}\in (\frac{1}{3},\frac{9}{13}]$. Hence we know 
\[
(\fp_c^{-1})_* (\lambda+\frac{a}{2}H_h)=(\fp_c^{-1})_* (\lambda+\frac{1-c}{c}\Delta^{\K})
\]
is ample on $\ofM_c^{\K}$. By Proposition \ref{prop:F(a,b)proper} and Lemma \ref{lem:antiample}, there is a birational morphism  $\sigma_{a,b}: \sF(a,b)\to \ofM_c^{\K}$ with exceptional divisor $H_u(a,b)$ which is anti-ample over $\ofM_c^{\K}$. Since $(\lambda+\frac{1}{2}H_u)|_{H_u}\sim_{\bQ} 0$ and $H_h\cap H_u=\emptyset$ by \eqref{eq:bu-1}, we know that 
\[
(\sigma_{a,b})^* (\fp_c^{-1})_* (\lambda+\frac{a}{2}H_h) = (\psi_{a,b})_* (\lambda +\frac{a}{2}H_h+ \frac{1}{2} H_u ). 
\]
Hence $(\psi_{a,b})_* (\lambda +\frac{a}{2}H_h+ \frac{1-\epsilon}{2} H_u )$ is ample as it is the pull back of an ample $\bQ$-divisor on $\ofM_c^{\K}$ twisted by a small multiple of a $\sigma_{a,b}$-ample divisor.

(3) Since the statement is independent of the choice of $b\in (0,1)$, we may assume $b\in (0, \frac{1}{2})$. We prove nefness by induction on the walls of $a$.  To start with, we assume $a\in (0,a_1)$ where $a_1=\frac{1}{9}$. Then we have $\sF(a,b)\cong \ofM_{c}^{\K}\cong \hsF$ by Lemma \ref{lem:small-variation} as $c=\frac{1}{1+2a}\in (\frac{9}{11},1)$. 
By Lemma \ref{lem:small-variation}, we know that $\lambda(a,b)+\frac{a}{2}H_h(a,b)+\frac{b}{2}H_u(a,b)$ is ample. Hence we get nefness of $\lambda(a,b)+\frac{a}{2}H_h(a,b)$ for $a\in (0, a_1)$ by letting $b\to 0$. Next, we divide the induction into two parts. Note that we always assume $0<\epsilon\ll 1$ in this proof.
%It is clear that $\widehat{\lambda}$ is nef on $\hsF$. Since $\widehat{H}_h$ is effective, it suffices to show that $((\widehat{\lambda}+\frac{a}{2}\widehat{H}_h)\cdot C)\geq 0$ for any curve $C\subset \widehat{H}_h$. This is true since $\widehat{H}_h\cap \widehat{H}_u=\emptyset$ and $\widehat{\lambda}+\frac{a}{2}\widehat{H}_h + b \widehat{H}_u$ is ample. 
%To simplify notation, we denote by $\lambda_a$, $H_h(a,b)$, and $H_u(a,b)$ the $\psi_{a,b}$-push forward of $\lambda$, $H_h$, and $H_u$, respectively. 

Assume that $\lambda (a_i-\epsilon,b)+\frac{a_i-\epsilon}{2}H_{h}(a_i-\epsilon,b)$ is nef. Since $\sF(a_i-\epsilon,b)$ is independent of the choice of $\epsilon$, by letting $\epsilon\to 0$ we have that $\lambda(a_i-\epsilon,b)+\frac{a_i}{2}H_{h}(a_i-\epsilon,b)$ is nef. As all $\sF(a,b)$'s with $a,b\in (0,1)\cap \bQ$ are isomorphic in codimension $1$, we have that 
\begin{equation}\label{eq:nef-trace-wall}
\lambda(a_i\pm\epsilon,b)+\frac{a_i}{2}H_{h}(a_i\pm\epsilon,b)=(\varphi_i^{\pm})^* (\lambda(a_i,b)+\frac{a_i}{2}H_{h}(a_i,b)).
\end{equation}
Hence we obtain that $\lambda (a_i,b)+\frac{a_i}{2}H_{h}(a_i,b)$ is also nef.

Assume that $\lambda(a_i,b)+\frac{a_i}{2}H_{h}(a_i,b)$ is nef and $a\in (a_i, a_{i+1})$. By \eqref{eq:nef-trace-wall} and $\sF(a,b)\cong \sF(a_i+\epsilon,b)$, we know that $\lambda(a,b)+\frac{a_i}{2}H_h(a,b)$ is the $\varphi_i^+$-pull-back of $\lambda(a_i,b)+\frac{a_i}{2}H_{h}(a_i,b)$ hence is nef. Since $a>a_i$, in order to show nefness of  $\lambda(a,b)+\frac{a}{2}H_h(a,b)$ it suffices to show that $(\lambda(a,b)+\frac{a}{2}H_h(a,b))|_{H_h(a,b)}$  is nef. By parts (1) and (2), there exists $b'=b'(a)\in (0,1)$ such that $\lambda(a,b)+\frac{a}{2}H_h(a,b)+\frac{b'}{2}H_u(a,b)$ is ample. Since  $H_h(a,b)\cap H_u(a,b)=\emptyset$, 
\[
(\lambda(a,b)+\frac{a}{2}H_h(a,b))|_{H_h(a,b)} = (\lambda(a,b)+\frac{a}{2}H_h(a,b)+\frac{b'}{2}H_u(a,b))|_{H_h(a,b)}
\]
is ample.  Thus $\lambda(a,b)+\frac{a}{2}H_h(a,b)$ is nef for any $a\in (a_i,a_{i+1})$. As a result, the induction steps are validated which yield the nefness of $\lambda(a,b)+\frac{a}{2}H_h(a,b)$ for any $a,b\in (0,1)\cap\bQ$.
\end{proof}

\begin{defn}\label{def:blow-up-GIT}%\footnote{needs to be fixed}
Let $\tfM^{\GIT}:=\sF(1-\epsilon, 1-\epsilon)$ for $0<\epsilon\ll 1$. Then 
%Let $\trho: \tfM^{\GIT}\to \ofM^{\GIT}$ be the composition of two Kirwan weighted blow-ups at $[2Q]$ and $[T]$ that extracts the union of birational transforms of $H_h$ and $H_u$. 
the birational map $\sF\dashrightarrow\tfM^{\GIT}$ is isomorphic in codimension $1$. Denote the pushforwards of $\lambda$, $H_h$, and $H_u$ under this map by $\tlambda$, $\tH_h$, and $\tH_u$, respectively. By Definition \ref{def:F(a,b)} and Proposition \ref{prop:F(a,b)proper}, there is a  birational morphism $\trho: \tfM^{\GIT}\to \ofM^{\GIT}$ induced by $\sigma_{1-\epsilon, 1-\epsilon}$ that contracts  $\tH_h$ and $\tH_u$ to $[2Q]$ and $[T]$ respectively and is isomorphic elsewhere. 

From Definition \ref{def:F(a,b)} and Theorem \ref{thm:k-moduli-walls}, we see that  $\sF(a,b)\cong \ofM^{\GIT}$ if $a,b\in [1,+\infty)$, and $\sF(1-\epsilon,b_1)\cong \tfM^{\GIT}$ if $0<\epsilon\ll 1$ and $b_1\in (0,1)$.
\end{defn}

\begin{thm}\label{thm:finitegeneration}
    For any $a,b\in \bQ_{>0}$, the section ring $R(\sF, \lambda+\frac{a}{2}H_h+\frac{b}{2}H_u)$ is finitely generated, and  $\sF(a,b)\cong\Proj~ R(\sF, \lambda+\frac{a}{2}H_h+\frac{b}{2}H_u)$. In particular, every $\sF(a,b)$ is projective.%\footnote{modify this proof to remove weighted blow up/use gluing approach instead \YL{done}} 
\end{thm}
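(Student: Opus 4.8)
The plan is to reduce the statement to finitely many ampleness/nefness facts already established in Lemma \ref{lem:CM-perturb} and Theorem \ref{thm:CM-proportional}, together with standard base-point-free type arguments on the K-moduli spaces. First I would fix $a,b\in\bQ_{>0}$ and observe that since $R(\sF,D)$ and $R(\sF,\lambda_0 D)$ have the same $\Proj$ for any positive rational scaling, it suffices to treat the divisor $D_{a,b}:=\lambda+\tfrac{a}{2}H_h+\tfrac{b}{2}H_u$ itself. The birational map $\psi_{a,b}:\sF\dashrightarrow\sF(a,b)$ is, by Definition \ref{def:F(a,b)} together with Theorem \ref{thm:k-moduli-walls} and the construction of the Kirwan blow-ups/blow-downs, an isomorphism in codimension one in the cases $a,b\in(0,1)$ (Lemma \ref{lem:CM-perturb}), and a composition of a codimension-one isomorphism with explicit divisorial contractions/extractions in the remaining ranges. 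In all cases $(\psi_{a,b})_*D_{a,b}=D(a,b):=\lambda(a,b)+\tfrac a2 H_h(a,b)+\tfrac b2 H_u(a,b)$, and since pushing forward along a birational contraction identifies section rings, $R(\sF,D_{a,b})\cong R(\sF(a,b),D(a,b))$. So the theorem reduces to showing that $D(a,b)$ is a semiample $\bQ$-Cartier divisor on the projective normal variety $\sF(a,b)$ whose ample model is $\sF(a,b)$ itself, i.e. that $D(a,b)$ is in fact ample (or at least big and semiample with the right Proj).

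Next I would dispose of the cases by region. For $a,b\in(0,1)$ I claim $D(a,b)$ is ample on $\sF(a,b)$: indeed Lemma \ref{lem:CM-perturb}(3) gives that $\lambda(a,b)+\tfrac a2 H_h(a,b)$ is nef, and Lemma \ref{lem:CM-perturb}(1),(2) give an ample class of the form $\lambda(a,b)+\tfrac a2 H_h(a,b)+\tfrac{b_0}{2}H_u(a,b)$ for a suitable $b_0\in(0,1)$ (either $b_0=\tfrac{9a}{2}$ when $a<\tfrac29$, or $b_0=1-\epsilon$ when $a\ge\tfrac29$); writing $D(a,b)$ as a positive combination of $(\lambda(a,b)+\tfrac a2 H_h(a,b))$ and this ample class when $b\le b_0$, or as the ample class plus the effective $\bQ$-Cartier divisor $\tfrac{b-b_0}{2}H_u(a,b)$ plus a nef correction when $b>b_0$, shows $D(a,b)$ is ample in the first subcase and at least big and nef in the second. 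To upgrade big-and-nef to ample when $b\ge 1$, or to handle $b\ge b_0$, I would restrict to $H_u(a,b)$ (using $H_h\cap H_u=\emptyset$ from \cite[Lemma 1.7.3]{LO19}, recorded in \eqref{eq:bu-1}) and check ampleness there via the same decomposition, exactly in the style of the two-case argument in Lemma \ref{lem:small-variation}; the fibers of any contraction of $D(a,b)$ would then have to lie inside $H_u(a,b)$ and the restriction being ample forces $D(a,b)$ itself to be ample by the contraction/negativity lemma. The case $a\ge 1$ is symmetric in $H_h$, contracting it to the point $[2Q]$ and landing on $\tfM^{\GIT}$ or $\ofM^{\GIT}$ as in Definition \ref{def:blow-up-GIT}.

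Having shown $D(a,b)$ is an ample $\bQ$-Cartier divisor on the normal projective variety $\sF(a,b)$, finite generation of $R(\sF(a,b),D(a,b))=\bigoplus_{m\ge 0}H^0(\sF(a,b),\lfloor mD(a,b)\rfloor)$ is immediate, and $\Proj$ of this ring is $\sF(a,b)$ by the definition of the ample model. Transporting back along the codimension-one isomorphism (resp. birational contraction) $\psi_{a,b}$ gives $R(\sF,D_{a,b})\cong R(\sF(a,b),D(a,b))$ finitely generated with $\Proj\cong\sF(a,b)$, which is the assertion. The main obstacle I anticipate is the boundary and large-coefficient bookkeeping: one must check that the Kirwan blow-up/blow-down descriptions in Definition \ref{def:F(a,b)} really do make $D(a,b)$ ample (not merely semiample) across every chamber, including the walls $a=a_i$ and $b=1$ where $\sF(a,b)$ is only defined as a one-sided limit $\sF(a_i,b):=\sF(a_i+\epsilon,b)$; this requires carefully matching the exceptional-divisor coefficients $\tfrac14,\tfrac98$ appearing in $\Delta^{\K}$ with the blow-up weights, and invoking the negativity lemma on each extremal contraction. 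The positivity of the log CM line bundle \cite{CP18, Pos19, XZ19} and the nefness of the Hodge line bundle on $\ofM^{\K}_{1-\epsilon}$ are exactly the inputs that make all of these comparisons go through.
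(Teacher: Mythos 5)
Your overall scaffolding — push forward along $\psi_{a,b}$, use Theorem~\ref{thm:CM-proportional} and Lemma~\ref{lem:CM-perturb} as the positivity inputs, and split into regions in $(a,b)$ — matches the paper. But there are two concrete gaps in the region analysis.

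First, the ``contraction/negativity lemma'' invocation is the wrong tool and hides the actual crux. In the range $a\in(0,1)$, $b_0<b<1$, the divisor $D(a,b)$ is ample on $\sF(a,b)$ not because the restriction $D(a,b)|_{H_u(a,b)}$ is ample per se, but because of a convex-combination trick hinging on an exact numerical identity: one first shows $\lambda(a,b)+\tfrac{a}{2}H_h(a,b)+\tfrac12 H_u(a,b)$ is \emph{nef}, and for this the key computation is that
\[
\bigl(\lambda(a,b)+\tfrac12 H_u(a,b)\bigr)\big|_{H_u(a,b)}\ \sim_{\bQ}\ 0,
\]
which is transported from \eqref{eq:bu-1} through the small birational map $\hsF\dashrightarrow\sF(a,b)$ (this uses Theorem~\ref{thm:T_0-replace}(3) and Lemma~\ref{lem:small-variation} to see that this map is isomorphic near $H_u$). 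With that nefness in hand, $D(a,b)$ is a strict convex combination of an ample class (Lemma~\ref{lem:CM-perturb}(1),(2)) and a nef class, hence ample. Your argument says the restriction to $H_u$ is ample and invokes negativity; the negativity lemma doesn't upgrade nef+restriction-ample to ample, and more importantly the statement you actually need at the wall $b=1$ is $\bQ$-triviality, not ampleness, of that restriction.

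Second, the case $b\ge 1$ (and its mirror $a\ge 1$) is genuinely different in structure and not handled by your sketch. There $\sF(a,b)$ is the Kirwan blow-\emph{down} $\sF(a,1)$ of $\ofM_c^{\K}$ in which $H_u$ has been contracted to a point, so ``restrict to $H_u(a,b)$'' is vacuous. Instead one must show that $\lambda(a,1)+\tfrac{a}{2}H_h(a,1)$ — a big nef divisor on $\sF(a,1)$ that arises as $(\rho_a)_*$ of a $\rho_a$-trivial big nef divisor upstairs — is in fact ample, and the paper does this via a Nakai–Moishezon check that its restriction to every positive-dimensional subvariety of $\sF(a,1)$ is big. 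Then one writes $D(a,b)$ on $\sF(a,1-\epsilon)$ as $\rho_a^*(\text{ample})+\tfrac{b-1}{2}H_u(a,1-\epsilon)$ with the second term effective and $\rho_a$-exceptional, so that the section rings agree. A parallel but not literally symmetric Nakai–Moishezon argument is needed for $a\ge 1$; the asymmetry is that $H_h$ and $H_u$ appear at different walls ($c=\tfrac13$ versus $c=\tfrac{9}{13}$), so the nefness of $\tlambda+\tfrac12\tH_h$ on $\tfM^{\GIT}$ has to be extracted from Lemma~\ref{lem:CM-perturb}(3) by letting $\epsilon\to 0$ rather than being ``the same by symmetry.'' These steps are where the real work lies and are not supplied by the proposal as written.
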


\begin{proof}
We split into four cases based on values of $a$ and $b$.

\textbf{Case 1: $a,b\in (0,1)$.}
By Lemma \ref{lem:CM-perturb}, $\psi_{a,b}:\sF\dashrightarrow\sF(a,b)$ is an isomorphism in codimension $1$. Thus it suffices to show that the $\bQ$-divisor $\lambda(a,b)+\frac{a}{2}H_h(a,b)+\frac{b}{2}H_u(a,b)$ is ample on $\sF(a,b)$ for $a,b\in (0,1)\cap\bQ$. 
%By Lemma \ref{lem:small-variation}, this is true if $a\in (0,\frac{1}{9})$ and $b\in (0, \frac{1}{2})$. We first extend to $a\in (0,1)$ and $b\in (0, \frac{1}{2})$, then extend to $a,b\in (0,1)$.
%\textbf{Case 1: $a\in (0,1)$ and $b\in (0, \frac{1}{2})$.}
%Hence we just need to prove the finite generation conjecture. The case for $a=0$ is clear. Let us assume $a\in (0,1)$, as $a=1$ case will follow easily. It is clear that $\psi_{a,b}:\sF\dashrightarrow \sF(a)$ is an isomorphism in codimension $1$. Thus it suffices to show that $(\psi_{a,b})_*(\lambda+a\Delta)=(\psi_{a,b})_*(\lambda+\frac{a}{2}H_h+\frac{a}{2}H_u)$ is $\bQ$-Cartier and ample. 

By Lemma \ref{lem:CM-perturb}(1)(2), there exists $b'=b'(a)\in (0,1)$ such that $\lambda(a,b)+\frac{a}{2}H_h(a,b)+\frac{b'}{2}H_u(a,b)$ is ample. By Lemma \ref{lem:CM-perturb}(3), we know that $\lambda(a,b)+\frac{a}{2}H_h(a,b)$ is nef. Since a strict convex combination of a nef $\bQ$-divisor and an ample $\bQ$-divisor is ample, it suffices to show the nefness of $\lambda(a,b)+\frac{a}{2}H_h(a,b)+\frac{1}{2}H_u(a,b)$. Moreover, since $\frac{b'}{2}<\frac{1}{2}$ and  $H_h(a,b)\cap H_u(a,b)=\emptyset$, it suffices to show that  
\[
(\lambda(a,b)+\frac{a}{2}H_h(a,b)+\frac{1}{2}H_u(a,b))|_{H_u(a,b)}=(\lambda(a,b)+\frac{1}{2}H_u(a,b))|_{H_u(a,b)}
\]
is nef.
Indeed, by Definition \ref{def:F(a,b)}, Theorem \ref{thm:T_0-replace}(3), and Lemma \ref{lem:small-variation}, the birational map $\hsF\dashrightarrow \sF(a,b)$ is isomorphic near $\hH_u$ and $H_u(a,b)$. Since $H_u$ is a big open subset of $\hH_u$ and $(\lambda+\frac{H_u}{2})|_{H_u}=0$ by \eqref{eq:bu-1}, we know that $(\hat{\lambda}+\frac{\hH_u}{2})|_{\hH_u}=0$. This implies $(\lambda(a,b)+\frac{1}{2}H_u(a,b))|_{H_u(a,b)}$ is $\bQ$-linearly trivial and hence nef. Thus Case 1 is proved.

\textbf{Case 2: $a\in (0,1)$ and $b\in [1,+\infty)$.} 
We have the diagram
$\sF(a,1-\epsilon)\xrightarrow{\rho_a} \sF(a,1)\xleftarrow{\cong} \sF(a,b)$ from  Definition \ref{def:F(a,b)}, where $\rho_a$ is the birational morphism that contracts the divisor $H_u(a,1-\epsilon)$ to a point. We first show that $\lambda(a,1)+\frac{a}{2}H_h(a,1)$ is ample on $\sF(a,1)$. Indeed, from Case 1 we have the ampleness of $\lambda(a,1-\epsilon)+\frac{a}{2}H_h(a,1-\epsilon)+\frac{1-\epsilon}{2}H_u(a,1-\epsilon)$ on $\sF(a,1-\epsilon)$. By letting $\epsilon\to 0$, we know that  $\lambda(a,1-\epsilon)+\frac{a}{2}H_h(a,1-\epsilon)+\frac{1}{2}H_u(a,1-\epsilon)$ is big and nef, whose restrict to $H_u(a,1-\epsilon)$ is $\bQ$-linearly trivial. Thus
\[
\lambda(a,1-\epsilon)+\frac{a}{2}H_h(a,1-\epsilon)+\frac{1}{2}H_u(a,1-\epsilon)=\rho_a^*(\lambda(a,1)+\frac{a}{2}H_h(a,1)).
\]
This shows that $\lambda(a,1)+\frac{a}{2}H_h(a,1)$ is big and nef. By the Nakai-Moishezon criterion, to show the ampleness of $\lambda(a,1)+\frac{a}{2}H_h(a,1)$, it suffices to show that $(\lambda(a,1)+\frac{a}{2}H_h(a,1))|_V$ is big for any positive dimensional closed subvariety $V\subset \sF(a,1)$. Let $\tV\subset \sF(a,1-\epsilon)$ be the birational transform of $V$. Then clearly $\tV\not\subset H_u(a,1-\epsilon)$ as $\rho_a$ contracts $H_u(a,1-\epsilon)$ to a point. Since $(\lambda(a,1-\epsilon)+\frac{a}{2}H_h(a,1-\epsilon)+\frac{1-\epsilon}{2}H_u(a,1-\epsilon))|_{\tV}$ is ample, we know that 
\[
(\lambda(a,1-\epsilon)+\frac{a}{2}H_h(a,1-\epsilon)+\frac{1}{2}H_u(a,1-\epsilon))|_{\tV}=\rho_a^*((\lambda(a,1)+\frac{a}{2}H_h(a,1))|_V)
\]
is big. Thus $(\lambda(a,1)+\frac{a}{2}H_h(a,1))|_V$ is big which implies the ampleness of $\lambda(a,1)+\frac{a}{2}H_h(a,1)$. 

Since $\sF(a,1-\epsilon)$ and $\sF$ are isomorphic in codimension $1$, Case 2 reduces to showing
\[
\Proj~ R(\sF(a,1-\epsilon),\lambda(a,1-\epsilon)+\frac{a}{2}H_h(a,1-\epsilon)+\frac{b}{2}H_u(a,1-\epsilon))\cong \sF(a,1).
\]
This is true because $\lambda(a,1)+\frac{a}{2}H_h(a,1)$ is ample, $H_u(a,1-\epsilon)$ is $\rho_a$-exceptional, and
\[
\lambda(a,1-\epsilon)+\frac{a}{2}H_h(a,1-\epsilon)+\frac{b}{2}H_u(a,1-\epsilon)=\rho_a^*(\lambda(a,1)+\frac{a}{2}H_h(a,1))+\frac{b-1}{2} H_u(a, 1-\epsilon).
\]
Thus Case 2 is proved.

\textbf{Case 3: $a\in [1,+\infty)$ and $b\in (0,1)$.}%\footnote{\YL{need to modify the proof}}
%By Proposition \ref{prop:F(a,b)proper} we know that $\ofM^{\GIT} \cong \ofM_{c}^{\K}\xrightarrow{\sigma_{a,b}'}\sF(a,b)$.
Since $\sF$ and $\tfM^{\GIT}$ are isomorphic in codimension $1$, it suffices to show that 
\begin{equation}\label{eq:Proj-case3}
\Proj~ R(\tfM^{\GIT}, \tlambda+\frac{a}{2}\tH_h+\frac{b}{2}\tH_u)\cong \sF(a,b).
\end{equation}
By Definition \ref{def:F(a,b)}, we know that $\tfM^{\GIT}$ is the gluing of $U_u$ and $\ofM^{\K}_{\frac{1}{3}+\epsilon}\setminus \{[(\bP^3, T)]\}$, while $\sF(a,b)$ is the gluing of $U_u$ and $\ofM^{\K}_{c}\setminus \{[(\bP^3, T)]\}$ with $c=\frac{1}{1+2a}\leq \frac{1}{3}$. Thus we have $\ofM_c^{\K}\cong \ofM^{\GIT}$ by Theorem \ref{thm:firstwall}. Thus the birational morphism $\trho: \tfM^{\GIT} \to \ofM^{\GIT}$ can be decomposed into 
\[
\tfM^{\GIT} \xrightarrow{\rho_1} \sF(a,b) \xrightarrow{\rho_2} \ofM^{\GIT},
\]
where $\rho_1$ and $\rho_2$ contracts $\tH_h$ and $(\rho_1)_* \tH_u$ respectively. Note that $\rho_2$ is also induced by $\sigma_{a,b}$.

%Clearly, $\rho_1:\tfM^{\GIT}\to \mathrm{Bl}_{[T]}\ofM^{\GIT}$ is the Kirwan weighted blow-up at $[2Q]$ extracting $\tH_h$.
We first show ampleness of $(\rho_1)_* (\tlambda+\frac{b}{2}\tH_u)$. Indeed, by \eqref{eq:GIT-lambda} we know that $(\rho_1)_* (\tlambda+\frac{1}{2}\tH_u)$ is the pull-back of the ample $\bQ$-line bundle $\Lambda^{\GIT}$ on $\ofM^{\GIT}$ under %the Kirwan blow-up
$\rho_2$. %:\mathrm{Bl}_{[T]}\ofM^{\GIT}\to \ofM^{\GIT}$.
Since $(\rho_1)_* \tH_u$ is %$\rho_2$-exceptional, it is 
$\rho_2$-anti-ample by Lemma \ref{lem:antiample}, we have that $(\rho_1)_* (\tlambda+\frac{1-\epsilon}{2}\tH_u)$ is ample for $0<\epsilon\ll 1$. On the other hand, we know that $(\tlambda +\frac{1}{2}\tH_h)|_{\tH_h}$ is $\bQ$-linearly trivial by \eqref{eq:bh-2}. Thus 
$(\rho_1)^*(\rho_1)_* \tlambda=\tlambda+\frac{1}{2}\tH_h$. By Definition \ref{def:blow-up-GIT} and Lemma \ref{lem:CM-perturb}(3), we know that $\tfM^{\GIT}= \sF(1-\epsilon, 1-\epsilon)$ and $\tlambda+\frac{1-\epsilon}{2}\tH_h$ is nef. By letting $\epsilon\to 0$ we obtain nefness of $\tlambda+\frac{1}{2}\tH_h$ which implies the nefness of $(\rho_1)_* \tlambda$. This together with ampleness of $(\rho_1)_* (\tlambda+\frac{1-\epsilon}{2}\tH_u)$ implies  that $(\rho_1)_* (\tlambda+\frac{b}{2}\tH_u)$ is ample.

Now we prove \eqref{eq:Proj-case3}. From the above arguments, we have that $(\rho_1)_* (\tlambda+\frac{b}{2}\tH_u)$ is ample on $\sF(a,b)$, %$\mathrm{Bl}_{[T]}\ofM^{\GIT}$,
and 
\[
\tlambda+\frac{a}{2}\tH_h+\frac{b}{2}\tH_u = (\rho_1)^*(\rho_1)_* (\tlambda+\frac{b}{2}\tH_u) + \frac{a-1}{2} \tH_h.
\]
Since $a\geq 1$ and $\tH_h$ is $\rho_1$-exceptional,  \eqref{eq:Proj-case3} follows. Thus Case 3 is proved.

\textbf{Case 4: $a,b\in [1,+\infty)$.} %\footnote{\YL{need to modify the proof}}
By Definition \ref{def:blow-up-GIT} we know that $\sF(a,b)\cong \ofM^{\GIT}$. Similarly to Case 3, it suffices to show that 
\begin{equation}\label{eq:Proj-case4}
\Proj~ R(\tfM^{\GIT}, \tlambda+\frac{a}{2}\tH_h+\frac{b}{2}\tH_u)\cong \ofM^{\GIT}.
\end{equation}
By \eqref{eq:GIT-lambda}, we know that $\trho^*\Lambda^{\GIT}=\tlambda+\frac{1}{2}\tH_h+\frac{1}{2}\tH_u$. Since $\Lambda^{\GIT}$ is ample on $\ofM^{\GIT}$, both $\tH_h$ and $\tH_u$ are $\trho$-exceptional, and $a,b\geq 1$, we conclude that $\eqref{eq:Proj-case4}$ holds. Thus Case 4 is proved.
\end{proof}

\begin{remark}\label{rmk:weightedblowup}
 From the identification of the K-moduli spaces with $\sF(a,b)$, and the results of \cite[Sections 5.1 and 5.2]{LO18b}, where Laza-O'Grady prove that $\trho: \tfM^{\GIT}=\sF(1-\epsilon, 1-\epsilon)\to \ofM^{\GIT}$ (denoted by $\tfM\to \fM$ in their notation) is a composition of  weighted blowups of Kirwan type at $[2Q]$ and $[T]$, we can conclude that the morphisms $\rho$ in Theorem \ref{thm:firstwall}(3)  and $\rho_u$ in Definition \ref{def:F(a,b)} are weighted blowups at $[2Q]$ and $[T]$ respectively.
\end{remark}

\begin{proof}[Proof of Theorem \ref{mthm:fg}]
This follows from Definition \ref{def:F(a,b)}, Theorems \ref{thm:k-moduli-walls}, and \ref{thm:finitegeneration}.
\end{proof}

\begin{proof}[Proof of Corollary \ref{cor:LO}]
This is a direct consequence of Theorem \ref{mthm:fg} by letting $a=b$.
\end{proof}

% We  fix $b\in (0,\frac{1}{2})$ and let $a$ increase from $0$ to $1$. From the above,  $\lambda(a,b)+\frac{a}{2}H_h(a,b)+\frac{b}{2}H_u(a,b)$ is ample if $a\in (0, \frac{1}{9})$. 
% By Lemma \ref{lem:CM-perturb}, we know that there exists $b'=b'(a)>\frac{a}{2}$ such that  $\lambda(a,b)+\frac{a}{2}H_h(a,b)+\frac{b'}{2} H_u(a,b)$ is ample. Hence by interpolation it suffices to show that $(\psi_{a,b})_*(\lambda+\frac{a}{2}H_h)$ is $\bQ$-Cartier and nef on $\sF(a)$ for $a\in (0,1)$. Since  $(\psi_{a,b})_*H_{u}$ is $\bQ$-Cartier as it is the exceptional divisor of Kirwan blow-up, we know that $(\psi_{a,b})_*(\lambda+\frac{a}{2}H_h)$ is always $\bQ$-Cartier. 
% To summarize, using induction on $a_i$ we have shown the nefness of $\lambda_{a}+\frac{a}{2}H_h(a,b)$ on $\sF(a)$ for $a\in (0,1)$ which implies the ampleness of $\lambda_{a}+\frac{a}{2}(H_h(a,b)+H_u(a,b))$. For the $a=1$ case, notice that $\phi_8^{-}:\sF(1-\epsilon)\to \sF(1)\cong \ofM^{\GIT}$ is precisely the Kirwan blow up at $[2Q]$ and $[T]$, and by \cite{LO19} we know that  $\lambda_{1-\epsilon}+\frac{1}{2}(H_{h,1-\epsilon}+H_{u,1-\epsilon})$ is $\phi_8^{-}$-trivial. Hence the section ring $R(\sF, \lambda+\Delta)$ is isomorphic to $R(\ofM^{\GIT}, (\psi_1)_*(\lambda+\Delta))$. From \cite{LO19} we know that $(\psi_1)_*(\lambda+\Delta)$ is ample on  $\ofM^{\GIT}$. Hence the proof is finished.

% The wall crossing result follows from the K-moduli wall crossing (see Theorem ???).
%\end{proof}

\begin{rem}
%\textcolor{blue}{YL: Using similar methods, we should be able to construct ample model of $\lambda+\frac{a}{2}H_h+\frac{b}{2}H_u$ on $\sF$ for any non-negative $a,b$. The walls are determined by $a\in \{\beta_i\}$ or $b\in \{0,1\}$. Think about how to treat the case where either $a$ or $b$ is zero. Is $\sF^*$ of picard rank $1$? If so, there might be intermediate models between $\sF^*$ and $\hsF$ corresponding to $a=0$ or $b=0$.}
It is reasonable to expect that $R(\sF, \lambda+\frac{a}{2} H_h +\frac{b}{2}H_u)$ is finitely generated if one of $a$ or $b$ is zero and the other is positive. Taking $\Proj$ of these rings should extend the wall-crossing picture as described in Theorem \ref{mthm:fg} to $a,b\in \bQ_{\geq 0}$ where $\sF(0,0)=\sF^*$. 
\end{rem}

\subsection{Quartic double solids}

Recall that a smooth quartic double solid $Y$ is a double cover of $\bP^3$ branched along a smooth quartic surface $S$. Denote the double cover map by $\pi:Y\to \bP^3$. A quartic double solid is the same as a del Pezzo threefold of degree $2$ (see e.g. \cite{Fuj90}). By \cite[Example 4.2]{Der16}, we know that $Y$ is K\"ahler-Einstein and hence K-stable as $\Aut(Y)$ is finite. Since any smooth deformation of a del Pezzo threefold is still del Pezzo of the same degree, there exists an open substack $\sY$ of $\cM_{3, 16}^{\rm Kss}$ parametrizing all smooth quartic double solids. Let $\osY$ be the Zariski closure of $\sY$ in $\cM_{3, 16}^{\rm Kss}$ with reduced structure. By definition, we know that $\osY$ is also a closed substack of $\ocM_{3,16}^{\rm sm, Kss}$. Let $\ofY$ be the good moduli space of $\osY$, then $\ofY$ is a closed subscheme of $\oM_{3,16}^{\rm sm, Kps}$ and $M_{3,16}^{\rm Kps}$. Since $\sY$ parametrizes K-stable pairs, it is a saturated Deligne-Mumford open substack of $\osY$. Hence $\sY$ admits a coarse moduli space $\fY$ as an open subscheme of $\ofY$. We call $\ofY$ the \emph{K-moduli space of quartic double solids}. We know that $\ofY$ is an irreducible component of $M_{3,16}^{\rm Kps}$ since $\fY$ is open in $M_{3,16}^{\rm Kps}$.

\begin{prop}\label{prop:qds}
There exists a bijective morphism $\iota:\ofM_{\frac{1}{2}}^{\K}\to \ofY$.
\end{prop}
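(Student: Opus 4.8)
The plan is to construct the morphism $\iota$ by taking fiberwise double covers over the K-moduli stack $\osM_{\frac12}^{\K}$, then to show it is bijective on closed points using the interpolation and cone-construction results already available. First I would recall that for $c=\frac12$, a K-polystable log Fano pair $(X, \frac12 S)$ in $\ofM_{\frac12}^{\K}$ has $S\sim -K_X$ Cartier (by Lemma~\ref{lem:L-construct} when $X\cong\bP^3$, and by the explicit descriptions of $X_h$, $\bP(1,1,2,4)$, $X_u$ from Sections~\ref{sec:unigonal} and~\ref{sec:hyperelliptic}, noting Proposition~\ref{prop:no-other-3-fold} limits the possible $X$'s; moreover $X_u$ does not appear at $c=\frac12<\frac{9}{13}$, so only $\bP^3$, $X_h$, $\bP(1,1,2,4)$ occur). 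Hence one can form the double cover $Y\to X$ branched along $S$, and I would check that $Y$ is a Fano threefold with $(-K_Y)^3 = 2\cdot\frac{1}{8}(-K_X)^3 = 16$ and Gorenstein canonical singularities — the latter because taking a double cover branched along a Cartier divisor preserves the canonical/klt property of the total space when the branch divisor has the appropriate singularities, via the standard cyclic cover discrepancy formula. This gives a family-level construction: over the universal family $(\cX,\cS)\to \osM_{\frac12}^{\K}$, the relative double cover produces a $\bQ$-Gorenstein family of Fano threefolds, inducing a morphism $\osM_{\frac12}^{\K}\to \cM_{3,16}^{\rm Fano}$, and I would argue its image lands in $\osY$ by specializing from the smooth case.

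Next I would establish that the double cover $Y$ is K-(poly/semi)stable iff $(X,\frac12 S)$ is, which is the crux and follows from the Galois descent of K-stability under crepant finite morphisms \cite{LZ20, Zhu20}: the cover $\pi: Y\to X$ satisfies $\pi^*(K_X+\frac12 S) = K_Y$ by the Hurwitz formula (since $S\sim -K_X$ and the ramification is along $\frac12 S$ pulled back, so $\pi^*(K_X + \frac{1}{2}S) = K_Y + R - \frac{1}{2}\pi^*S$ with $R = \frac{1}{2}\pi^*S$, giving $\pi^*(K_X+\frac12 S)=K_Y$). Thus $(Y,0)$ and $(X,\frac12 S)$ have the same stability behavior, giving a morphism at the level of K-semistable stacks $\osM_{\frac12}^{\K}\to \osY$ which descends to $\iota:\ofM_{\frac12}^{\K}\to\ofY$ on good moduli spaces. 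For injectivity on closed points: if $(X,\frac12 S)$ and $(X',\frac12 S')$ give isomorphic double solids $Y\cong Y'$, then since the branch divisor of $|-K_Y|$ is intrinsic to $Y$ (recovered as the fixed locus of the deck involution, or via the anticanonical morphism), one recovers $(X,S)\cong (X',S')$; here one uses that $X$ is the image of the anticanonical map $Y\to \bP^3$ or its appropriate analogue, together with Lemma~\ref{lem:X_u-anti-can}-type uniqueness in the hyperelliptic cases via the cone construction $\sC(\cdot,\cdot)$ and Theorem~\ref{thm:induce-3-fold}.

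For surjectivity onto closed points of $\ofY$, I would take a K-polystable Gorenstein canonical Fano threefold $Y_0$ in $\osY$, i.e. a $\bQ$-Gorenstein degeneration of a smooth quartic double solid, and show it carries a deck transformation: the involution on the smooth members (swapping sheets of the double cover) extends to $Y_0$ by properness/uniqueness of K-polystable limits and the fact that $\Aut$ acts on the K-moduli space; then $Y_0/\langle\tau\rangle = X_0$ is a Gorenstein canonical Fano with $(-K_{X_0})^3 = 8$, and $(X_0,\frac12 S_0)$ — with $S_0$ the branch divisor — is K-polystable by the same descent argument, so it lies in $\ofM_{\frac12}^{\K}$ and maps to $[Y_0]$. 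The main obstacle I anticipate is precisely the existence and uniqueness of the deck involution on the K-polystable limit $Y_0$: one must rule out that the $\bmu_2$-action degenerates badly, which I expect to handle by arguing that the $\bmu_2$-equivariant K-polystable degeneration exists (via the equivariant version of \cite{LWX18} as used in Proposition~\ref{prop:G-kps}) and is forced to coincide with $Y_0$ by uniqueness of K-polystable limits, so that $\tau$ specializes to an honest involution of $Y_0$ whose quotient is the required $X_0$. Once bijectivity is shown, $\iota$ is a bijective morphism between (reduced, projective) schemes as claimed; the promotion to an isomorphism on seminormalizations in Theorem~\ref{mthm:doublesolid} is then a separate matter handled there.
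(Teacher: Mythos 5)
Your construction of $\iota$ via fiberwise double covers and crepant descent of K-stability via \cite{LZ20, Zhu20} matches the paper's approach in spirit, and the observation that $X_u$ does not occur at $c=\tfrac{1}{2}$ (so only $\bP^3$, $X_h$, $\bP(1,1,2,4)$ appear) is correct. However, there is a real gap in your injectivity argument, and your surjectivity argument takes a more complicated route than necessary.

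\textbf{Injectivity.} You assert that the double-cover structure is ``intrinsic to $Y$'', recoverable from the deck involution or from the anticanonical map. Neither of these is justified as stated. The deck involution is part of the covering data, not an intrinsic feature of $Y$, so one cannot appeal to ``the'' deck involution without first showing the cover is unique. The anticanonical system $|-K_Y|$ does not give the double-cover map either: since $-K_Y = 2\widetilde{L}$ with $(\widetilde{L}^3)=2$, the double cover is given by the ``half-anticanonical'' system $|\widetilde{L}|$, not $|-K_Y|$, and the real question is whether the Weil divisorial sheaf $\widetilde{L}$ with $2\widetilde{L}\sim -K_Y$ is unique on a possibly singular $Y$. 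For smooth $Y$ this follows from $\Pic(Y)\cong\bZ$, but for singular K-polystable degenerations it requires an argument. The paper's proof is precisely about this point: it shows that if $\widetilde{L}$ and $\widetilde{L}'$ are two such polarizations coming from two double-cover structures, then $\widetilde{L}-\widetilde{L}'$ is a torsion Cartier divisor (locally trivial or locally $2$-torsion at the non-lci points), and then uses rational connectedness of the $\bQ$-Fano $Y$ \cite{Zha06} to conclude $Y$ is simply connected, hence has no torsion line bundles, so $\widetilde{L}=\widetilde{L}'$; from there the sheaf decomposition $\pi_*\cO_Y(\widetilde{L})\cong\cO_X(L)\oplus\cO_X(-L)$ recovers the covering map. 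Your proposal omits this uniqueness-of-polarization step, which is the crux, and your invocation of Lemma~\ref{lem:X_u-anti-can} is misplaced since $X_u$ does not occur at $c=\tfrac12$.

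\textbf{Surjectivity.} Your argument tries to produce the deck involution on a K-polystable limit $Y_0$ via equivariant K-polystable degenerations. This might be made to work, but it introduces exactly the difficulty you anticipate (does the $\bmu_2$-action specialize?), which is unnecessary. The paper's argument is much simpler: $\fY$ is visibly in the image of $\iota$, the domain $\ofM_{\frac12}^{\K}$ is proper and $\ofY$ is separated, so the image of $\iota$ is closed; since $\ofY$ is irreducible and $\fY$ is a dense open, the image is all of $\ofY$. You should replace your equivariant argument with this direct one.
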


\begin{proof}
Consider the Hilbert scheme  $\mathbb{H}$  of pairs $(X,D) \subset \bP^9$ with Hilbert polynomial $\chi(\bP^3, \calO(4))$ and $D \sim -K_X$.  Let $\mathcal{H}$ denote the locally closed subscheme of $\mathbb{H}$ parameterizing K-semistable pairs $(X,\frac{1}{2}D) \subset \bP^9$ (see \cite[Definition 3.7]{ADL19} or \cite[Theorem 2.21]{ADL20}).  Because $c = \frac{1}{2}$, the wall crossing results in Section 
\ref{sec:wallcrossings} prove that, if $[(X, \frac{1}{2}D)] \in \mathcal{H}$, then $X \cong \bP^3, X_h$, or $\bP(1,1,2,4)$.

By Lemma \ref{lem:Xh-unobstructed}, the $\bQ$-Gorenstein deformations of $X = \bP^3$, $X_h$, or $\bP(1,1,2,4)$ are unobstructed in $\ocM_{3,64}^{{\rm sm},\delta\geq \epsilon_0}$, and there is no torsion in the class group for $\calX_t = \bP^3, X_h,$ or $\bP(1,1,2,4)$ (see e.g. \cite[Proposition 3.14]{Kol13}). Hence $\cO_{\bP^9}(1)|_X$ is the unique Weil divisor class that is $\bQ$-linearly equivalent to $-\frac{1}{2}K_X$. % which implies that any divisor in the linear system $|-\frac{1}{2}(K_X)| =  |\cO_{\bP^9}(1)|_X|$ is Cartier, so does not cause further obstructions.  
Therefore, $\mathcal{H}$ is smooth, so the quotient stack $[ \mathcal{H} / \PGL(10, \mathbb{C})]$ is smooth.

By construction of $\mathcal{H}$ which parameterizes K-semistable pairs, universality of K-moduli gives a map $[\mathcal{H}/\PGL(10, \mathbb{C})] \to \ofM_{\frac{1}{2}}^\K$.   This map is separated, stabilizer preserving, and bijective on $\mathbb{C}$-points.  Therefore, by \cite[Theorem A.5]{AI}, we have that $[\mathcal{H}/\PGL(10, \mathbb{C})] \cong \osM_{\frac{1}{2}}^\K$.

We now construct a morphism $\iota: \ofM_{\frac{1}{2}}^{\K}\to \ofY$. %exists a $\bmu_2$-gerbe $\sZ\to \osM_{\frac{1}{2}}^{\K}$ that admits a morphism $\sZ \to \osY$.
Consider the universal family $(\cX, \cD) \to \osM_{\frac{1}{2}}^{\K}$. By the isomorphism of $[\mathcal{H}/\PGL(10, \mathbb{C})]$ and $\osM_{\frac{1}{2}}^\K$, there is a line bundle $\cO_{\cX}(1)$ on $\cX$ obtained as the pull-back of the line bundle $\calO_{\bP^9}(1)$ from the universal family on the Hilbert scheme. Since $\cO_{\cX_t}(2)\sim -K_{\cX_t}\sim \cD_t$ on each fiber $\cX_t$ for any $t \in |\osM_{\frac{1}{2}}^{\K}|$, %namely the divisor   %Note that there is no torsion in the class group for $\calX_t = \bP^3, X_h,$ or $\bP(1,1,2,4)$ (see e.g. \cite[Proposition 3.14]{Kol13}), so this restriction is $ \calN = \frac{1}{2}\calD$.
we know that $\cO_{\cX}(\cD) \otimes \cO_{\cX}(-2)$ is trivial on every fiber $\cX_t$, which implies that it is the pull-back of a line bundle $\cF$ on $\osM_{\frac{1}{2}}^\K$. Let $\phi_{\sZ}:\sZ \to \osM_{\frac{1}{2}}^\K$ be the $\bmu_2$-gerbe obtained as the second root stack of $\cF$ (see e.g. \cite[Appendix B.1]{AGV08}), i.e. $\sZ := \osM_{\frac{1}{2}}^\K \times_{B\bG_m} B_{\bG_m}$ where $\osM_{\frac{1}{2}}^\K \to {B\bG_m}$ is the classifying morphism of $\cF$, and $B\bG_m\to B\bG_m$ is the second power map. Hence there is a line bundle $\cG$ on $\sZ$ such that $\cG^{\otimes 2}$ is the pull-back of $\cF$ on $\sZ$. Denote by $\pi_{\sZ}:(\cX_{\sZ}, \cD_{\sZ})\to \sZ$ the base change of the family $(\cX,\cD)$ to $\sZ$. Then there is a line bundle $\cN_{\sZ}:= \cO_{\cX_{\sZ}}(1)\otimes \pi_{\sZ}^* \cG$ on $\sZ$ satisfying $\cN_{\sZ}^{\otimes 2} \cong \cO_{\cX_{\sZ}}(\cD_{\sZ})$.

Consider the double cover $\cY_{\sZ}$ of $\cX_{\sZ}$ branched along $\cD_{\sZ}$, i.e. 
\[
\cY_{\sZ}:= \mathrm{Spec}_{\cX_{\sZ}} \cO_{\cX_{\sZ}} \oplus \cN_{\sZ}^{\otimes -1},
\]
where the $\cO_{\cX_{\sZ}}$-algebra structure is induced by the sheaf homomorphism $\cN_{\sZ}^{\otimes -2}\xrightarrow{\cdot s} \cO_{\cX_{\sZ}}$ where $s$ is a section of $\cN_{\sZ}^{\otimes 2}$ such that $(s=0)=\cD_{\sZ}$.
%, which we denote by $\cZ$. Since all fibers $\cX_t$ are Fano with klt singularities, by Kodaira vanishing $H^i(\cX_t, \cN) = 0$ for $i > 0$. Therefore, by cohomology and base change, 
%\footnote{\YL{I don't think we need cohomology and base change here, as $\cN_{\sZ}$ is flat}}
Since $\cN_{\sZ}$ is locally free, the double cover $\cY_{\sZ} \to \cX_{\sZ}$ is also a fiberwise double cover. In particular, each fiber $(\cX_t,\frac{1}{2}\cD_t)$ for $t\in |\osM_{\frac{1}{2}}^{\K}|$ is the $\bmu_2$-quotient of $\cY_z$ where $z\in |\sZ|$ is the unique point lying over $t$. Thus, by \cite{LZ20, Zhu20} we know that $\cY_{\sZ} \to \sZ$ is a $\bQ$-Fano family with K-semistable fibers, where a general fiber is a smooth quartic double solid. Therefore, %$\cZ \to \osM_{\frac{1}{2}}^{\K}$ gives a family of K-semistable quartic double solids over $\osM_{\frac{1}{2}}^{\K}$. 
this gives $\sZ \to \osY$ by the universality of the K-moduli stack.

Next we prove that the composition  $\sZ \xrightarrow{\phi_{\sZ}} \osM_{\frac{1}{2}}^{\K} \to \ofM_{\frac{1}{2}}^{\K}$ provides a good moduli space of $\sZ$. By \cite{alper} it suffices to show that $(\phi_{\sZ})_* \cO_{\sZ} = \cO_{\osM_{\frac{1}{2}}^{\K}}$ and $\phi_{\sZ}$ is cohomologically affine. The first statement follows from the fact that $\phi_{\sZ}$ is a $\bmu_2$-gerbe. For the second statement, applying \cite[Proposition 3.10(vii)]{alper} to the Cartesian diagram in the fiber product construction of $\sZ$, it suffices to show that the second power map $f: B\bG_m\to B\bG_m$ is cohomologically affine. A quasi-coherent sheaf $V$ over $B\bG_m$ corresponds via the weight decomposition to a family $(V_i)_{i\in \bZ}$ of $\bC$-vector spaces. It is clear that $W:=f_* V $ corresponds to $(W_j)_{j\in \bZ}$ where $W_j = V_{2j}$. Since $V\mapsto V_i$ is exact for every $i\in \bZ$, we know that $f_*$ is exact. Hence $\phi_{\sZ}$ is cohomologically affine, which implies that  $\sZ \to \ofM_{\frac{1}{2}}^{\K}$ is a good moduli space morphism.
Descending the map $\sZ \to \osY$ to level of good moduli spaces gives the desired morphism $\iota:\ofM_{\frac{1}{2}}^{\K} \to \ofY$.

 %We let $\cD_{\cY}$ denote the pull-back of $\cD$ to the index one cover.  

%Consider the universal family $(\cX,\cD)\to \osM_{\frac{1}{2}}^{\K}$. By Theorem \ref{thm:k-moduli-walls} we know that any fiber $\cX_t$ for $t\in |\osM_{\frac{1}{2}}^{\K}|$ is isomorphic to $\bP^3$, $X_h$, or $\bP(1,1,2,4)$. In particular, there exists a $\bQ$-Cartier Weil divisorial sheaf $\cL_t$ on $\cX_t$ such that $-K_{\cX_t}\sim 4\cL_t$ and $\cN_t:=2\cL_t$ is Cartier.  Since $\osM_{\frac{1}{2}}^{\K}$ is a smooth stack, there exists a $\bmu_2$-gerbe $\sZ\to \osM_{\frac{1}{2}}^{\K}$ and a line bundle $\cN_{\sZ}$ on $\cX_{\sZ}$ where $(\cX_{\sZ},\cD_{\sZ}):=(\cX,\cD)\times_{\osM_{\frac{1}{2}}^{\K}}\sZ$, such that $-K_{\cX_{\sZ}/\sZ}\sim_{\sZ} 2\cN_{\sZ}\sim_{\sZ} \cD_{\sZ}$. Thus we can take a fiberwise double cover $\cY_{\sZ}\to \cX_{\sZ}$ branched along $\cD_{\sZ}$. By \cite{LZ20, Zhu20} we know that $\cY_{\sZ}\to \sZ$ has K-semistable fibers, where a general fiber is a smooth quartic double solid. Thus we obtain a morphism $\sZ\to \osY$ which descends to $\iota:\fZ\to \ofY$ where $\fZ$ is the good moduli space of $\sZ$. Since $\sZ$ is a $\bmu_2$-gerbe over $\osM_{\frac{1}{2}}^{\K}$, we have $\fZ \cong \ofM_{\frac{1}{2}}^{\K}$, and we identify them for simplicity. 

Next, we will show that $\iota:\ofM_{\frac{1}{2}}^{\K}\to \ofY$ is bijective. Clearly, $\fY$ is contained in the image of $\iota$, so properness of $\ofM_{\frac{1}{2}}^{\K}$ implies the surjectivity of $\iota$.
It suffices to show injectivity of $\iota$, i.e. for any two points $[(X,D)],[(X',D')]\in \ofM_{\frac{1}{2}}^{\K}$, if their double covers $Y$ and $Y'$ are isomorphic, then $(X,D)\cong (X',D')$. 
First of all, we know that $X$ (resp. $X'$) is isomorphic to $\bP^3$, $X_h$, or $\bP(1,1,2,4)$ by Theorem \ref{thm:k-moduli-walls}, where in the latter two cases $D$ (resp. $D'$) does not pass through the cone vertex. By Lemma \ref{lem:L-construct}, we know that there exist ample $\bQ$-Cartier Weil divisorial sheaves $L$ and $L'$ on $X$ and $X'$ respectively, such that $-K_X=4L$ and $-K_{X'}=4L'$. In addition, if $X$ (resp. $X'$) is a cone, then $L$ (resp. $L'$) is Cartier away from the cone vertex, and it has Cartier index $2$ at the cone vertex. Let $\tL$ and $\tL'$ be the pull-back of $L$ and $L'$ to $Y$ and $Y'$ respectively. If $X$ (resp. $X'$) is smooth, then clearly $Y$ (resp. $Y'$) has local complete intersection singularities. If $X$ (resp. $X'$) is singular, then $Y$ (resp. $Y'$) has precisely two singularities, the preimage of the cone vertex, that are not local complete intersections,  since $D$ (resp. $D'$) is away from the cone vertex by Theorem \ref{thm:k-moduli-walls}. Since $Y\cong Y'$, we know that $X$ is smooth if and only if $X'$ is smooth. We split into two cases. For simplicity we assume $Y=Y'$, and denote the double cover maps by $\pi:Y\to X$ and $\pi':Y \to X'$.

\textbf{Case 1: $X\cong X'\cong \bP^3$.}
In this case, both $\tL$ and $\tL'$ are Cartier on $Y$, and $-K_Y=2\tL=2\tL'$ which implies that $\tL-\tL'$ is a torsion Cartier divisor on $Y$. Since $Y$ is $\bQ$-Fano, it is rationally connected by \cite{Zha06} and hence simply connected. Thus any torsion line bundle on $Y$ is trivial which implies $\tL=\tL'$.  By \cite[Definition 2.50]{KM98} we know that $\pi_*\cO_Y(\tL)\cong\cO_X(L)\oplus \cO_X(-L)$ and  $\pi'_*\cO_Y(\tL')\cong\cO_{X'}(L')\oplus \cO_{X'}(-L')$. Thus 
\begin{equation}\label{eq:qds-1}
     H^0(X, \cO_X(L))\cong H^0(Y, \tL)=H^0(Y, \tL')\cong H^0(X', \cO_{X'}(L')).
\end{equation}
Hence the linear system $|\tL|$ induces a map $Y\to \bP^3$ isomorphic to both $\pi$ and $\pi'$. By taking ramification divisors, we obtain $(X,D)\cong (X',D')$.

\textbf{Case 2: both $X$ and  $X'$ are cones.}
In this case, denote the unique non-lci singularity in $X$ and $X'$ by $x$ and $x'$, respectively. Then $\pi^{-1}(x)=\pi'^{-1}(x')=:\{y_1,y_2\}$. From the geometry of $X_h$ and $\bP(1,1,2,4)$, we know that $\Pic(x\in X)\cong \Pic(x'\in X')\cong \bZ/2\bZ$ where $L$ and $L'$ are generators respectively. Since $\pi$ (resp. $\pi'$) is \'etale over a neighborhood of $x$ (resp. of $x'$),  we know that $\tL-\tL'$ is a torsion Cartier divisor on $Y$, and as above we conclude that $\tL=\tL'$. We  have \eqref{eq:qds-1}, and also $\pi_*\cO_Y(2\tL)=\cO_X(2L)\oplus\cO_X$ and $\pi'_*\cO_Y(2\tL')\cong\cO_{X'}(2L')\oplus \cO_{X'}$ which implies 
\begin{equation}\label{eq:qds-2}
    H^0(X,\cO_X(2L))\oplus \bC \cong H^0(Y, 2\tL)=H^0(Y,2 \tL')\cong H^0(X', \cO_{X'}(2L'))\oplus \bC.
\end{equation}
By choosing a basis $(s_0, s_1,s_2,s_3)$ of $H^0(X,L)$ and an element $s_4\in H^0(Y, 2\tL)\setminus H^0(X,\cO_X(2L))$, we obtain a morphism $[s_0,\cdots, s_4]:Y\to\bP(1^4,2)$ which is isomorphic to $\pi$ after taking the image. Similarly, we have $[s_0',\cdots,s_4']:Y\to \bP(1^4,2)$ isomorphic to $\pi'$ after taking the image. From the construction, \eqref{eq:qds-1}, and \eqref{eq:qds-2}, we know that $\pi$ and $\pi'$ only differ by an automorphism of $\bP(1^4,2)$, so they are isomorphic to each other. Thus $(X,D)\cong (X',D')$. 
\end{proof}

\begin{proof}[Proof of Theorem \ref{mthm:doublesolid}]
The first statement follows from Proposition \ref{prop:qds}. The diagram follows from Theorem \ref{thm:k-moduli-walls} where $p=[(\bP(1,1,2,4), (x_3^2=x_2^4))]$. Then $\tau(p)$ represents the weighted hypersurface $(x_4^2=x_3^2-x_2^4)\subset\bP(1,1,2,4,4)$ which is isomorphic to $(x_2^4=x_3x_4)\subset\bP(1,1,2,4,4)$ after an automorphism of $\bP(1,1,2,4,4)$.
\end{proof}

\begin{rem}
If we take quadruple cyclic covers instead of double covers, then similar arguments show that there is a finite birational morphism $\ofM_{\frac{3}{4}}^{\K}\to M_{3, 4}^{\rm Kps}$ whose image is the Zariski closure of the moduli space of smooth quartic threefolds that are cyclic covers of $\bP^3$ under linear projections. Note that smooth quartic threefolds are known to be K-stable by \cite{Che01, CP02, Fuj19b}, while their K-moduli compactification is currently unknown. 
\end{rem}

\subsection{Gorenstein $\bQ$-Fano degenerations of $\bP^3$}

In this subsection, we prove Theorem \ref{mthm:gordeg}.

\begin{proof}[Proof of Theorem \ref{mthm:gordeg}]
Let $X$ be a Gorenstein $\bQ$-Fano degeneration of $\bP^3$. By the effective non-vanishing theorem of Ambro \cite[Main Theorem]{Amb99} and Kawamata \cite[Theorem 5.1]{Kaw00}, there exists an effective Cartier divisor $S\in |-K_X|$ such that $(X,S)$ is plt. Hence Theorem \ref{thm:uKs-almost-logCY} implies that $(X, (1-\epsilon)S)$ is uniformly K-stable for $0<\epsilon\ll 1$. Let $\pi:\cX\to B$ be a $\bQ$-Fano family over a smooth pointed curve $0\in B$ such that $\cX_0\cong X$ and $\cX_b\cong \bP^3$ for any $b\in B\setminus \{0\}$. By Lemma \ref{lem:L-construct} we know that $\pi_*\omega_{\cX/B}^{\vee}$ is a vector bundle over $B$ whose fiber over $b\in B$ is precisely $H^0(\cX_b, \omega_{\cX_b}^{\vee})$. In particular, we know that $(X,S)$ admits a $\bQ$-Gorenstein smoothing to $(\bP^3, S_b)$ where $S_b$ is a smooth quartic surface. Hence $[(X,S)]\in \ofM_{1-\epsilon}^{\K}$, and the statement follows from Proposition \ref{prop:no-other-3-fold}.
\end{proof}

\bibliographystyle{alpha}
\bibliography{quarticK3}

\end{document}